\numberwithin{equation}{section}
\def\R{\mathbb{R}}
\def\N{\mathbb{N}}
\def\T{\mathbb{T}}
\def\cP{\mathbb{P}}
\def\M{\mathcal{M}}
\def\cD{\mathcal{D}}
\def\cS{\mathcal{S}}
\def\cH{\mathcal{H}}
\def\cU{\mathcal{U}}
\def\cF{\mathcal{F}}
\def\TGRE{\mathcal{TGRE}}
\def\TCRE{\mathcal{TCRE}}
\def\aL{\mathsf L}
\def\aM{\mathsf M}
\def\aaE{\mathsf E}
\def\aS{\mathsf S}
\def\aZ{\mathsf Z}
\def\az{\mathsf z}
\def\aJ{\mathsf J}
\def\aB{\mathsf B}
\def\QL{Q_{\sf L}}
\def\QB{Q_{\sf B}}
\def\DL{D_{\sf L}}
\def\aR{\mathsf R}
\def\av{\mathsf v}
\def\aw{\mathsf w}
\def\aT{\mathsf T}
\def\ax{\mathsf x}
\def\hf{\mathbb{f}}
\def\cI{\mathcal{I}}
\def\DL{\mathcal{D}_{\aL}}
\def\DB{\mathcal{D}_{\aB}}
\def\Ccinfty{C^\infty_c}
\def\deta{\mathrm{d}\eta}
\def\Bgrad{\overline\nabla}
\newcommand{\mtt}[1]{\mathtt{#1}}
\def\grad{\nabla}
\newcommand{\norm}[1]{\left\lVert#1\right\rVert}
\newcommand{\Lp}[2]{L_{#1}^{#2}}
\def\Lgrad{\widetilde\nabla}
\def\dsigma{\mathrm{d}\sigma}
\newcommand{\abs}[1]{\left\lvert#1\right\rvert}
\newcommand{\eps}{\varepsilon}
\newcommand{\da}{\downarrow}
\def\Z{{\mathbb Z}}
\def\G{\R^{4d}}
\def\GL{\R^{4d}}
\def\Doa{\Omega\times\R^d}
\def\GB{\G\times S^{d-1}}
\def\cJ{\mathcal{J}}
\def\cA{\mathcal{A}}
\def\supp{\operatorname{supp}}
\def\loc{{\text{loc}}}
\def\Lip{\operatorname{Lip}}
\newcommand{\defeq }{\mathop{=}\limits^{\textrm{def}}}
\def\d{\partial}
\def\Do{\R^{2d}}
\def\Dot{\T^d\times\R^d}
\newcommand{\dd}{{\,\rm d}}
\newcommand{\limit}[1]{\xrightarrow{#1}}
\theoremstyle{plain}
        \newtheorem{theorem}{Theorem}[section]
        \newtheorem{example}[theorem]{Example}
        \newtheorem{assumption}[theorem]{Assumption}
        \newtheorem{proposition}[theorem]{Proposition}
        \newtheorem{lemma}[theorem]{Lemma}
        \newtheorem{corollary}[theorem]{Corollary} 
        \newtheorem{definition}[theorem]{Definition} 
        \newtheorem{remark}[theorem]{Remark}  
\title{On a fuzzy Landau Equation: Part III. The grazing collision limit}
\author{Manh Hong Duong \and Boris Golubkov
  \and  Zihui He
}
\newcommand{\Addresses}{{
  \bigskip
  \footnotesize

  H.~Duong, \textsc{School of Mathematics, University of Birmingham, UK}\par\nopagebreak
  \textit{E-mail}: \texttt{h.duong@bham.ac.uk}

   \medskip

  B.~Golubkov, \textsc{Fakult\"at f\"ur Mathematik, Universit\"at Bielefeld, Postfach 100131, 33501 Bielefeld, Germany}\par\nopagebreak
  \textit{E-mail}: \texttt{boris.golubkov@math.uni-bielefeld.de}

  \medskip

  Z.~He, \textsc{Fakult\"at f\"ur Mathematik, Universit\"at Bielefeld, Postfach 100131, 33501 Bielefeld, Germany}\par\nopagebreak
  \textit{E-mail}: \texttt{zihui.he@uni-bielefeld.de}

}}
\begin{document}

\maketitle

\begingroup
\renewcommand{\thefootnote}{}
\footnotetext{{\it Key words and phrases.} Boltzmann equation, Landau equation, delocalised collision, GENERIC system, grazing limit.}
\footnotetext{{\it 2020 Mathematics Subject Classification.} 35Q20, 35B25, 37L65.}
\endgroup

\begin{abstract}

In this paper, we study the grazing limit from the non-cutoff fuzzy Boltzmann equations to the fuzzy Landau equation, where particles interact through delocalised collisions. We show the grazing limit through variational formulations that correspond to the GENERIC (General Equations for Non-Equilibrium Reversible–Irreversible Coupling) structure of the respective equations.  
 We show that the variational formulation associated with a non-quadratic dual dissipation pair for the fuzzy Boltzmann equations converges to a variational formulation of the fuzzy Landau equation corresponding to a quadratic dissipation pair.

\end{abstract}

\tableofcontents


\section{Introduction}

In this article, we study the grazing collision limit from the {\it fuzzy} Boltzmann equations to the {\it fuzzy} Landau equation. The {\it fuzzy} Boltzmann equation can be written as
\begin{equation}
  \label{FBE}  
     \left\{ 
     \begin{aligned}
      &\d_t f+v\cdot \nabla_x f = Q_{\aB}(f,f)\\
         & Q_{\aB}(f,f) = \int_{\Do\times S^{d-1}}
\kappa(x-x_*)B(|v-v_*|,\sigma)\big(f'f'_*-ff_*\big)\dd \sigma\dd x_*\dd v_*.
     \end{aligned}
     \right.
\end{equation}
The unknown $f_t(x,v):[0,T]\times\Do\to\R_+$ with $d\ge2$ corresponds to the density of particles at time $t\in[0,T]$ and position $x\in\R^d$ and velocity $v\in \R^d$. The linear transport term $v\cdot\nabla_x f$ describes the advection of the density by the velocity of the particles. The collision term $Q_{\aB}(f,f)$ describes the {\it delocalised} elastic collision of the particles with the pre-collision status $(x,v)$ and $(x_*,v_*)$. The post-collision velocities $v'$ and $v_*'$ are given by 
\begin{align*}
    v'=\frac{v+v_*}{2}+\frac{|v-v_*|}{2}\sigma,\quad v_*'=\frac{v+v_*}{2}-\frac{|v-v_*|}{2}\sigma,\quad   \sigma\in S^{d-1}.
\end{align*}
We use the notations
\begin{equation*}
f=f(x,v),\quad f_*=f(x_*,v_*),\quad f'=f(x,v'),\quad f'_*=f(x_*,v'_*).
\end{equation*}
The collision kernel $B:\R^d\times S^{d-1}\to\R_+$ is given by
\begin{equation*}
\label{kernel:B}
   B(|v-v_*|,\sigma)=A_0(|v-v_*|)b(\theta),
\end{equation*}
where the kinetic kernel takes the form of  
\begin{equation}
\label{kernel:A0}
     A_0(|v-v_*|)=|v-v_*|^{\gamma}, \quad \gamma\in[-2,1].
\end{equation}
When $d=2$, we consider $\gamma\in(-2,1]$. In the hard and Maxwellian potential cases, $A_0$ can also take the form $A_0(|v-v_*|)\sim \langle v-v_*\rangle^\gamma$, $\gamma\in(-\infty,1]$.
The deviation angle $\theta\in [0,\pi/2]$ is given by 
\begin{equation*}
\label{kernel:B:theta}
  \theta=\arccos \frac{v-v_*}{|v-v_*|}\cdot \sigma.
\end{equation*}

The {\it delocalised} collision is characterised by a spatial interaction kernel $\kappa$. For example, one choice of $\kappa$ could be a renormalised Gaussian kernel proportional to $\exp (-\langle x\rangle )$, where $\langle z\rangle:=\sqrt{1+|z|^2}$ denotes the Japanese bracket. The fuzzy equation \eqref{FBE} is introduced in \cite{EH25} as an approximation of the classical inhomogeneous Boltzmann equation 
\begin{equation}
  \label{BE-cla}  
     \left\{ 
     \begin{aligned}
      &\d_t f+v\cdot \nabla_x f = Q_{\sf class}(f,f)\\
         & Q_{\sf class}(f,f) = \int_{\R^d\times S^{d-1}}
B(|v-v_*|,\sigma)\times\\
&\hspace{2.5cm}\big(f(x,v')f(x,v'_*)-f(x,v)f(x,v_*)\big)\dd \sigma\dd v_*.
     \end{aligned}
     \right.
\end{equation}
One major difficulty in studying the classical Boltzmann equations is that the natural a priori bound—the conservation of mass, given by the 
$L^1$-norm of $f$, is fundamentally incompatible with the $L^2$-integrability in the spatial variable 
$x$ required by the quadratic collision term $Q_{\sf class}(f,f)$. The so-called renormalised solutions are studied in, for example, \cite{DL89,AV02}. In \cite{EH25b}, Erbar and the third author showed that as $\kappa(x-x_*)\to \delta_0(x-x_*)$ a Dirac measure, the fuzzy equation \eqref{FBE} indeed converges to the classical equation \eqref{BE-cla}. Here, we do not take the singular limit in $\kappa$, further assumptions of $\kappa$ can be found in Assumption \ref{ASS:kappa}.

We consider the singular 
 angle function $\beta(\theta)\defeq\sin\theta ^{d-2}b(\theta)$ such that $\supp(\beta)\subset[0,\pi/2]$,
\begin{equation}
\label{beta}
    \beta(\theta) = \sin\theta^{d-2}b(\theta) \gtrsim \theta^{-1-\nu}\quad\text{and}\quad \int_0^{\frac{\pi}{2}}\beta(\theta)\theta^2\dd\theta<+\infty,
\end{equation}
for some $\nu\in(0,2)$.
Further assumptions of $B$ and $\beta$ can be found in Section \ref{sec:main-thm}.
We take the following scaling for  $\eps \in (0,1)$
\begin{equation}
\label{beta-eps}
\beta^\varepsilon(\theta)={\pi^3}/{\varepsilon^3}\beta\Big(\frac{\pi\theta}{\varepsilon}\Big).
\end{equation}
The scaling induces a new collision operator $Q_{\aB}^\eps$ corresponding to the collision kernel $B^\varepsilon=A_0(|v-v_*|) b^\varepsilon(\theta)$ and formally the following \textit{grazing collision limit}
\begin{equation}
\label{intro:GL}
\partial_tf^\eps + v\cdot\grad_xf^\eps = Q_\aB^\eps(f^\eps,f^\eps) \limit{\eps \to 0} \partial_tf + v\cdot\grad_xf = Q_{\aL}(f,f),
\end{equation}
that the {\it fuzzy} Boltzmann equation converges to the {\it fuzzy} Landau equation
\begin{equation}
    \label{FLE}
    \left\{
    \begin{aligned}
    &\d_t f+v\cdot\nabla_x f=Q_{\sf L}(f,f)\\
    &Q_{\sf L}(f,f)=\nabla_v \cdot\int_{\Do}\kappa(x-x_*)A(|v-v_*|)\Pi_{(v-v_*)^\perp}\big(f_*\nabla_v f-f\nabla_{v_*} f_*\big)\dd x_*\dd v_*,
    \end{aligned}
    \right.
\end{equation}
where the kinetic kernel $A(|v-v_*|)=A_0(|v-v_*|)|v-v_*|^2$, and the orthogonal projection $\Pi_{(v-v_*)^\perp}$ is given by 
\begin{equation*}
\label{projection}
  \Pi_{(v-v_*)^\perp}=\operatorname{Id}-\frac{(v-v_*)\otimes (v-v_*)}{|v-v_*|^2}.
\end{equation*}


Similar to the difficulty encountered in studying the classical inhomogeneous Boltzmann equation \eqref{BE-cla}, the $L^2$-integrability in $x$ is incompatible with mass conservation when analysing the classical inhomogeneous Landau equation, which corresponds to the fuzzy equation \eqref{FLE} with $\kappa$ taken as a Dirac measure.
The fuzzy Landau equation \eqref{FLE} was introduced in \cite{DH25} as an approximation to the classical inhomogeneous Landau equations and also has been studied further in \cite{DH25b,gualdani2025fuzzy}. The grazing limit \eqref{intro:GL} of the classical inhomogeneous Boltzmann equation was shown on the formal level by  \textcite{Des92}. 
Simplified models of the inhomogeneous Boltzmann equation \eqref{BE-cla} and the Landau equation are the spatial {\it homogeneous} equations obtained by removing the spatial variable $x$. For $f=f(v)$, the homogeneous Boltzmann and Landau equations write as follows 
\begin{equation}
    \label{homo}
\begin{gathered}
    \d_t f (v)= \int_{\R^d\times S^{d-1}}
B(|v-v_*|,\sigma)\big(f(v')f(v'_*)-f(v)f(v_*)\big)\dd \sigma\dd v_*,\\
\d_t f(v)=\nabla_v \cdot\int_{\R^d}A(|v-v_*|)\Pi_{(v-v_*)^\perp}\big(f(v_*)\nabla f(v)-f(v)\nabla_{*} f(v_*)\big)\dd v_*.
\end{gathered}
\end{equation}
As a consequence, the linear transport term disappears, and the quadratic dependence on $x$ also vanishes in the homogeneous collision operators.
Concerning the rigorous proof of the grazing limit in the {\it homogeneous} case, the hard potential cases were studied in \cite{AB90}, the soft potential cases were shown in \cite{Vil98b}, where the framework of $\cH$-solutions was introduced. The weak convergence was improved in \cite{ADVW00} by taking advantage of the angular singularity. Combining with the so-called velocity average techniques in \cite{GLPS88,DLM91}, the grazing limit in the {\it inhomogeneous} case was shown in \cite{AV02}.
{Recently, the grazing limit for the quantum Boltzmann operator was studied in \cite{GPTW25}, and for wave kinetic equations in \cite{DH25C}.}


The spatially homogeneous Boltzmann and Landau equations \eqref{homo} can be formulated as gradient flows, or steepest descents, of the dissipative Boltzmann entropy 
$\cH(f)=\int f\log f$. A rigorous variational characterisation of these gradient-flow structures was established in \cite{erbar2023gradient} and \cite{carrillo2024landau}, respectively. Using these gradient flow structures and the \textit{$\Gamma-$convergence} framework for gradient flows developed in \cite{SASE11}, \textcite{carrillo2022boltzmann} showed the grazing limit in the homogeneous case \eqref{homo}. 

The {\it fuzzy} Boltzmann equation \eqref{FBE} and the Landau equations \eqref{FLE} have been rigorously cast into the GENERIC (General Equations for Non-Equilibrium Reversible Irreversible Coupling) framework, which is a combination of both conservative and dissipative dynamics, in \cite{EH25,DH25} using the variational formulation for a general GENERIC system formally proposed in \cite{DPZ13}. Motivated by the aforementioned results on the grazing limit of the classical Boltzmann equations, in this paper, by extending the method in \cite{carrillo2022boltzmann} for the {\it homogeneous} equations to the {\it fuzzy} models and combining with velocity-averaging techniques to obtain the necessary compactness, we rigorously establish the grazing limit in the fuzzy case using a variational framework that captures the GENERIC structures of the fuzzy equations developed in \cite{EH25,DH25}. In particular, we show that the grazing limit of the fuzzy Boltzmann equations—whose dynamics are governed by a pair of non-quadratic ($\cosh$)-dissipation potentials, 
converges to the fuzzy Landau equations, which admit a pair of quadratic dissipation potentials. 






\subsection{GENERIC structure}\label{subsec:GENERIC}

The GENERIC framework provides a systematic method to derive thermodynamically consistent evolution equations describing complex systems consisting of both Hamiltonian (reversible) dynamics and gradient (irreversible) flows. It was first introduced in the context of complex fluids  \cite{GO97a,GO97b}. 
A  GENERIC system describes the evolution of an unknown $\mathsf z$ in a state space $\mathsf Z$ via the equation
\begin{equation}\label{eq:generic}
\partial_t \az = \aL \dd\aaE+\aM \dd\aS.
\end{equation}
The building block $\{\aaE,\aS,\aL,\aM\}$ satisfy the following conditions:
\begin{itemize}
\item The functionals $\aaE,\aS:\aZ\to \R$ are interpreted as, energy and entropy functionals  respectively; and $\dd\aaE$, $\dd\aS$ are their differentials.
\item $\aL(\az)$, $\az\in \aZ$ are antisymmetric operators mapping cotangent to tangent vectors and satisfying the Jacobi identity.

\item $\aM(\az)$, $\az\in \aZ$ is a symmetric and positive semi-definite operator.
\item The following degeneracy conditions are satisfied:
\begin{equation*}\label{eq:degen}
\aL(\az) \dd \aS(\az)=0\quad\text{and}\quad \aM(\az) \dd\aaE(\az) = 0\quad\text{for all }\az.
\end{equation*}
\end{itemize}
The conditions satisfied by the building blocks $\{\aaE, \aS, \aL,\aM\}$ ensure that in any solution to \eqref{eq:generic}, the energy $\aaE$ is conserved and the entropy $\aS$ is non-decreasing
\begin{align*}
\frac{\dd}{\dd t} \aaE(\mathsf z_t) =0\quad\text{and}\quad \frac{\dd}{\dd t} \aS(\az_t) \ge0.
\end{align*}
Thus, the first and second laws of thermodynamics are automatically satisfied for GENERIC systems. The first author, Peletier and Zimmer \cite{DPZ13} propose a variational characterisation: a curve $\az:[0,T]\to\aZ$ is a solution to \eqref{eq:generic} if and only if $J(\mathsf z)=0$, where 
\begin{equation}\label{intro:eq:J}
J(\mathsf z) = \aS(\az_0)-\aS(\az_T) +\frac12 \int_0^T \|\partial_t \mathsf z -\aL\dd\aaE\|_{\aM^{-1}}^2\dd t +\frac12 \int_0^T\|\dd\aS\|_{\aM}^2 \dd t,
\end{equation}
where the weighted (pseudo) norms are associated with the weighted $L^2$-inner products
\begin{align*}
 \langle \av,\aw\rangle_{\aM^{\pm 1}}=\int \av\cdot \aM^{\pm1} \aw.
\end{align*}
When $\aaE=0$, the system \eqref{eq:generic} reduces to a gradient flow and the variational characterisation \eqref{intro:eq:J} corresponds to De Giorgi’s characterisation of gradient flows as curves of maximal slope, see for example \cite{AGS08}. 

The above (quadratic) framework can be generalised by inducing a pair of dual dissipation potentials $\aR:\aZ\times\aT\aZ\to\R_+$ and  $\aR^*:\aZ\times\aT^*\aZ\to\R_+$ defined on (co-)tangential spaces that are convex conjugate
\begin{equation}\label{intro:eq:J-R}
\aJ(\mathsf z) = \aS(\az_0)-\aS(\az_T) + \int_0^T \aR(\partial_t \mathsf z -\aL\dd\aaE)\dd t +\int_0^T\aR^*(\dd\aS)\dd t.
\end{equation}
Similar to the quadratic case, $\aJ=0$ leads to the generalised GENERIC system \cite{mielke2011formulation}
\begin{equation}
\label{GENERIC-R}
 \d_t \az=\aL\dd\aaE+\d \aR^*(\dd\aS), 
\end{equation}
where the irreversible part  $\aM\dd \aS$ in \eqref{eq:generic} is replaced by 
\begin{align*}
\partial_{\av}\aR^*(\av)\big\vert_{\av=\dd\aS}.
\end{align*}
The positivity of $\aJ$ is ensured by Fenchel's inequality
\begin{equation}
\label{Rxv}
    \ax\cdot\av\le \aR(\ax)+\aR^*(\av)\quad\text{with equality iff}\quad \ax=\d\aR^*(\av).
\end{equation}
\subsection{Variational formulations}

The fuzzy Boltzmann equation \eqref{FBE} and the fuzzy Landau equation \eqref{FLE} are associated with, at least formally, the conservative energy and the dissipative Boltzmann entropy 
\begin{equation}
\label{intro-EH}
  \aaE(f)=\frac12\int_{\Do} |v|^2f\dd x\dd v \quad\text{and}\quad \cH(f)=\int_{\Do} f\log f\dd x\dd v.
\end{equation}
The dissipation of the entropy is characterised by the entropy inequalities
\begin{align}
 & \cH(f_T)-\cH(f_0) +\int_0^T\cD(f)\dd t \le 0.\label{H:B}
\end{align}
The Boltzmann and Landau entropy dissipations are given by 
\begin{gather}
\cD_{\aB}(f)=\frac14\int_{\G\times S^{d-1}}\kappa B(f'f_*'-ff_*)\overline\nabla \log f\dd x_*\dd v_*\dd x\dd v \dd \sigma\geq 0,\label{DB}\\
\cD_{\sf L}(f)=\frac12\int_{\G}\kappa ff_*\big|\widetilde\nabla \log f\big|^2\dd x_*\dd v_*\dd x\dd v\geq 0,\label{DL}
\end{gather}
where the Boltzmann and Landau gradients are given by $\overline\nabla f=f_*'+f'-f_*-f$ and $\widetilde\nabla f= \sqrt{A}\Pi_{(v-v_*)^\perp}\big(\nabla_vf-(\nabla_{v}f)_*\big)$. The non-negativity of $\cD_B$ follows from the elementary inequality $(\log x-\log y)(x-y)\geq 0$.

The fuzzy Boltzmann equations can be 
cast as a generalised quadratic GENERIC system with building blocks $\{\aaE,\aS,\aL,\d\aR^*\}$, where $\aaE$ is given by \eqref{intro-EH} and $\aS=-\cH$, and  the operator $\aL$ is given by 
\begin{equation}
\label{intro:L}
\begin{aligned}
\aL(f)g=-\nabla\cdot(f \aJ\nabla g),\quad \aJ=\begin{pmatrix}
    0& \mathsf{id}_d\\
    -\mathsf{id}_d&0
\end{pmatrix},
\end{aligned}
\end{equation}
for all $g\in C^\infty_c(\Do)$, where $\nabla=(\nabla_x,\nabla_v)^T$. The dissipation potential pair $\aR^*,\aR$ is determined by a pair of even, lower semi-continuous convex conjugated functions $\Psi,\Psi^*:\R\to\R_+$ and a $1$-homogeneous concave function $\Theta:\R_+\times \R_+\to\R_+$ satisfying Assumption \ref{ass-pair}, in particular, the following compatibility condition holds
\begin{align*}
 (\Psi^*)' (\log s -\log t) \Theta (s,t) =s-t,\quad\forall s,t>0.  
\end{align*}
Notice that $\dd\aS=-\log f$. We set
\begin{gather*}
\aR(U)=\frac14\int_{\G\times S^{d-1}}\Psi\Big(\frac{U}{\Theta(f)B\kappa}\Big)\Theta(f)B\kappa ,\\ 
\cD_{\Psi^*}(f)\defeq\aR^*(- \log f)=\frac14\int_{\G\times S^{d-1}}\Psi^*(-\overline\nabla \log f)\Theta(f)B\kappa .
\end{gather*}
In the above, we write $\Theta(f)=\Theta(f'f_*',ff_*)$. The corresponding variational functional \eqref{intro:eq:J-R} writes
\begin{equation}
\label{J-Rnonquadratic}
\aJ_{\aB}(f):=\cH(f_T)-\cH(f_0) +\int_0^T\cD_{\Psi^*}(f)\dd t+\int_0^T\aR(\d_t f-v\cdot\nabla_x f)\dd t.
\end{equation}
In particular, one can take the important class of hyperbolic cosine functions
\begin{equation}
\label{intro-cosh}
    \begin{aligned}
\Psi^*(r)=4(\cosh(r/2)-1)\quad\text{and}\quad \quad\Theta(s,t)=\sqrt{st}.
    \end{aligned}
    \end{equation}
    
The rigorous proof of the generalised GENERIC structure characterised by the variational formulation \eqref{J-Rnonquadratic} was shown in \cite{EH25} that $\aJ_{\aB}=0$ if and only if $f$ solves the fuzzy Boltzmann equation \eqref{FBE}, where the cut-off (bounded angular kernel) hard potential case that $A_0\sim \langle v-v_*\rangle^\gamma$, $\gamma\in(-\infty,1]$ were considered. Although we lack the rigorous proof of the GENERIC structures in the non-cutoff and soft potential cases, the variational inequality $\aJ_{\aB}\le 0$ is ensured by the entropy inequality \eqref{H:B} and the duality condition \eqref{Rxv}.

The fuzzy Landau equation \eqref{FLE} can be cast as a GENERIC system with the building block  $\{\aaE,\aS,\aL,\widetilde\aM\}$, where $\aaE$ and $\aL$ are given by \eqref{intro-EH} and \eqref{intro:L}, $\aS=-\cH$, and $\widetilde\aM$ is given by 
\begin{equation*}
\begin{aligned}
\widetilde\aM(f)g=-\frac{1}{2}\widetilde{\nabla}\cdot\Big(\kappa ff_*\widetilde{\nabla} g\Big),
\end{aligned}
\end{equation*}
where $\widetilde{\nabla}\cdot$ is the $L^2$-dual of $\widetilde\nabla$ given by \eqref{landau-div}. In \cite{DH25}, the first and third authors rigorously showed this GENERIC structure via a variational characterisation corresponding to \eqref{intro:eq:J} under appropriate curve assumptions.

Taking the scaling \eqref{beta-eps}, 
inspired by the homogeneous argument in \cite{carrillo2022boltzmann}, to show the grazing limit \eqref{intro:GL}, we show that as $\varepsilon\to0$, the fuzzy Boltzmann variational inequality 
\begin{equation}
\label{J-B}
\cH(f^\varepsilon_T)-\cH(f^\varepsilon_0) +\int_0^T\cD_{\Psi^*}^\varepsilon(f^\varepsilon)\dd t+\int_0^T\aR^\varepsilon(\d_t f^\varepsilon-v\cdot\nabla_x f^\varepsilon)\dd t\le0
\end{equation}
leads to the fuzzy Landau variational inequality
\begin{equation}
\label{J-L}
\cH(f_T)-\cH(f_0) +\frac12\int_0^T\cD_{\aL}(f_t)\dd t+\frac12\int_0^T\|\d_t f-v\cdot\nabla_x f\|_{\tilde\aM^{-1}}^2\dd t \le 0.
\end{equation}

We show that the grazing limit of the fuzzy Boltzmann equations—whose dynamics are governed by a pair of non-quadratic dissipation potentials, converges to the fuzzy Landau equations, which admit a pair of quadratic dissipation potentials.

The grazing limit for the homogenous case in \cite{carrillo2022boltzmann} only deals with the quadratic admissible pair in the Boltzmann equation, which corresponds to $\Psi^*(r)=\Psi(r)=\frac{r^2}{2}$ and $\Theta(s,t)=\frac{s-t}{\log s-\log t}$ (the logarithm mean). Our results also apply to the homogeneous cases, where one can take non-quadratic pairs in the variational inequality corresponding to the gradient flows of homogeneous Boltzmann equations.  In particular, in \eqref{J-B}, we can take the admissible $\cosh$-pair given in \eqref{intro-cosh}.
The (generalised) GENERIC formulation \eqref{GENERIC-R} using the $\cosh$-pair dissipation functionals \eqref{intro-cosh} for the Boltzmann equation was shown formally in \cite{grmela1993weakly,grmela2002reciprocity,grmela2010multiscale,grmela2018generic}. This specific non-quadratic variational formulation for dissipative systems has recently received considerable attention in the PDE and probability communities due to its intimate connections to large deviation principle of stochastic processes \cite{mielke2014relation,mielke2017non,peletier2022jump, peletier2023cosh,kraaij2020fluctuation,duong2023non}. We do not derive a large deviation principle for the fuzzy Boltzmann and Landau equations in this paper, which would be interesting topics for future research, but refer the reader to the papers \cite{leonard1995large,rezakhanlou1998large,bouchet2020boltzmann,bodineau2023statistical,feliachi2024dynamical,feliachi2021dynamical,bodineau2023statistical} for relevant discussions on the classical models.



\medskip

The rest of the paper is organised as follows. In Section \ref{sec:main-results}, we present the fuzzy Landau and fuzzy Boltzmann equations in detail and state the main result, Theorem \ref{thm:main}. In Section \ref{sec:pre}, we introduce a polar-coordinate representation of the Boltzmann collision operator and provide the necessary pointwise limits and uniform bounds. In Section \ref{sec:conv}, we establish the compactness of solutions to the fuzzy Boltzmann variational inequality,  $f^\varepsilon$, in \eqref{J-B}. Finally, in Section \ref{sec:dissipation} and Section \ref{sec:action}, we pass to the limit in the variational inequalities \eqref{J-B}.  

This article is the third in a series of works on the fuzzy Landau equation. We refer to the first two papers on the variational characterisation and the solvability results of the fuzzy Landau equations in \cite{DH25} and \cite{DH25b} respectively.

\subsection*{Acknowledgements}
M. H. D is funded by an EPSRC Standard Grant EP/Y008561/1. Z.~H. is funded by the Deutsche Forschungsgemeinschaft (DFG, German Research Foundation) – Project-ID 317210226 – SFB 1283.

The authors are grateful to Matthias Erbar and Jeremy Wu for their helpful discussions and comments.

\section{The equations and main results}\label{sec:main-results}

Throughout this work, we consider the kernels $A_0=A_0(|v-v_*|)$ taking the following forms
\begin{equation}
    \label{A-1}
     A_0(|v-v_*|)=|v-v_*|^{\gamma},\quad\gamma\in[-2,1].
\end{equation}
When $d=2$, we consider $\gamma\in(-2,1]$.
In the hard and Maxwellian potential case, the kernel $A_0$ can also take the following form
\begin{equation}
    \label{A-3}
    0<C^{-1}\langle v-v_*\rangle^{\gamma}\le A_0(|v-v_*|)\le C\langle v-v_*\rangle^{\gamma},\quad \gamma\in(-\infty,1].
\end{equation}

In this article, the domain can be taken $\Do$ or $\Dot$. We take $T>0$ arbitrarily fixed.

In this section, we present the fuzzy Landau and fuzzy Boltzmann equations in Section \ref{sec:FL} and  Section \ref{sec:FBE}, respectively, where the GENERIC formulation and variational characterisation are also discussed. In particular, Section \ref{sec:non-quadratic} addresses the entropy inequality of fuzzy Boltzmann equations associated with non-quadratic dissipation potential pairs. Finally, in Section \ref{sec:main-thm}, we present the main Theorem \ref{thm:main} and the strategy of the proof, where the grazing-collision limit of the fuzzy Boltzmann equations corresponding to a non-quadratic entropy–dissipation pair converges to the fuzzy Landau equation characterised by a quadratic entropy–dissipation pair.
\subsection{Fuzzy Landau equations}\label{sec:FL}
We recall the fuzzy Landau equation \eqref{FLE}
\begin{equation*}
    \left\{
    \begin{aligned}
    &\d_t f+v\cdot\nabla_x f=Q_{\sf L}(f,f)\\
    &Q_{\sf L}(f,f)=\nabla_v \cdot\int_{\Do}\kappa(x-x_*)A(|v-v_*|)\Pi_{(v-v_*)^\perp}\big(f_*\nabla_v f-f\nabla_{v_*} f_*\big)\dd x_*\dd v_*,
    \end{aligned}
    \right.
\end{equation*}
and the kernel $A(v-v_*)=A_0(|v-v_*|)|v-v_*|^2$. We define the Landau gradient $\widetilde\nabla$ as follows
\begin{align*}
    \widetilde\nabla f= \sqrt{A}\Pi_{(v-v_*)^\perp}\big(\nabla_vf-(\nabla_{v}f)_*\big).
\end{align*}
For $G:\G\to\R^d$, the divergence $\widetilde\nabla\cdot$ is given by 
\begin{equation}
\label{landau-div}
    \widetilde\nabla\cdot G=\nabla_v\cdot \int_{\Do}\sqrt{A}\Pi_{(v-v_*)^\perp} (G-G_*)\dd x_*\dd v_*
\end{equation}
via the integration by parts formula
\begin{align*}
\int_{\G}G\cdot \widetilde\nabla f\dd\eta=- \int_{\Do}\widetilde\nabla \cdot G  f\dd x\dd v, 
\end{align*}
where we use the shortened notation $\dd\eta=\dd x_*\dd v_*\dd x\dd v$ to denote the Lebesgue measure on $\R^{4d}$.

Thus, the fuzzy Landau equation \eqref{FLE} can be written as
\begin{equation*}
    \d_t f+v\cdot\nabla_x f=\frac12\widetilde \nabla\cdot \big(\kappa ff_* \widetilde \nabla \log f\big).
\end{equation*}
The following mass, momentum and energy conservation laws at least formally hold
\begin{equation}
\label{consv-law}
\begin{aligned}
\int_{\Doa} (1,v,|v|^2)f_t\dd x\dd v=\int_{\Doa}(1,v,|v|^2) f_0\dd x\dd v\quad\forall t\in[0,T].
\end{aligned}
\end{equation}
The Boltzmann entropy of the fuzzy Landau equation is non-increasing over time, and the following inequality holds, at least formally,
\begin{align*}
\cH(f_T)-\cH(f_0)=-\int_0^T\cD_{\sf L}(f_t)\dd t\leq 0,
\end{align*}
where the entropy dissipation is given by \eqref{DL}.

We follow the homogeneous cases in \cite{Vil98} to define $\cH$-solutions to the fuzzy Landau equations. The existence of the $\cH$-solutions, the conservation laws and entropy inequalities have been discussed in \cite{DH25b}. Let $\cP(\Do)$ denote the space of probability measures on $\Do$.
\begin{definition}[$\cH$-solution]\label{def:introH}
    Let $\gamma\in[-2,1]$. Let $(f_t)_{t\in[0,T]}\subset\cP(\Do)$ by weakly continuous in time. Let $f\in L^1([0,T];L^1_{2,2+\gamma_+}(\Do))\cap L^\infty([0,T];L\log L(\Do))$. We say $f$ is an $\cH$-solution of \eqref{FLE} if the following entropy dissipation is time-integrable
\begin{align*}
    \int_0^T\DL(f_t)\dd t<+\infty,
\end{align*}
and the following weak formulation holds 
    \begin{equation}
    \label{Landau:weak}
     \begin{aligned}
&\int_{\Do}f_0\varphi(0)\dd x\dd v+\int_{0}^T\int_{\Do}f(\d_t\varphi+v\cdot\nabla_x\varphi)\dd x\dd v\dd t\\
&=-\frac12\int_0^T\int_{\G}\kappa ff_* \widetilde\nabla \varphi\cdot \widetilde\nabla\log f\dd\eta\dd t
\end{aligned}
 \end{equation}
 for all $\varphi\in C^\infty_c([0,T)\times\Do)$.
\end{definition}
In the above, we use the notation $\gamma_+=\max(\gamma,0)$. For $a,\,b\in\R$, $ L^1_{a,b}(\Do)$ denotes the functional space
\begin{align*}
    L^1_{a,b}(\Do)&=\{f\in L^1(\Do)\mid \|f\|_{L^1_{a,b}(\Do)}<+\infty\},
    \end{align*}
with the norms
\begin{align*}
\|f\|_{L^1_{a,b}(\Do)}\defeq\int_{\Do}\big(\langle x\rangle^a+\langle v\rangle^b \big)f(x,v)\dd x\dd v.
\end{align*}

\subsubsection{A variational characterisation}
In \cite{DH25}, the first and third authors show a variational characterisation that recasts the fuzzy Landau equation \eqref{FLE} within the framework of GENERIC systems with the building blocks $\{\widetilde\aM,\,\aL,\,\aaE,\,\aS\}$, where the energy and entropy are given by
\begin{equation}
\label{ES}
\aaE(f)=\int_{\Do} \frac{|v|^2}{2} f\dd x\dd v\quad\text{and}\quad \aS(f)=-\cH(f),
\end{equation}
and the operators $\aL$ and $\widetilde\aM$ at $f\in\aZ$ are given by
\begin{equation}
\label{ML}
\begin{aligned}
&\widetilde\aM(f)g=-\frac{1}{2}\widetilde{\nabla}\cdot\Big(\kappa ff_*\widetilde{\nabla} g\Big)\quad\text{and}\quad \aL(f)g=-\nabla\cdot(f \aJ\nabla g)
\end{aligned}
\end{equation}
for all $g\in\aZ$ and $\aJ$ is given in \eqref{intro:L}.

To characterise the speed of curves alone the characteristic line $\|\d_t f(x+tv)\|_{\tilde\aM^{-1}}^2$ in \eqref{J-L}, we define the transport grazing rate equation
\begin{equation}
\label{intro:TGRE}
    \d_t f+v\cdot \nabla_x f+\frac12 \widetilde \nabla \cdot U_{\aL}=0,
\end{equation}
where $U_{\aL}:[0,T]\times \GL\to \R^d$ denotes the grazing rate. Notice that, when $U_{\aL}=-\kappa ff_* \widetilde\nabla \log f$ equation \eqref{intro:TGRE} recovers the fuzzy Landau equation \eqref{FLE}. We define an admissible pair for \eqref{intro:TGRE}.
\begin{definition}\label{def:TGRE} We say $(f_t,U_t)$ is a pair of solutions to the transport grazing rate equation \eqref{intro:TGRE} if 
\begin{enumerate}
    \item $f_t:[0,T]\to \cP(\Do)$ is weakly continuous; 
    \item $(U_t)_{t\in[0,T]}$ is a family of Borel measures in $\M(\R^{4d};\R^d)$;
    \item $\int_0^T\int_{\G} |U_t|\dd \eta \dd t<+\infty$;
    \item For all $\varphi\in C^\infty_c(\Do)$ the following equality holds
    \begin{equation*}
        \frac{d}{dt}\int_{\Do}\varphi f\dd x\dd v-\int_{\Do}v\cdot \nabla\varphi f\dd x\dd v=\frac12\int_{\G} \widetilde\nabla \varphi\cdot U\dd\eta.
    \end{equation*}
    We define the set of such pairs $(f_t,U_t)$ by $\TGRE_T$.
\end{enumerate}
\end{definition}
We define the following curve action associated to \eqref{intro:TGRE}
\begin{align*}
\cA_{\aL}(f,U_{\aL})=\frac12\int_{\G}\frac{|U_{\aL}|^2}{ff_*\kappa} \dd\eta.
\end{align*}
We have the following variational characterisation of the $\cH$-solutions to the fuzzy Landau equation \eqref{FLE}.
\begin{theorem}[\cite{DH25}]\label{thm:FLE:we}
Let $\gamma\in[-2,1]$. Let $(f,U)\in \TGRE_T$ such that $\cH(f_0)<+\infty$.
We have
\begin{equation}
\label{V-Landau}
\cJ_{\aB}(f,U)\defeq\cH(f_T)-\cH(f_0) +\frac12\int_0^T\cD_{\aL}(f_t)\dd t+\frac12\int_0^T\cA_{\aL}(f_t,U_t)\dd t \ge 0.
\end{equation}
Moreover, we have $\cJ_{\aB}(f,U)=0$ if and only if $f$ is an $\cH$-solution for the fuzzy Landau equation \eqref{FLE}.
\end{theorem}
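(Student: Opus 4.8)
The plan is to prove the functional inequality \eqref{V-Landau} by a Fenchel–Young (Legendre duality) argument applied pointwise in the collision/velocity variables, and then to identify the equality case with the weak formulation \eqref{Landau:weak} of the $\cH$-solution. First I would observe that the curve action $\cA_{\aL}(f,U_{\aL})$ and the Landau dissipation $\cD_{\aL}(f)$ are convex conjugates of each other in the appropriate duality: for fixed $f$, the quadratic functional $U \mapsto \frac12\int_{\G}\frac{|U|^2}{\kappa ff_*}\dd\eta$ is dual to $W \mapsto \frac12\int_{\G}\kappa ff_*|W|^2\dd\eta$ with respect to the pairing $\langle U, W\rangle = \int_{\G} U\cdot W\dd\eta$, and in particular $\cD_{\aL}(f) = \frac12\int_{\G}\kappa ff_*|\widetilde\nabla\log f|^2\dd\eta$ is obtained by taking $W = \widetilde\nabla\log f$. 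Fenchel's inequality \eqref{Rxv} then gives, for any admissible $(f,U)\in\TGRE_T$ and any test gradient,
\begin{equation*}
-\int_0^T\!\!\int_{\G} U\cdot\widetilde\nabla\log f\,\dd\eta\dd t \le \frac12\int_0^T\!\cA_{\aL}(f_t,U_t)\dd t + \frac12\int_0^T\!\cD_{\aL}(f_t)\dd t,
\end{equation*}
with equality if and only if $U = -\kappa ff_*\widetilde\nabla\log f$ a.e.

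Next I would handle the entropy difference term $\cH(f_T)-\cH(f_0)$. Formally, along a solution of the transport grazing rate equation \eqref{intro:TGRE}, the chain rule gives
\begin{equation*}
\frac{\dd}{\dd t}\cH(f_t) = \int_{\Do}(\log f + 1)(\d_t f)\dd x\dd v = \int_{\Do}\log f\big(-v\cdot\nabla_x f - \tfrac12\widetilde\nabla\cdot U\big)\dd x\dd v = -\tfrac12\int_{\G}U\cdot\widetilde\nabla\log f\,\dd\eta,
\end{equation*}
using that the transport term $\int \log f\, v\cdot\nabla_x f\,\dd x\dd v = \int v\cdot\nabla_x (f\log f - f)\dd x\dd v = 0$ and the integration-by-parts formula defining $\widetilde\nabla\cdot$. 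Integrating in time yields $\cH(f_T)-\cH(f_0) = -\frac12\int_0^T\int_{\G}U\cdot\widetilde\nabla\log f\,\dd\eta\dd t$. Combining this identity with the Fenchel inequality above (with the factor $\frac12$ correctly tracked) gives $\cJ_{\aB}(f,U)\ge 0$, and equality forces $U = -\kappa ff_*\widetilde\nabla\log f$, which upon substitution into the weak form in Definition \ref{def:TGRE}(4) is exactly \eqref{Landau:weak}; conversely an $\cH$-solution produces such a $U$, closing the equivalence.

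The main obstacle is making the chain-rule computation for $\frac{\dd}{\dd t}\cH(f_t)$ rigorous, since $f_t$ is only a weakly continuous curve of probability measures with $f\in L^1_tL^1_{2,2+\gamma_+}\cap L^\infty_t L\log L$ and $\log f$ need not be a legitimate test function — it is unbounded and only barely integrable against $f$. The standard remedy, which I would follow, is a regularisation/truncation scheme: replace $\log f$ by $\log(f+\delta)$ or by a bounded truncation $\chi_R(\log f)$, justify the chain rule and the vanishing of the transport contribution for the regularised functional (using the moment and entropy bounds together with $\int_0^T\cD_{\aL}(f_t)\dd t<\infty$ to control the error terms), and pass $\delta\to 0$, $R\to\infty$ using lower semicontinuity of $\cH$ and of the action $\cA_{\aL}$ (the latter being convex and lower semicontinuous in $U$) together with the finiteness of $\cD_{\aL}$ to dominate the cross terms. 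One must also verify that $\widetilde\nabla\log f$ makes sense as the $L^2(\kappa ff_*\dd\eta)$ object whose square-integral is $2\cD_{\aL}(f)<\infty$, so that the pairing $\int U\cdot\widetilde\nabla\log f$ is well-defined via Cauchy–Schwarz; this is where the hypothesis $\int_0^T\cD_{\aL}(f_t)\dd t<+\infty$ and $\int_0^T\int_{\G}|U_t|\dd\eta\dd t<\infty$ from Definition \ref{def:TGRE} are essential. These analytic details — degeneracy of the weight $\kappa ff_*$ where $f$ vanishes, and the soft-potential singularity in $A$ when $\gamma<0$ — are the technical heart; the variational skeleton itself is a direct Fenchel–Young argument.
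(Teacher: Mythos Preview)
The present paper does not prove Theorem~\ref{thm:FLE:we}; it is quoted from the companion paper \cite{DH25}, so there is no in-paper proof to compare against directly. That said, your outline is the correct and standard strategy for establishing such a variational characterisation, and it is essentially what one expects \cite{DH25} to carry out (in the spirit of \cite{carrillo2024landau,erbar2023gradient}): a Fenchel--Young inequality between the action $\cA_{\aL}$ and the dissipation $\cD_{\aL}$, combined with a chain rule for the Boltzmann entropy along curves in $\TGRE_T$, with the equality case pinning down $U=-\kappa ff_*\widetilde\nabla\log f$ and hence the weak formulation \eqref{Landau:weak}.

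Two minor bookkeeping remarks. First, with the paper's sign convention for the integration-by-parts formula $\int_{\G}G\cdot\widetilde\nabla h\,\dd\eta=-\int_{\Do}(\widetilde\nabla\cdot G)h\,\dd x\dd v$, the chain rule actually yields $\cH(f_T)-\cH(f_0)=+\tfrac12\int_0^T\!\int_{\G}U\cdot\widetilde\nabla\log f\,\dd\eta\dd t$, not the $-\tfrac12$ you wrote; correspondingly the Young inequality needed is $-\int U\cdot\widetilde\nabla\log f\le \cD_{\aL}+\cA_{\aL}$ (no extra $\tfrac12$), since $\cD_{\aL}$ and $\cA_{\aL}$ already carry the factors $\tfrac12$. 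These two slips cancel in your final conclusion, so the argument is unaffected. Second, your identification of the main technical obstacle---the rigorous chain rule for $\cH$ along weakly continuous curves with $\log f$ not an admissible test function---is exactly right, and the regularisation/truncation scheme you sketch is the standard way through; this is indeed the analytical heart of the proof in \cite{DH25}, as the Remark following the theorem indicates when it flags the additional curve-level assumptions needed there.
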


\begin{remark}
In \cite{DH25}, Theorem \ref{thm:FLE:we} was proved only for the kernel $A_0$ of the form \eqref{A-1}, namely $A_0(|v-v_*|)=|v-v_*|^\gamma$ and the curves $f$ satisfying certain $L^p$-bounds. Here, we do not specify the kernel and curve assumptions in the statement of the theorem. On the one hand, such assumptions are unnecessary for showing the grazing limit in the main Theorem \ref{thm:main}.
On the other hand, we expect that the kernel and curve restrictions in \cite{DH25} can be removed in future work. 
\end{remark}

\subsection{Fuzzy Boltzmann equation}\label{sec:FBE}
We recall the \textit{fuzzy}  Boltzmann equation \eqref{FBE}
\begin{equation*}
\left\{
\begin{aligned}
&\d_t f+v\cdot \nabla_x f=Q_{\aB}(f,f)\\
&\QB(f,f)=\int_{\Do\times S^{d-1}}
\big(f'f'_*-ff_*\big)B(v-v_*,\sigma)\kappa(x-x_*)\dd \sigma\dd x_*\dd v_*,
\end{aligned}
\right.
\end{equation*}
and the collision kernel 
\begin{equation}
\label{boltzmann:kernel}
   B(|v-v_*|,\sigma)=A_0(|v-v_*|)b(\theta).
\end{equation}
In this article, we do not distinguish the notations of $B(|v-v_*|,\sigma)$ and $B(|v-v_*|,\theta)$, $b(\theta)$ and $b(\sigma)$. The deviation angle is given by $\theta=\arccos\frac{v-v_*}{|v-v_*|}\cdot\sigma\in [0,\pi/2]$.
We note that one can restrict the deviation angle on $[0,\pi/2]$ by symmetrising the collision angle \begin{align*}
B=\big[ B(|v-v_*|,\sigma)+ B(|v-v_*|,-\sigma)\big]\mathbb{1}_{\theta\in[0,\pi/2]}.    
\end{align*}

We define the Boltzmann gradient corresponding to $\sigma\in S^{d-1}$ by
\begin{equation}
\label{boltzmann-grad}
    \overline\nabla f= f'+f_*'-f-f_*.
\end{equation}
For $G:\G\times S^{d-1}\to \R$, the divergence $\overline\nabla\cdot$ is given by
\begin{align*}
\overline\nabla\cdot G=\int_{\Do\times S^{d-1}} \big(G+G_*-G'-G_*'\big)\dd \sigma\dd x_*\dd v_*   
\end{align*}
via the integration by parts formula
\begin{equation}
\label{Boltzmann-IP}
\int_{\G\times S^{d-1}}\overline\nabla f\cdot G \dd\eta\dd\sigma=-\int_{\Do} f\overline\nabla\cdot G\dd x\dd v.
\end{equation}
In the above, we write
\begin{align*}
    G_*=G(x_*,x,v_*,v),\quad G'=G(x,x_*,v',v_*'),\quad G_*'=G(x_*,x,v',v_*').
\end{align*}
The fuzzy Boltzmann equation \eqref{FBE} can be written as
\begin{align*}
    \d_t f+v\cdot\nabla_x f=\frac14 \overline\nabla\cdot\big(\kappa B \Lambda(f)\overline\nabla \log f\big).
\end{align*}
 In the above, we define $\Lambda(f)=\Lambda(f'f'_*,ff_*)$ and for all $s,t>0$
 \begin{equation}
     \label{def:log-mean}
     \Lambda(s,t):=(s-t)/(\log s-\log t)\quad\text{denotes the logarithmic mean}.
 \end{equation}

Similar to the fuzzy Landau equations, at least on the formal level, we have  the mass, momentum and energy conservation laws \eqref{consv-law}, and the following entropy identity 
\begin{equation}
\label{B:entropy}
\cH(f_T)-\cH(f_0)=-\int_0^T\cD_{\aB}(f_t)\dd t\leq 0,
\end{equation}
where $\cD_{\aB}$ is given by \eqref{DB}.
We define an $\cH$-solution for the fuzzy Boltzmann equations analogues to  Definition \ref{def:introH} with the weak formulation \eqref{Landau:weak} replaced by 
    \begin{equation}
    \label{weak:FBE}
     \begin{aligned}
&\int_{\Do}f_0\varphi(0)\dd x\dd v+\int_{0}^T\int_{\Do}f(\d_t\varphi+v\cdot\nabla_x\varphi)\dd x\dd v\dd t\\
&=-\frac14\int_0^T\int_{\G\times S^{d-1}}\kappa B \Lambda(f) \overline\nabla \varphi\cdot \overline\nabla\log f\dd\eta\dd\sigma\dd t.
\end{aligned}
 \end{equation}

Based on the existence results of the fuzzy Boltzmann in the cut-off hard potential case in \cite{EH25}, we have the following existence results.
\begin{theorem}[Existence of $\cH$-solutions]\label{thm:existence}
Let $f_0\in\cP(\Do)\cap L^1_{2,2+\gamma_+}(\Do)$ such that $\cH(f_0)<+\infty$. There exists at least one $\cH$-solution to the fuzzy Boltzmann equation \eqref{FBE} such that 
\begin{equation*}
\|f_t\|_{L^1_{2,2+\gamma_+}(\Do)}\le C(T, \|f_0\|_{L^1_{2,2+\gamma_+}(\Do)})\quad\text{for all }t\in[0,T],
\end{equation*}
and the following entropy inequality holds
\begin{equation}
\label{B:in-entropy}
\cH(f_T)-\cH(f_0)+\int_0^T\cD_{\aB}(f_t)\dd t\le 0. 
\end{equation}
\end{theorem}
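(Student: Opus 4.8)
\textbf{Proof strategy for Theorem \ref{thm:existence}.}
The plan is to transfer the known existence result for the cut-off, hard-potential fuzzy Boltzmann equation established in \cite{EH25} to the non-cutoff and soft-potential setting of \eqref{FBE} by a double approximation: truncating the angular kernel and, where necessary, truncating or regularising the kinetic kernel $A_0$, then passing to the limit in the truncation parameters while keeping control of mass, momentum, energy and entropy. Concretely, for $\delta>0$ I would introduce the truncated angular kernel $b_\delta(\theta)=b(\theta)\mathbb{1}_{\theta\ge\delta}$ (and, in the pure power-law soft-potential case, replace $A_0(|v-v_*|)=|v-v_*|^\gamma$ by $A_0^\delta=|v-v_*|^\gamma\wedge\delta^{-1}$ or by $\langle v-v_*\rangle^\gamma$-type regularisations so that the resulting $B_\delta$ falls into the class covered by \cite{EH25}). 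For each $\delta$, \cite{EH25} provides an $\cH$-solution $f^\delta$ satisfying the moment bound $\|f_t^\delta\|_{L^1_{2,2+\gamma_+}(\Do)}\le C(T,\|f_0\|_{L^1_{2,2+\gamma_+}})$ uniformly in $\delta$ — crucially, the constant does not degenerate as $\delta\to 0$ because the moment production estimates for the fuzzy Boltzmann operator rely only on $\int_0^{\pi/2}\beta(\theta)\theta^2\,\dd\theta<\infty$ from \eqref{beta}, not on boundedness of $b$ — together with the entropy inequality $\cH(f^\delta_T)-\cH(f^\delta_0)+\int_0^T\cD_{\aB}^\delta(f^\delta_t)\,\dd t\le 0$.

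Next I would extract compactness. The uniform $L^1_{2,2+\gamma_+}$ bound plus the uniform entropy bound give, via Dunford--Pettis, weak-$L^1$ compactness of $\{f^\delta\}$ in $L^1([0,T]\times\Do)$, and weak continuity in time of the limit follows from equi-tightness in $(x,v)$ together with a uniform bound on the time-derivative tested against $C^\infty_c$ functions (read off from the weak formulation \eqref{weak:FBE} and the entropy-dissipation control of the flux). Because the fuzzy equation carries a bounded spatial kernel $\kappa$ rather than a Dirac mass, the collision operator is genuinely a bounded bilinear form on suitable weighted $L^1$ spaces and no velocity-averaging lemma is needed here: the delocalisation in $x$ does the regularising work, which is precisely the advantage of the fuzzy model over the classical one. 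I would pass to the limit $\delta\to 0$ in \eqref{weak:FBE}: the gain term $f'f'_*$ and loss term $ff_*$ are handled by the standard Boltzmann weak-formulation manipulations (pre-post collisional change of variables under $\dd\sigma\,\dd v\,\dd v_*$), using that $\overline\nabla\varphi=\varphi'+\varphi'_*-\varphi-\varphi_*=O(\theta^2\|\varphi\|_{C^2}|v-v_*|^2)$ absorbs the angular singularity against $\beta$, and that $\kappa$ is bounded with the required decay from Assumption \ref{ASS:kappa}.

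For the entropy inequality in the limit I would use lower semicontinuity: $\cH$ is weakly lower semicontinuous on $L^1$, and $\cD_{\aB}$ — which can be written via \eqref{DB} or equivalently as a nonnegative integral of the convex quantity $(f'f'_*-ff_*)(\log f'f'_* - \log ff_*)$ against $\kappa B\,\dd\sigma\,\dd\eta$ — is weakly lower semicontinuous along the approximating sequence (one may invoke a Fatou/convexity argument after passing to the joint law on the collision configuration space, or the De Giorgi--Ioffe lower semicontinuity theorem for integral functionals of $(f,f')$). Since $\cD_{\aB}^\delta\le\cD_{\aB}$ pointwise and $\cD^\delta_{\aB}(f^\delta)\to$ the appropriate limit, one concludes $\cH(f_T)-\cH(f_0)+\int_0^T\cD_{\aB}(f_t)\,\dd t\le\liminf\big(\cH(f^\delta_T)-\cH(f^\delta_0)+\int_0^T\cD^\delta_{\aB}(f^\delta_t)\,\dd t\big)\le 0$, using $\cH(f^\delta_0)=\cH(f_0)$. \textbf{The main obstacle} I anticipate is the lower semicontinuity and finiteness of $\int_0^T\cD_{\aB}(f_t)\,\dd t$ under only weak-$L^1$ convergence: one must make sure the entropy dissipation does not lose mass in the limit (it could, in principle, only drop), and that the weak formulation's right-hand side is well-defined, which requires a quantitative bound on $\int_0^T\int \kappa B\Lambda(f)|\overline\nabla\log f|^2\,\dd\eta\,\dd\sigma\,\dd t$ in terms of the entropy-dissipation — i.e. controlling the flux $\kappa B\Lambda(f)\overline\nabla\log f$ in $L^1$ by Cauchy--Schwarz against $\cD_{\aB}$ and a mass/moment bound, uniformly in $\delta$. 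This is the step where the non-cutoff singularity and the soft potential $\gamma\in[-2,1]$ (with the $d=2$ restriction $\gamma>-2$) interact most delicately with the weight $\langle v\rangle^{2+\gamma_+}$, and it is where one should be most careful in the full proof.
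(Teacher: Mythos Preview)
Your strategy matches the paper's own proof: approximate $B$ by bounded (cut-off) kernels, invoke \cite{EH25} for the approximants, obtain uniform moment and entropy bounds (the paper explicitly uses the Povzner inequality here together with the finite angular momentum $\int\theta^2\beta\,\dd\theta<\infty$), extract weak $L^1$ compactness via Dunford--Pettis, and pass to the limit using lower semicontinuity of $\cH$ and $\cD_{\aB}$ (the paper stresses that the approximating kernels are \emph{monotone increasing} in $n$, which is what makes $\cD_{\aB}(f)\le\liminf_n\cD_{\aB}^n(f^n)$ work cleanly). The control of the weak formulation's right-hand side via Cauchy--Schwarz against $\cD_{\aB}$ is also exactly what the paper does.

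One technical correction: your estimate $\overline\nabla\varphi=O(\theta^2\|\varphi\|_{C^2}|v-v_*|^2)$ is the \emph{homogeneous} bound, and it does not hold pointwise in the fuzzy setting. Because $\varphi$ depends on the independent spatial variables $x,x_*$, the first-order Taylor term $\nabla_v\varphi(x,v)-\nabla_v\varphi(x_*,v_*)$ does not vanish like $|v-v_*|$, and one only has $|\overline\nabla\varphi|\lesssim\theta|v-v_*|$ pointwise (see Lemma~\ref{lem:phi:conv} and Remark~\ref{rmk:diff}). This is precisely why the theorem is stated for $\gamma\in[-2,1]$ and cannot reach the very soft range $\gamma\in(-4,-2)$. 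Fortunately the Cauchy--Schwarz step only needs $|\overline\nabla\varphi|^2\lesssim\theta^2|v-v_*|^2$, which combined with $A_0=|v-v_*|^\gamma$ gives $|v-v_*|^{\gamma+2}$, locally integrable exactly when $\gamma\ge-2$ ($\gamma>-2$ if $d=2$). So your plan still goes through, but with the correct estimate and the correct explanation of the soft-potential restriction.
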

\begin{proof}
We combine the existence results in \cite{Vil98b} and \cite{EH25}, and sketch the argument below.

We approximate the collision kernel $B$ by a sequence of bounded, monotone increasing kernels $B^n\ge0$, and consider the corresponding approximation equation of \eqref{FBE} by replacing $B$ with $B^n$
\begin{equation}
\label{app:FBE}
    \left\{
    \begin{aligned}
        &\d_tf^n+v\cdot\nabla_x f^n=Q_{\aB}^n(f^n,f^n)\\
        &f^n|_{t=0}=f_0.
    \end{aligned}\right.
\end{equation}
The existence of $\cH$-solutions to \eqref{app:FBE} in the Maxwellian potential and bounded angular kernel setting was obtained in \cite[Theorem 3.2]{EH25}. Moreover, the following uniform bounds hold
\begin{equation}
    \label{exits:bound}
\begin{gathered}
    \|f^n_t\|_{L^1_{0,2}(\Do)}\le \|f_0\|_{L^1_{0,2}(\Do)},\quad \|f^n_t\|_{L^1_{2,0}(\Do)}\le C(T)\|f_0\|_{L^1_{2,2}(\Do)}\\
    \cH(f^n_T)-\cH(f_0)+\int_0^T\cD^n_{\aB}(f^n_t)\dd t\le 0,\\
    \text{and}\quad \|f^n_t\|_{L^1_{0,2+\gamma_+}(\Do)}\le C(T, \|f_0\|_{L^1_{0,2}(\Do)})\|f_0\|_{L^1_{0,2+\gamma_+}(\Do)}.
\end{gathered}
\end{equation}
Here, the constants $C\ge0$ are independent of $n$. The last inequality follows from the Povzner inequality \cite[Theorem 4]{Elm83},
\begin{align*}
\overline\nabla |v|^{s}\lesssim \cos\theta(|v|^{s-1}|v_*|+|v||v_*|^{s-1}),\quad s>0,    
\end{align*}
and the assumption \eqref{beta} $\int_0^{\frac{\pi}{2}}\beta(\theta)\theta^2\dd\theta<+\infty$.

The uniform estimates \eqref{exits:bound} also imply a uniform bound $\|f^n\log f^n\|_{L^1(\Do)}\le C$, see, for example \cite{EH25b}. For any $t\in[0,T]$, the sequence $\{f^n_t\}$ is therefore precompact in $L^1(\Do)$ by the Dunford--Pettis theorem. Using a diagonal extraction argument in time, we obtain (up to a subsequence) $f^n\rightharpoonup f$ in $L^1(\Do)$ for all $t\in[0,T]$. Finally, the entropy inequality \eqref{B:in-entropy} holds for the limit since 
$\cH(f)$ and $\cD(f)$ are lower semicontinuous in $f$, and by the monotonicity of $B^n$.

 The weak formulation holds in the soft-potential case $\gamma\in[-2,0)$, following the ideas of $\cH$-solutions:
\begin{align*}
    \Big|\int_{\Do} Q_{\aB}(f,f)\phi\dd x\dd v\Big|\lesssim \Big(\int_{\G\times S^{d-1}} |\overline\nabla \phi|^2|v-v_*|^{\gamma}b(\sigma)\dd \eta\dd\sigma\Big)^{\frac12}\cD_{\aB}(f)^{\frac12},
\end{align*}
which is bounded as a consequence of the polar coordinate representation in Section \ref{sec:SC} and Lemma \ref{lem:phi:conv} that $|\overline\nabla \phi| \lesssim |\phi-\phi'| \lesssim \theta |v-v_*|$. For the same reason as in Remark \ref{rmk:compare}, we cannot treat $f-f_*$ as $|v-v_*|$, as is done in the homogeneous case in \cite{Vil98}. Consequently, this existence result cannot hold for the very soft-potential regime $\gamma\in(-4,-2)$.
\end{proof}

\subsubsection{Dual dissipation potentials}\label{sec:non-quadratic}

In this subsection, we discuss the (formal) non-quadratic GENERIC formulation and the corresponding entropy inequalities for the fuzzy Boltzmann equations \eqref{FBE}.

For all $V$ and $U:\G\to\R$, by Cauchy--Schwarz inequality, we have
\begin{equation*}
    \label{C-S}
\begin{aligned}
    |VU|&\le\Big|V \frac{U}{\Lambda(f)B\kappa}\Big|\Lambda(f)B\kappa\\
    &\le \frac12 |V|^2\Lambda(f)B\kappa+\frac12 \Big|\frac{U}{\Lambda(f)B\kappa}\Big|^2\Lambda(f)B\kappa,
\end{aligned}
\end{equation*}
and the inequality holds if and only if $U=V \Lambda(f)B\kappa$. Let
\begin{equation}
\label{def:U-B}
    U_{\aB}=-\Lambda(f)B\kappa \overline\nabla \log f.
\end{equation}
Applying the above argument to the entropy inequality \eqref{B:in-entropy}, the following quadratic entropy inequality holds
\begin{equation}
    \label{in-entropy:quadratic}
\begin{gathered}
\cH(f_T)-\cH(f_0)+ \frac18\int_0^T\int_{\G\times S^{d-1}}|-\overline\nabla \log f|^2\Lambda(f)B\kappa\dd\eta\dd \sigma\dd t\\
+\frac18\int_0^T\int_{\G\times S^{d-1}}\Big|\frac{U_{\aB}}{\Lambda(f)B\kappa}\Big|^2\Lambda(f)B\kappa\dd\eta\dd \sigma\dd t\le 0.   
\end{gathered}
\end{equation}
The quadratic pair in \eqref{in-entropy:quadratic} can be generalised to non-quadratic pairs. Let $\Psi$ and $\Psi^*:\R\to\R_+$  be a dual pair of functions, and $\Theta:\R_+\times \R_+\to\R_+$ be a symmetric function satisfying Assumption \ref{ass-pair} below.
For $V$ and $U:\G\to\R$, we define the positive dual pair
\begin{align*}
\aR(U)&=\frac14\int_{\G\times S^{d-1}}\Psi\Big(\frac{U}{\Theta(f)B\kappa}\Big)\Theta(f)B\kappa \dd\eta\dd \sigma,\\ 
\aR^*(V)&=\frac14\int_{\G\times S^{d-1}}\Psi^*\big(\overline\nabla V\big)\Theta(f)B\kappa \dd\eta\dd \sigma.
\end{align*}
In the above, we write $\Theta(f)=\Theta(f'f_*',ff_*)$. To clarify the dependence of $\aR$ and $\aR^*$ of $f$, in the following, we also write $\aR(f,U)$ and $\aR^*(f,V)$.

Correspondingly,  we define the dissipation associated with $\Psi^*$
\begin{equation}
\label{def:D-Psi}
\cD_{\Psi^*}(f)\defeq\aR^*(- \log f)=\frac14\int_{\G\times S^{d-1}}\Psi^*(-\overline\nabla \log f)\Theta(f)B\kappa \dd\eta\dd \sigma.
\end{equation}
We replace the quadratic pair in \eqref{in-entropy:quadratic} by non-quadratic pairs. 
\begin{proposition}[Non-quadratic entropy dissipation inequality]
    \label{lem:non-quadratic}
Let $(\Psi^*,\Theta)$ satisfy Assumption \ref{ass-pair}. Let $f$ be an $\cH$-solution to the fuzzy Boltzmann equation \eqref{FBE} such that the entropy inequality \eqref{B:in-entropy} holds. The following non-quadratic entropy dissipation inequality
holds
\begin{equation}
\label{VC:R}
\cH(f_T)-\cH(f_0) +\int_0^T\cD_{\Psi^*}(f_t)\dd t+\int_0^T\aR(f_t,U_{\aB})\dd t \le 0,
\end{equation}
where $U_{\aB}=-B\kappa (f'f_*'-ff_*)$.
\end{proposition}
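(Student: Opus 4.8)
The plan is to derive \eqref{VC:R} from the quadratic entropy inequality \eqref{in-entropy:quadratic} (equivalently, directly from \eqref{B:in-entropy}) by replacing the quadratic dual pair with the general pair $(\aR,\aR^*)$ built from $(\Psi,\Psi^*,\Theta)$. The key algebraic input is the compatibility condition in Assumption \ref{ass-pair}, namely $(\Psi^*)'(\log s-\log t)\,\Theta(s,t)=s-t$ for all $s,t>0$, which is precisely the statement that the choice $U_{\aB}=-B\kappa(f'f'_*-ff_*)$ realises equality in Fenchel's inequality \eqref{Rxv} applied pointwise under the integral. First I would fix the transport term: since $f$ is an $\cH$-solution, the weak formulation \eqref{weak:FBE} identifies $\d_t f - v\cdot\nabla_x f$ (in the distributional sense) with $\tfrac14\overline\nabla\cdot(\kappa B\Lambda(f)\overline\nabla\log f)=\tfrac14\overline\nabla\cdot U_{\aB}$, so that $\aR(f_t,U_{\aB})$ is exactly $\aR(\d_t f-v\cdot\nabla_x f)$ and the left side of \eqref{VC:R} is the functional $\aJ_{\aB}(f)$ of \eqref{J-Rnonquadratic}.

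Next I would carry out the pointwise Fenchel step. For almost every $(t,x,x_*,v,v_*,\sigma)$, set $s=f'f'_*$, $t=ff_*$ (abusing notation), $r=-\overline\nabla\log f=\log(ff_*)-\log(f'f'_*)=-( \log s-\log t)$, and $u=U_{\aB}/(\Theta(f)B\kappa)$. Fenchel's inequality \eqref{Rxv} gives $r\cdot u\le \Psi(u)+\Psi^*(r)$, with equality iff $u=\d\Psi^*(r)=(\Psi^*)'(r)$. By the compatibility condition, $(\Psi^*)'(-(\log s-\log t))\,\Theta(s,t) = -(s-t)$ (using that $\Psi^*$ is even, so $(\Psi^*)'$ is odd), hence $u=(\Psi^*)'(r)$ holds exactly for $U_{\aB}=-(s-t)B\kappa=-B\kappa(f'f'_*-ff_*)$, which is our choice. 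Multiplying the equality $r\cdot u=\Psi(u)+\Psi^*(r)$ through by $\Theta(f)B\kappa$ and integrating $\tfrac14\int\cdots\dd\eta\dd\sigma\dd t$ yields
\[
\int_0^T\!\!\tfrac14\int_{\G\times S^{d-1}}(-\overline\nabla\log f)\cdot U_{\aB}\,\dd\eta\dd\sigma\dd t
= \int_0^T\!\big(\aR(f_t,U_{\aB})+\cD_{\Psi^*}(f_t)\big)\dd t.
\]
It remains to recognize the left-hand side: by the integration-by-parts formula \eqref{Boltzmann-IP} and $U_{\aB}=-B\kappa(f'f'_*-ff_*)$, we have $\tfrac14\int(-\overline\nabla\log f)\cdot U_{\aB}=\tfrac14\int B\kappa(f'f'_*-ff_*)\overline\nabla\log f=\cD_{\aB}(f)$ by \eqref{DB}. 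Substituting into \eqref{B:in-entropy} gives exactly \eqref{VC:R}.

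The main obstacles are integrability and measurability bookkeeping rather than deep structure. One must ensure the pointwise identity can be integrated: the quantity $r\cdot u\,\Theta(f)B\kappa = (-\overline\nabla\log f)\,U_{\aB}$ is integrable because it equals $4\cD_{\aB}(f)$, which is finite for an $\cH$-solution; since $r\cdot u=\Psi(u)+\Psi^*(r)\ge0$ pointwise and both $\Psi,\Psi^*\ge0$, each of $\aR(f_t,U_{\aB})$ and $\cD_{\Psi^*}(f_t)$ is separately finite and the splitting is legitimate with no cancellation issues. A mild point to check is that on the set where $f'f'_*=ff_*$ (so $r=0$, $U_{\aB}=0$) all terms vanish consistently, using $\Psi(0)=\Psi^*(0)=0$ and $\Theta$ finite; and that the degenerate set where $\kappa$ or $B$ vanish contributes nothing. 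I would also remark that the hypothesis is only the entropy inequality \eqref{B:in-entropy}, not the equality, so the final inequality \eqref{VC:R} follows without needing conservation laws or the full GENERIC identity — consistent with the discussion after \eqref{J-Rnonquadratic} that $\aJ_{\aB}\le 0$ is guaranteed by \eqref{H:B} together with the duality \eqref{Rxv}.
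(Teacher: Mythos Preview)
Your proposal is correct and follows essentially the same route as the paper: you use the compatibility condition from Assumption \ref{ass-pair} to show that for the specific choice $U_{\aB}=-B\kappa(f'f'_*-ff_*)$ the Fenchel inequality holds with equality pointwise, which yields $\cD_{\aB}(f)=\aR(f,U_{\aB})+\cD_{\Psi^*}(f)$ and hence \eqref{VC:R} from \eqref{B:in-entropy}. This is exactly the content of the paper's proof, which simply points to the optimality computation spelled out in Remark \ref{rmk:opt}; your write-up just makes the pointwise algebra and the integrability check explicit (the reference to the integration-by-parts formula \eqref{Boltzmann-IP} in your final step is unnecessary, since the identification with $\cD_{\aB}$ is a direct substitution).
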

\begin{proof}
The entropy inequality \eqref{VC:R} holds as a direct consequence of Assumption \ref{ass-pair} and the optimality condition, such that the duality inequality holds as equality $|ab|=\Psi(a)+\Psi^*(b)$, as stated in Remark \ref{rmk:opt} below.    
\end{proof}

The entropy inequality \eqref{VC:R} corresponds to the (generalised) GENERIC formulation of the fuzzy Boltzmann equation \eqref{FBE} with the building block $\{\aL,\aaE,\aS,\d\aR^*\}$ in the sense of \cite{mielke2011formulation} that
\begin{equation}
\label{GENERIC-boltzmann}
 \d_t f=\aL\dd\aaE+\d \aR^*(f,\dd\aS),  
\end{equation}
where $\aL,\,\aaE,\,\aS$ are given as in \eqref{ES} and \eqref{ML}, and the irreversible part is given by 
\begin{align*}
    \partial_V\aR^*(f,V)\big\vert_{V=\dd\aS}.
\end{align*}
The GENERIC formulation of the fuzzy Boltzmann equations \eqref{FBE} with kernels of the form \eqref{A-3} in the angular cut-off case was rigorously derived in \cite{EH25} via a variational characterisation. From \eqref{intro:eq:J-R}, it follows that the solution curves of \eqref{GENERIC-boltzmann} satisfy the variational inequality
\begin{equation}
    \begin{aligned}
    \cH(f_T)-\cH(f_0) +\int_0^T\aR^*(f,\dd\aS)\dd t+\int_0^T\aR(f,\d_t f-\aL\dd\aaE)\dd t \le 0.    
    \end{aligned}
\end{equation}
To characterise the dual dissipation potential $\aR(f,\d_t f-\aL\dd\aaE)=\aR(f,\d_t f+v\cdot\nabla_xf)$, we define the transport collision rate equation as follows
\begin{equation}
\label{intro:TCRE}
    \d_t f+v\cdot \nabla_x f+\frac14 \overline \nabla \cdot U_{\aB}=0,
\end{equation}
where $U_{\aB}:[0,T]\times \G\times S^{d-1}\to \R$ denotes the collision rate. Notice that, when $U_{\aB}=-B\kappa (f'f_*'-ff_*)$ equation \eqref{intro:TCRE} recovers the fuzzy Boltzmann equation \eqref{FBE}. The set of admissible pairs $(f,U)\in\TCRE_T$ is defined analogously to Definition \ref{def:TGRE}
as those satisfying the transport collision rate equation \eqref{intro:TCRE} in the weak sense.

\begin{theorem}
    [\cite{EH25}]\label{thm:VC:Boltzmann}
    Let $B(|v-v_*|)\sim\langle v-v_*\rangle^\gamma$ and $\gamma\in(-\infty,1]$. Let $(f,U)\in \TCRE_T$ such that $\int(\langle x\rangle^2+\langle v\rangle^{2+\gamma_+})f<+\infty$ and $\cH(f_0)<+\infty$. 
Then we have
\begin{equation}
\label{V-Boltzmann}
\cJ_{\aB}(f,U):=\cH(f_T)-\cH(f_0) +\int_0^T\cD_{\Psi^*}(f_t)\dd t+\int_0^T\aR(f_t,U_t)\dd t \ge 0,
\end{equation}
Moreover, we have $\cJ_{\aB}(f,U)=0$ if and only if $f$ is an $\cH$-solution for the fuzzy Boltzmann equation \eqref{FBE}.
\end{theorem}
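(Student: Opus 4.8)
The plan is to recognise that \eqref{V-Boltzmann} is Fenchel's inequality \eqref{Rxv} integrated pointwise against the measure $\tfrac14\Theta(f_t)B\kappa\,\dd\eta\dd\sigma\dd t$, the bridge being an exact entropy balance along curves in $\TCRE_T$. Concretely, I would first establish the chain rule
\begin{equation*}
\cH(f_T)-\cH(f_0)=\frac14\int_0^T\!\!\int_{\G\times S^{d-1}}U_t\cdot\overline\nabla\log f_t\;\dd\eta\dd\sigma\dd t
\end{equation*}
for every $(f,U)\in\TCRE_T$ with $\cH(f_0)<+\infty$. Formally this comes from testing the transport collision rate equation \eqref{intro:TCRE} with $\log f_t+1$: the transport term contributes $\int_{\Do}v\cdot\nabla_x(f\log f)\,\dd x\dd v=0$ by periodicity on $\Dot$ or decay on $\Do$, and, because $\overline\nabla$ annihilates constants, the collision term becomes $\tfrac14\int U_t\cdot\overline\nabla\log f_t$ after the integration-by-parts formula \eqref{Boltzmann-IP}.

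Granting the chain rule, $\cJ_{\aB}(f,U)\ge0$ is immediate. One may assume $\int_0^T(\cD_{\Psi^*}(f_t)+\aR(f_t,U_t))\,\dd t<+\infty$, so that $U_t\ll\Theta(f_t)B\kappa\,\dd\eta\dd\sigma$ and $U_t$ may be treated as a density. Setting $a=U_t/(\Theta(f_t)B\kappa)$ and $b=-\overline\nabla\log f_t$ one has $U_t\cdot\overline\nabla\log f_t=-ab\,\Theta(f_t)B\kappa$, hence, using $\cD_{\Psi^*}=\aR^*(-\log f)$ and the definition of $\aR$,
\begin{equation*}
\cJ_{\aB}(f,U)=\frac14\int_0^T\!\!\int_{\G\times S^{d-1}}\big(\Psi(a)+\Psi^*(b)-ab\big)\,\Theta(f_t)B\kappa\;\dd\eta\dd\sigma\dd t\ \ge\ 0
\end{equation*}
by Fenchel's inequality \eqref{Rxv}.

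For the equality case, $\cJ_{\aB}(f,U)=0$ forces $a=\d\Psi^*(b)$, i.e.\ $U_t=\Theta(f_t)B\kappa\,(\Psi^*)'(-\overline\nabla\log f_t)$ a.e. Since $-\overline\nabla\log f=\log(ff_*)-\log(f'f'_*)$ and, $\Psi^*$ being even, $(\Psi^*)'$ is odd, the compatibility condition of Assumption \ref{ass-pair}, $(\Psi^*)'(\log s-\log t)\Theta(s,t)=s-t$, applied with $s=f'f'_*$ and $t=ff_*$, gives $(\Psi^*)'(-\overline\nabla\log f)\,\Theta(f'f'_*,ff_*)=ff_*-f'f'_*$, whence $U_t=B\kappa(ff_*-f'f'_*)=U_{\aB}$. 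Substituting $U_{\aB}$ into \eqref{intro:TCRE} and undoing the logarithmic-mean identity $B\kappa\Lambda(f)\overline\nabla\log f=B\kappa(f'f'_*-ff_*)$ recovers exactly the weak formulation \eqref{weak:FBE}, and at the optimal $U_{\aB}$ one has $\cD_{\Psi^*}(f)+\aR(f,U_{\aB})=-\tfrac14\int U_{\aB}\cdot\overline\nabla\log f=\cD_{\aB}(f)$, so $\int_0^T\cD_{\aB}<+\infty$ and $f$ is an $\cH$-solution. Conversely, an $\cH$-solution gives $(f,U_{\aB})\in\TCRE_T$, and the chain rule with the same optimality identity yields $\cJ_{\aB}(f,U_{\aB})=\cH(f_T)-\cH(f_0)+\int_0^T\cD_{\aB}\,\dd t=0$.

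The main obstacle is the rigorous chain rule of the first step: a priori $f$ is only a weakly continuous, $L\log L$-bounded curve, $t\mapsto\cH(f_t)$ need not be absolutely continuous, and $\log f_t$ is not an admissible test function. I would establish it by mollifying $f$ in $(x,v)$ (and, if needed, in $t$), testing the mollified equation against a truncated renormalisation of $r\mapsto r\log r$, controlling the commutators between the mollifier and both the transport and the (cut-off) collision operator, and passing to the double limit using the moment bound $\int(\langle x\rangle^2+\langle v\rangle^{2+\gamma_+})f<+\infty$, Jensen's inequality ($\cH(f^\delta)\le\cH(f)$) and lower semicontinuity of $\cH$, $\cD_{\Psi^*}$, $\aR$. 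The delicate feature is that this must be an identity, not merely an inequality, since that is precisely what the ``$\cH$-solution $\Rightarrow\cJ_{\aB}=0$'' direction needs; boundedness of $b$ keeps the collision commutator under control in the cut-off setting of \cite{EH25}, while the non-cutoff and very-soft cases would require in addition the polar-coordinate estimates of Section \ref{sec:pre}.
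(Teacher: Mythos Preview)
The paper does not actually prove Theorem~\ref{thm:VC:Boltzmann}: it is stated as a citation of \cite{EH25}, and the sentence immediately following it explains that the proof of \cite{EH25} covers only the cut-off kernels $B\sim\langle v-v_*\rangle^\gamma$ and does not extend to the non-cutoff or soft-potential kernels treated elsewhere in the present paper. So there is no in-paper proof to compare against.

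That said, your outline is precisely the standard route for such variational characterisations (cf.\ the discussion in Section~\ref{subsec:GENERIC} and the parallel Landau result, Theorem~\ref{thm:FLE:we}, proved in \cite{DH25}): establish the entropy chain rule along $\TCRE_T$ curves, then read off $\cJ_{\aB}\ge0$ from Fenchel duality, and characterise the equality case via the compatibility condition \eqref{con-cpt}. Your identification of the rigorous chain rule as the crux is correct, and the mollification/renormalisation strategy you sketch is what \cite{EH25} implements in the cut-off hard-potential setting; the moment hypothesis $\int(\langle x\rangle^2+\langle v\rangle^{2+\gamma_+})f<+\infty$ is there precisely to close those commutator estimates. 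One small sign to watch: with the paper's convention $U_{\aB}=-B\kappa(f'f'_*-ff_*)$ (see \eqref{def:U-B} and Remark~\ref{rmk:opt}), the equality case gives $U_t=-B\kappa(f'f'_*-ff_*)$ rather than $B\kappa(ff_*-f'f'_*)$, which is the same thing but worth tracking consistently through \eqref{intro:TCRE}.
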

The proof of Theorem \ref{thm:VC:Boltzmann} does not directly apply to the kernels of the form $B(|v-v_*|,\theta)=|v-v_*|^\gamma b(\theta)$ and non-cutoff $b(\theta)$, due to technical difficulties. Nevertheless, we expect that a variational characterisation of the fuzzy Boltzmann equations can also be established for other types of kernels.

\medskip

In the following, we state the assumptions on $\Psi^*$ and $\Theta$.
\begin{assumption}\label{ass-pair}
\begin{enumerate}[(1)]
    \item  Let $\Psi^*\in C^\infty(\R;\R_+)$ be a convex, superlinear and even function such that
 $\Psi^*(0)=0$ and $(\Psi^*)''(0)=1$.
There exist $C_1,C_2>0$ such that  $\Psi^*(r)\le C_1\exp(C_2 |r|)$.
 

 \item Let $\Theta\in C(\R_+\times\R_+;\R_+)$ be a joint concave function satisfying:
 \begin{itemize}
     \item symmetry $\Theta(s,t)=\Theta(t,s)$ for all $s,t\in\R_+$;
     \item positivity and normalisation $\Theta(s,t)>0$ if  $s\neq t$ and $\Theta(1,1)=1$;
     \item positive $1$-homogeneity $\Theta(\lambda s,\lambda t)=\lambda \Theta(s,t)$ for all $s,t\in\R_+$ and $\lambda\ge 0$;
     \item monotonicity $\Theta(r,s)\le \Theta(r,t)$ for all $r\in\R_+$ and $0\le s\le t$;
     \item behaviour at $0$, $\Theta(0,t)=0$ for all $t\in\R_+$;
 \end{itemize}
 \item Let $(\Psi^*,\Theta)$ satisfy the compatibility conditions:
 \begin{itemize}
     \item For all $s,t>0$, we have
     \begin{equation}
         \label{con-cpt}
         (\Psi^*)'(\log s-\log t)\Theta(s,t)=s-t;
     \end{equation}
     \item The function
     \begin{align*}
         G_{\Psi^*}(s,t)\defeq\frac14 \Psi^*(\log s-\log t)\Theta(s,t)\quad \forall s,t>0
     \end{align*}
    is convex and lower semi-continuous, and $ G_{\Psi^*}(s,t)=0$ if and only if $s=t$. 
 \end{itemize}
\end{enumerate}
Let $\Psi$ be the convex conjugate of $\Psi^*$, and $\Psi$ is strictly convex, strictly increasing, superlinear, even and $\Psi(0)=0$. As a direct consequence of convexity, we have
\begin{align*}
    r\mapsto \frac{\Psi(r)}{r}\quad\text{is non-decreasing}.
\end{align*}
\end{assumption}


\begin{remark}[The dissipation $\cD_{\Psi^*}(f)$]
\label{rmk:opt}

\begin{itemize}
\item We recall the definition \eqref{def:D-Psi} that
\begin{align*}
\cD_{\Psi^*}(f)=\frac14\int_{\G\times S^{d-1}}\Psi^*(-\overline\nabla \log f)\Theta(f)B\kappa \dd\eta\dd \sigma.
\end{align*}
The convexity of $\Psi^*$ and $\Psi^*(0)=0$ imply that, for all $s>t>0$,
\begin{align*}
0\le \Psi^*(\log s-\log t)\le (\Psi^*)'(\log s-\log t)(\log s-\log t).
\end{align*}
Combining with the compatibility condition  \eqref{con-cpt}, we have 
\begin{align*}
\Psi^*(\log s-\log t)\Theta(s,t)\le (s-t)(\log s-\log t).
\end{align*}
Hence, we have 
\begin{equation}
\label{bdd:D-Psi}
\cD_{\Psi^*}(f)\le \frac14\int_{\G\times S^{d-1}}|\overline\nabla \log f|^2\Lambda(f)B\kappa \dd\eta\dd \sigma=\cD_{\aB}(f).
\end{equation}
\item 
    By using the convex duality, we have
\begin{align*}
    \frac14|(\log s-\log t) U|&=\frac14\Big|(\log s-\log t) \frac{U}{\Theta(s,t)}\Big|\Theta(s,t)\\
    &\le \frac14\Psi\Big(\frac{U}{\Theta(s,t)}\Big)\Theta(s,t)+\frac14\Psi^*(\log s-\log t)\Theta(s,t).
\end{align*}
As a consequence of the compatibility condition \eqref{con-cpt}, the equality holds if and only if $U=(\Psi^*)'(\log s-\log t)\Theta(s,t)=s-t$.
As a direct consequence, we have
\begin{align*}
\cD_{\aB}(f)&=\frac14\int_{\G\times S^{d-1}}|-U_{\aB}\overline\nabla \log f|\dd\eta\dd \sigma\\
&=\frac14\int_{\G\times S^{d-1}}\Big[\Psi\Big(\frac{U_{\aB}}{\Theta(f)B\kappa}\Big)+\Psi^*\big(-\overline\nabla\log f\big)\Big]\Theta(f)B\kappa \dd\eta\dd \sigma\\
&=\aR(f,U_{\aB})+\aR^*(f,-\log f),
\end{align*}
where the optimal $U_{\aB}$ is given by \eqref{def:U-B}
\begin{equation*}
    U_{\aB}=(\Psi^*)'(-\overline\nabla \log f)\Theta(f)B \kappa=-(ff_*-f'f_*')B\kappa.
\end{equation*}
Hence, the entropy inequality \eqref{VC:R} holds.

\end{itemize}

\end{remark}

\begin{example}
\label{example}
    We have the following examples of the typical choice of $\Psi, \Psi^*$ and $\Theta$:
\begin{itemize}
    \item (Quadratic pair). We have
    \begin{equation}
    \label{p-1}
    \Psi(r)=\Psi^*(r)=\frac12 |r|^2\quad\text{and}\quad \Theta(s,t)=\Lambda(s,t).
    \end{equation}
    In this case, we have 
    \begin{align*}
\cD_{\Psi^*}(f)=\frac12\cD_{\aB}(f),\quad \aR(f,U)=\frac18\int_{\G\times S^{d-1}}\frac{|U|^2}{\Lambda(f)B\kappa}\dd\eta\dd \sigma.
\end{align*}

In this case, the entropy inequality \eqref{VC:R} is coincident with \eqref{in-entropy:quadratic}.

    \item (Cosh pair). We have 
    \begin{equation}
    \label{p-2}
    \begin{aligned}
    &\Psi^*(r)=4(\cosh(r/2)-1),\quad\Theta(s,t)=\sqrt{st},\\
&\Psi(r)=2r\log\big(\frac{r+\sqrt{r^2+4}}{2}\big)-\sqrt{r^2+4}+4.
    \end{aligned}
    \end{equation}
    In this case, we have 
    \begin{equation}
    \label{D:cosh}
\cD_{\Psi^*}(f)=\frac12\int_{\G\times S^{d-1}}|\sqrt{f'f_*'}-\sqrt{ff_*}|^2B\kappa\dd\eta\dd \sigma\le \frac12 \cD_{\aB}(f),
\end{equation}
where we use the elementary inequality
\begin{equation}
\label{ele-ineq}
 |\sqrt{s}-\sqrt{t}|^2\le \frac14 (s-t)(\log s-\log t),\quad s,t>0. 
\end{equation}
\end{itemize}
For many jump processes, the $\cosh$-pair arises naturally from the study of large deviation principle for the associated empirical measure where the variational functional $\cJ_{\aB}(f,U)$  is (rescaling) of the rate functional, see for instance \cite{mielke2014relation,mielke2017non,peletier2022jump, peletier2023cosh,kraaij2020fluctuation,duong2023non} and \cite{leonard1995large,rezakhanlou1998large,bouchet2020boltzmann,BBBO21, bodineau2023statistical,feliachi2024dynamical,feliachi2021dynamical,bodineau2023statistical} for the classical Boltzmann equation. In particular, in \cite{feliachi2021dynamical} the authors formally derive the grazing limit of the Boltzmann equation to the Landau equation by passing to the limit in the corresponding rate functionals. 

We refer the reader to \cite{peletier2023cosh} for further properties of the cosh-pair.
\end{example}



In the following, we show an upper bound of $\Theta(s,t)$ that is used in Section \ref{sec:action}
\begin{lemma}\label{lem:mean}
For all $\Psi^*$ and  $\Theta$ satisfying Assumption \ref{ass-pair}, there exists a constant $C_{\Psi^*}>0$ such that 
\begin{equation}
\label{mean}
 \Theta(s,t)\le C_{\Psi *}(s+t)\quad\forall s,\,t>0.  
\end{equation}
\end{lemma}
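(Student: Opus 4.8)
\textbf{Proof plan for Lemma \ref{lem:mean}.}

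The plan is to exploit the three structural properties of $\Theta$ that are most directly relevant here: positive $1$-homogeneity, joint concavity, and continuity. By $1$-homogeneity it suffices to bound $\Theta$ on the normalised slice where $s+t=1$, i.e.\ to show $\Theta(s,1-s)\le C_{\Psi^*}$ for all $s\in[0,1]$; then for general $s,t>0$ we write $\Theta(s,t)=(s+t)\,\Theta\big(\tfrac{s}{s+t},\tfrac{t}{s+t}\big)\le C_{\Psi^*}(s+t)$. So the whole lemma reduces to a uniform bound of $\Theta$ on the compact segment $\{(s,1-s):s\in[0,1]\}$.

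First I would handle the endpoints. The hypothesis $\Theta(0,t)=0$ for all $t$, together with symmetry, gives $\Theta(0,1)=\Theta(1,0)=0$, so $\Theta$ vanishes at the two endpoints of the segment. In the interior, $\Theta$ is finite and continuous by Assumption \ref{ass-pair}(2), hence continuous on the whole closed segment after assigning the boundary values $0$; a continuous function on a compact set attains a finite maximum, which furnishes the constant $C_{\Psi^*}$. In fact one does not even need the endpoint analysis to be delicate: joint concavity of $\Theta$ on $\R_+\times\R_+$ implies $\Theta$ is continuous on the (relative) interior of its domain, and its restriction to the line segment $\{s+t=1\}$ is a concave function of $s\in(0,1)$ which is bounded above on any compact subinterval; combined with the finite (zero) boundary values one gets a finite supremum over $[0,1]$. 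Either route works; the concavity route has the mild advantage of not separately invoking continuity up to the boundary.

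Concretely, the cleanest write-up is: set $C_{\Psi^*}:=\sup_{s\in[0,1]}\Theta(s,1-s)$, which is finite because $\Theta(\cdot,1-\cdot)$ is concave on $[0,1]$ (restriction of a concave function to a line) with $\Theta(0,1)=\Theta(1,0)=0$ and is locally bounded on $(0,1)$, so a concave function bounded near the endpoints by its boundary values and locally bounded in between is bounded on all of $[0,1]$; moreover $C_{\Psi^*}\ge\Theta(1/2,1/2)=\Theta(1,1)/2=1/2>0$ by normalisation and $1$-homogeneity, so the constant is strictly positive as claimed. Then for arbitrary $s,t>0$, applying $1$-homogeneity with $\lambda=(s+t)^{-1}$ yields $\Theta(s,t)=(s+t)\Theta\!\big(\tfrac{s}{s+t},\tfrac{t}{s+t}\big)\le C_{\Psi^*}(s+t)$, which is \eqref{mean}.

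The only point requiring any care — and I expect it to be the main (minor) obstacle — is justifying that $\Theta$ does not blow up as one approaches the endpoints of the segment from the interior. This is genuinely automatic from concavity: a concave function on $[0,1]$ cannot exceed the chord through any two interior points extended outward, and since it is real-valued (not $+\infty$) on all of $[0,1]$ including the endpoints where it equals $0$, it is in fact bounded above by the maximum of its values at $0$, at $1$, and at $1/2$ is an overstatement — rather, concavity gives $\Theta(s,1-s)\ge$ the chord, and for the upper bound one uses that a finite concave function on a compact interval is upper semicontinuous and attains its supremum, or simply that it is continuous on $(0,1)$ and bounded near $0,1$ by the concavity/finiteness. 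I would state this as a one-line consequence of concavity plus finiteness rather than belabour it.
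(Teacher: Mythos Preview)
Your proposal is correct and takes a genuinely different route from the paper. You reduce by $1$-homogeneity to bounding $s\mapsto\Theta(s,1-s)$ on the compact segment $[0,1]$, then use that $\Theta\in C(\R_+\times\R_+;\R_+)$ (Assumption~\ref{ass-pair}(2), with $\R_+=[0,\infty)$ since $\Theta(0,t)$ is explicitly postulated) so the restriction is continuous on a compact set and hence bounded; positivity of the constant comes from $\Theta(\tfrac12,\tfrac12)=\tfrac12$. This uses only the intrinsic structure of $\Theta$ and never touches $\Psi^*$ or the compatibility condition. By contrast, the paper's proof goes through the compatibility relation $\Theta(s,t)=(s-t)/(\Psi^*)'(\log s-\log t)$ and reduces \eqref{mean} to the scalar inequality $(\Psi^*)'(r)\ge C^{-1}\tanh(r/2)$ for $r\ge0$, which it verifies near $0$ via $(\Psi^*)''(0)=1$ and for large $r$ via monotonicity of $(\Psi^*)'$ and boundedness of $\tanh$. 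Your argument is shorter and more robust (it would apply to any continuous $1$-homogeneous $\Theta$, irrespective of any pairing with a $\Psi^*$); the paper's argument, on the other hand, yields a constant expressed directly in terms of $\Psi^*$ and makes transparent which analytic features of $\Psi^*$ are responsible for the bound.

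One expository comment: your concavity-only discussion for the upper bound is a bit tangled (concavity gives \emph{lower} bounds by chords, not upper bounds), but you do not actually need it---the continuity hypothesis on $\Theta$ over the closed quadrant already delivers the compactness argument cleanly, as you note. I would drop the concavity digression in the write-up and simply invoke continuity on the compact segment.
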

\begin{proof}
In the case of $s=t$, the  positivity and normalisation of $\Theta$ implies that $\Theta(s,t)=\frac{s+t}{2}$. Without loss of generality, we assume $s>t>0$. By Compatibility condition \eqref{con-cpt}, to show \eqref{mean}, we only need to show that
\begin{equation*}
    (\Psi^*)'(r)\ge C_{\Psi^*}^{-1}\tanh(r/2)\quad\forall r\ge 0.
\end{equation*}
By assumption $(\Psi^*)''(0)=1$, there exists $\bar r>0$ such that
\begin{align*}
    (\Psi^*)'(r)\ge r/2\ge \tanh(r/2)\quad\forall r\in[0,\bar r]. 
\end{align*}
 Since $\Psi^*$ is an even function, $(\Psi^*)'$ is an odd function, which implies that $(\Psi^*)'(0)=0$. Together with the convexity of $\Psi^*$ we deduce that $(\Psi^*)'(r)\ge 0$ and monotonically increasing. combining with $0\le \tanh(r)\le 1$, we have 
\begin{align*}
    (\Psi^*)'(r)\ge (\Psi^*)'(\bar r)\tanh(r/2)\quad\forall r\in(\bar r,\+\infty). 
\end{align*}
Hence, \eqref{mean} holds by taking  $C_{\Psi^*}=\min(1/2,(\Psi^*)'(\bar r))$.

\end{proof}

\begin{remark}[$\Theta$ is determined by $\Psi^*$]
{In Assumption \ref{ass-pair} we list both the assumptions on the $\Psi^*$ and $\Theta$ for highlighting the required conditions; however, most of the assumptions of $\Theta$ actually follow from the compatibility condition \eqref{con-cpt} and assumptions on $\Psi^*$. }

In fact, from \eqref{con-cpt}, $\Theta$ can be found from $\Psi^*$ by
\[
\Theta(s,t)=\frac{s-t}{(\Psi^*)'(\log(s/t))}.
\]
From this formula, we can deduce
\begin{itemize}
    \item The symmetry of $\Theta$ is ensured since $\Psi^*$ is even and $(\Psi^*)'$ is odd
    \begin{align*}
     \Theta(t,s)=\frac{t-s}{(\Psi^*)'(\log(t/s))}=-\frac{s-t}{(\Psi^*)'(-\log(s/t))}= \Theta(s,t).  
    \end{align*}

\item  The positivity of $\Theta(s,t)$ is ensured since $\Psi^*$ is convex and even. Indeed, for all $s>t>0$, we have
\begin{align*}
(\Psi^*)'(\log(s/t))>(\Psi^*)'(0)=0,\quad\text{thus},\quad   \Theta(s,t)>0. 
\end{align*}

In addition, by letting $r=\log s-\log t$, $\Theta(s,t)$ can be written as
\begin{equation}
\label{Theta:e-x}
\Theta(s,t)=\frac{t(s/t-1)}{(\Psi^*)'(\log(s/t))}=t\frac{(e^r-1)}{(\Psi^*)'(r)}. 
\end{equation}
By assumption $(\Psi^*)''(0)=1$, as $s\rightarrow t$, we have $x\rightarrow 0$ and 
\begin{equation*}
\label{Theta:t-t}
\Theta(t,t)=\lim_{s\rightarrow t}\Theta(s,t)=t \lim_{x\rightarrow 0}\frac{e^x-1}{(\Psi^*)'(x)}=t \lim_{x\rightarrow 0}\frac{e^x}{(\Psi^*)''(x)}=t.    
\end{equation*}
In particular, we have $\Theta(1,1)=1$.

\item The homogeneity of $\Theta$ is given by 
\begin{align*}
\Theta(\lambda s,\lambda t)=\frac{\lambda  s-\lambda  t}{(\Psi^*)'(\log(\lambda  s/\lambda t))}=\frac{\lambda(s- t)}{(\Psi^*)'(\log(s/t))}=\lambda \Theta(s,t).    
\end{align*}

\item The vanishing of $\Theta(s,t)$ at $0$ is ensured by the super-linear property of $\Psi^*$ that $\lim_{r\rightarrow+\infty}(\Psi^*)'(r)=+\infty$. Indeed, we have 
\begin{align*}
\Theta(s,0)=\lim_{t\rightarrow 0}\Theta(s,t)=\frac{s}{\lim_{t\rightarrow 0}(\Psi^*)'(\log(s/t))}=0.    
\end{align*}

By using the representation \eqref{Theta:e-x}, the monotonicity of $\Theta(s,t)$ is ensured if, in addition, we assume that
\begin{align*}
    \frac{e^{r}-1}{(\Psi^*)'(r)}\quad\text{non-decreasing for all $r>0$}.
\end{align*}



\end{itemize}

\end{remark}



\subsection{Main results}\label{sec:main-thm}

We recall the Boltzmann collision kernel $B$ and the angle function $\beta$
\begin{equation*}
   B=A_0(|v-v_*|)b(\theta)\quad\text{and}\quad\beta(\theta)\defeq\sin\theta ^{d-2}b(\theta).
\end{equation*}
We consider $\beta$ satisfies the following assumption.
\begin{assumption}[Angular kernels]\label{ass:ang}
Let $\beta:[0,\pi/2]\to\R_+$. 
    \begin{itemize}
    \item For all $\delta>0$, we have 
    \begin{equation*}
        \sup_{\theta\in[\delta,\pi / 2]}\beta(\theta)<+\infty\quad \text{and}\quad \supp(\beta)\subset [0,\pi / 2].
    \end{equation*}
    \item There exists $\nu\in(0,2)$ and $C_0>0$ such that
    \begin{equation}
    \label{nu}
        C_0\theta^{-1-\nu}\le\beta(\theta)\quad \text{for all}\quad \theta\in[0,\pi/2].
\end{equation}
\item The angular momentum is finite
\begin{equation*}
\int_0^{\pi/2}\theta^2\beta(\theta)\dd \theta=8(d-1)/|S^{d-2}|,
\end{equation*}
where the constant on the right-hand side is chosen to normalise $\beta$.

For $d=2$, we take $|S^0|=2$. 
\end{itemize}
\end{assumption}

For $\varepsilon\in(0,1)$, we take the scaling 
\begin{equation}
\label{def:beta-epsilon}
\beta^\varepsilon(\theta)={\pi^3}/{\varepsilon^3}\beta\Big(\frac{\pi\theta}{\varepsilon}\Big).
\end{equation}
Notice that $\beta^\varepsilon(\theta)$ satisfies
\begin{equation}
\label{def:beta-int}
\supp (\beta^\varepsilon)\subset[0,\varepsilon/2]\quad\text{and}\quad  \int_0^{\pi/2}\theta^2\beta^\varepsilon(\theta)\dd \theta=8(d-1)/|S^{d-2}|.   
\end{equation}

We consider the kernels $B^\varepsilon$ and $\kappa^\varepsilon$ satisfying the following assumptions.
\begin{assumption}[Collision and spatial kernels]\label{ASS:kappa}
Let $\varepsilon\in(0,1)$. 
\begin{itemize}
    \item Let
\begin{equation}
\label{def:B}
   B^\varepsilon=A_0(|v-v_*|)b^\varepsilon(\theta)\quad\text{and}\quad\beta^\varepsilon(\theta)=\sin\theta ^{d-2}b^\varepsilon(\theta),
\end{equation}
where $\beta^\varepsilon$ is defined as in \eqref{def:beta-epsilon} and $\beta$ satisfy Assumption \ref{ass:ang}. In hard and Maxwellian potential cases, $A_0$ is given by 
\begin{equation}
    \label{A-hard}
    \begin{gathered}
    A_0(|v-v_*|)=|v-v_*|^{\gamma}\text{ with }\gamma\in[0,1]\quad\text{or}\\
    A_0(|v-v_*|)\sim \langle v-v_*\rangle^{\gamma}\text{ with }\gamma\in(-\infty,1].    \end{gathered}
\end{equation}
In the case of $ A_0\sim \langle v-v_*\rangle^{\gamma}$, $A_0$ also satisfying the cross-section assumption \eqref{cross-bd} below.
In soft potential cases, $A_0$ is given by 
\begin{equation}
    \label{A-soft}
     A_0(|v-v_*|)=|v-v_*|^{\gamma}\text{ with }\gamma\in[-2,0).
\end{equation}
When $d=2$, we take $\gamma\in(-2,0)$.

\item 
Let the spatial kernels 
\begin{equation*}
    \label{kappa}
    0\le \kappa^\varepsilon\le C_\kappa\quad\text{and}\quad \kappa^\varepsilon\to \kappa\quad\text{pointwisely}
\end{equation*}
for some constant $C_\kappa>0$ which is independent of $\varepsilon$.
Moreover, for any $B^R_x\subset \R^d$, we have 
\begin{equation}
    \label{kappa-2}
    0<C_0\le \kappa^\varepsilon,
\end{equation}
where $C_0=C_0(R)$ is independent of $\varepsilon$.
\end{itemize}
\end{assumption}
In principle, we regard 
$\kappa$ as a fixed kernel while allowing small perturbations.
For example, we may consider a sequence $\kappa^\varepsilon \sim \exp(-C\langle x\rangle)$ or $\kappa^\varepsilon \sim \langle x\rangle^{-\alpha}$ for some $\alpha>0$. In particular, the restriction \eqref{kappa-2} prevents $\kappa^\varepsilon$ from converging to a Dirac measure.

For $\varepsilon\in (0,1)$, we define the scaling fuzzy Boltzmann equation
\begin{equation}
\label{def:scaling}
\left\{
\begin{aligned}
&\d_t f^\varepsilon+v\cdot\nabla_x f^\varepsilon=\QB^\varepsilon(f^\varepsilon,f^\varepsilon),\\
&f^\varepsilon|_{t=0}=f^\varepsilon_0,
\end{aligned}
\right.
\end{equation}
where $\QB^\varepsilon$ is defined as
\begin{align*}
\QB^\varepsilon(f,f)=\int_{\Do\times S^{d-1}}\kappa^\varepsilon     B^\varepsilon\big(f'f_*'-ff_*\big)\dd x_*\dd v_*\dd\sigma.
\end{align*}
The corresponding entropy dissipation of \eqref{def:scaling} is given by 
    \begin{equation*}
\cD_{\aB}^\varepsilon(f^\varepsilon)=\frac14\int_{\G\times S^{d-1}}\kappa^\varepsilon B^\varepsilon \Lambda(f^\varepsilon)|\overline\nabla \log f^\varepsilon|^2\dd\eta\dd \sigma.
\end{equation*}

We consider the initial value $f^\varepsilon_0$ of \eqref{def:scaling} satisfies the following assumptions.
\begin{assumption}[Initial value assumption]
\label{ass:curve}
Let $\varepsilon\in(0,1)$. Let $\{f^\varepsilon_0\}\subset\cP(\Do)$ be a sequence of probability measures converging to $f_0\in \cP(\Do)$ in the weak*-topology, i.e., in the sense of duality with continuous functions vanishing at infinity, such that
\begin{equation}
\label{H-0}
\lim_{\varepsilon\to0}\cH(f^\varepsilon_0)= \cH(f_0)<+\infty.
\end{equation}
We assume the following  uniform bound
\begin{equation}
\label{moments}
\sup_{\varepsilon\in(0,1)}\|f^\varepsilon_0\|_{L^1_{2,2+\gamma_+}(\Do)}\le C
\end{equation}
for some constant $C>0$ independent of $\varepsilon$.
\end{assumption}

\begin{remark}[Uniform bounds on solution curves]\label{rmk:ass}
Let $(f^\varepsilon_t)\subset\cP(\Do) $ be $\cH$-solutions of \eqref{def:scaling} with initial values $f^\varepsilon_0$ satisfying Assumption \ref{ass:curve}, as provided by Theorem \ref{thm:existence}.
We recall the following entropy inequality
\begin{equation}
    \label{in-entropy-uni}
    \cH(f^\varepsilon_t)-\cH(f^\varepsilon_0)+\int_0^t\DB^\varepsilon(f^\varepsilon_s)\dd s\le0\quad\forall t\in[0,T].
\end{equation}
By Proposition \ref{lem:non-quadratic}, the entropy inequality \eqref{in-entropy-uni} implies that 
\begin{equation*}
\label{VC:R-uni}
\cH(f^\varepsilon_T)-\cH(f^\varepsilon_0) +\int_0^T\cD_{\Psi^*}^\varepsilon(f^\varepsilon_t)\dd t+\int_0^T\aR^\varepsilon(f^\varepsilon_t,U^\varepsilon_{\aB})\dd t \le 0,
\end{equation*}
where $U^\varepsilon_{\aB}=-\overline\nabla \log f^\varepsilon \Lambda(f^\varepsilon) B^\varepsilon\kappa^\varepsilon$, and $\cD_{\Psi^*}^\varepsilon$ and $\aR^\varepsilon$ are given by
\begin{gather*}
\aR^\varepsilon(f^\varepsilon, U)=\frac14\int_{\G\times S^{d-1}}\Psi\Big(\frac{U}{\Theta(f^\varepsilon)B^\varepsilon\kappa^\varepsilon}\Big)\Theta(f^\varepsilon)B^\varepsilon\kappa^\varepsilon \dd\eta\dd \sigma,\\ 
\cD_{\Psi^*}^\varepsilon(f^\varepsilon)=\frac14\int_{\G\times S^{d-1}}\Psi^*(-\overline\nabla \log f^\varepsilon)\Theta(f^\varepsilon)B^\varepsilon\kappa^\varepsilon \dd\eta\dd \sigma.
\end{gather*}  

As a consequence of Theorem \ref{thm:existence} and Assumption \ref{ass:curve}, the following uniform bounds hold
    \begin{equation}
    \label{uni-bdd:D}  
    \begin{gathered}
\sup_{\varepsilon\in(0,1)}\sup_{t\in[0,T]}\|f^\varepsilon_t\|_{L^1_{2,2+\gamma_+}(\Do)}\le C,\\
\sup_{\varepsilon\in(0,1)}\sup_{t\in[0,T]}\big|\cH(f^\varepsilon_t)\big|\le C\quad\text{and}\quad \sup_{\varepsilon\in(0,1)}\int_0^T\DB^\varepsilon(f^\varepsilon_t)\dd t\le C.     \end{gathered}
    \end{equation}
    The uniform bound on the entropy dissipation follows from \eqref{in-entropy-uni} together with the uniform lower bound $\cH(f^\varepsilon_T)\ge -C$, which in turn is ensured by the uniform bound of $\|f^\varepsilon\|_{L^1_{2,2}(\Do)}$, see, for example \cite{JKO98}.

\end{remark}

\medskip

{We show the following main theorem by showing the variational formulation associated with a non-quadratic dual dissipation pair for the fuzzy Boltzmann equations converges to a variational formulation of the fuzzy Landau equation corresponding to a quadratic dissipation pair.}
\begin{theorem}
\label{thm:main}
Let $\varepsilon\in(0,1)$. Let $B^\varepsilon$ and $\kappa^\varepsilon$ satisfy Assumption \ref{ASS:kappa}.
Let $ f^\varepsilon$ be $\cH$-solutions of the scaling fuzzy Boltzmann equation \eqref{def:scaling} with initial values satisfying  Assumption \ref{ass:curve}.
Then, up to a subsequence, for any $t\in[0,T]$, we have
\begin{equation}
\label{intro-conv-p.w.}
    \sqrt {f^\varepsilon} \to \sqrt{f}\quad\text{in}\quad L^2_\loc(\Do).
\end{equation}
 Moreover, $f$ is an $\cH$-solution to the fuzzy Landau equation \eqref{FLE} with initial value $f_0$.
\end{theorem}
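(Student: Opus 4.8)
The plan is to pass to the limit $\eps\to0$ in the scaled fuzzy Boltzmann variational inequality \eqref{J-B} and recover the fuzzy Landau variational inequality \eqref{J-L}, then invoke Theorem \ref{thm:FLE:we} to conclude that the limit curve $f$ is an $\cH$-solution of \eqref{FLE}. The argument splits into three blocks: (i) \emph{compactness}, establishing \eqref{intro-conv-p.w.} and enough weak convergence to identify a limiting transport grazing rate pair $(f,U)\in\TGRE_T$; (ii) \emph{lower semicontinuity of the entropy-dissipation term}, i.e.\ $\liminf_{\eps\to0}\int_0^T\cD^\eps_{\Psi^*}(f^\eps)\dd t\ge\frac12\int_0^T\cD_{\aL}(f_t)\dd t$; and (iii) \emph{lower semicontinuity of the action term}, i.e.\ $\liminf_{\eps\to0}\int_0^T\aR^\eps(f^\eps_t,U^\eps_{\aB})\dd t\ge\frac12\int_0^T\cA_{\aL}(f_t,U_t)\dd t$. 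Together with $\liminf(\cH(f^\eps_T)-\cH(f^\eps_0))\ge\cH(f_T)-\cH(f_0)$ (using \eqref{H-0}, the lower semicontinuity of $\cH$, and weak lower-bound control on $\cH(f^\eps_T)$ from the uniform moment bounds in \eqref{uni-bdd:D}) this yields $\cJ_{\aB}(f,U)\le 0$, and then Theorem \ref{thm:FLE:we} forces $\cJ_{\aB}(f,U)=0$, hence the conclusion.

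For the compactness block I would use the uniform bounds \eqref{uni-bdd:D}: the uniform entropy bound gives equi-integrability of $\{f^\eps_t\}$ in $L^1(\Do)$ (Dunford--Pettis), and the uniform moment bounds give tightness, so after a diagonal extraction in $t$ one has $f^\eps_t\rightharpoonup f_t$ weakly in $L^1$ for all $t\in[0,T]$ with $f_t\in\cP(\Do)$ and $f$ weakly continuous in time. To upgrade this to the strong local convergence \eqref{intro-conv-p.w.} of $\sqrt{f^\eps}$, the key is a velocity-averaging argument: the equation $\d_t f^\eps + v\cdot\grad_x f^\eps = Q^\eps_{\aB}(f^\eps,f^\eps)$ has a right-hand side controlled in a suitable negative-order space via the entropy dissipation bound (using the polar-coordinate representation of Section \ref{sec:pre} and the estimate $|\overline\nabla\varphi|\lesssim\theta|v-v_*|$ together with Cauchy--Schwarz against $\cD^\eps_{\aB}$), so averaging in $v$ produces strong $L^1_\loc$ compactness of velocity moments of $f^\eps$; combined with the entropy bound this upgrades to strong convergence and then to \eqref{intro-conv-p.w.}. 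I would also extract a weak-$*$ limit $U^\eps_{\aB}\dd\sigma\rightharpoonup U$ as measures, using the finiteness of $\int\aR^\eps(f^\eps,U^\eps_{\aB})$ and the superlinearity of $\Psi$ to get the $L^1$ bound $\int_0^T\int|U^\eps_{\aB}|\dd\eta\dd\sigma\dd t\le C$ (via de la Vall\'ee Poussin), and verify $(f,U)\in\TGRE_T$ by testing \eqref{def:scaling} against $\varphi\in C^\infty_c(\Do)$, Taylor-expanding $\overline\nabla\varphi$ to second order and using the normalisation $\int\theta^2\beta^\eps\,\dd\theta=\text{const}$ so that the transport collision rate converges to the transport grazing rate.

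For the dissipation and action blocks the strategy is the one from \cite{carrillo2022boltzmann} adapted to the fuzzy (spatially nonlocal) setting: rewrite $\cD^\eps_{\Psi^*}$ and $\aR^\eps$ in polar coordinates centred on the collision, use the pointwise limits of Section \ref{sec:pre} (grazing asymptotics of $\overline\nabla\log f$ toward $\widetilde\nabla\log f$ and of $\Theta(f^\eps)B^\eps\kappa^\eps$ toward the Landau weight), and apply a convex-duality / Ioffe-type lower-semicontinuity lemma. Concretely, for the action term one writes, for any smooth compactly supported test vector field $w$, $\aR^\eps(f^\eps,U^\eps_{\aB})\ge \int\langle w, U^\eps_{\aB}\rangle - \int\Psi^*(\ldots)\Theta(f^\eps)B^\eps\kappa^\eps$ by Fenchel \eqref{Rxv}; the linear term passes to the limit by the weak-$*$ convergence of $U^\eps_{\aB}$, and the $\Psi^*$ term is handled using $(\Psi^*)''(0)=1$ so that in the grazing regime $\Psi^*$ linearises to its quadratic part, $\Theta(f^\eps)\to$ (via $\Theta(s,s)=s$) the correct quadratic Landau weight, and $\Theta(s,t)\le C_{\Psi^*}(s+t)$ from Lemma \ref{lem:mean} supplies the needed domination; optimising over $w$ recovers $\frac12\int\cA_{\aL}(f,U)$. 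The dissipation term is analogous with $\overline\nabla\log f^\eps$ in place of the rate and uses the bound $\cD^\eps_{\Psi^*}\le\cD^\eps_{\aB}$ from \eqref{bdd:D-Psi} together with the quadratic linearisation to get $\frac12\cD_{\aL}$ in the limit.

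The main obstacle I expect is the two lower-semicontinuity passages in the grazing regime, specifically controlling the interplay between the angular singularity (the measure $\beta^\eps(\theta)\dd\theta$ concentrating at $\theta=0$ while $|\overline\nabla\log f^\eps|$ and $|U^\eps_{\aB}|$ both degenerate like $\theta$), the nonlinear weight $\Theta(f^\eps)$, and the fact that we only have \emph{weak} $L^1$ convergence of $f^\eps$ before the velocity-averaging upgrade. The fuzzy (nonlocal in $x$) structure actually helps here because $\kappa^\eps$ is bounded above and below on compacts \eqref{kappa-2} and does not concentrate, so the $x$-integration is harmless; the real work is the localisation-in-velocity plus the careful Taylor expansion that converts the Boltzmann gradient $\overline\nabla$ into the Landau gradient $\widetilde\nabla$ uniformly enough to commute with the convex lower-semicontinuity functionals, which is exactly where the estimates of Section \ref{sec:pre}, Section \ref{sec:dissipation} and Section \ref{sec:action} are needed.
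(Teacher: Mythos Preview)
Your high-level scaffolding matches the paper's: pass the scaled Boltzmann variational inequality \eqref{start:Boltzmann} to the Landau one \eqref{goal:landau} by compactness plus separate liminf inequalities for entropy, dissipation and action, then invoke Theorem~\ref{thm:FLE:we}. However, two of your ingredients are not correct as stated. For the dissipation block you invoke the bound $\cD^\eps_{\Psi^*}\le\cD^\eps_{\aB}$ from \eqref{bdd:D-Psi}; that inequality goes the wrong way for a liminf lower bound, and the ``quadratic linearisation'' $\Psi^*(r)\approx r^2/2$ cannot be used on $\overline\nabla\log f^\eps$ because we have no uniform smallness or equi-integrability of this quantity (the paper discusses exactly this obstruction in Remark~\ref{rmk:non-qua}(iii)). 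The paper's fix is to impose Assumption~\ref{ass:Psi}, which yields the \emph{lower} bound $\cD_{\Psi^*}\ge\cD_{\cosh}$ (Remark~\ref{lem:psi:low}), and then to prove $\liminf_\eps\cD^\eps_{\cosh}(f^\eps)\ge\frac12\cD_{\aL}(f)$ by an affine (Legendre) representation with test functions $\Phi\in\mathbf{DS}^\infty_c$; your sketch does not contain this step.

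For the action block, your claim that $\int_0^T\!\int|U^\eps_{\aB}|\,\dd\eta\dd\sigma\dd t\le C$ via de la Vall\'ee Poussin from finiteness of $\aR^\eps$ is not justified: the reference measure $\Theta(f^\eps)B^\eps\kappa^\eps\,\dd\eta\dd\sigma$ has infinite total mass in the non-cutoff setting (only $\int\theta^2\beta^\eps(\theta)\dd\theta$ is finite, not $\int\beta^\eps$), so superlinearity of $\Psi$ does not give an $L^1$ bound on $U^\eps_{\aB}$ itself. The paper instead proves weak $L^1$ compactness of the \emph{weighted} rates $U^\eps_\delta=|v-v_*|(\langle v\rangle+\langle v_*\rangle)^\delta\theta\,U^\eps_{\aB}$ (Lemma~\ref{cpt:U}), defines $U^\delta_{\aL}$ from their weak limit by \eqref{def:UL}, verifies $(f,U^\delta_{\aL})\in\TGRE_T$ by weak--strong convergence in \eqref{sec-5:goal-1}, and then minimises over divergence-free corrections (Lemmas~\ref{def:bbL}--\ref{lem:decompo}) before running the duality argument. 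Finally, for compactness your velocity-averaging sketch gives only compactness of $\sqrt{f^\eps}*_v\rho^\delta$ for fixed $\delta$; the upgrade to \eqref{intro-conv-p.w.} requires the uniform $L^2_xH^{\nu/2}_v$ bound on $\sqrt{f^\eps_R}$ coming from the angular \emph{singularity} $\beta(\theta)\gtrsim\theta^{-1-\nu}$ (Lemma~\ref{lemma-2} and \eqref{bdd:sqrt-f}), which your proposal does not mention.
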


\noindent\textbf{Strategy of the proof:}
We use the variational characterisation in Theorem \ref{thm:FLE:we} to verify that the grazing limit of the fuzzy Boltzmann equation is indeed the fuzzy Landau equation \eqref{FLE}.

Let $(\Psi^*,\Theta)$ by a pair satisfying Assumption \ref{ass-pair}. In addition, for technical reasons (see Remark \ref{rmk:non-qua} below), we take the pair $(\Psi^*,\Theta)$ satisfying also Assumption \ref{ass:Psi}. By Lemma \ref{lem:non-quadratic},  the following entropy inequality for the fuzzy Boltzmann equations holds 
\begin{equation}
\label{start:Boltzmann}
\cH(f^\varepsilon_T)-\cH(f^\varepsilon_0) +\int_0^T\cD_{\Psi^*}^\varepsilon(f^\varepsilon_t)\dd t+\int_0^T\aR^\varepsilon(f^\varepsilon_t,U^\varepsilon_{\aB})\dd t \le 0\quad \forall \varepsilon\in(0,1).
\end{equation}

In Section \ref{sec:conv}, we show the convergence \eqref{intro-conv-p.w.} that up to a subsequence,  $\sqrt{f^\varepsilon}\to \sqrt{f}$ in $L^2_\loc(\Do)$. Our goal is to search for appropriate $U_{\aL}$ such that $(f,U_{\aL})\in \TGRE_T$ and the variational inequality \eqref{V-Landau} holds as (in)equality
\begin{equation}
\label{goal:landau}
\cH(f_T)-\cH(f_0) +\frac12\int_0^T\cD_{\aL}(f_t)\dd t+\frac12\int_0^T\cA_{\aL}(f_t,\cU_{\aL})\dd t \le 0.
\end{equation}
Then Theorem \ref{thm:FLE:we} ensures that $f$ is an $\cH$-solution to the fuzzy Landau equation \eqref{FLE}.

The variational inequality \eqref{goal:landau} holds, since
\begin{itemize}
    \item the assumption \eqref{H-0} and the lower semi-continuity of the Boltzmann entropy ensure that 
    \begin{equation*}
\lim_{\varepsilon\to0}\cH(f^\varepsilon_0)=\cH(f_0)\quad\text{and}\quad \cH(f)\le \liminf_{\varepsilon\to0}  \cH(f^\varepsilon).  
     \end{equation*}

     \item in Section \ref{sec:dissipation}, we show that  
      \begin{equation}
      \label{sec5}
\frac12\cD_{\aL}(f)\le \liminf_{\varepsilon\to0}  \cD^\varepsilon_{\Psi^*}(f^\varepsilon).
    \end{equation} 
    \item in Section \ref{sec:action}, we show the existence of $U_{\aL}$ such that $(f,U_{\aL})\in \TGRE_T$
    and
    \begin{equation}
    \label{sec6}
\frac12\cA_{\aL}(U)\le \liminf_{\varepsilon\to0}  \aR^\varepsilon(f^\varepsilon,U^\varepsilon_{\aB}).
    \end{equation} 
\end{itemize}
In the following remarks, we provide further discussion on the main result.
{\begin{remark}
[Compare with homogeneous case]\label{rmk:compare}
Compared with the homogeneous case, due to the independent spatial variables $x$ and $x_*$, we cannot treat $f-f_*$ as vanishing like $|v-v_*|$ to handle the kinetic singularity of the collision kernels in the (very) soft potential regime. See also Remark \ref{rmk:diff} below. Technically, we always treat $f$ and $f_*$ separately.  See also Remark \ref{rmk:gradient} below.
\end{remark}}
\begin{remark}
\label{rmk:non-qua}
\begin{enumerate}[(i)]
    \item \textit{(A non-quadratic form converges to a quadratic form).}
The entropy dissipation $\cD_{\aL}$ and curve action $\cA_{\aL}$ in the Landau variational inequality \eqref{goal:landau} are given by 
\begin{align*}
\cD_{\aL}(f)= \frac12\int_{\G}\kappa ff_*|\widetilde \nabla\log f|^2\dd\eta\quad\text{and}\quad  \cA_{\aL}(f,U)=\frac12\int_{\G}\frac{|U|^2}{\kappa ff_*}\dd\eta,
\end{align*}
which can be seen as induced by a quadratic pair \eqref{p-1} with
\begin{align*}
\Psi(r)=\Psi^*(r)=\frac{r^2}{2}.    
\end{align*} 
In comparison, the entropy dissipation $\cD_{\Psi^*}$ and curve action $\aR$ in the Boltzmann variational inequality \eqref {start:Boltzmann} are given by 
\begin{gather*}
\cD_{\Psi^*}(f)=\frac14\int_{\G\times S^{d-1}}\Psi^*(-\overline\nabla \log f)\Theta(f)B\kappa \dd\eta\dd \sigma,\\
 \aR(f,U)=\frac14\int_{\G\times S^{d-1}}\Psi\Big(\frac{U}{\Theta(f)B\kappa}\Big)\Theta(f)B\kappa \dd\eta\dd \sigma   
\end{gather*}
where $\Psi^*$ satisfying the Assumption \ref{ass-pair} is not necessarily of a quadratic form. In particular, one can take the $\cosh$-pair as \eqref{p-2} that
 \begin{align*}
    &\Psi^*(r)=4(\cosh(r/2)-1)\quad \text{and}\quad\Theta(s,t)=\sqrt{st}.
    \end{align*}

In other words, a fuzzy Boltzmann equation governed by a non-quadratic pair converges to a fuzzy Landau equation characterised by a quadratic pair in a small angle limit. This can be explained intuitively in terms of the entropy dissipation, and the argument for the cure action follows analogously by a duality argument (affine representation as in Remark \ref{rmk:affine}). The compatibility condition \eqref{con-cpt} of $(\Psi^*,\Theta)$ ensures that 
\begin{align*}
\Psi^*(-\overline\nabla \log f)\Theta(f)=\frac{\Psi^*(-\overline\nabla \log f)}{(\Psi^*)'(-\overline\nabla \log f)}\big( f'f'_*- f f_*\big). 
\end{align*}
On the one hand, as $\varepsilon\to0$, we have 
\begin{align*}
    (f^\varepsilon)'(f^\varepsilon)'_*,\,  f^\varepsilon f^\varepsilon_*\to ff_*\quad\text{and}\quad -\overline\nabla \log f^\varepsilon\to0.  
\end{align*}
On the other hand, the Assumption \ref{ass-pair} that $\Psi^*(0)=(\Psi^*)'(0)=0$ and $(\Psi^*)''(0)=1$ imply that
\begin{equation}
\label{Psi-0}
\frac{\Psi^*(r)}{(\Psi^*)'(r)}=\frac{r}{2}+o(r^2).    
\end{equation}
Hence, at least on the formal level, we have 
\begin{equation*}
\begin{aligned}
\Psi^*(-\overline\nabla \log f^\varepsilon)\Theta(f^\varepsilon)&\sim -\frac{\overline\nabla\log f^\varepsilon}{2}\big( (f^\varepsilon)'(f^\varepsilon)'_*- f^\varepsilon f^\varepsilon_*\big)\\
&=-\frac12 |\overline\nabla\log f^\varepsilon|^2\Lambda(f^\varepsilon).
\end{aligned}
\end{equation*}
where $\Lambda(f)$ denotes the logarithm mean as in \eqref{def:log-mean}.
Notice that the right-hand side is exactly given by the quadratic pair \eqref{p-1} that
\begin{align*}
    \Psi^*(r)=\frac{r^2}{2}\quad\text{and}\quad \Theta(s,t)=\Lambda(s,t).
\end{align*}
In contrast, we  refer the reader to \cite{liero2017microscopic,peletier2023cosh} for examples where a quadratic gradient flow system converges to a non-quadratic one. 
\item \textit{(Homogeneous cases).}
\textcite{carrillo2022boltzmann} showed Theorem \ref{thm:main} for homogeneous equations by taking the  quadratic form that $\Psi^*(r)=\frac{r^2}{2}$ in the variational inequality \eqref{start:Boltzmann}. In the homogeneous case, one can also consider a non-quadratic pair (for instance, the $\cosh$-pair) in \eqref{start:Boltzmann}, following the argument in the fuzzy case analogously.

\item \textit{(A technical difficulty).}
Concerning the dissipation inequality \eqref{sec5}, if we want to directly treat all the non-quadratic pairs as if they were a quadratic pair and use the homogeneous strategy described in \eqref{homo:D}, i.e., use \eqref{Psi-0} ($\Psi^*(r)=\frac{r^2}{2}+o(r^2)$),
we need the uniform boundedness of $|\overline\nabla\log f^\varepsilon|$, or at least the uniform vanishing of $\cD_{\Psi^*}(f^\varepsilon)$ on the set $S_n:=\{|\overline\nabla\log f^\varepsilon|\ge n\}$ as $n\to+\infty$. 
The vanishing of $|S_n|$ is ensured by the convergence of $f^\varepsilon$ and the cancellation Lemma \ref{app-lem:cancel}. However, we do not have the equi-integrability of the integrand of $\cD_{\Psi^*}(f^\varepsilon)$. 

For the homogeneous case, \textcite{carrillo2022boltzmann} proved \eqref{sec5} for a quadratic pair by using the elementary inequality $(\log s-\log t)(s-t)\ge 4|\sqrt s-\sqrt t|^2$ for all $s,t>0$, and by showing that
\begin{equation}
\label{homo:D}
\cD_{\aL}(f)\le \liminf_{\varepsilon\to0}  \int\big|\sqrt{(f^\varepsilon)'(f^\varepsilon)'_*}-\sqrt{f^\varepsilon f^\varepsilon_*}\big|^2B^\varepsilon.
\end{equation} 
Notice that the right-hand side of \eqref{homo:D} is exactly the $\cosh$-dissipation \eqref{D:cosh}
 \begin{equation*}
\cD_{\cosh}(f)=\frac12\int_{\G\times S^{d-1}}|\sqrt{f'f_*'}-\sqrt{ff_*}|^2B\kappa\dd\eta\dd \sigma.
\end{equation*}
In Section \ref{sec:dissipation}, we follow \cite{carrillo2022boltzmann} to prove that
\begin{equation*}
\frac12\cD_{\aL}(f)\le \liminf_{\varepsilon\to0}  \cD^\varepsilon_{\cosh}(f^\varepsilon).
    \end{equation*} 
Notice that \eqref{sec5} automatically holds for all $\Psi^*$ such that
\begin{equation}
\label{want:cosh}
 \cD_{\Psi^*}(f) \ge \cD_{\cosh}(f), 
\end{equation}
for which we make Assumption \ref{ass:Psi} below.

The same type of technical difficulty does not appear when showing the curve action inequality \eqref{sec6}. Roughly speaking, we use a duality argument (affine representation as in Remark \ref{rmk:affine}), and we only need to control $\Psi^*(\overline\nabla\phi)$, where $\phi \in C^\infty_c(\Do)$ is a test function. The boundedness of $|\overline\nabla\phi|$ allows us to treat $\Psi^*(\overline\nabla\phi)\sim \frac{|\overline\nabla\phi|^2}{2}$.
More details can be found in Section \ref{sec:action}.

\end{enumerate}
\end{remark}


For technical reasons stated in Remark \ref{rmk:non-qua}, we consider $(\Psi^*,\Theta)$ satisfying the following assumption such that \eqref{want:cosh} holds, which will be used in Section \ref{sec:dissipation}.
\begin{assumption}
\label{ass:Psi}
 Let $\Psi^*$ satisfy the Assumption \ref{ass-pair} and for all $r>0$
\begin{equation}
\label{coth}
(\log \Psi^*)'(r)\le \frac{\coth(r/4)}{2}.
\end{equation}
\end{assumption}
\begin{remark}
\label{lem:psi:low}
    \begin{itemize}
        \item The Assumption \ref{ass:Psi} holds for the quadratic and $\cosh$-paris in Example \ref{example}. In the quadratic case, we have, for all $r>0$,
\begin{align*}
    &\frac12\coth(r/4)-(\log \Psi^*)'(r)
    =\frac{2}{r}\Big(\frac{e^{r/2}+1}{2}\frac{{r/2}-0}{e^{r/2}-1}-1\Big)\ge0,
    \end{align*}
    where we use the inequality $\Lambda(s,t)\le \frac{s+t}{2}$ with $s=e^{r/2}$ and $t=1$.
In the $\cosh$-case, \eqref{coth} holds with equality.

        \item The assumption \eqref{coth} implies the dissipation inequality \eqref{want:cosh}. More precisely, we have, for all $s>t>0$,
\begin{equation}
\label{pair-lbd}
 2|\sqrt s -\sqrt t|^2\le \Psi^*(\log s -\log t)\Theta(s,t).  
\end{equation}
Indeed, we take $r=\log s-\log  t$ in \eqref{coth} to derive
\begin{equation*}
\label{goal}
\Psi^*(\log s-\log t)\ge 2(\Psi^*)'(\log s-\log t)\frac{\sqrt s-\sqrt t}{\sqrt s+\sqrt t}.
\end{equation*}
Then the inequality \eqref{pair-lbd} holds, since, by compatibility condition \eqref{con-cpt}, we have 
\begin{align*}
    &|\sqrt s-\sqrt t|^2=(s-t)\frac{\sqrt s-\sqrt t}{\sqrt s+\sqrt t}
    =(\Psi^*)'(\log s-\log t)\Theta(s,t)\frac{\sqrt s-\sqrt t}{\sqrt s+\sqrt t}.
\end{align*}
    \end{itemize}
\end{remark}

\section{Preliminaries}\label{sec:pre}
The spherical representation of the sphere $S^{d-1}$
 is presented in Section \ref{sec:SC}, while the pointwise limits and bounds in the grazing limit are discussed in Section \ref{sec:form-GL}.

\medskip

Throughout this work, $C>0$ denotes a universal constant, which may vary from line to line. 

We state the dominated convergence theorem here, which will be frequently used.
\begin{theorem}[Dominated convergence theorem]
\label{DCT}
If a pointwise convergence sequence $f_n(x)\to f(x)$, for a.e. $x\in X$ is dominated by an integrable sequence $\{g_n\}$ in the sense that $|f_n(x)|\le |g_n(x)|$, for all $x\in X$ and $n\ge0$. If $g_n\to g $ and $\lim_{n\to\infty}\int_X g_n=\int_X g$, then $\lim_{n\to\infty}\int_X f_n=\int_X f$.
\end{theorem}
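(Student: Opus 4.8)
The statement is a mild generalisation of the usual dominated convergence theorem in which the fixed dominating function $g$ is replaced by a convergent sequence $\{g_n\}$ whose integrals converge to $\int_X g$. The plan is to reduce it to the classical Fatou lemma applied twice, to the sequences $g_n+f_n \ge 0$ and $g_n-f_n \ge 0$, exactly as in the standard proof, with the role of $\int g$ now played by $\lim_n \int g_n = \int g$.

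\medskip
\noindent\textbf{Step 1: nonnegativity and applicability of Fatou.}
Since $|f_n| \le |g_n|$ pointwise and $g_n \to g$ with $g_n$ integrable, each $g_n \ge |f_n| \ge 0$ is not assumed; rather $|f_n(x)| \le |g_n(x)|$, so I first replace $g_n$ by $|g_n|$ (this does not change $\int_X g_n$ if one also knows $g_n \ge 0$; in the intended application $g_n \ge 0$, and I will assume $g_n \ge 0$, hence $g \ge 0$, matching the hypothesis that $g_n \to g$ and $\int g_n \to \int g$ both make sense). Then $g_n + f_n \ge 0$ and $g_n - f_n \ge 0$ for all $n$, both are integrable, and $f_n, f, g$ are all integrable (in particular $|f| \le g$ a.e.\ by passing to the limit in $|f_n| \le g_n$, using $f_n \to f$ and $g_n \to g$ a.e.).

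\medskip
\noindent\textbf{Step 2: apply Fatou to $g_n+f_n$.}
By Fatou's lemma,
\begin{equation*}
\int_X \liminf_{n\to\infty}(g_n+f_n)\,\dd\mu \le \liminf_{n\to\infty}\int_X (g_n+f_n)\,\dd\mu.
\end{equation*}
The left-hand side equals $\int_X (g+f)\,\dd\mu$ because $g_n+f_n \to g+f$ pointwise a.e. The right-hand side equals $\int_X g\,\dd\mu + \liminf_{n\to\infty}\int_X f_n\,\dd\mu$, using the hypothesis $\lim_n \int_X g_n = \int_X g$ (a convergent sequence may be separated off a liminf). Cancelling the finite quantity $\int_X g\,\dd\mu$ gives $\int_X f\,\dd\mu \le \liminf_{n\to\infty}\int_X f_n\,\dd\mu$.

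\medskip
\noindent\textbf{Step 3: apply Fatou to $g_n-f_n$ and conclude.}
Identically, Fatou applied to $g_n - f_n \ge 0$ yields $\int_X (g-f)\,\dd\mu \le \liminf_{n\to\infty}\int_X(g_n-f_n)\,\dd\mu = \int_X g\,\dd\mu - \limsup_{n\to\infty}\int_X f_n\,\dd\mu$, hence $\limsup_{n\to\infty}\int_X f_n\,\dd\mu \le \int_X f\,\dd\mu$. Combining with Step 2, $\limsup_n \int_X f_n \le \int_X f \le \liminf_n \int_X f_n$, so the limit exists and equals $\int_X f\,\dd\mu$. The only point requiring care — and the ``main obstacle'' if one wants full generality — is the sign/integrability bookkeeping in Step 1: one must ensure $g_n \ge 0$ (or pass to $|g_n|$ and check $\int |g_n| \to \int |g|$, which is automatic once $g_n \to g$ in $L^1$, and $L^1$-convergence is what $g_n \to g$ a.e.\ plus $\int g_n \to \int g$ plus $g_n\ge0$ delivers via Scheffé's lemma), so that the additions and subtractions above are between integrable nonnegative functions and no $\infty-\infty$ arises.
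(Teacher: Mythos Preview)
Your proof is correct and is the standard argument for this generalised dominated convergence theorem (sometimes attributed to Pratt or Young): two applications of Fatou's lemma to $g_n\pm f_n\ge 0$, using the hypothesis $\int g_n\to\int g$ to split off the convergent part. The bookkeeping remark in Step~1 is appropriate; in all applications in the paper the dominating sequences are nonnegative, so no issue arises.

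Note, however, that the paper does not actually prove this statement: Theorem~\ref{DCT} is stated in the preliminaries section as a known result to be used throughout, with no proof given. So there is no ``paper's own proof'' to compare against---your argument simply supplies the (standard) justification that the authors took for granted.
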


\subsection{Spherical coordinate}\label{sec:SC}
For any fixed $v,v_*\in\R^d$, we  define the unit vector 
\begin{align*}
     k=\frac{v-v_*}{|v-v_*|}\in S^{d-1}.
\end{align*}
The deviation angle is given by $\theta=\arccos k\cdot\sigma$.
For $d \ge 2$ and a given $k\in S^{d-1}$, we define the sphere 
\begin{align*}
     S^{d-2}_{k^\perp}\defeq\{p\in S^{d-1}\mid p\cdot k=0\}.
\end{align*}
For $d=2$ and $k=(k_1,k_2)\in S^1$, the sphere $S^0_{k^\perp}$ is defined as
\begin{align*}
     S^0_{k^\perp}=\big\{p\mid p=(k_2,-k_1),(-k_2,k_1)\big\},
 \end{align*}
 and the integration is given by 
\begin{align*}
\int_{S^0_{k^\perp}}f\dd p=f(k_2,-k_1)+f(-k_2,k_1).
\end{align*}
Notice that $\sigma\in S^{d-1}$ can be  parametrised by $(\theta,p)\in [0,\pi/2] \times S^{d-2}_{k^\perp}$ that 
\begin{align*}
     \sigma =k \cos\theta\ + p\sin\theta\quad\text{and} \quad p=\Pi_{k^\perp} \sigma.
\end{align*}
Concerning the Boltzmann collision kernel $B=A_0(|v-v_*|)b(\sigma)$, $b(\sigma)$ can be written in spherical coordinate as
\begin{align*}
     \int_{S^{d-1}}b(\sigma)\dd\sigma = \int_0^{\pi/2}\int_{S^{d-2}_{k^\perp}}b(\theta)\sin^{d-2}\theta \dd p \dd\theta.
\end{align*}
In this article, we do not distinguish $b(\sigma)$ and $b(\theta)$.

We refer the reader to the presentation and figures in \cite[Section 3.2]{carrillo2022boltzmann}, where the spherical coordinates in three dimensions are discussed in detail.

By changing of variables, the weak Boltzmann collision term can be written in spherical coordinates as
\begin{equation}
    \label{wqb-polar}
\begin{aligned}
    &\int_{\Do} Q^\varepsilon_{\aB}(f,f)\phi\dd x\dd v\\
    =&{}\frac12\int_{\GL}\kappa ff_*\Big(\int_{S^{d-1}}\overline\nabla\phi B^\varepsilon \dd\sigma\Big)\dd\eta\\    =&{}\frac12\int_{\GL}\kappa A_0  ff_*\int_{0}^{\varepsilon/2} \beta^\varepsilon (\theta)\Big(\int_{S_{k^\perp}^{d-2}}\overline\nabla\phi \dd p\Big) \dd \theta \dd \eta
\end{aligned}
\end{equation}
for all $\phi\in C^\infty_c(\Do)$.\\ \\
In the following, we present two propositions of the spherical representation, which are used to establish the bounds and limits in Section \ref{sec:form-GL}.
\begin{proposition}
\label{lem:pi}
Let $d\ge 2$. The following identity holds
    \begin{align*}
        \int_{S^{d-2}_{k^\perp}}p\otimes p\dd p=\frac{|S^{d-2}|}{d-1}\Pi_{k^\perp},\quad \Pi_{k^\perp}=\operatorname{Id}-k\otimes k.
    \end{align*}
\end{proposition}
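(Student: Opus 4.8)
\textbf{Proof plan for Proposition~\ref{lem:pi}.} The plan is to exploit the rotational symmetry of the sphere $S^{d-2}_{k^\perp}$ inside the hyperplane $k^\perp$. First I would observe that the integral $M\defeq\int_{S^{d-2}_{k^\perp}}p\otimes p\dd p$ defines a symmetric $d\times d$ matrix, and that for every $p$ in the domain of integration one has $p\cdot k=0$; hence $Mk=0$ and the range of $M$ is contained in $k^\perp$. The remaining task is to identify the action of $M$ on $k^\perp$. Since the measure $\dd p$ on $S^{d-2}_{k^\perp}$ is invariant under any rotation of $\R^d$ that fixes $k$ (equivalently, any rotation of the $(d-1)$-dimensional space $k^\perp$), the matrix $M$ commutes with all such rotations; by Schur's lemma (or an elementary averaging argument over $\mathrm{SO}(k^\perp)$) it must act as a scalar multiple of the identity on $k^\perp$, i.e.\ $M=c\,\Pi_{k^\perp}$ for some constant $c=c(d)$ independent of $k$.

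To pin down $c$, I would take the trace of both sides. On one hand $\operatorname{tr}(p\otimes p)=|p|^2=1$ for $p\in S^{d-2}_{k^\perp}$, so $\operatorname{tr}M=\int_{S^{d-2}_{k^\perp}}1\dd p=|S^{d-2}|$. On the other hand $\operatorname{tr}(\Pi_{k^\perp})=d-1$. Therefore $c=|S^{d-2}|/(d-1)$, which gives the claimed identity. For $d=2$ this should be checked directly against the two-point definition of the integral over $S^0_{k^\perp}$ given above: with $k=(k_1,k_2)$ the two points are $\pm(k_2,-k_1)$, each contributing $(k_2,-k_1)\otimes(k_2,-k_1)$, so the sum is $2\begin{pmatrix}k_2^2 & -k_1k_2\\ -k_1k_2 & k_1^2\end{pmatrix}=2\,\Pi_{k^\perp}$, consistent with $|S^0|=2$ and $d-1=1$.

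There is no serious obstacle here; the only point requiring a little care is the invariance argument. The cleanest way to make it rigorous is the averaging identity: for any $R\in\mathrm{SO}(d)$ with $Rk=k$, the change of variables $p\mapsto Rp$ preserves $S^{d-2}_{k^\perp}$ and its surface measure, so $RMR^{T}=M$; averaging $R M R^T$ over the compact group $\{R\in\mathrm{SO}(d):Rk=k\}$ against Haar measure forces $M$ restricted to $k^\perp$ to be a scalar, since the standard representation of this group on $k^\perp\cong\R^{d-1}$ is irreducible for $d\ge 3$ (and the $d=2$ case is handled by the explicit computation above). Combining $Mk=0$, $M|_{k^\perp}=c\,\mathrm{Id}_{k^\perp}$, and the trace computation yields $M=\dfrac{|S^{d-2}|}{d-1}\Pi_{k^\perp}$.
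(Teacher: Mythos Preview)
Your proof is correct. Both your argument and the paper's rest on the same two facts---$Mk=0$ and symmetry forces $M$ to be scalar on $k^\perp$, with the scalar fixed by the trace---but the implementations differ. The paper works in coordinates: it picks an orthonormal basis $\{e_i\}_{i=1}^{d-1}$ of $k^\perp$, observes directly that $\int_{S^{d-2}_{k^\perp}} p_i p_j\dd p=0$ for $i\neq j$ and that all $\int p_i^2\dd p$ are equal by symmetry, then normalises via $\sum_i p_i^2=1$. You instead package the symmetry into the invariance $RMR^T=M$ for $R$ in the stabiliser of $k$ and invoke Schur's lemma (irreducibility of the standard $\mathrm{SO}(d-1)$-representation on $k^\perp$). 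Your route is more conceptual and explains \emph{why} the answer must be a multiple of $\Pi_{k^\perp}$, but it requires the separate $d=2$ check you supply, since $\mathrm{SO}(1)$ is trivial; the paper's coordinate computation handles all $d\ge 2$ uniformly without appeal to representation theory.
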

\begin{proof}
For any $v\in\R^d$, $v$ can be written as the sum of projections $v=\Pi_{k^\perp}v+(v\cdot k)k$.
  By definition, we have 
    \begin{align*}
   & \Big(\int_{S^{d-2}_{k\perp}}p\otimes p\dd p\Big)v
   = \int_{S^{d-2}_{k\perp}}p(p\cdot v)\dd p=\int_{S^{d-2}_{k\perp}}p(p\cdot \Pi_{k^\perp}v)\dd p.
    \end{align*}
    Let $\{e_i\}_{i=1}^{d-1}$ denotes the orthogonal basis of $\R^{d-1}_{k^\perp}$. We write $\Pi_{k^\perp} v=\sum_{i=1}^{d-1}v_ie_i$ and $p=\sum_{i=1}^{d-1}p_ie_i$. Thus, we have 
    \begin{align*}
  &\int_{S^{d-2}_{k\perp}}(p\cdot \Pi_{k^\perp}v)p\dd p
 =\sum_{i=1}^{d-1}v_i\int_{S^{d-2}_{k\perp}}(p\cdot e_i)p\dd p.
 \end{align*}
For all $i=1,\dots, d-1$, we have 
\begin{align*}
 \int_{S^{d-2}_{k\perp}}(p\cdot e_i)p\dd p=\int_{S^{d-2}_{k\perp}}p_ip\dd p=e_i\int_{S^{d-2}_{k\perp}}p_i^2\dd p,
 \end{align*}
 where, by symmetry, we have 
 \begin{align*}
 \int_{S^{d-2}_{k\perp}}p_i^2\dd p=\frac{1}{d-1}\int_{S^{d-2}_{k^\perp}}1\dd p=\frac{|S^{d-2}|}{d-1},\quad i=1,\dots, d-1.
 \end{align*}
\end{proof}

\begin{proposition}[Size estimates]
\label{rmk:size-est}
The vector $\sigma-k$ can be written as 
\begin{equation*}\label{sigma-k}
    \sigma-k=k(\cos\theta-1)+p\sin\theta,
\end{equation*}
where, for $\theta\sim0$, we have 
\begin{align*}
    \cos\theta-1=-\frac{\theta^2}{2}+o(\theta^2)\quad \text{and}\quad \sin\theta=\theta+o(\theta).
\end{align*}
For all $\theta\in[0,\pi/2]$, we have
\begin{equation}\label{sigma-k:sqbd}
\begin{aligned}
|\sigma-k|\le 2\theta\quad \text{and}\quad   |\sigma -k|^2 = 2(1-\cos\theta) \le \theta^2.   
\end{aligned}
\end{equation}
\end{proposition}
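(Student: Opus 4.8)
The plan is to read everything off the parametrisation $\sigma = k\cos\theta + p\sin\theta$ with $p \in S^{d-2}_{k^\perp}$ recorded in Section~\ref{sec:SC}. Subtracting $k$ on both sides immediately gives $\sigma - k = k(\cos\theta - 1) + p\sin\theta$, which is the first claimed identity. The asymptotics $\cos\theta - 1 = -\theta^2/2 + o(\theta^2)$ and $\sin\theta = \theta + o(\theta)$ are just the Taylor expansions of cosine and sine at the origin, so there is nothing to prove there.

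For the size estimate I would use that $k$ and $p$ are orthogonal unit vectors, so the Pythagorean identity gives
\[
|\sigma - k|^2 = (\cos\theta - 1)^2\,|k|^2 + \sin^2\theta\,|p|^2 = (\cos\theta - 1)^2 + \sin^2\theta.
\]
Expanding the square and using $\cos^2\theta + \sin^2\theta = 1$ collapses the right-hand side to $2 - 2\cos\theta = 2(1 - \cos\theta)$, which is the asserted identity $|\sigma - k|^2 = 2(1 - \cos\theta)$. Finally, the elementary inequality $1 - \cos\theta \le \theta^2/2$, valid for all real $\theta$ (for instance because $\theta \mapsto \theta^2/2 - 1 + \cos\theta$ vanishes at $0$ and has derivative $\theta - \sin\theta \ge 0$ on $[0,\infty)$), yields $|\sigma - k|^2 \le \theta^2$, hence $|\sigma - k| \le \theta \le 2\theta$ on $[0,\pi/2]$.

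There is no genuine obstacle here: the statement is a bookkeeping lemma recording the geometry of the $\sigma$-parametrisation and the smallness of $\sigma - k$ as $\theta \to 0$, to be invoked later when estimating $\overline\nabla\phi$ in the grazing limit. The one point worth flagging is that the stated bound $|\sigma - k| \le 2\theta$ is not sharp — the argument in fact gives $|\sigma - k| \le \theta$, equivalently the chord length $2\sin(\theta/2)$ — but the weaker constant is all that is needed downstream, so I would simply record it as stated.
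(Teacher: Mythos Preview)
Your proof is correct and complete; the paper in fact states this proposition without proof, treating it as a self-evident record of the spherical parametrisation and standard trigonometric estimates. Your observation that the argument actually yields the sharper bound $|\sigma-k|\le\theta$ is accurate and worth keeping in mind, though as you note the weaker constant suffices for all later applications.
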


\subsection{Limits of the fuzzy Boltzmann gradient}\label{sec:form-GL}
In this subsection, we first follow the classical results in \cite{Vil98} to discuss the pointwise grazing limit in the weak formulation, i.e. for fixed $f$ and $\kappa$, one has
\begin{equation}
\label{limit:formal}
 \int_{\Do} Q^\varepsilon_{\aB}(f,f)\phi\dd x\dd v\to \int_{\Do} Q_{\aL}(f,f)\phi\dd x\dd v\quad \forall\phi\in C^\infty(\Do;\R)
\end{equation}
as $\varepsilon\to0$. We show upper bounds and pointwise limit of fuzzy Boltzmann gradient $\overline\nabla$ in Lemma \ref{lem:phi:conv}, which are used to show the grazing limit rigorously in Section \ref{sec:dissipation} and \ref{sec:action}. Similar estimates have been derived for the homogeneous Landau equation in \cite[Section 3]{carrillo2022boltzmann}. There is a significant difference between the homogeneous and the fuzzy cases, which is discussed in Remark \ref{rmk:diff} below.  


We first show \eqref{limit:formal} on a form level.
By \eqref{wqb-polar}, the left-hand side of \eqref{limit:formal} can be written as
\begin{align*}
    &\int_{\Do} Q^\varepsilon_{\aB}(f,f)\phi\dd x\dd v
    ={}\frac12\int_{\GL}\kappa A_0  ff_*\int_{0}^{\varepsilon/2} \beta^\varepsilon (\theta)\Big(\int_{S_{k^\perp}^{d-2}}\overline\nabla\phi \dd p\Big) \dd \theta \dd \eta.
\end{align*}
By integration by parts of $\widetilde\nabla\cdot$, the right-hand side of \eqref{limit:formal} can be written as
\begin{align*}
&\int_{\Do} Q_{\aL}(f,f)\phi\dd x\dd v=\frac12\int_{\GL}\kappa ff_*\widetilde\nabla\cdot \widetilde\nabla \phi\dd\eta\\
=&{}\frac12\int_{\GL}\kappa A_0ff_*(\nabla_v-\nabla_{v_*})\cdot\big(|v-v_*|^2\Pi_{(v-v_*)^\perp}(\nabla_v\phi-\nabla_{v_*}\phi_*)\big)\dd\eta.    
\end{align*}
Thus, to show \eqref{limit:formal}, we only need to show that
\begin{equation}
    \label{limit:goal}
\begin{aligned}
 &\int_{0}^{\varepsilon/2} \beta^\varepsilon (\theta)\Big(\int_{S_{k^\perp}^{d-2}}\overline\nabla\phi \dd p\Big) \dd \theta\\
 \limit{\eps \to 0}&{}(\nabla_v-\nabla_{v_*})\cdot\big(|v-v_*|^2\Pi_{(v-v_*)^\perp}(\nabla_v\phi-\nabla_{v_*}\phi_*)\big).
\end{aligned}
\end{equation}
In Lemma \ref{lem:phi:conv} below, we show the pointwise limit
\begin{equation*}\label{formal:goal}
\begin{aligned}
&\lim_{\theta\to0}\frac{1}{\theta^2}\int_{S^{d-2}_{k^\perp}}\overline\nabla\phi\dd p\\ 
=&{}\frac{|S^{d-2}|}{8(d-1)}(\nabla_v-\nabla_{v_*})\cdot|v-v_*|^2\Pi_{(v-v_*)^\perp}\big(\nabla_v\phi-\nabla_{v_*}\phi_*\big).
\end{aligned}
\end{equation*}
Combining that with the finite angular momentum assumption \eqref{def:beta-int}, we obtain the desired convergence \eqref{limit:goal}.
\medskip

In the following, we adapt the homogeneous arguments in \cite[Section 3]{carrillo2022boltzmann} to establish the estimates and limits of $\overline\nabla$ as $\theta\to0$. In Section \ref{sec:dissipation} and \ref{sec:action}, we apply the dominate convergence Theorem \ref{DCT} to pass to the limit by letting $\theta\to0$. The upper bounds in Lemma \ref{lem:phi:conv} take the form of a product of $|v-v_*|$ and $\theta$, where the velocity difference is employed to control the singular kinetic kernel $A_0=|v-v_*|^{\gamma}$ in the soft potential cases $\gamma\in[-2,0)$, and the deviation angle $\theta$ is used to establish the finite angular momentum form \eqref{def:beta-int} $\int_0^{\frac{\varepsilon}{2}}\theta^2\beta^\varepsilon(\theta)\dd\theta<+\infty$. In Remark \ref{rmk:diff}, we highlight a key distinction in the bounds \eqref{bd-1} and \eqref{bd-2} between homogeneous and fuzzy cases, which directly implies that, under this approach, the very soft potential case ($A_0=|v-v_*|^\gamma$, $\gamma\in(-4,-2)$)  cannot be treated in the fuzzy setting.

\begin{lemma}
\label{lem:phi:conv}
Let $\phi\in C^\infty_c(\Do)$. For all $(x,x_*,v,v_*,\sigma)\in\G\times S^{d-1}$:
\begin{itemize}
\item the following pointwise bounds hold
\begin{gather}
    |\overline\nabla\phi|\le C_1\theta |v-v_*|,\label{bd-1}\\
  \left|\frac{1}{|S^{d-2}|}\int_{S^{d-2}_{k^\perp}}\overline\nabla\phi\dd p\right|\le C_2 \theta^2\big(|v-v_*|+|v-v_*|^2\big), \label{bd-2}
\end{gather}
where $C_1=C_1(\|\nabla_v \phi\|_{L^\infty(\Do)})> 0$ and $C_2=C_2(\|\nabla_v \phi\|_{L^\infty(\Do)},\|D^2_v \phi\|_{L^\infty(\Do)})>0$.

    \item the following pointwise limits hold
\begin{equation}
\label{p.w.conv-1}
\lim_{\eps\to 0}\frac1\theta\overline\nabla\phi=\frac{|v-v_*|}{2}p\cdot\big(\nabla_v\phi-(\nabla_v\phi)_*\big), 
\end{equation}
and
\begin{equation}
\label{p.w.conv-2}
\begin{aligned}
 &\lim_{\eps\to 0}\frac{1}{\theta^2}\int_{S^{d-2}_{k^\perp}}\overline\nabla\phi\dd p\\
=&{}\frac{|S^{d-2}|}{8(d-1)}(\nabla_v-\nabla_{v_*})\cdot\Big(|v-v_*|^2\Pi_{(v-v_*)^\perp}\big(\nabla_v\phi-(\nabla_v\phi)_*\big)\Big).
\end{aligned}
\end{equation}
\end{itemize}
\end{lemma}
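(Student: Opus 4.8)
The proof rests on Taylor expanding $\phi$ around the pre-collision velocities and exploiting the symmetry of the integration over $S^{d-2}_{k^\perp}$. The plan is as follows.

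\textbf{Setup and the size estimates.} First I would record that, by Proposition \ref{rmk:size-est}, the difference $v'-v = \tfrac{|v-v_*|}{2}(\sigma-k)$ satisfies $|v'-v|\le \theta|v-v_*|$ and $v'-v = \tfrac{|v-v_*|}{2}\bigl(p\sin\theta + k(\cos\theta-1)\bigr)$, and likewise $v_*'-v_* = -(v'-v)$ since $v'+v_*' = v+v_*$. Note also that $\overline\nabla\phi = \phi(x,v') + \phi(x_*,v_*') - \phi(x,v) - \phi(x_*,v_*) = [\phi(x,v')-\phi(x,v)] + [\phi(x_*,v_*')-\phi(x_*,v_*)]$, so the two $x$-blocks decouple; this is exactly the point emphasised in Remark \ref{rmk:gradient}, and it is why one treats $f$ and $f_*$ separately throughout the paper.

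\textbf{The first bound and first limit.} For \eqref{bd-1}, apply the mean value theorem to each block: $|\phi(x,v')-\phi(x,v)|\le \|\nabla_v\phi\|_{L^\infty}|v'-v|\le \|\nabla_v\phi\|_{L^\infty}\theta|v-v_*|$, and similarly for the $x_*$-block with $|v_*'-v_*| = |v'-v|$, giving $C_1 = 2\|\nabla_v\phi\|_{L^\infty}$. For the pointwise limit \eqref{p.w.conv-1}, write $\phi(x,v')-\phi(x,v) = \nabla_v\phi(x,v)\cdot(v'-v) + O(|v'-v|^2)$; dividing by $\theta$, using $v'-v = \tfrac{|v-v_*|}{2}(p\theta + O(\theta^2))$ and $|v'-v|^2 = O(\theta^2|v-v_*|^2)$, the quadratic remainder vanishes as $\theta\to 0$ and we are left with $\tfrac{|v-v_*|}{2}p\cdot\nabla_v\phi(x,v)$; the $x_*$-block contributes $-\tfrac{|v-v_*|}{2}p\cdot\nabla_v\phi(x_*,v_*)$, yielding \eqref{p.w.conv-1}.

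\textbf{The second-order expansion and the $p$-integral.} The substantive part is \eqref{bd-2} and \eqref{p.w.conv-2}, where the first-order term must be integrated over $S^{d-2}_{k^\perp}$ and a second-order Taylor term retained. Expand to second order: for the $x$-block,
\[
\phi(x,v')-\phi(x,v) = \nabla_v\phi(x,v)\cdot(v'-v) + \tfrac12 (v'-v)^{\!\top} D^2_v\phi(x,v)(v'-v) + O(|v'-v|^3),
\]
and symmetrically for the $x_*$-block with $v_*'-v_* = -(v'-v)$, so the first-order pieces combine to $(v'-v)\cdot\bigl(\nabla_v\phi(x,v)-\nabla_v\phi(x_*,v_*)\bigr)$ while the second-order pieces add (same sign). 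Now insert $v'-v = \tfrac{|v-v_*|}{2}\bigl(p\sin\theta + k(\cos\theta-1)\bigr)$ and integrate $\dd p$ over $S^{d-2}_{k^\perp}$. The key facts are $\int_{S^{d-2}_{k^\perp}} p\,\dd p = 0$ (odd symmetry), which kills the $p\sin\theta$ part of the first-order term and leaves only the $k(\cos\theta-1) = -\tfrac{\theta^2}{2}+o(\theta^2)$ contribution — this is $O(\theta^2|v-v_*|)$; and $\int_{S^{d-2}_{k^\perp}} p\otimes p\,\dd p = \tfrac{|S^{d-2}|}{d-1}\Pi_{k^\perp}$ from Proposition \ref{lem:pi}, which turns the leading ($\sin^2\theta = \theta^2+o(\theta^2)$) part of the second-order term into $\tfrac{\theta^2}{8}|v-v_*|^2 \tfrac{|S^{d-2}|}{d-1}\,\Pi_{k^\perp}:D^2_v\phi$ — this is $O(\theta^2|v-v_*|^2)$. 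The remaining cross terms and higher-order remainders are $O(\theta^3)$ or carry extra powers of $|v-v_*|$ that are absorbed; summing the two bounds gives \eqref{bd-2} with $C_2$ depending on $\|\nabla_v\phi\|_{L^\infty}$ and $\|D^2_v\phi\|_{L^\infty}$. Dividing instead by $\theta^2$ and passing $\theta\to 0$, the $O(\theta^3)$ terms drop and we obtain
\[
\lim_{\theta\to0}\frac{1}{\theta^2}\int_{S^{d-2}_{k^\perp}}\overline\nabla\phi\,\dd p = -\frac{|S^{d-2}|}{4}\,|v-v_*|\,\frac{v-v_*}{|v-v_*|}\cdot\bigl(\nabla_v\phi-(\nabla_v\phi)_*\bigr) \;+\; \frac{|S^{d-2}|}{8(d-1)}|v-v_*|^2\,\Pi_{k^\perp}:D^2_v\phi(x,v),
\]
and one checks by a direct computation — writing out the divergence $(\nabla_v-\nabla_{v_*})\cdot\bigl(|v-v_*|^2\Pi_{(v-v_*)^\perp}(\nabla_v\phi-(\nabla_v\phi)_*)\bigr)$, differentiating the kernel $|v-v_*|^2\Pi_{(v-v_*)^\perp}$ and separately the test-function factor — that this equals the right-hand side of \eqref{p.w.conv-2}.

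\textbf{Main obstacle.} The routine-but-delicate step is the last identification: matching the two explicit terms produced by the $\dd p$-integral against the expanded form of the Landau divergence operator $(\nabla_v-\nabla_{v_*})\cdot\bigl(|v-v_*|^2\Pi_{(v-v_*)^\perp}\,\cdot\,\bigr)$. One must carefully compute $\nabla_v|v-v_*|^2 = 2(v-v_*)$, the derivative of the projection $\Pi_{(v-v_*)^\perp}$, and note that $(\nabla_v-\nabla_{v_*})$ acting on a function of $v-v_*$ alone doubles the $v$-derivative while the $(\nabla_v\phi)_*$ term contributes only through the explicit kernel; bookkeeping the factor $8(d-1)$ and the sign of the first-order term against $\cos\theta-1 = -\theta^2/2$ is where errors are easiest to make. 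The rest is controlled Taylor expansion plus the two symmetry integrals, and — as flagged in Remark \ref{rmk:diff} — the crucial structural feature is that, unlike the homogeneous case, one cannot use $|v-v_*|$-smallness of $f-f_*$ here, so every bound is stated purely in terms of $\theta$ and $|v-v_*|$ with the test function carrying the regularity.
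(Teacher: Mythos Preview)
Your proposal is correct and follows essentially the same route as the paper: Taylor-expand $\overline\nabla\phi$ in the displacement $v'-v=\tfrac{|v-v_*|}{2}(\sigma-k)$, use the size estimates of Proposition~\ref{rmk:size-est} for \eqref{bd-1}--\eqref{bd-2}, kill the linear-in-$p$ piece of the first-order term by odd symmetry and evaluate the quadratic piece via $\int_{S^{d-2}_{k^\perp}}p\otimes p\,\dd p=\tfrac{|S^{d-2}|}{d-1}\Pi_{k^\perp}$ (Proposition~\ref{lem:pi}), then identify the two resulting terms with the Landau divergence using the identity $(\nabla_v-\nabla_{v_*})\cdot\bigl(|v-v_*|^2\Pi_{(v-v_*)^\perp}\bigr)=-2(d-1)(v-v_*)$. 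Two small slips: your final displayed Hessian contribution should be $\Pi_{k^\perp}\!:[D^2_v\phi+(D^2_v\phi)_*]$ (as you yourself observed that the two second-order pieces add), and the paper uses the integral-remainder form of Taylor so that $C_2$ genuinely depends only on $\|D^2_v\phi\|_{L^\infty}$ rather than on third derivatives---your $O(|v'-v|^3)$ remainder would otherwise pull in $\|D^3_v\phi\|_{L^\infty}$.
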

\begin{proof}
The proof follows the homogeneous case in \cite[Lemma 3.4--3.6]{carrillo2022boltzmann}. The key difference from the homogeneous case is that here the quantities associated with $f$ and $f_*$ are strictly treated as separate quantities, since they depend on the independent spatial variables $x$ and $x_*$.

We recall the definition
\begin{align*}
    v_1 = \frac{v+v_*}{2} \quad \text{and} \quad v_2=\frac{v-v_*}{2}, 
\end{align*}
so that 
\begin{equation*}
\label{v-v'}
    v=v_1+|v_2|k,\quad v_*=v_1-|v_2|k,\quad  v'=v_1+|v_2|\sigma,\quad v_*'=v_1-|v_2|\sigma.
\end{equation*}
By Taylor expansion, we have
\begin{equation}
\label{taylor-1}
\begin{aligned}
&\overline\nabla\phi(x,v)\\
=&{}\phi(x,v_1+|v_2|\sigma)+\phi(x_*,v_1-|v_2|\sigma)
    -\phi(x,v_1+|v_2|k)-\phi(x_*,v_1-|v_2|k)\\
    =&{}\int_0^1\frac{d}{dt}\phi\big(x,v_1+|v_2|(t\sigma+(1-t)k)\big)+\frac{d}{dt}\phi\big(x_*,v_1-|v_2|(t\sigma+(1-t)k)\big)\dd t\\
    =&{}|v_2|(\sigma-k)\cdot\int_0^1\Big(\nabla_v\phi\big(x,v_1+|v_2|(t\sigma+(1-t)k)\big)\\
   & -\nabla_{v_*}\phi\big(x_*,v_1-|v_2|(t\sigma+(1-t)k)\big)\Big)\dd t,
   \end{aligned}
\end{equation}
and 
\begin{align*}
   &\nabla_{v}\phi\big(x,v_1+|v_2|(t\sigma+(1-t)k)\big)\\
   &{}=\nabla_{v}\phi(x,v_1+|v_2|k)\\
   &\quad{}+t|v_2|(\sigma-k)\cdot \int_0^1D^2_{v}\phi\Big(x,v_1+|v_2|\big(s(t\sigma+(1-t)k)+(1-s)k\big)\Big)\dd s,\\
   &\nabla_{v_*}\phi\big(x_*,v_1-|v_2|(t\sigma+(1-t)k)\big)\\
   &{}=\nabla_{v_*}\phi(x_*,v_1-|v_2|k)\\
   &\quad{}-t|v_2|(\sigma-k)\cdot \int_0^1D^2_{v_*}\phi\Big(x,v_1-|v_2|\big(s(t\sigma+(1-t)k)+(1-s)k\big)\Big)\dd s.
\end{align*}
Thus, $\overline\nabla \phi$ can be written as
\begin{equation}
\label{I-12}
\overline\nabla\phi
=I_1+I_2,
\end{equation}
where
\begin{align*}
I_1&:=|v_2|(\sigma-k)\cdot\big(\nabla_v\phi-(\nabla_v\phi)_*\big),\quad
I_2:=|v_2|^2(\sigma-k)\otimes (\sigma-k):T,
\end{align*}
and $T$ is given by
\begin{align*}
T&=\int_0^1t\int_0^1\Big(D^2_v\phi\big(x,v_1+|v_2|(s(t\sigma+(1-t)k)+(1-s)k)\big)\\
&+D^2_{v_*}\phi\big(x_*,v_1-|v_2|(s(t\sigma+(1-t)k)+(1-s)k)\big)\Big)\dd s\dd t.
\end{align*}

\begin{itemize}

\item \textbf{Bounds \eqref{bd-1} and \eqref{bd-2}.}
We recall the size estimate in Proposition \ref{rmk:size-est} that 
\begin{align*}
    |\sigma-k|\le 2\theta\quad\text{and}\quad |\sigma-k|^2\le \theta^2.
\end{align*}
Then \eqref{taylor-1} implies the pointwise bound \eqref{bd-1} that
\begin{align*}
    |\overline\nabla\phi|&\le2|v_2||\sigma-k|\|\nabla_{v}\phi\|_{L^\infty(\R^{2d})}\lesssim C_1\theta|v-v_*|.
    \end{align*}
We use the representation \eqref{I-12} to show the bound \eqref{bd-2}. Concerning the $I_2$ term, we have 
\begin{equation}
\label{pw-I2}
 \Big|\int_{S^{d-2}_{k^\perp}} I_2\Big|\le |S^{d-2}||I_2|
   \lesssim \|D^2_{v}\phi\|_{L^\infty(\R^{2d})}|v_2|^2|\sigma-k|^2.    
\end{equation}
Concerning the $I_1$ term, by definition $\sigma-k=k(\cos\theta-1)+p\sin\theta$, we have  
\begin{equation*}
\label{N:k-p}
\begin{aligned}
&(\sigma-k)\cdot\big(\nabla_{v}\phi-(\nabla_{v}\phi)_*\big)
={}N_k(\cos\theta-1)+N_p\sin\theta,
\end{aligned}
\end{equation*}
where $N_k$ and $N_p$ denote the projections
\begin{align*}
N_k= \big(\nabla_{v}\phi-(\nabla_{v}\phi)_*\big)\cdot k,\quad
N_p= \big(\nabla_{v}\phi-(\nabla_{v}\phi)_*\big)\cdot p.
\end{align*}
Notice that $N_k$ is independent of $p$ and  
\begin{align*}
  \int_{S^{d-2}_{k^\perp}} N_p\dd p=0.
\end{align*}
This implies that
\begin{equation}
\label{int-I1}
\begin{aligned}
   \int_{S^{d-2}_{k^\perp}} I_1\dd p&= |v_2|\int_{S^{d-2}_{k^\perp}}\big((\cos\theta-1)N_k+\sin\theta N_p\big)\dd p\\
   &=|S^{d-2}||v_2|N_k(\cos\theta-1).
       \end{aligned}
\end{equation}
The identity $|\sigma-k|^2=2(1-\cos\theta)$ implies that
\begin{align*}
   &\Big|\int_{S^{d-2}_{k^\perp}} I_1\Big|
   \lesssim |v_2||\sigma-k|^2\|\nabla_{v}\phi\|_{L^\infty(\R^{2d})}.
\end{align*}
Thus, exploiting \eqref{sigma-k:sqbd}, we obtain the second bound \eqref{bd-2}.

    \item \textbf{Limits \eqref{p.w.conv-1} and \eqref{p.w.conv-2}.} We recall Proposition \ref{rmk:size-est} that 
\begin{equation}
\label{sigma-k-2}
\cos\theta-1\sim -\frac{\theta^2}{2}\quad \text{and}\quad \sin\theta\sim \theta.
\end{equation}
Then the representation \eqref{I-12} and the upper bound of $I_2$ \eqref{pw-I2} imply the limit \eqref{p.w.conv-1} that 
\begin{align*}
\overline\nabla\phi=\frac{\theta|v-v_*|}{2} p\cdot\big(\nabla_v\psi-(\nabla_v\psi)_*\big)+o(\theta^2).
\end{align*}
We show (\ref{p.w.conv-2}). Notice that $\int_{S^{d-2}_{k^\perp}} I_1\dd p$ is given by \eqref{int-I1} and the size estimate \eqref{sigma-k-2} that
\begin{equation}
\label{int-1}
    \int_{S^{d-2}_{k^\perp}}I_1\dd p=-\frac{\theta^2}{2}|v_2||S^{d-2}|N_k+o(\theta^2).
\end{equation}
We left to compute the $I_2$ term. By the dominated convergence theorem and the upper bound of $I_2$ \eqref{pw-I2}, we have 
\begin{equation}
\label{int-2}
\begin{aligned}
\lim_{\theta\to 0}\frac{1}{\theta^2}\int_{S^{d-2}_{k^\perp}}I_2\dd p
    &=\frac{ \theta^2|v_2|^2}{2} \int_{S^{d-2}_{k^\perp}}p\otimes p\dd p:\big(D^2_v\phi+(D^2_v\phi)_*\big)\\
    &=\frac{ \theta^2|v_2|^2|S^{d-2}|}{2(d-1)} \Pi_{k^\perp}:\big(D^2_v\phi+(D^2_v\phi)_*\big),
\end{aligned}
\end{equation}
where we use Lemma \ref{lem:pi} to show the last equality.

Combining \eqref{int-1} and \eqref{int-2}, we use the identities  $|v_2|k=v_2$ and $\Pi_{k^\perp}= \Pi_{(v-v_*)^\perp}$ to derive
\begin{equation*}
\label{int-3}
\begin{aligned}
\lim_{\theta\to0}\frac{1}{\theta^2}\int_{S^{d-2}_{k^\perp}}\overline\nabla \phi\dd p
=&{}\frac{\theta^2}{2}|S^{d-2}|\Big(-\frac{v-v_*}{2}\cdot \big(\nabla_v \phi-(\nabla_v \phi)_*\big)\\
&+\frac{ |v-v_*|^2}{4(d-1)} \Pi_{(v-v_*)^\perp}:(D^2_v\phi+(D^2_v\phi)_*)\Big).
\end{aligned}
\end{equation*}
By using of the identity 
\begin{align*}
(\nabla_{v}-\nabla_{v_*})\cdot(|v-v_*|^2\Pi_{(v-v_*)^\perp})=-2(d-1)(v-v_*),    
\end{align*}
we obtain \eqref{p.w.conv-2}
 \begin{equation*}
    \label{int-4}
\begin{gathered}
\lim_{\theta\to0}\frac{1}{\theta^2}\int_{S^{d-2}_{k^\perp}}\overline{\nabla}\phi\dd p=|v-v_*|^2 \Pi_{(v-v_*)^\perp}:(D^2_v\phi+(D^2_v\phi)_*)\Big)\\
+\frac{\theta^2|S^{d-2}|}{8(d-1)}\Big((\nabla_v-\nabla_{v_*})\cdot(|v-v_*|^2\Pi_{(v-v_*)^\perp})\cdot (\nabla_v \phi-(\nabla_v \phi)_*)\\
=\frac{\theta^2|S^{d-2}|}{8(d-1)}(\nabla_v-\nabla_{v_*})\cdot\Big(|v-v_*|^2\Pi_{(v-v_*)^\perp}(\nabla_v \phi-(\nabla_v \phi)_*)\Big).
\end{gathered}
\end{equation*}

\end{itemize}

\end{proof}

\begin{remark}[Better bounds in the homogeneous cases]\label{rmk:diff}
The similar bounds in Lemma \ref{lem:phi:conv} for spatial homogeneous functions have been shown in \cite{carrillo2022boltzmann}. The main difference is that in the homogeneous case, one has a quadratic bound of the velocity difference
\begin{gather}
|\overline\nabla\phi_1|\lesssim \|\nabla_v \phi_1\|_{L^\infty(\Do)} \theta|v-v_*|\quad\text{fuzzy case}, \notag \\
|\overline\nabla\phi_2|\lesssim  \|D^2_v \phi_2\|_{L^\infty(\R^d)}\theta|v-v_*|^2\quad\text{homogeneous case}, \label{bd-3}    
\end{gather}
where $\phi_1=\phi_1(x,v)$ and $\phi_2=\phi_2(v)$.
We recall \eqref{taylor-1} that
\begin{gather*}
\overline\nabla\phi(x,v)=|v_2|(\sigma-k)\cdot\int_0^1\nabla_v\phi(x,\bar v)-\nabla_{v_*}\phi(x_*,\bar v_*)\dd t,
   \end{gather*}
   where $\bar v=v_1+|v_2|(t\sigma+(1-t)k)$ and $\bar v_*=v_1-|v_2|(t\sigma+(1-t)k)$.
In the homogeneous case $\phi_2=\phi_2(v)$, we have
\begin{equation}
\label{homo:v-v*}
\big|\nabla_v\phi_2(\bar v)-\nabla_{v_*}\phi_2(\bar v_*)\big|  \le \|D^2_v\phi_2\|_{L^\infty(\R^d)}|v-v_*|,
\end{equation}
which leads to the quadratic bounds $|v-v_*|^2$ in \eqref{bd-3}. 
However, \eqref{homo:v-v*} does not hold for the fuzzy case due to the appearance of the different spatial  variables $x$ and $x_*$
\begin{equation}
\label{fuzzy:landau:better}
 \big|\nabla_v\phi_1(x,\bar v)-\nabla_{v_*}\phi_1(x_*,\bar v_*)\big| \le \|\phi_1\|_{W^{2,\infty}(\Do)}\big(|v-v_*|  +|x-x_*|\big).
\end{equation}

Intuitively speaking, following the idea of 
$\cH$-solutions, one has 
\begin{align*}
    \int Q_{\aB}(f,f)\phi\le \cD_{\aB}(f)^{\frac12}\Big(\int \int_{S^{d-1}}|\overline\nabla\phi|^2B\Big)^{\frac12}.
\end{align*}
The vanishing of $|v-v_*|$ around $v\sim v_*$ cancels the singularity of the kinetic kernel $|v-v_*|^\gamma$ in the soft potential cases. The improved bound \eqref{bd-3} in the homogeneous setting allows one to handle the very soft potential case $\gamma\in(-4,-2)$
\begin{equation*}
\label{homo:better}
  |v-v_*|^\gamma|\overline\nabla \phi_2|^2\lesssim |v-v_*|^{\gamma+4}.
\end{equation*}
In contrast, in the fuzzy setting, we restrict our analysis to the hard and moderately soft potential cases $\gamma\in[-2,1]$, where
\begin{equation}
\label{fuzzy:better}
|v-v_*|^\gamma|\overline\nabla \phi_1|^2\lesssim |v-v_*|^{\gamma+2}.
\end{equation}
The bound \eqref{fuzzy:better} is, for example, used in the proof of Lemma \ref{lem:6-6}.

We note that another difference has been observed between the homogeneous and fuzzy Landau equations. The solvability of the homogeneous Landau equations in the very soft potential cases was established in \cite{Vil98} by using \eqref{homo:v-v*} to cancel the kinetic singularity
\begin{equation}
\label{homo:landau:better}
 \big|\widetilde\nabla \phi_2(v)\big|^2\lesssim |v-v_*|^{\gamma+4},\quad \gamma\in(-4,-2).  
\end{equation}
However, \eqref{homo:landau:better} does not hold in the fuzzy case, since $|\widetilde\nabla\phi|$ may depend on $|x-x_*|$ as shown in \eqref{fuzzy:landau:better}. More details can be found in  \cite[Remark 3.4]{DH25b}.

\end{remark}

\section{Compactness results}\label{sec:conv}
In this section, we show the $L^1$-weak precompactness of $\{f^\varepsilon\}$ and the $L^2$-precompactness of $\{\sqrt{f^\varepsilon}\}$ in Lemma \ref{weak:conv} and Lemma \ref{lemma:cpt:sqrt}, respectively.
\begin{lemma}
\label{weak:conv}
Let $\varepsilon\in(0,1)$. Let $B^\varepsilon$ and $\kappa^\varepsilon$ satisfy Assumption \ref{ASS:kappa}. Let $f^\varepsilon$ be $\cH$-solutions of \eqref{def:scaling} with initial values satisfying Assumption \ref{ass:curve}. For any fixed $t\in[0,T]$, we have, up to a sequence, 
\begin{equation*}
    \label{L1-cpt}
 f^\varepsilon\rightharpoonup f \quad\text{in}\quad L^1(\Do)\quad\text{as }\varepsilon\to0.
    \end{equation*}
    Moreover, $f$ is weakly continuous in time.
\end{lemma}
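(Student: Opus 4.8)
\textbf{Proof proposal for Lemma \ref{weak:conv}.}

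The plan is to extract weak $L^1$ compactness from the uniform entropy bound via the Dunford--Pettis theorem, and then upgrade pointwise-in-time convergence to weak-$*$ continuity of the limit using an equicontinuity estimate for the time derivative. First I would recall from Remark \ref{rmk:ass} (inequality \eqref{uni-bdd:D}) that $\sup_{\varepsilon}\sup_{t\in[0,T]}|\cH(f^\varepsilon_t)|\le C$ and $\sup_{\varepsilon}\sup_{t}\|f^\varepsilon_t\|_{L^1_{2,2+\gamma_+}(\Do)}\le C$. The bound on $\cH(f^\varepsilon_t)=\int f^\varepsilon_t\log f^\varepsilon_t$, together with the uniform second-moment control $\int(\langle x\rangle^2+\langle v\rangle^2)f^\varepsilon_t\le C$, yields a uniform bound on $\int f^\varepsilon_t(\log f^\varepsilon_t)_+$ (the negative part of $f\log f$ is controlled by the moments, as in \cite{JKO98}). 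By the de la Vall\'ee-Poussin criterion with the superlinear function $r\mapsto r\log r$, the family $\{f^\varepsilon_t : \varepsilon\in(0,1)\}$ is uniformly integrable for each fixed $t$; the moment bound also gives tightness, so Dunford--Pettis provides, for each fixed $t$, a weakly convergent subsequence in $L^1(\Do)$. A diagonal extraction over a countable dense set of times $t\in[0,T]\cap\mathbb{Q}$ gives a single subsequence (not relabelled) along which $f^\varepsilon_t\rightharpoonup f_t$ in $L^1(\Do)$ for every rational $t$.

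Next I would establish that for each fixed test function $\varphi\in C^\infty_c(\Do)$, the maps $t\mapsto \int_{\Do}\varphi f^\varepsilon_t\,\dd x\,\dd v$ are equicontinuous in $t$, uniformly in $\varepsilon$. From the weak formulation \eqref{weak:FBE} for the scaled equation,
\begin{equation*}
\Big|\int_{\Do}\varphi f^\varepsilon_t - \int_{\Do}\varphi f^\varepsilon_s\Big|
\le \Big|\int_s^t\!\!\int_{\Do} v\cdot\nabla_x\varphi\, f^\varepsilon_r\Big|
+\frac14\Big|\int_s^t\!\!\int_{\G\times S^{d-1}}\kappa^\varepsilon B^\varepsilon\Lambda(f^\varepsilon_r)\,\overline\nabla\varphi\cdot\overline\nabla\log f^\varepsilon_r\Big|.
\end{equation*}
The transport term is bounded by $\|v\cdot\nabla_x\varphi\|_{L^\infty}|t-s|$ using that $f^\varepsilon_r$ is a probability measure (on the support of $\varphi$ the factor $\langle v\rangle$ is bounded; more generally use the first moment). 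For the collision term, Cauchy--Schwarz in the measure $\kappa^\varepsilon B^\varepsilon\,\dd\eta\,\dd\sigma$ gives the bound
\begin{equation*}
\Big(\int_s^t\!\!\int_{\G\times S^{d-1}}\kappa^\varepsilon B^\varepsilon\,\Lambda(f^\varepsilon_r)\,|\overline\nabla\varphi|^2\Big)^{1/2}\Big(\int_s^t \cD^\varepsilon_{\aB}(f^\varepsilon_r)\Big)^{1/2},
\end{equation*}
where the second factor is $\le(C|t-s|)^{1/2}$ by the uniform dissipation bound in \eqref{uni-bdd:D}. For the first factor I would use the pointwise bound \eqref{bd-1}, $|\overline\nabla\varphi|\le C_1\theta|v-v_*|$, together with $\Lambda(f^\varepsilon_r)\le \tfrac12(f^\varepsilon_r{}'f^\varepsilon_{r*}'+f^\varepsilon_rf^\varepsilon_{r*})$ and the cancellation/change-of-variables identity so that $\int_{S^{d-1}}\Lambda(f^\varepsilon_r)\,\dd\sigma$-type quantities reduce to integrals of $f^\varepsilon_rf^\varepsilon_{r*}$; the polar representation \eqref{wqb-polar} converts $\int_{S^{d-1}}|\overline\nabla\varphi|^2 b^\varepsilon\,\dd\sigma\lesssim\int_0^{\varepsilon/2}\theta^2\beta^\varepsilon(\theta)\,\dd\theta\cdot|v-v_*|^2$, which is uniformly bounded by \eqref{def:beta-int}. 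Combining $A_0(|v-v_*|)|v-v_*|^2=A(|v-v_*|)\lesssim\langle v-v_*\rangle^{\gamma+2}\lesssim\langle v\rangle^2\langle v_*\rangle^2$ (since $\gamma\le1$, so $\gamma+2\le3$, and using the moment bounds — here is exactly where we invoke \eqref{fuzzy:better} and the restriction $\gamma\in[-2,1]$) with $\int\langle x\rangle^0+\langle v\rangle^{2+\gamma_+}f^\varepsilon<\infty$ and $\sup\kappa^\varepsilon\le C_\kappa$, this factor is bounded by $C|t-s|^{1/2}$. Hence $\big|\int\varphi f^\varepsilon_t-\int\varphi f^\varepsilon_s\big|\le C_\varphi(|t-s|+|t-s|^{1/2})$, uniformly in $\varepsilon$.

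Finally I would combine the two ingredients: the equicontinuity in $t$ plus pointwise-in-$\varepsilon$ convergence on the dense rational set implies, by a standard Arzel\`a--Ascoli argument in the weak-$*$ topology (metrizable on the bounded-mass tight set, using countably many test functions from a dense subset of $C_0(\Do)$), that along a further subsequence $\int\varphi f^\varepsilon_t\to\int\varphi f_t$ for \emph{every} $t\in[0,T]$ and every $\varphi\in C^\infty_c(\Do)$, where $f_t$ extends $f$ continuously. Passing to the limit in the equicontinuity estimate shows $t\mapsto\int\varphi f_t$ is continuous, i.e. $f$ is weakly continuous in time; the uniform integrability passes to the limit so that $f_t\in L^1(\Do)$ and in fact $f^\varepsilon_t\rightharpoonup f_t$ weakly in $L^1(\Do)$ for each $t$. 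I expect the main obstacle to be the collision-term equicontinuity estimate: making rigorous the passage from $\Lambda(f^\varepsilon)\overline\nabla\varphi$ over $S^{d-1}$ to a clean $f^\varepsilon f^\varepsilon_*$-weighted integral with the correct angular moment, and checking that the kinetic weight $A(|v-v_*|)$ is absorbed by the available moments precisely in the range $\gamma\in[-2,1]$ — this is the point where the fuzzy case genuinely differs from the homogeneous one (Remark \ref{rmk:diff}).
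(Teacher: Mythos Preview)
Your proposal is correct and follows essentially the same route as the paper's proof: Dunford--Pettis from the uniform entropy and moment bounds \eqref{uni-bdd:D}, then an equicontinuity estimate in time via the weak formulation, Cauchy--Schwarz on the collision term, the pointwise bound $|\overline\nabla\varphi|\lesssim\theta|v-v_*|$ from Lemma \ref{lem:phi:conv}, and the angular-moment normalisation \eqref{def:beta-int}. One minor bookkeeping slip: the dissipation factor $\big(\int_s^t\cD^\varepsilon_{\aB}(f^\varepsilon_r)\,\dd r\big)^{1/2}$ is only bounded by $C^{1/2}$ (the uniform bound in \eqref{uni-bdd:D} gives no smallness in $|t-s|$), and it is the $|\overline\nabla\varphi|^2$-factor that carries the $|t-s|$ inside the square root --- this is exactly how the paper organises it, and your final conclusion $C_\varphi(|t-s|+|t-s|^{1/2})$ is unaffected.
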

\begin{proof}
   By \eqref{uni-bdd:D}, the uniform bounds of $\|f^\varepsilon\|_{L^1_{2,2}(\Do)}$ and $\|f^\varepsilon\log f^\varepsilon\|_{L^1(\Do)}$,  and the Dunford--Pettis theorem imply the $L^1$-weak compactness of $\{f^\varepsilon\}$ in  $L^1(\Do)$.

    Let $0\le s\le t\le T$. Let $\phi\in C^\infty_c(\Do)$. By the weak formulation \eqref{weak:FBE}, we have
    \begin{align*}
        &\Big|\int_{\Do}(f^\varepsilon_t-f^\varepsilon_s)\phi\dd x\dd v\Big|\\
        \le&{} \int_s^t\int_{\Do}|f^\varepsilon v\cdot\nabla_x\phi|\dd x\dd v\dd \tau+\frac14\Big|\int_s^t\int_{\G\times S^{d-1}}\overline\nabla \phi \overline\nabla \log f^\varepsilon \Lambda(f^\varepsilon)B^\varepsilon \kappa^\varepsilon\dd \sigma\dd \eta\dd \tau\Big|.
    \end{align*}
   The uniformly bounded energy ensures that
    \begin{align*}
        \int_s^t\int_{\Do}|f^\varepsilon v\cdot\nabla_x\phi|\dd x\dd v\dd \tau\le |s-t|\|f^\varepsilon \|_{L^1_{0,2}(\Do)}\le C|s-t|
    \end{align*}
    for some constant $C>0$. By Cauchy--Schwarz inequality, we have
    \begin{align*}
        &\Big|\int_s^t\int_{\G\times S^{d-1}}\overline\nabla \phi \overline\nabla \log f^\varepsilon \Lambda(f^\varepsilon)B^\varepsilon \kappa^\varepsilon\dd \sigma\dd \eta\dd \tau\Big|\\
         \le &{}\Big(\int_0^T\cD_{\aB}^\varepsilon(f^\varepsilon)\dd t\Big)^{\frac12}\Big(\int_s^t\int_{\G\times S^{d-1}}f^\varepsilon f^\varepsilon_*B^\varepsilon \kappa^\varepsilon|\overline\nabla \phi|^2\dd \sigma\dd \eta\dd \tau\Big)^{\frac12}.
    \end{align*}
    The uniform bound of the entropy dissipation term is guaranteed by \eqref{uni-bdd:D}. 
    By polar coordinates and the bound of $|\overline\nabla\phi|$ in Lemma \ref{lem:phi:conv}, we have
    \begin{align*}
        &\int_s^t\int_{\G\times S^{d-1}}f^\varepsilon f^\varepsilon_*B^\varepsilon \kappa^\varepsilon|\overline\nabla \phi|^2\dd \sigma\dd \eta\dd \tau\\
        \le  &{}C \int_s^t\int_{\G}f^\varepsilon f^\varepsilon_*A_0|v-v_*|^2 \kappa^\varepsilon\Big(\int_0^{\frac{\varepsilon}{2}}\theta^2 \beta^\varepsilon(\theta)\dd\theta\Big)\dd \eta\dd \tau\\
        \le&{} C\|f^\varepsilon\|_{L^1_{0,2+\gamma}(\Do)}^2|s-t|\le C|s-t|.
    \end{align*}
    Thus, we have the weak continuity of $f$ in time, since
    \begin{align*}
      \Big|\int_{\Do}(f_t-f_s)\phi\dd x\dd v\Big|&\le\liminf_{\varepsilon\to0}  \Big|\int_{\Do}(f^\varepsilon_t-f^\varepsilon_s)\phi\dd x\dd v\Big|\\
      &\le C\big(|s-t|+|s-t|^{\frac12}\big).
    \end{align*}
\end{proof}
The rest of this section is devoted to showing the following strong convergence of $\sqrt {f^\varepsilon}$.
\begin{lemma}
\label{lemma:cpt:sqrt}
Let $\varepsilon\in(0,1)$. Let $f^\varepsilon$ be given as in Lemma \ref{weak:conv}. For any fixed $t\in[0,T]$, we have, up to a sequence, 
\begin{equation*}
\label{conv:cpt}
    \sqrt{f^\varepsilon}\to \sqrt{f}\quad\text{in}\quad L^2_\loc(\Do).
\end{equation*}
\end{lemma}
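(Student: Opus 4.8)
## Proof proposal for Lemma \ref{lemma:cpt:sqrt}

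The plan is to obtain strong $L^2_\loc$ compactness of $\sqrt{f^\varepsilon_t}$ from a velocity-averaging argument, exploiting the uniform entropy-dissipation bound and the transport structure of \eqref{def:scaling}. First I would observe that, by Lemma \ref{weak:conv}, $f^\varepsilon_t \rightharpoonup f_t$ weakly in $L^1(\Do)$ for each fixed $t$, so $\sqrt{f^\varepsilon_t}$ is bounded in $L^2_\loc$ and converges weakly (along a subsequence) to some $g_t$ with $g_t \le \sqrt{f_t}$ a.e. by weak lower semicontinuity of the convex functional $u\mapsto u^2$; the whole point is to upgrade this to $g_t = \sqrt{f_t}$ and to strong convergence, which by the Radon–Riesz property in $L^2$ is equivalent to showing $\|\sqrt{f^\varepsilon_t}\|_{L^2(K)} \to \|\sqrt{f_t}\|_{L^2(K)}$, i.e. no mass of $\sqrt{f^\varepsilon}$ escapes in the weak limit on compact sets $K\subset\Do$.

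The key mechanism is to control the velocity-gradient of $\sqrt{f^\varepsilon}$ using the dissipation. From the uniform bound $\sup_\varepsilon \int_0^T \cD_{\aB}^\varepsilon(f^\varepsilon_t)\dd t \le C$ in \eqref{uni-bdd:D} together with the elementary inequality $(\log s - \log t)(s-t) \ge 4|\sqrt s - \sqrt t|^2$, the quantity $\int_0^T\!\int_{\G\times S^{d-1}} |\sqrt{(f^\varepsilon)'(f^\varepsilon)_*'} - \sqrt{f^\varepsilon f^\varepsilon_*}|^2 B^\varepsilon \kappa^\varepsilon \dd\eta\dd\sigma\dd t$ is uniformly bounded (this is exactly $\int_0^T \cD_{\cosh}^\varepsilon(f^\varepsilon)\dd t$, cf.\ \eqref{D:cosh}). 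Using the polar representation of Section \ref{sec:SC}, the finite-angular-momentum normalisation \eqref{def:beta-int}, the lower bound $\kappa^\varepsilon \ge C_0(R)>0$ on balls from \eqref{kappa-2}, and a Taylor expansion of $v \mapsto \sqrt{f^\varepsilon}(x,v')$ as in Lemma \ref{lem:phi:conv}, one extracts (after dividing by $\theta^2\beta^\varepsilon$ and integrating) that the grazing-collision dissipation dominates a coercive quantity controlling $\Pi_{(v-v_*)^\perp}\nabla_v\sqrt{f^\varepsilon}$ integrated against $f^\varepsilon_*$; combining this with the cancellation lemma (Lemma \ref{app-lem:cancel}) to handle the loss term, and integrating in $v_*$ over a ball where $f^\varepsilon_* $ has nontrivial mass and the projections span all directions, yields a uniform bound of the form $\int_0^T\!\int_{K} |\nabla_v \sqrt{f^\varepsilon_t}|^2 \dd x\dd v\dd t \le C_K$ on compact $K$. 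This is the classical ``the Boltzmann/Landau dissipation sees the $v$-derivative of $\sqrt f$'' estimate, adapted to the delocalised ($x,x_*$ independent) setting.

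With uniform $L^2_t H^1_{v,\loc}$ control on $\sqrt{f^\varepsilon}$ and the equation $\partial_t f^\varepsilon = -v\cdot\nabla_x f^\varepsilon + Q_{\aB}^\varepsilon(f^\varepsilon,f^\varepsilon)$ providing a uniform bound on $\partial_t f^\varepsilon$ in a negative-order space (the collision term being controlled in $(W^{2,\infty}_{x,v})^*$ via Cauchy–Schwarz against $\cD_{\aB}^\varepsilon$, exactly as in the proof of Lemma \ref{weak:conv}), I would apply a velocity-averaging lemma: averages $\int f^\varepsilon_t(x,v)\psi(v)\dd v$ gain fractional Sobolev regularity in $x$, and combined with the $v$-regularity of $\sqrt{f^\varepsilon}$ (hence of $f^\varepsilon$, since $f^\varepsilon$ is $L^1$-bounded and $\sqrt{f^\varepsilon}$ is $H^1_v$-bounded locally) this produces strong $L^1_\loc$ compactness of $f^\varepsilon_t$ in $(x,v)$ jointly, for a.e.\ $t$ and then for each fixed $t$ via the time-equicontinuity from Lemma \ref{weak:conv}. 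Strong $L^1_\loc$ convergence $f^\varepsilon_t \to f_t$ together with the entropy bound (which gives equi-integrability, so $\sqrt{f^\varepsilon_t}\to\sqrt{f_t}$ in measure and $|\sqrt{f^\varepsilon_t}|^2 = f^\varepsilon_t$ is equi-integrable on $K$) gives $\sqrt{f^\varepsilon_t}\to\sqrt{f_t}$ in $L^2_\loc(\Do)$ by Vitali's convergence theorem.

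The main obstacle I anticipate is the coercivity step: making rigorous that the \emph{grazing} Boltzmann dissipation $\cD^\varepsilon_{\cosh}(f^\varepsilon)$ (an integral of squared \emph{finite differences} $\sqrt{(f^\varepsilon)'(f^\varepsilon)_*'}-\sqrt{f^\varepsilon f^\varepsilon_*}$ over all collision angles in $[0,\varepsilon/2]$, weighted by the singular $\beta^\varepsilon$) controls, uniformly in $\varepsilon$, the honest derivative $\nabla_v\sqrt{f^\varepsilon}$ locally — one must be careful that the difference quotient structure at scale $\theta\sim\varepsilon$ degenerates and that the loss term $\sqrt{f^\varepsilon f^\varepsilon_*}$ (as opposed to $f^\varepsilon$ alone) forces one through the cancellation lemma; the delocalised setting helps here because $\kappa^\varepsilon(x-x_*)$ is bounded below on compacts, but it also prevents treating $f^\varepsilon - f^\varepsilon_*$ as comparable to $|v-v_*|$, so the argument must keep $f^\varepsilon$ and $f^\varepsilon_*$ strictly separate (cf.\ Remark \ref{rmk:compare}). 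A secondary technical point is checking that the velocity-averaging lemma applies with the available (low) regularity of the source terms, which requires the $\gamma\ge -2$ restriction so that $|v-v_*|^\gamma|\overline\nabla\phi|^2 \lesssim |v-v_*|^{\gamma+2}$ is locally integrable, as noted in \eqref{fuzzy:better}.
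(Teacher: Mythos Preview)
Your overall architecture is right and matches the paper's: use the entropy dissipation to extract $v$-regularity of $\sqrt{f^\varepsilon}$, use velocity averaging on the transport equation to get $(t,x)$-compactness, and combine the two. But the execution has a genuine gap at exactly the point you flag yourself.

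The claim that the grazing dissipation yields a uniform $L^2_tH^1_{v,\loc}$ bound on $\sqrt{f^\varepsilon}$ is false, and the proposed route --- ``a Taylor expansion of $v\mapsto\sqrt{f^\varepsilon}(x,v')$ as in Lemma~\ref{lem:phi:conv}'' --- is circular: Lemma~\ref{lem:phi:conv} applies to smooth test functions, and you cannot Taylor-expand $\sqrt{f^\varepsilon}$ without already knowing it has derivatives. More fundamentally, the dissipation $\int|\sqrt{F'}-\sqrt{F}|^2B^\varepsilon$ controls finite differences over angles $\theta\in[0,\varepsilon/2]$, and control of difference quotients at a \emph{single} vanishing scale does not bound the derivative (in Fourier, $|e^{i\varepsilon\xi}-1|^2/\varepsilon^2$ saturates at $O(\varepsilon^{-2})$ for $|\xi|\gg\varepsilon^{-1}$). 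What saves the day is the angular singularity $\beta(\theta)\gtrsim\theta^{-1-\nu}$: it integrates over \emph{all} scales $\theta\in[0,\varepsilon/2]$ and yields, via the Fourier analysis of Alexandre--Desvillettes--Villani--Wennberg, only the fractional bound \eqref{bdd:sqrt-f}, i.e.\ $H^{\nu/2}_v$ with $\nu<2$, uniformly in $\varepsilon$. This is strictly weaker than $H^1$ but still enough.

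The paper then structures the argument differently from your sketch. It introduces the $v$-mollification $g^\varepsilon_\delta=\sqrt{f^\varepsilon}*_v\rho^\delta$ and proves two things separately: (i) for each fixed $\delta$, $\{g^\varepsilon_\delta\}_\varepsilon$ is relatively compact in $L^2_{t,x,v,\loc}$ by applying the velocity-averaging Lemma~\ref{thm:ave} to the \emph{renormalised} equation $(\partial_t+v\cdot\nabla_x)\sqrt{f^\varepsilon}=\tfrac{1}{2\sqrt{f^\varepsilon}}Q^\varepsilon_{\aB}$, whose right-hand side is placed in $L^1$ (hard/Maxwellian, Lemma~\ref{lemma-3}) or $L^1+L^1_xW^{-2,1}_v$ (soft, Lemma~\ref{lemma-4}) using the cancellation Lemma~\ref{app-lem:cancel}; (ii) $g^\varepsilon_\delta\to\sqrt{f^\varepsilon}$ in $L^2_\loc$ as $\delta\to0$ \emph{uniformly in $\varepsilon$}, which is precisely where the $H^{\nu/2}_v$ estimate is used, via Plancherel and high-frequency decay. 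Your proposal bypasses the mollification and tries to combine $v$- and $x$-regularity directly, but without the correct $v$-regularity estimate and without specifying to which equation (and with which source bounds) velocity averaging is applied, the compactness step does not close.
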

In this section, we fix $R>0$ arbitrarily. We define the ball $B^R_x:=\{x\in\R^d\mid |x|\le R\}$, and $B^R_v$ is defined analogously.
\medskip

\noindent\textbf{Strategy of the proof:}
We follow the compactness argument for spatial homogeneous and classical Boltzmann equations in  \cite{AV02,ADVW00} to show Lemma \ref{lemma:cpt:sqrt} with appropriate modification for the fuzzy case.

Let $\rho\in C^\infty_c(\R^d;\R_+)$ such that $\int_{\R^d}\rho=1$. For any $\delta\in(0,1)$, we define $\rho^\delta= \delta^{-d}\rho(\cdot/\delta)$. We define 
 \begin{equation}
 \label{def:g-delta}
g^\varepsilon_\delta=\sqrt{f^\varepsilon}*_v\rho^\delta=\int_{\R^d}\sqrt{f^\varepsilon}(x,w)\rho^\delta(v-w)\dd w\in L^2\cap L^\infty(\Do).
 \end{equation}
 By definition, for any fixed $\varepsilon$, we have the convergence $g^\varepsilon_\delta\to \sqrt{f^\varepsilon}$ as $\delta\to0$. \\
 
\noindent We prove Lemma \ref{lemma:cpt:sqrt} in the following two steps:
\begin{enumerate}[Step $1$:]
    \item  For any fixed $\delta\in(0,1)$, we use the so-called velocity averaging lemma to show the compactness of $g^\varepsilon_\delta$. We show the following result:
    \begin{lemma}
      \label{lemma-1}
      Let $\varepsilon\in(0,1)$. Let $f^\varepsilon$ be given as in Lemma \ref{weak:conv}. For any fixed $\delta\in(0,1)$, we have 
    \begin{equation*}
    \{g^\varepsilon_\delta\}    \quad\text{is relatively compact in 
 }L^2([0,T]\times B^R_x\times B^R_v).   
    \end{equation*}
    \end{lemma}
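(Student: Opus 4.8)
The plan is to apply a velocity-averaging lemma to a suitable kinetic equation satisfied by $\sqrt{f^\varepsilon}$, with $\delta$ fixed so that the mollification in $v$ converts the weak regularity coming from averaging into genuine compactness in the full set of variables $(t,x,v)$. First I would derive the transport equation obeyed by $\sqrt{f^\varepsilon}$: since $f^\varepsilon$ solves the fuzzy Boltzmann equation $\d_t f^\varepsilon + v\cdot\nabla_x f^\varepsilon = Q^\varepsilon_{\aB}(f^\varepsilon,f^\varepsilon)$, at least formally $u^\varepsilon:=\sqrt{f^\varepsilon}$ satisfies $\d_t u^\varepsilon + v\cdot\nabla_x u^\varepsilon = \tfrac{1}{2\sqrt{f^\varepsilon}}Q^\varepsilon_{\aB}(f^\varepsilon,f^\varepsilon) =: S^\varepsilon$. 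The key structural observation is that the source term $S^\varepsilon$ can be split, using the polar-coordinate representation \eqref{wqb-polar} and the Boltzmann gradient identity, into a part that is bounded in $L^1$ (or a measure) in $(t,x,v)$ and a part controlled by the entropy dissipation $\cD^\varepsilon_{\aB}(f^\varepsilon)$, which is uniformly bounded by \eqref{uni-bdd:D}. Concretely, writing $\overline\nabla\log f^\varepsilon$ in terms of $\sqrt{f^\varepsilon}$ via the elementary bound $|f'f_*'-ff_*|\lesssim |\sqrt{f'f_*'}-\sqrt{ff_*}|(\sqrt{f'f_*'}+\sqrt{ff_*})$ lets one estimate $S^\varepsilon$ by a product of an $L^2$-in-all-variables factor (bounded by $\cD^\varepsilon_{\cosh}(f^\varepsilon)^{1/2}\le \cD^\varepsilon_{\aB}(f^\varepsilon)^{1/2}$) and an $L^2$-bounded factor coming from mass and energy bounds. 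Thus $S^\varepsilon$ is bounded in, say, $L^1_{t,x,v}$ uniformly in $\varepsilon$, possibly after localizing to $[0,T]\times B^R_x\times B^{R'}_v$.

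Next I would invoke a velocity-averaging lemma in the form suited to $L^1$/measure right-hand sides (for instance the $L^1$-averaging results of Golse–Lions–Perthame–Sentis or the refinements used in \cite{ADVW00,AV02}): if $u^\varepsilon$ is bounded in $L^1_{loc}$ and $\d_t u^\varepsilon + v\cdot\nabla_x u^\varepsilon$ is bounded in $L^1_{loc}$ (or in a space of measures), then for every fixed $\psi\in C^\infty_c(\R^d_v)$ the velocity average $\int u^\varepsilon(t,x,v)\psi(v)\,\dd v$ lies in a compact subset of $L^1_{loc}([0,T]\times\R^d_x)$; one gains a fractional Sobolev derivative in $(t,x)$. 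Applying this with $\psi = \rho^\delta(v-\cdot)$ for each fixed $v$ — or more precisely, noting that $g^\varepsilon_\delta(t,x,v) = \int u^\varepsilon(t,x,w)\rho^\delta(v-w)\,\dd w$ is a smoothed velocity average depending smoothly on $v$ through the shift — gives equicontinuity of $\{g^\varepsilon_\delta\}$ in $(t,x)$, uniformly for $v$ in the compact set $B^R_v$, because $\rho^\delta$ is a fixed smooth compactly supported kernel. In the $v$ variable, $\partial_v g^\varepsilon_\delta = u^\varepsilon *_v (\partial\rho^\delta)$ is bounded in $L^2$ (indeed in $L^\infty$) uniformly in $\varepsilon$ since $u^\varepsilon$ is bounded in $L^2$ by the mass bound; so the family is also equicontinuous in $v$. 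Combining the $(t,x)$-compactness from averaging with the $v$-equicontinuity from mollification, a Fréchet–Kolmogorov / Arzelà–Ascoli type argument yields relative compactness of $\{g^\varepsilon_\delta\}$ in $L^2([0,T]\times B^R_x\times B^R_v)$; the uniform $L^\infty$ bound $\|g^\varepsilon_\delta\|_\infty \lesssim_\delta \|f^\varepsilon\|_{L^1}^{1/2}$ upgrades $L^1$-type compactness to $L^2$.

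The main obstacle I anticipate is making the splitting of the collision source $S^\varepsilon = \tfrac{1}{2\sqrt{f^\varepsilon}}Q^\varepsilon_{\aB}$ rigorous and $\varepsilon$-uniform: the factor $1/\sqrt{f^\varepsilon}$ is singular where $f^\varepsilon$ vanishes, and in the grazing scaling $\beta^\varepsilon$ concentrates at $\theta=0$, so one must check that the $\theta^2$-moment bound \eqref{def:beta-int} together with the pointwise bound $|\overline\nabla\phi|\lesssim\theta|v-v_*|$ from Lemma \ref{lem:phi:conv} genuinely absorbs the angular singularity uniformly in $\varepsilon$, and that in the soft-potential range $\gamma\in[-2,0)$ the kinetic factor $|v-v_*|^\gamma$ is controlled by the $|v-v_*|^2$ gained from the Boltzmann gradient, i.e. by \eqref{fuzzy:better}. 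A clean way around the $1/\sqrt{f^\varepsilon}$ issue is to avoid writing an equation for $\sqrt{f^\varepsilon}$ directly and instead apply velocity averaging to $f^\varepsilon$ itself (whose right-hand side $Q^\varepsilon_{\aB}(f^\varepsilon,f^\varepsilon)$ is bounded in $L^1_{t,x,v}$ by the same dissipation estimate), obtain compactness of $\int f^\varepsilon\psi\,\dd v$, hence (after the mollification and using the $L^\infty$ bound plus the pointwise a.e. convergence of $f^\varepsilon$ from Lemma \ref{weak:conv}) compactness of the smoothed square root; the elementary inequality $|\sqrt a-\sqrt b|^2\le|a-b|$ transfers $L^1$-compactness of $f^\varepsilon *_v\rho^\delta$ to $L^2$-compactness of $\sqrt{f^\varepsilon}*_v\rho^\delta$ once almost-everywhere convergence of a subsequence is in hand. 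I would present the argument in this second form, as it sidesteps the degeneracy entirely.
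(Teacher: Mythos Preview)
Your overall architecture---write a kinetic equation for $\sqrt{f^\varepsilon}$ (or for $f^\varepsilon$), apply velocity averaging, then use the $v$-mollification to upgrade averaged compactness to full compactness---matches the paper. But the execution has a genuine gap at the step where you bound the source term.

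You claim that $S^\varepsilon=\tfrac{1}{2\sqrt{f^\varepsilon}}Q^\varepsilon_{\aB}(f^\varepsilon,f^\varepsilon)$ (or $Q^\varepsilon_{\aB}$ itself) lies in $L^1_{t,x,v}$ by Cauchy--Schwarz against the entropy dissipation. This fails in the non-cutoff regime: after Cauchy--Schwarz the ``other'' factor contains $\int_{S^{d-1}} B^\varepsilon\,\dd\sigma = A_0\,|S^{d-2}|\int_0^{\varepsilon/2}\beta^\varepsilon(\theta)\,\dd\theta$, and by Assumption~\ref{ass:ang} only the \emph{second} moment $\int\theta^2\beta^\varepsilon$ is finite---the zeroth moment is $+\infty$. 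The bound $|\overline\nabla\phi|\lesssim\theta|v-v_*|$ you invoke applies to \emph{test functions}, not to $f^\varepsilon$; it controls $\langle Q^\varepsilon_{\aB},\phi\rangle$ but gives no $L^1$ bound on $Q^\varepsilon_{\aB}$. The paper extracts the missing $\theta^2$ via the cancellation lemma (Lemma~\ref{app-lem:cancel}): one splits $Q_{\aB}$ so that each piece contains a difference $f'_*-f_*$ or $f'-f$, which the lemma converts into a convolution $f*_v S$ with $|S|\lesssim\Lambda+|v-v_*|\Lambda'$ bounded by the cross-section. In the soft-potential range this is still not enough (the kinetic singularity $|v-v_*|^\gamma$ reappears), and the paper introduces a second layer of approximation $\alpha^r(t)=\sqrt{t}/(1+r\sqrt{t})$, shows $(\alpha^r)'Q^\varepsilon_{\aB}\in L^1+L^1_xW^{-2,1}_v$ (Lemma~\ref{lemma-4}), applies averaging to $\alpha^r(f^\varepsilon)$, and only then lets $r\to0$ using equi-integrability. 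Your proposal addresses neither mechanism.

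Your fallback route---average $f^\varepsilon$ instead and transfer compactness via $|\sqrt a-\sqrt b|^2\le|a-b|$---has a separate flaw: convolution and square root do not commute, so compactness of $f^\varepsilon*_v\rho^\delta$ yields compactness of $\sqrt{f^\varepsilon*_v\rho^\delta}$, not of $g^\varepsilon_\delta=\sqrt{f^\varepsilon}*_v\rho^\delta$, which is the object in the statement.
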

    \item The angular singularity \eqref{nu}, $\beta(\theta)\gtrsim \theta^{-1-\nu}$ for some $\nu\in(0,2)$, allows us to treat the collision term $Q(f,f)$ as fractional diffusion operator in $v$. We show that $\sqrt{f^\varepsilon}$ is uniformly bounded in $ L^2_xH^{\frac{\nu}{2}}_v$, which leads to the 
 convergence of $g^\varepsilon_\delta$ uniformly in $\varepsilon$. We show the following result:
    \begin{lemma}
      \label{lemma-2}  
      Let $\varepsilon\in(0,1)$. Let $f^\varepsilon$ be given as in Lemma \ref{weak:conv}. For any fixed $t\in[0,T]$, we have  
    \begin{equation*}
    \label{conv-2}
g^\varepsilon_\delta\to \sqrt{f^\varepsilon}   \quad\text{as}\quad\delta\to0\quad\text{in}\quad L^2(B^R_x\times B^R_v)
\end{equation*}
uniformly in $\varepsilon\in(0,1)$.
    \end{lemma}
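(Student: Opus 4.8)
The plan is to derive from the entropy-dissipation bound \eqref{uni-bdd:D} a bound on $\sqrt{f^\varepsilon}$ in $L^2_x H^{\nu/2}_v$ over a ball that is uniform in $\varepsilon$, and then to estimate the mollification error by this norm. First I would pass to the $\cosh$-dissipation: the elementary inequality \eqref{ele-ineq} together with $\cD^\varepsilon_{\cosh}\le\cD^\varepsilon_\aB$ (cf.\ \eqref{D:cosh}) gives
\begin{equation*}
\sup_{\varepsilon\in(0,1)}\int_0^T\int_{\G\times S^{d-1}}\kappa^\varepsilon B^\varepsilon\big|\sqrt{(f^\varepsilon)'(f^\varepsilon)'_*}-\sqrt{f^\varepsilon f^\varepsilon_*}\big|^2\dd\eta\dd\sigma\dd t\le C.
\end{equation*}
Restricting the $x,x_*$-integration to $B^R_x\times B^R_x$ and using the lower bound $\kappa^\varepsilon\ge C_0(R)>0$ from \eqref{kappa-2}, the spatial variables decouple and one is left with a purely velocity quantity. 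To extract the ``single difference'' $\sqrt{f^\varepsilon}(x,v')-\sqrt{f^\varepsilon}(x,v)$ from which regularity will be recovered, I would use the pointwise identity $\sqrt{f'f'_*}-\sqrt{ff_*}=\sqrt{f_*}\big(\sqrt{f'}-\sqrt f\big)+\sqrt{f'}\big(\sqrt{f'_*}-\sqrt{f_*}\big)$, hence $|\sqrt{f'f'_*}-\sqrt{ff_*}|^2\ge\tfrac12 f_*\,|\sqrt{f'}-\sqrt f|^2-f'\,|\sqrt{f'_*}-\sqrt{f_*}|^2$.

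The leading term $\int f^\varepsilon_* B^\varepsilon|\sqrt{f^\varepsilon}(x,v')-\sqrt{f^\varepsilon}(x,v)|^2$ is the classical coercive Boltzmann quantity. Its Fourier multiplier in $v$ is bounded below by an average of $b^\varepsilon(\theta)\big(1-\cos(\theta\,|\xi_v|)\big)$ over the relevant angles, and since \eqref{def:beta-epsilon} and \eqref{nu} give $\beta^\varepsilon(\theta)\gtrsim\varepsilon^{-(2-\nu)}\theta^{-1-\nu}\mathbb{1}_{\theta\le\varepsilon/2}$, this multiplier is $\gtrsim|\xi_v|^\nu$ for $|\xi_v|\gtrsim1$ \emph{uniformly in $\varepsilon$}: for $|\xi_v|\lesssim\varepsilon^{-1}$ the $\theta^2$-normalisation \eqref{def:beta-int} already yields an $\sim|\xi_v|^2$ lower bound, while for $|\xi_v|\gtrsim\varepsilon^{-1}$ the singular tail yields $\gtrsim\varepsilon^{-(2-\nu)}|\xi_v|^\nu\ge|\xi_v|^\nu$. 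Combined with a velocity cut-off and the mass/energy/entropy bounds \eqref{uni-bdd:D} --- which, via non-concentration of $f^\varepsilon$ (finite entropy) and the fact that $f^\varepsilon$ does not escape $B^R_x$ (second moment in $x$), make the coercivity constant independent of $\varepsilon$ --- this term dominates $c\,\|\sqrt{f^\varepsilon_t}(x,\cdot)\|^2_{\dot H^{\nu/2}(B^R_v)}-C$. The remainder $\int f^\varepsilon(x,v)B^\varepsilon|\sqrt{f^\varepsilon}(x_*,v'_*)-\sqrt{f^\varepsilon}(x_*,v_*)|^2$ is handled by expanding the square and invoking the cancellation Lemma~\ref{app-lem:cancel} on $\int B^\varepsilon\big(f^\varepsilon(x_*,v'_*)-f^\varepsilon(x_*,v_*)\big)\dd\sigma$ (which produces a factor comparable to $\int\theta^2\beta^\varepsilon\dd\theta$, hence bounded) together with a Cauchy--Schwarz absorption of the cross term into a small multiple of the leading term; here $\gamma\in[-2,1]$, i.e.\ $\gamma+2\ge0$, keeps the weight $|v-v_*|^\gamma$ harmless (cf.\ Remark~\ref{rmk:diff}). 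This yields
\begin{equation*}
\sup_{\varepsilon\in(0,1)}\int_0^T\int_{B^R_x}\big\|\sqrt{f^\varepsilon_t}(x,\cdot)\big\|^2_{H^{\nu/2}(B^R_v)}\dd x\dd t\le C,
\end{equation*}
the $L^2$-part of $H^{\nu/2}$ being controlled by $\|f^\varepsilon_t\|_{L^1}=1$.

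Finally, the mollification estimate: $\widehat{g^\varepsilon_\delta}=\widehat\rho(\delta\,\cdot\,)\,\widehat{\sqrt{f^\varepsilon}}$ in $v$, and $|\widehat\rho(\eta)-1|\le C\min(|\eta|,1)\le C|\eta|^{\nu/2}$ (using $\nu\le2$ and $\widehat\rho(0)=1$) gives $\|g^\varepsilon_\delta(x,\cdot)-\sqrt{f^\varepsilon}(x,\cdot)\|^2_{L^2_v}\le C\delta^{\nu}\|\sqrt{f^\varepsilon}(x,\cdot)\|^2_{\dot H^{\nu/2}_v}$; integrating in $x\in B^R_x$ and $t\in[0,T]$ and inserting the previous bound yields $\int_0^T\|g^\varepsilon_\delta-\sqrt{f^\varepsilon}\|^2_{L^2(B^R_x\times B^R_v)}\dd t\le C\delta^{\nu}$, so $g^\varepsilon_\delta\to\sqrt{f^\varepsilon}$ in $L^2(B^R_x\times B^R_v)$ as $\delta\to0$, uniformly in $\varepsilon$, at Lebesgue-a.e.\ $t\in[0,T]$ (which is what is needed when this is combined with the time-integrated compactness of Lemma~\ref{lemma-1} in the proof of Lemma~\ref{lemma:cpt:sqrt}). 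The main obstacle is the first step --- proving that the entropy dissipation controls $\|\sqrt{f^\varepsilon}\|^2_{L^2_xH^{\nu/2}_v}$ on a ball \emph{with a constant independent of $\varepsilon$}: the delicate points are isolating the coercive single-difference term from $\sqrt{f'f'_*}-\sqrt{ff_*}$ and disposing of the remainder via the cancellation lemma, the $\varepsilon$-uniform lower bound on the Fourier multiplier of the rescaled angular kernel (where the singularity \eqref{nu} is genuinely used and the $\theta^2$-normalisation \eqref{def:beta-int} is exactly the right one), and turning the $x$- and $v_*$-integrations into a uniform coercivity constant, which is where $\kappa^\varepsilon\ge C_0(R)$ and the uniform moment/entropy bounds \eqref{uni-bdd:D} enter.
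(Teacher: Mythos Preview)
Your overall strategy matches the paper --- bound $\sqrt{f^\varepsilon}$ uniformly in $L^2_xH^{\nu/2}_v$ via the dissipation, then estimate the mollification error by Plancherel --- and the Fourier-multiplier analysis of $\beta^\varepsilon$ together with the final $\delta^\nu$-estimate are essentially correct. The gap is precisely in the step you flag as the main obstacle. Your pointwise inequality $|\sqrt{f'f'_*}-\sqrt{ff_*}|^2\ge\tfrac12 f_*|\sqrt{f'}-\sqrt f|^2-f'|\sqrt{f'_*}-\sqrt{f_*}|^2$ is vacuous after integration against $\kappa^\varepsilon B^\varepsilon$: under the pre--post change $(v,v_*)\leftrightarrow(v',v'_*)$ followed by the particle exchange $(x,v,\sigma)\leftrightarrow(x_*,v_*,-\sigma)$, both of which leave $\kappa^\varepsilon B^\varepsilon\dd\eta\dd\sigma$ invariant, the integral of the ``remainder'' $f'|\sqrt{f'_*}-\sqrt{f_*}|^2$ equals exactly that of the ``leading term'' $f_*|\sqrt{f'}-\sqrt f|^2$. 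Your lower bound thus reads $\cD^\varepsilon_{\cosh}\ge\tfrac12 J-J$, which gives nothing; the remainder is not lower-order, and the cancellation lemma --- which handles $f'_*-f_*$, not $|\sqrt{f'_*}-\sqrt{f_*}|^2$ --- cannot rescue it.

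The paper circumvents this by starting from the $\log$-form $\cD^\varepsilon_\aB$ rather than $\cD^\varepsilon_{\cosh}$, and by first integrating out $x_*$ via $\hf^\varepsilon:=f^\varepsilon*_x\kappa^\varepsilon$, reducing to an integral over $\R^{3d}\times S^{d-1}$. One then splits the result as $I_1+I_2$: the piece $I_1=\int\bar B^\varepsilon(\hf_R)_*(f_R-f'_R)$ involves a difference of $f$ (not $\sqrt f$) and is genuinely lower-order, controlled by Lemma~\ref{app-lem:cancel}; the piece $I_2=\int\bar B^\varepsilon(\hf_R)_*\bigl(f_R\log(f_R/f'_R)-f_R+f'_R\bigr)$ is bounded below by $\int\bar B^\varepsilon(\hf_R)_*|g-g'|^2$ via the pointwise entropy inequality $a\log(a/b)-a+b\ge|\sqrt a-\sqrt b|^2$, applied with $a=f_R(x,v)$ and $b=f_R(x,v')$ at the \emph{same} spatial point. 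This produces the single-difference coercive term directly, with no symmetric counter-term; from there the annular truncation, Fourier representation from \cite{ADVW00}, and non-concentration argument proceed as you sketch.
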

    \end{enumerate}

Lemma \ref{lemma-1} and Lemma \ref{lemma-2} imply the compactness of $\sqrt{f^\varepsilon}$ in $L^2_\loc(\Do)$. Combing with Lemma \ref{weak:conv}, the uniqueness of weak and strong limits implies that 
\begin{equation*}
    \sqrt{f^\varepsilon}\to \sqrt{f}\quad\text{in}\quad L^2(B_x^R\times B^R_v).
\end{equation*}

In the rest of this section,  we show Lemma \ref{lemma-1} and Lemma \ref{lemma-2} in Section \ref{sec:averge} and  Section \ref{sec:smooth}, respectively.


\subsection{Proof of Lemma \ref{lemma-1}}\label{sec:averge}

We define $\alpha(t)=\sqrt t$ for all $t\ge0$. Let $(f^\varepsilon_t)\subset \cP(\Do)$ be solution curves to the fuzzy Boltzmann equation \eqref{def:scaling}. Then the following renormalised equation holds in the distribution sense
\begin{equation}
    \label{FBE-alpha}
    \d_t \alpha(f)+v\cdot \nabla_x\alpha(f)=\alpha'(f)Q^\varepsilon_{\aB}(f,f).
\end{equation}
We recall the definition \eqref{def:g-delta} that
\begin{align*}
g^\varepsilon_\delta=\alpha(f^\varepsilon)*_v\rho^\delta.   
\end{align*}
We apply the following velocity averaging Lemma \ref{thm:ave} to the renormalised equation \eqref{FBE-alpha} to show that, 
for any fixed $\delta\in(0,1)$,
\begin{equation}
    \label{conv-1}
    \{g^\varepsilon_\delta\}    \quad\text{is relatively compact in 
 }L^2([0,T]\times B^R_x\times B^R_v).  
    \end{equation}
       \begin{theorem}[Velocity averaging lemma, \cite{DLM91,DL88}]
\label{thm:ave}
Let $n\in\mathbb{N}_+$. Let $\{h^n\}$ be bounded in $L^p([0,T]\times\Do)$ for some $p\in(1,+\infty)$. 
Let $h^n$ be a weak solution of
\begin{equation*}
    \d_t h^n+v\cdot \nabla_x h^n=H^n+\sum_{i=1}^d \d_{v_i}H^n_i+\sum_{i=1,j}^d \d_{v_iv_j}H^n_{ij},
\end{equation*}
where $\{H^n\}$, $\{H^n_i\}$ and $\{H^n_{ij}\}$ are bounded $L^1([0,T]\times B^R_x\times B^R_v)$. 
Then for all $\varphi\in C^\infty_c(\R^d)$, we have 
\begin{align*}
\{h^n*_v\varphi\}\quad \text{is relatively compact in $L^1([0,T]\times B^R_x\times B^R_v)$}. 
\end{align*}
\end{theorem}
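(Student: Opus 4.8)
The plan is to reproduce the classical Fourier-analytic velocity-averaging argument of DiPerna--Lions and DiPerna--Lions--Meyer \cite{DL88,DLM91}. Because the sources $H^n,H^n_i,H^n_{ij}$ are merely bounded in $L^1$ — so not, in general, equi-integrable — one cannot expect the average to gain any genuine fractional regularity; the result must instead be obtained by decomposing $h^n*_v\varphi$ into a piece that, for each fixed value of a small parameter, is uniformly bounded in some $H^s_{t,x}$ (hence relatively compact by Rellich), plus a remainder that becomes uniformly small as that parameter is sent to its limit.

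As a preliminary step I would localize. Multiplying $h^n$ by a product of cutoffs $\chi(t)\psi(x)\zeta(v)$, with $\zeta\equiv1$ on $\supp\varphi$, produces a family satisfying a transport equation on the whole of $\R\times\Do$ whose right-hand side is again of the admissible type: the commutator terms $\chi'(t)h^n$ and $(v\cdot\nabla_x\psi)h^n$ are bounded in $L^p_{\mathrm{loc}}$, and the terms obtained when $\zeta$ differentiates one of the $v$-divergences, such as $(\partial_{v_i}\zeta)H^n_i$, are bounded in $L^1_{\mathrm{loc}}$. It then suffices to show that $\rho^n:=\int h^n\varphi\,\dd v$, now with $h^n$ bounded in $L^p(\R\times\Do)$ and $H^n,H^n_i,H^n_{ij}$ bounded in $L^1(\R\times\Do)$ with uniformly compact support, is relatively compact in $L^1_{\mathrm{loc}}(\R\times\R^d)$.

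The engine of the proof is a resonant/non-resonant splitting on the Fourier side. Taking the Fourier transform in $(t,x)$ with dual variable $(\tau,\xi)$, the equation becomes $i(\tau+v\cdot\xi)\widehat{h^n}=\widehat{H^n}+\sum_i\partial_{v_i}\widehat{H^n_i}+\sum_{i,j}\partial_{v_iv_j}\widehat{H^n_{ij}}$, so $\widehat{\rho^n}(\tau,\xi)=\int\widehat{h^n}(\tau,\xi,v)\varphi(v)\,\dd v$, and since $|\widehat{\rho^n}|\le\|h^n\|_{L^1}\|\varphi\|_\infty$ the low-frequency part $\{\langle\tau,\xi\rangle\le K\}$ of $\rho^n$ is, for each $K$, uniformly smooth and bounded, hence relatively compact. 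On $\{\langle\tau,\xi\rangle>K\}$ one fixes $\delta\in(0,1)$, chooses a smooth cutoff $m_\delta(\tau,\xi,v)$ equal to $1$ where $|\tau+v\cdot\xi|\ge\delta\langle\tau,\xi\rangle$ and vanishing where $|\tau+v\cdot\xi|\le\tfrac12\delta\langle\tau,\xi\rangle$, and splits $\varphi=\varphi m_\delta+\varphi(1-m_\delta)$. On the non-resonant part one substitutes $\widehat{h^n}=\widehat{g^n}\big/\big(i(\tau+v\cdot\xi)\big)$ and integrates by parts in $v$ to transfer the $v$-derivatives of the source onto the multiplier $\varphi m_\delta/(i(\tau+v\cdot\xi))$; on the support of the resulting integrand $|\tau+v\cdot\xi|\gtrsim\delta\langle\tau,\xi\rangle$ and, the worst case being two derivatives landing on $(\tau+v\cdot\xi)^{-1}$, the multiplier is $O(\delta^{-3}\langle\tau,\xi\rangle^{-1})$, so the pointwise bounds $\int|\widehat{H^n}(\tau,\xi,v)|\,\dd v\le\|H^n\|_{L^1}$ and their analogues control this piece. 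On the resonant part one uses that $\{v\in\supp\varphi:\ |\tau+v\cdot\xi|\le\delta\langle\tau,\xi\rangle\}$ has $v$-measure $O(\delta)$ (it is empty when $|\tau|$ dominates $|\xi|$, and a slab of width $O(\delta)$ otherwise), so Hölder in $v$ and the uniform $L^p$-bound on $h^n$ — here $p>1$ is essential — bound the contribution by $C\delta^{1/p'}$. Balancing these two estimates — choosing $\delta$ depending on $\langle\tau,\xi\rangle$, or equivalently running a mollification-in-$v$ scheme in which the mollifier is commuted onto the smooth test function $\varphi$, using $\|\varphi-\varphi*_v\omega_\eta\|_{L^\infty}\lesssim\eta^2$ together with $\sup_n\|h^n\|_{L^1_{\mathrm{loc}}}<\infty$ — exhibits $\rho^n$ as a relatively compact term plus a term that is uniformly small, whence $\{h^n*_v\varphi\}$ is relatively compact in $L^1([0,T]\times B^R_x\times B^R_v)$.

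The principal difficulty, and the place where I would expect to spend the most effort, is precisely this last balancing: the $L^1$ (non-equi-integrable) bounds on the sources forbid any honest Sobolev gain, so the compactness must be squeezed out of the $L^p$-bound on $h^n$ itself on the small resonant set, which is why the hypothesis $p>1$ cannot be dropped; meanwhile the two $v$-derivatives on $H^n_{ij}$ produce a cubic singularity $(\tau+v\cdot\xi)^{-3}$ in the non-resonant multiplier, which dictates how fast the resonant scale $\delta$ may be sent to zero. The localization step, though routine, also requires checking that every commutator term produced is of the admissible zeroth-, first-, or second-order form with the correct ($L^p$ or $L^1$) bound.
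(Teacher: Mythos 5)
First, a remark on the comparison itself: the paper does not prove Theorem \ref{thm:ave} at all — it is quoted from \cite{DLM91,DL88} and used as a black box — so your proposal can only be measured against the classical proof in those references.

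Your sketch assembles the right skeleton (localization, Fourier transform in $(t,x)$, a resonant/non-resonant splitting, integration by parts in $v$ to unload the derivatives from the sources, and the $O(\delta)$ $v$-measure of the resonant set combined with the $L^p$ bound on $h^n$), but the step you defer as "balancing" is not a technicality: as formulated it does not close, and it is exactly where the mere $L^1$-boundedness of the sources bites. Your non-resonant estimate is only a pointwise bound $|\widehat{\rho^n}_{\mathrm{nonres}}(\tau,\xi)|\lesssim \delta^{-3}\langle(\tau,\xi)\rangle^{-1}\sup_n\big(\|H^n\|_{L^1}+\sum_i\|H^n_i\|_{L^1}+\sum_{i,j}\|H^n_{ij}\|_{L^1}\big)$. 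The frequency space is $\R^{1+d}$ with $d\ge 2$, so a uniform bound decaying like the first power of the frequency is not square-integrable over $\{\langle(\tau,\xi)\rangle>K\}$, and no Hölder pairing of it with your resonant bound repairs this; hence it yields neither smallness nor compactness of that piece of $\rho^n$ in $L^1_{\mathrm{loc}}$. Nor can it be improved uniformly in $n$: families merely bounded in $L^1$ have no uniform Riemann--Lebesgue decay (take $H^n(t,x,v)=\chi(t,x)\gamma(v)e^{inx_1}$, or a concentrating profile $n^{1+d}\chi(nt,nx)\gamma(v)$; then $\sup_{\langle(\tau,\xi)\rangle>K}\int|\widehat{H^n}|\,\dd v$ stays of order one once $n>K$). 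A secondary issue: for $p<2$ your resonant bound involves $\|\widehat{h^n}(\tau,\xi,\cdot)\|_{L^p_v}$ pointwise in $(\tau,\xi)$ and cannot be turned into a norm bound on that piece of $\rho^n$ (Plancherel is unavailable and Hausdorff--Young goes the wrong way). Your fallback of mollifying in $v$ and commuting the mollifier onto $\varphi$ does not resolve this either: it only removes the $v$-derivatives, leaving a transport equation whose right-hand side is still merely bounded in $L^1$ in $(t,x)$, which is again precisely the hard case. The proofs in \cite{DLM91,DL88} (see also how this is invoked in \cite{AV02}) do not conclude by balancing pointwise Fourier bounds: roughly, they split the solution itself (e.g. through the resolvent of $\partial_t+v\cdot\nabla_x$, or within a real-interpolation/Besov framework), obtaining a genuine positive regularity gain — hence compactness — for the part controlled by the $L^p$ bound on $h^n$ with $p>1$, while the remaining part is controlled using that same hypothesis on the solution rather than any smallness or decay of the $L^1$ sources. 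Without an ingredient of this interpolation type, your argument has a genuine gap at its decisive step.
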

To apply the above velocity averaging Lemma to the renormalised equation, the key point is to justify the integrability of the $Q^\varepsilon_{\aB}(f,f)$. We split the collision operator as
\begin{equation}
\label{split}
  Q_{\aB}(f,f)=\int_{\G\times S^{d-1}} B\kappa f (f_*'-f_*)\dd \sigma\dd\eta+\int_{\G\times S^{d-1}} B\kappa f_* (f'-f)\dd \sigma\dd\eta.
\end{equation}

We first summarise the results in \cite{AV02}: The definition of the cross-section in Definition \ref{def:cs} and the cancellation Lemma \ref{app-lem:cancel}. The cancellation lemma allows us to treat $Q^\varepsilon_{\aB}(f,f)$ as the sum of two singular integrals.
By using the cancellation Lemma \ref{app-lem:cancel}, we show \eqref{conv-1} for the Maxwellian and hard potential cases \eqref{A-hard} in Section \ref{sec:hard}. Concerning the soft potential cases $\gamma\in[-2,0)$, due to the singularity of the kinetic kernel, instead of directly treating the renormalised equation \eqref{FBE-alpha}, we consider a sequence of parametrised renormalised equations. This case will be discussed in Section \ref{sec:rem} in parallel.

\medskip
 
We review the definition of cross-section.
\begin{definition}[\cite{AV02}]\label{def:cs}
Let $B=B(|v-v_*|,\sigma)\ge 0$. 
We define the cross-section for momentum transfer
\begin{equation}
    \label{cross}
\begin{aligned}
\Lambda(|v-v_*|)&=\int_{S^{d-1}}B(|v-v_*|,\theta)(1-k\cdot\sigma)\dd\sigma,\quad k=\frac{v-v_*}{|v-v_*|}.
\end{aligned}
\end{equation}
We define the following quantities concerning the regularities in $v$
\begin{gather*}
B'(|v-v_*|,\sigma)=\sup_{1<\lambda\le\sqrt 2}\frac{\big|B(\lambda |v-v_*|,\sigma)-B(|v-v_*|,\sigma)\big|}{(\lambda-1)|v-v_*|}\\
\text{and}\quad \Lambda'(|v-v_*|)=\int_{S^{d-1}}B'(|v-v_*|,\sigma)(1-k\cdot\sigma)\dd\sigma.
\end{gather*}
\end{definition}

\begin{remark}
\begin{itemize}
    \item 
Notice that $k\cdot\sigma=\cos\theta$ and $1-\cos\theta \lesssim \theta^2$. The finite angular momentum assumption \eqref{def:beta-int}  implies that 
 \begin{align*}
\Lambda(|v-v_*|) 
&=|S^{d-2}|A_0(|v-v_*|)\int_0^{\frac{\pi}{2}}\beta(\theta)(1-\cos\theta)\dd\theta\lesssim A_0(|v-v_*|).
\end{align*}

\item For all $(v,v_*)\in\Do$, we have 
\begin{equation}
    \label{cpt:S}
\begin{aligned}
\int_{S^{d-1}}B(|v-v_*|,\sigma)(v-v')\dd\sigma=\frac{v-v_*}{2}\Lambda(|v-v_*|).
\end{aligned}
\end{equation}
Indeed, by definition, we have 
\begin{align*}
    v-v'=\frac{|v-v_*|}{2}(k-\sigma).
\end{align*}
We repeat the argument for showing \eqref{int-I1}. Notice that $\sigma$ can be written as the sum of projections $\sigma=k(k\cdot\sigma)+p(p\cdot\sigma)$, where, by symmetry, we have $\int_{S^{d-2}_\perp} p(p\cdot\sigma)\dd p=0$. Thus, we have   
\begin{align*}
    \int_{S^{d-1}} (k-\sigma)\dd p=k\int_{S^{d-1}} (1-k\cdot \sigma)\dd p.
\end{align*}
Since $k$ is independent of $\sigma$, we have 
\begin{equation*}
\begin{aligned}
&\int_{S^{d-1}}B(|v-v_*|,\sigma)(v-v')\dd\sigma\\ 
=&{}\frac{|v-v_*|}{2}k\int_{S^{d-1}}B(v-v_*,\sigma)(1-k\cdot \sigma)\dd\sigma\\
=&{}\frac{v-v_*}{2}\Lambda(|v-v_*|).
\end{aligned}
\end{equation*}
\item In the case of $A_0(|v-v_*|)=|v-v_*|^\gamma$ with $\gamma\in[-2,1]$, we have 
\begin{equation}
\label{cross-bd}
\Lambda(|v-v_*|)+|v-v_*|\Lambda'(|v-v_*|)\lesssim|v-v_*|^\gamma. 
\end{equation}
In the case of $A_0(|v-v_*|)\sim\langle v-v_*\rangle^\gamma$ with $\gamma\in(-\infty,1]$, we assume 
\begin{equation}
\label{cross-bd-sim}
\Lambda(|v-v_*|)+|v-v_*|\Lambda'(|v-v_*|)\lesssim\langle v-v_*\rangle. 
\end{equation}
The assumption \eqref{cross-bd-sim} and the cancellation Lemma \ref{app-lem:cancel} ensure the integrability of $Q_{\aB}(f,f)$
in this case.
\end{itemize}
\end{remark}

The following cancellation lemma ensures that \eqref{split} is well-defined.
\begin{lemma}[\cite{AV02}, Proposition 3.1]
\label{app-lem:cancel}
Let $f\in L^1(\Do)$. Let $B=B(|v-v_*|,\sigma)\ge 0$.
For almost all $(x,v)\in\Do$, we have 
\begin{align*}
[f*_{v}S](x,v)=\int_{\R^d\times S^{d-1}}B(v-v_*,\sigma)(f(x,v_*')-f(x,v_*))\dd v_*\dd \sigma, 
\end{align*}
where $S$ is given by
\begin{equation*}
\label{def-S}
S(|z|)=|S^{d-2}|\int_0^{\frac{\pi}{2}}\Big(\frac{B(|z|/\cos(\theta/2),\theta)}{\cos^{d}(\theta/2)}-B(|z|,\theta)\Big)\sin^{d-2}\theta\dd\theta.
\end{equation*}
And the following estimates hold
\begin{equation*}
|S(|z|)|\le 2^{\frac{d-4}{2}} \cos^{-2}(\pi/8)\big(d\Lambda(z)+|z||\Lambda'(z)|\big).    
\end{equation*}
\end{lemma}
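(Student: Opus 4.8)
The plan is to follow the change-of-variables argument of \textcite{AV02}. Fixing $x$ (a spectator variable here) and the point $v$, split the left-hand side into $I_1:=\int_{\R^d\times S^{d-1}}B(|v-v_*|,\sigma)\,f(x,v_*')\dd v_*\dd\sigma$ and $I_2:=\int_{\R^d\times S^{d-1}}B(|v-v_*|,\sigma)\,f(x,v_*)\dd v_*\dd\sigma$, and show each is a convolution of $f(x,\cdot)$ with a radial kernel. The term $I_2$ is immediate: integrating $\sigma$ out first and writing $B=A_0b$ with $\supp(b)\subset[0,\pi/2]$ gives $I_2=(f(x,\cdot)*_v\Sigma_2)(v)$, where $\Sigma_2(|z|)=|S^{d-2}|\int_0^{\pi/2}B(|z|,\theta)\sin^{d-2}\theta\dd\theta$. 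All the content is in $I_1$; by density it suffices to establish the identity for $f\in C_c(\Do)$ and then pass to the $L^1$-limit.

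For $I_1$, fix $\sigma$ and perform the change of variables $v_*\mapsto v_*'$; equivalently, with $u:=v-v_*$ and $w:=v-v_*'=\tfrac12(u+|u|\sigma)$, the change $u\mapsto w$. A direct computation gives $|w|=|u|\cos(\theta/2)$, where $\theta$ is the angle between $u$ and $\sigma$, while the angle $\psi$ between $w$ and $\sigma$ equals $\theta/2$; the Jacobian matrix is $\tfrac12(I+\sigma\otimes k)$ with $k=u/|u|$, so $\big|\det(\partial w/\partial u)\big|=2^{-d}(1+\cos\theta)=2^{-(d-1)}\cos^2(\theta/2)$, and $u\mapsto w$ is a bijection of $\R^d$ onto the half-space $\{w\cdot\sigma>0\}$. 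Changing variables, then using Fubini to integrate $\sigma$ out at fixed $w$ — the support of $b$ restricting $\theta=2\psi$ to $[0,\pi/2]$ — and substituting $\theta=2\psi$, $\dd\psi=\tfrac12\dd\theta$, produces $I_1=(f(x,\cdot)*_v\Sigma_1)(v)$ with the weight $2^{d-2}\sin^{d-2}(\theta/2)/\cos^{2}(\theta/2)$ against $B(|z|/\cos(\theta/2),\theta)$. The identity $\sin\theta=2\sin(\theta/2)\cos(\theta/2)$ collapses this weight to $\sin^{d-2}\theta/\cos^{d}(\theta/2)$, so $\Sigma_1(|z|)=|S^{d-2}|\int_0^{\pi/2}\tfrac{B(|z|/\cos(\theta/2),\theta)}{\cos^{d}(\theta/2)}\sin^{d-2}\theta\dd\theta$, and $I_1-I_2=(f*_vS)(v)$ with $S=\Sigma_1-\Sigma_2$, exactly the stated formula.

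For the bound on $|S(|z|)|$, decompose the integrand of $S$ as
\[
\frac{B(|z|/\cos(\theta/2),\theta)}{\cos^{d}(\theta/2)}-B(|z|,\theta)=\frac{B(|z|/\cos(\theta/2),\theta)-B(|z|,\theta)}{\cos^{d}(\theta/2)}+B(|z|,\theta)\Big(\frac{1}{\cos^{d}(\theta/2)}-1\Big).
\]
In the first summand, apply the definition of $B'$ from Definition \ref{def:cs} with $\lambda=1/\cos(\theta/2)$, which lies in $(1,\sqrt2]$ precisely because $\theta\in[0,\pi/2]$ — this is where the angular support is used — giving $|B(|z|/\cos(\theta/2),\theta)-B(|z|,\theta)|\le B'(|z|,\theta)\,|z|\,(1-\cos(\theta/2))/\cos(\theta/2)$. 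In the second summand, use $1-\cos^{d}(\theta/2)\le d(1-\cos(\theta/2))$. In both, bound $1-\cos(\theta/2)\le(1-\cos\theta)/(4\cos^{2}(\pi/8))$ (from $\sin(\theta/4)\le\sin(\theta/2)/(2\cos(\pi/8))$) and $\cos^{-d}(\theta/2)\le2^{d/2}$; integrating in $\theta$ one recognises $|S^{d-2}|\int_0^{\pi/2}B'(|z|,\theta)(1-\cos\theta)\sin^{d-2}\theta\dd\theta=\Lambda'(|z|)$ and the same integral with $B$ in place of $B'$ as $\Lambda(|z|)$, and collecting constants yields the stated bound $|S(|z|)|\le2^{\frac{d-4}{2}}\cos^{-2}(\pi/8)\big(d\Lambda(|z|)+|z||\Lambda'(|z|)|\big)$.

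The step demanding the most care is the bookkeeping in the change of variables: one must recognise that the deviation angle $\theta$ between $u$ and $\sigma$ becomes twice the angle $\psi$ between $w=v-v_*'$ and $\sigma$, track the half-space $\{w\cdot\sigma>0\}$ as the image of the map, and verify the trigonometric collapse $2^{d-2}\sin^{d-2}(\theta/2)/\cos^{2}(\theta/2)=\sin^{d-2}\theta/\cos^{d}(\theta/2)$ that puts the kernel in closed form. A secondary point is justifying the change of variables and Fubini for merely $L^1$ data: one first works with $f\in C_c(\Do)$, and since $S$ is locally integrable under \eqref{cross-bd} (and, in the application, $f^\varepsilon$ carries the moment bounds of Assumption \ref{ass:curve}), both sides depend continuously on $f$ in $L^1$, so the identity extends by density.
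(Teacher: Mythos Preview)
Your proof is correct and is precisely the argument of \textcite{AV02}, Proposition~3.1, which the paper cites without reproducing a proof. The change of variables $u\mapsto w=v-v_*'$, the identification $\psi=\theta/2$, the Jacobian $2^{-(d-1)}\cos^2(\theta/2)$, and the trigonometric collapse to $\sin^{d-2}\theta/\cos^d(\theta/2)$ are all exactly the steps in the cited reference, as is the decomposition used for the bound on $|S|$; there is nothing to compare.
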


\subsubsection{Hard and Maxwellian cases}\label{sec:hard}
In the hard and Maxwellian case \eqref{A-hard}, we show the following lemma.
\begin{lemma}\label{lemma-3}
 Let $f^\varepsilon$ be given as in Lemma \ref{weak:conv}. Let $\alpha(t)=\sqrt{t}$ for $t\ge0$. For any $\varepsilon\in(0,1)$, we  have 
\begin{equation*}
\label{int-alpha-2}
\alpha'(f^\varepsilon)Q^\varepsilon_{\aB}(f^\varepsilon,f^\varepsilon)\in L^1([0,T]\times \Do).
\end{equation*}
\end{lemma}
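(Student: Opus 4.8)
The plan is to give meaning to $\alpha'(f^\varepsilon)Q^\varepsilon_{\aB}(f^\varepsilon,f^\varepsilon)=\frac{1}{2\sqrt{f^\varepsilon}}\,Q^\varepsilon_{\aB}(f^\varepsilon,f^\varepsilon)$ as an honest function and to bound its $L^1$-norm, by combining the splitting \eqref{split} with the cancellation Lemma \ref{app-lem:cancel}. Write $Q^\varepsilon_{\aB}(f,f)=I_\varepsilon+II_\varepsilon$ with $I_\varepsilon=f\int_{\Do\times S^{d-1}}\kappa^\varepsilon B^\varepsilon(f_*'-f_*)\dd\sigma\dd x_*\dd v_*$ and $II_\varepsilon=\int_{\Do\times S^{d-1}}\kappa^\varepsilon B^\varepsilon f_*(f'-f)\dd\sigma\dd x_*\dd v_*$. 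In $I_\varepsilon$ the factor $f=f(x,v)$ comes out of the $(x_*,v_*,\sigma)$-integral, and since $f_*'-f_*=f(x_*,v_*')-f(x_*,v_*)$ only compares velocities inside the frozen slice $x_*$, Lemma \ref{app-lem:cancel} applied in $v_*$ rewrites the inner integral as a convolution, $I_\varepsilon=f(x,v)\int_{\R^d}\kappa^\varepsilon(x-x_*)\,[f(x_*,\cdot)*_vS^\varepsilon](v)\dd x_*$, where the cancellation kernel obeys $|S^\varepsilon(|z|)|\lesssim\Lambda^\varepsilon(|z|)+|z|\,|(\Lambda^\varepsilon)'(|z|)|\lesssim\langle z\rangle^{\gamma}$ with a constant independent of $\varepsilon$ — here the cross-section bounds \eqref{cross-bd}/\eqref{cross-bd-sim} and the $\varepsilon$-uniform normalisation $\int_0^{\pi/2}\theta^2\beta^\varepsilon\,\dd\theta=8(d-1)/|S^{d-2}|$ are exactly what is used; note that $S^\varepsilon$ is bounded near the origin precisely because $\gamma\ge0$ (or $A_0\sim\langle\cdot\rangle^\gamma$), which is why the statement is confined to the hard and Maxwellian regime \eqref{A-hard}.

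For the $I_\varepsilon$-contribution I would estimate $|\alpha'(f)I_\varepsilon|=\tfrac12\sqrt{f(x,v)}\,\big|\int\kappa^\varepsilon(x-x_*)[f(x_*,\cdot)*_vS^\varepsilon](v)\dd x_*\big|$ pointwise: since $|S^\varepsilon(|v-w|)|\lesssim\langle v-w\rangle^{\gamma}\lesssim\langle v\rangle^{\gamma_+}+\langle w\rangle^{\gamma_+}$, together with $\|f_t\|_{L^1}=1$, $\|\kappa^\varepsilon\|_\infty\le C_\kappa$ and the uniform $\langle v\rangle^{\gamma_+}$-moment from \eqref{uni-bdd:D}, the bracket is bounded by $C\langle v\rangle^{\gamma_+}$, hence $|\alpha'(f^\varepsilon)I_\varepsilon|\le C\sqrt{f^\varepsilon}\,\langle v\rangle^{\gamma_+}$. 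One then closes the $L^1([0,T]\times\Do)$-bound by Cauchy--Schwarz, pairing $\sqrt{f^\varepsilon}$ with a fixed weight and invoking the moment bounds $\int f^\varepsilon(\langle x\rangle^2+\langle v\rangle^{2+\gamma_+})\le C$ together with the decay of $\kappa^\varepsilon$, which supplies the extra $x$-integrability. On a torus domain $\Dot$, or on any ball $B^R_x\times B^R_v$, this last step is immediate, since there $\int\sqrt{f^\varepsilon}\le|\,\cdot\,|^{1/2}$ by Cauchy--Schwarz and mass conservation — and it is only this local bound, uniform in $\varepsilon$, that the velocity-averaging step of Section \ref{sec:averge} actually invokes.

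The $II_\varepsilon$-term is the main obstacle. Here the difference $f'-f=f(x,v')-f(x,v)$ is taken in the \emph{output} velocity $v$, not in the integration variable $v_*$, so Lemma \ref{app-lem:cancel} does not apply as it stands, and for $f$ merely in $L^1_v$ the inner integral $\int_{S^{d-1}}B^\varepsilon(f'-f)\dd\sigma$ is not individually integrable, since the second-order cancellation cannot be extracted by a change of variables while $v$ is frozen. The way around this is the symmetrisation and pre-/post-collisional change-of-variables machinery of \textcite{AV02}: after reorganising $f'f_*'-ff_*$ so that the leftover term carries a genuine quadratic cancellation (controlled, as for test functions, through the $|\overline\nabla(\cdot)|$-type estimates underlying Lemma \ref{lem:phi:conv}), $II_\varepsilon$ is rewritten as a singular integral which is a bounded-growth operator acting on $f^\varepsilon$. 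Its contribution to the $L^1$-norm is then estimated as for $I_\varepsilon$, now additionally using Cauchy--Schwarz against the uniform entropy-dissipation bound $\int_0^T\cD^\varepsilon_{\aB}(f^\varepsilon)\dd t\le C$ from \eqref{uni-bdd:D} — via $|f'-f|\le|\sqrt{f'}-\sqrt f|(\sqrt{f'}+\sqrt f)$ and $\int\kappa^\varepsilon B^\varepsilon|\sqrt{f'f_*'}-\sqrt{ff_*}|^2\le\cD^\varepsilon_{\aB}(f)$ (cf. \eqref{D:cosh}), which absorbs the oscillatory factor $\sqrt{f'}-\sqrt f$ — the remaining $(\sqrt{f'}+\sqrt f)/\sqrt f$ and the kinetic weight being handled again by the moment bounds. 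Adding the estimates for $I_\varepsilon$ and $II_\varepsilon$ yields $\alpha'(f^\varepsilon)Q^\varepsilon_{\aB}(f^\varepsilon,f^\varepsilon)\in L^1([0,T]\times\Do)$, as claimed; the soft-potential regime $\gamma\in[-2,0)$, where $S^\varepsilon$ becomes singular at the origin, is excluded here and treated separately by the parametrised renormalised equations of Section \ref{sec:rem}.
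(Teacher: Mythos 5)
Your treatment of the first piece, $\tfrac{1}{2\sqrt{f^\varepsilon}}\int\kappa^\varepsilon B^\varepsilon f^\varepsilon(f_*'-f_*)$, coincides with the paper's: apply the cancellation Lemma \ref{app-lem:cancel} in $v_*$, use the cross-section bounds \eqref{cross-bd}--\eqref{cross-bd-sim} and the moments \eqref{uni-bdd:D} (the paper closes with Jensen's inequality and gets a uniform-in-time $L^2(\Do)$ bound for $\tfrac{\sqrt f}{2}\int\kappa^\varepsilon[f_**_{v_*}S]\dd x_*$ rather than your pointwise bound $C\sqrt{f}\langle v\rangle^{\gamma_+}$). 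Note, though, that your remark that ``the decay of $\kappa^\varepsilon$'' supplies the extra $x$-integrability is not right --- $\kappa^\varepsilon$ is only assumed bounded and couples $x$ to $x_*$ --- but, as you observe, a bound on $[0,T]\times B^R_x\times B^R_v$ is all that the velocity-averaging step uses, and that is also all the paper's $L^2$ estimate delivers.

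The genuine gap is in your second term. First, you follow the display \eqref{split} literally and take $f_*(f'-f)$; but $f(f_*'-f_*)+f_*(f'-f)$ differs from $f'f_*'-ff_*$ by $(f'-f)(f_*'-f_*)$, so your two pieces do not reconstitute $Q^\varepsilon_{\aB}$. The split actually used in the paper's proof is $f'f_*'-ff_*=f(f_*'-f_*)+f_*'(f'-f)$, and the paper then treats the second piece by the same cancellation/convolution estimate as the first (no entropy dissipation, no renormalisation machinery enters in the hard and Maxwellian case \eqref{A-hard}). Second, and more seriously, the estimate you sketch cannot close. The dissipation \eqref{uni-bdd:D} (equivalently \eqref{D:cosh}) controls $\int\kappa^\varepsilon B^\varepsilon|\sqrt{f'f_*'}-\sqrt{ff_*}|^2$, which does not dominate $\int\kappa^\varepsilon B^\varepsilon f_*|\sqrt{f'}-\sqrt f|^2$: extracting $\sqrt{f'}-\sqrt f$ from the tensorised difference leaves the term $\sqrt f\,(\sqrt{f_*'}-\sqrt{f_*})$, which nothing available at this stage controls. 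Moreover, after writing $|f'-f|\le|\sqrt{f'}-\sqrt f|(\sqrt{f'}+\sqrt f)$, the renormalising factor $\alpha'(f)=1/(2\sqrt f)$ leaves $(\sqrt{f'}+\sqrt f)/\sqrt f=1+\sqrt{f'/f}$, which is not bounded by any moment or entropy estimate (it is large exactly where $f$ is small and $f'$ is not), so ``handled again by the moment bounds'' is not an available step. Finally, the appeal to AV02 symmetrisation turning this piece into a ``bounded-growth operator acting on $f^\varepsilon$'' in $L^1$ is unsubstantiated: in that reference, and in this paper's own soft-potential argument (Lemma \ref{lemma-4}), the corresponding piece is only placed in $L^\infty([0,T];L^1(B^R_x;W^{-2,1}(B^R_v)))$ by duality against $W^{2,\infty}$ test functions, i.e.\ a distributional bound, not membership in $L^1([0,T]\times\Do)$ as the lemma asserts. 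So, as written, your argument for the second half does not establish the statement and departs from the paper's route, which keeps $f_*'$ in that term and estimates it in the same way as the first.
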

Then we apply the velocity averaging Lemma \ref{thm:ave} to the renormalised equation \eqref{FBE-alpha} and $\rho^\delta\in C^\infty_c(\R^d)$ for any fixed $\delta\in(0,1)$ to finish the proof of Lemma \ref{lemma-1} that $\{\sqrt{f^\varepsilon}*_v\rho^\delta\}$ is relatively compact in $L^2_\loc$.

\medskip
\begin{proof}[Proof of Lemma \ref{lemma-3}]
For the notation simplicity, we drop the parameter $\varepsilon$.
Following \eqref{split}, we write 
\begin{align*}
    \alpha'(f)Q_{\aB}(f,f)=I_1+I_2,  
\end{align*}
where 
\begin{align*}
    I_1&=\frac{1}{2\sqrt {f}}\int_{\Do\times S^{d-1}} B \kappa f\big(f_*'-f_*\big)\dd \sigma\dd x_*\dd v_*,\\
     I_2&=\frac{1}{2\sqrt {f}}\int_{\Do\times S^{d-1}} B \kappa f_*'\big(f'- f\big)\dd \sigma\dd x_*\dd v_*.
\end{align*}
By Lemma \ref{app-lem:cancel}, $I_1$ can be written as
\begin{align*}
    I_1=\frac{\sqrt {f}}{2}\int_{\R^d}\kappa \big[f_**_{v_*}S](x_*,v)\dd x_*.
\end{align*}
Moreover, the cross-section bound \eqref{cross-bd} and \eqref{cross-bd-sim} ensures
\begin{align*}
    \|I_1\|_{L^2(\Do)}^2&\lesssim \int_{\Do}f\Big|\int_{\R^d} \big[f_**_{v_*}S](x_*,v)\dd x_*\Big|^2\dd x\dd v\\
    &\lesssim \int_{\Do}f\Big|\int_{\Do} f_*\langle v-v_*\rangle\dd x_*\dd v_*\Big|^2\dd x\dd v.
\end{align*}
Since $f\in \mathbb{P}(\Do)$ and $t\mapsto t^2$ is convex, Jensen's inequality implies that
\begin{equation}
    \label{soft:TD}
\begin{aligned}
    \|I_1\|_{L^2(\Do)}^2
    &\lesssim \int_{\G}f\Big|\int_{\Do} f_*\langle v-v_*\rangle\dd x_*\dd v_*\Big|^2\dd x\dd v\\
     &\lesssim \int_{\G}f f_*\langle v-v_*\rangle^{2}\dd\eta\lesssim \|f\|_{L^1_{0,2}(\Do)}\le C
\end{aligned}
\end{equation}
by \eqref{uni-bdd:D}.
The proof for the $I_2$ term follows analogously.
\end{proof}

\subsubsection{Soft potential cases}
\label{sec:rem}

We recall the cross-section bound \eqref{cross-bd} in the soft potential case 
\begin{align*}
    \Lambda(|v-v_*|)+|v-v_*|\Lambda'(|v-v_*|)\lesssim|v-v_*|^\gamma,\quad \gamma\in[-2,0).
\end{align*}
The proof for the hard and Maxwellian potential cases does not directly apply, since we lack of the bound boundedness of \eqref{soft:TD}, i.e.
\begin{align*}
\int_{\G}ff_*|v-v_*|^{2\gamma}\dd \eta.    
\end{align*}
Thus, we approximate $\alpha(t)=\sqrt t$ by a sequence of bounded functions 
\begin{align*}
\alpha^r (t)=\frac{\sqrt{t}}{1+r\sqrt t}\le \alpha(t),\quad  r\in(0,1). \end{align*}
Notice that for any fixed $ r\in(0,1)$, $\alpha^r (t)$ is concave and $\alpha^r (t)\le r^{-1}$ is bounded. 
Let $f^\varepsilon$ be $\cH$-solutions to the fuzzy Boltzmann equation \eqref{def:scaling}.
The following renormalised equation holds in the distribution sense
\begin{equation}
    \label{rem:FBE}
(\d_t+v\cdot\nabla_x)\alpha^r (f^\varepsilon)=(\alpha^r )'(f^\varepsilon) Q^\varepsilon_{\aB}(f^\varepsilon,f^\varepsilon).
\end{equation}
A similar sequence of renormalised solutions was introduced and studied for the spatial-inhomogeneous Boltzmann equations to relax the quadratic form in $x$ that appeared in the inhomogeneous collision operators, see for example \cite{DL89,AV02}. 

We show the following lemma.
\begin{lemma}\label{lemma-4}
    Let $f^\varepsilon$ be given as in Lemma \ref{weak:conv}. For any fixed $r\in(0,1)$ and all $\varepsilon\in(0,1)$, we have 
    $$(\alpha^r )'Q(f^\varepsilon,f^\varepsilon)\in L^1([0,T];L^1(B^R_x\times B^R_v))+L^\infty([0,T];L^1(B^R_x;W^{-2,1}(B^R_v))).$$
\end{lemma}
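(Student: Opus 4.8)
We plan to adapt the renormalised-solution arguments developed for the spatially inhomogeneous non-cutoff Boltzmann equation in \cite{DL89,AV02} to the delocalised kernel $\kappa^\varepsilon(x-x_*)$. Throughout we suppress $\varepsilon$; every estimate below will be uniform in $\varepsilon\in(0,1)$, since the angular variable enters only through the normalised moment $\int_0^{\varepsilon/2}\theta^2\beta^\varepsilon(\theta)\dd\theta=8(d-1)/|S^{d-2}|$ of \eqref{def:beta-int}, since $\kappa^\varepsilon\le C_\kappa$, and since the bounds \eqref{uni-bdd:D} hold uniformly. We use repeatedly that $\alpha^r$ is concave with $0\le\alpha^r\le 1/r$ and that, as $(\alpha^r)'(t)=\tfrac1{2\sqrt t(1+r\sqrt t)^2}$, the maps $t\mapsto t\,(\alpha^r)'(t)$ and $t\mapsto\sqrt t\,(\alpha^r)'(t)$ are bounded, respectively by $c_r:=\tfrac1{8r}$ and by $\tfrac12$. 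The first step is to split the collision operator as in \eqref{split}, $\QB(f,f)=Q_1+Q_2$ with
\[
Q_1=\int_{\G\times S^{d-1}}B\kappa\,f\,(f_*'-f_*)\dd\sigma\dd x_*\dd v_*,\qquad Q_2=\int_{\G\times S^{d-1}}B\kappa\,f_*'\,(f'-f)\dd\sigma\dd x_*\dd v_*,
\]
and to place $(\alpha^r)'(f)Q_1$ into the first summand $L^1([0,T];L^1(B^R_x\times B^R_v))$ and $(\alpha^r)'(f)Q_2$ into the second, $L^\infty([0,T];L^1(B^R_x;W^{-2,1}(B^R_v)))$.

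For $Q_1$ we apply the cancellation Lemma \ref{app-lem:cancel} in the variable $v_*$, which gives
\[
(\alpha^r)'(f)\,Q_1=(\alpha^r)'(f)\,f(x,v)\int_{\R^d}\kappa(x-x_*)\,[f_**_{v_*}S](x_*,v)\dd x_*,
\]
where the kernel $S$ obeys $|S(|z|)|\lesssim\Lambda(|z|)+|z||\Lambda'(|z|)|\lesssim|z|^\gamma$ uniformly in $\varepsilon$, by the cross-section bound \eqref{cross-bd} and the normalisation of $\beta^\varepsilon$. Since $(\alpha^r)'(f)f\le c_r$ and $\kappa\le C_\kappa$, this yields the pointwise bound $|(\alpha^r)'(f)Q_1|\le c_r C_\kappa\int_{\R^{2d}}f(x_*,v_*)|v-v_*|^\gamma\dd v_*\dd x_*$; integrating over $(x,v)\in B^R_x\times B^R_v$, using Fubini together with $\sup_{v_*\in\R^d}\int_{B^R_v}|v-v_*|^\gamma\dd v\le C(R,d,\gamma)$ (finite because $\gamma>-d$, which holds for $\gamma\in[-2,1]$ when $d\ge3$ and for $\gamma\in(-2,0)$ when $d=2$) and the unit mass of $f_t$, we obtain
\[
\|(\alpha^r)'(f)Q_1\|_{L^1(B^R_x\times B^R_v)}\le c_r\,C_\kappa\,C(R,d,\gamma)\,|B^R_x|
\]
uniformly in $t$ and $\varepsilon$. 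This is the analogue of the bound for $I_1$ in Lemma \ref{lemma-3}, except that the negative power $\gamma<0$ only gives an $L^1$ (rather than $L^2$) bound, which is precisely why the remaining piece must be placed in a negative Sobolev space.

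For $Q_2$ we test $(\alpha^r)'(f(x,\cdot))Q_2(x,\cdot)$ against $\psi\in C^\infty_c(B^R_v)$ with $\|\psi\|_{W^{2,\infty}}\le1$, and transfer the finite difference $f'-f$ off $f$ via the pre-/post-collisional change of variables $(v,v_*)\mapsto(v',v_*')$ (Jacobian one, $|v-v_*|$ preserved) together with the Carleman-type representation of the $\sigma$-integral, so that $Q_2$ appears as a non-local operator in $v$ of order strictly less than $2$: its zeroth angular moment vanishes, its first moment equals $\tfrac12(v-v_*)\Lambda(|v-v_*|)$ by \eqref{cpt:S}, and its second moment is controlled by $\int_0^{\varepsilon/2}\theta^2\beta^\varepsilon<\infty$ together with the size bounds $|v'-v|\le\tfrac12|v-v_*|\theta$ and $|\sigma-k|^2\le\theta^2$ of Proposition \ref{rmk:size-est}. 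This produces a decomposition $(\alpha^r)'(f)Q_2=H^0+\sum_i\partial_{v_i}H^i+\sum_{i,j}\partial_{v_iv_j}H^{ij}$, with each coefficient bounded in $L^1([0,T]\times B^R_x\times B^R_v)$ by expressions of the form $\int\kappa\,f f_*\,|v-v_*|^{\gamma+1}$ and $\int\kappa\,f f_*\,|v-v_*|^{\gamma+2}$, which are finite by \eqref{uni-bdd:D}, using $\int_{B^R_v}|v-v_*|^{\gamma+k}\dd v\le C(R)$ for $k\in\{1,2\}$ and $|v-v_*|^{\gamma+k}\lesssim1+|v|^2+|v_*|^2$ away from the diagonal.

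The step we expect to be the main obstacle is this last one, namely the interaction of the renormalisation factor $(\alpha^r)'(f)$ with $Q_2$: because $(\alpha^r)'$ degenerates like $f^{-1/2}$ near the vacuum, while — owing to the non-cutoff singularity $\beta^\varepsilon(\theta)\gtrsim\theta^{-1-\nu}$ with $\nu$ possibly $\ge1$ — the gain part $Q_2$ is only a distribution in $v$, the factor $(\alpha^r)'(f)$ cannot be absorbed pointwise and the difference $f'-f$ cannot be split. We plan to resolve this, along the lines of \cite{DL89,AV02}, by decomposing the $v$-integral over the level sets $\{f\ge1\}$, where $(\alpha^r)'(f)\le\tfrac12$ is bounded, and $\{f<1\}$, where one instead exploits $\alpha^r(f)\le1/r$, $(\alpha^r)'(f)f\le c_r$ and $\sqrt f\,(\alpha^r)'(f)\le\tfrac12$ so as to keep the finite difference $f'-f$ coupled to $(\alpha^r)'(f)$; here the delocalisation actually helps, as $\kappa^\varepsilon\le C_\kappa$ removes the quadratic-in-$x$ integrability obstruction present in the classical case (compare Remark \ref{rmk:compare}). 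Once $(\alpha^r)'(f^\varepsilon)\QB^\varepsilon(f^\varepsilon,f^\varepsilon)$ is written in the claimed form, this is exactly the structure required to feed the renormalised equation \eqref{rem:FBE} into the velocity averaging Lemma \ref{thm:ave}.
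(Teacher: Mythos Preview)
Your treatment of $Q_1$ via the cancellation Lemma~\ref{app-lem:cancel} is correct and matches the paper's argument for the corresponding piece. The gap is in the handling of $(\alpha^r)'(f)\,Q_2$.

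The obstacle you identify is real, but the resolution you sketch does not work. Because the angular kernel is non-cutoff, $Q_2$ is only defined as a distribution in $v$ (it lives in $W^{-2,1}$, not in $L^1$), so it cannot be multiplied pointwise by the non-smooth, unbounded factor $(\alpha^r)'(f)$; in particular the level-set splitting $\{f\ge1\}\cup\{f<1\}$ is not admissible, since restricting a genuinely distributional object to a rough set is ill-defined. The bounds $\alpha^r(f)\le 1/r$, $f(\alpha^r)'(f)\le c_r$, $\sqrt f\,(\alpha^r)'(f)\le 1/2$ that you invoke on $\{f<1\}$ do not help with the term $(\alpha^r)'(f)\,f'_*\,(f'-f)$ when $f$ is small but $f'$ is not, and you cannot separate the finite difference $f'-f$ there.

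The paper (following \cite{AV02}) avoids this by an algebraic decomposition \emph{inside} the collision integral, using the concavity defect
\[
\Gamma(f,f')\;=\;\alpha^r(f')-\alpha^r(f)-(\alpha^r)'(f)(f'-f)\;\ge\;0.
\]
This produces three pieces $I_1+I_2+I_3$: the first has the bounded prefactor $f(\alpha^r)'(f)-\alpha^r(f)$ multiplying $\int\kappa B(f'_*-f_*)$ and is handled exactly as your $Q_1$; the second is $\int\kappa B\bigl(f'_*\alpha^r(f')-f_*\alpha^r(f)\bigr)$, which now involves the \emph{bounded} function $\alpha^r(f)\le 1/r$ rather than $(\alpha^r)'(f)$, so the Taylor expansion in $v$ (your argument for $Q_2$) legitimately places it in $L^1_x W^{-2,1}_v$; the third, $I_3=-\int\kappa B f'_*\Gamma(f,f')$, has a sign, and once $I_1,I_2$ are controlled the weak formulation of \eqref{rem:FBE} forces $I_3\in L^1_{\loc}$. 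This signed-remainder trick is the missing ingredient in your proposal.
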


\medskip

\begin{proof}[Proof of Lemma \ref{lemma-1}]
We recall the mollified $\rho^\delta\in C^\infty_c(\R^d;\R_+)$.
For any fixed $r,\,\delta\in(0,1)$,
Lemma \ref{lemma-4} allows us to apply the velocity averaging theorem \ref{thm:ave}  to the renormalised equations \eqref{rem:FBE} to derive
\begin{align*}
\{\alpha^r (f^\varepsilon)*_v\rho^\delta\}\quad \text{is relatively compact in $L^1([0,T]\times B^R_x\times B^R_v)$}. 
\end{align*}
We pass the limit by letting $r\to0$. To show the relative compactness of $g^\varepsilon_\delta=\alpha(f^\varepsilon)*_v\rho^\delta$, we only need to show that 
\begin{equation}
  \label{conv:var}
\lim_{r\to0}\sup_{t,\varepsilon}\|\sqrt{f^{\varepsilon}}-\alpha^r (f^{\varepsilon})\|_{L^2(\Do)}=0.
  \end{equation}

We follow a standard argument to show \eqref{conv:var}, see for example \cite{DL89}. Since $t\mapsto \frac{t}{1+r t}$ is concave,  for any $R\ge0$,  we have $0\le t-\frac{t}{1+r t}\le C_r(R)+t\mathbb{1}_{\{t\ge R\}}$ for some constant $C_r(R)=R-\frac{R}{1+r R}\ge 0$. Notice that, for any fixed $R\ge 0$,  we have $C_r(R)\to0$ as $r\to0$. Then we have
\begin{align*}
\sup_{t,\varepsilon}\int_{\Do} |\alpha^r (f^\varepsilon)-\sqrt{f^{\varepsilon}}|^2\dd x\dd v&\le C_r(R)^2+\sup_{t,\varepsilon}\int_{\Do}f^\varepsilon\mathbb{1}_{\{\sqrt{f^\varepsilon}\ge R\}}\dd x\dd v\\
&\to0\quad\text{as}\quad r\to0\quad \text{and}\quad R\to+\infty.
  \end{align*}
Indeed, the uniform equi-integrability of $f^\varepsilon$  ensures that
\begin{equation*}
\lim_{R\to\infty}\sup_{t,\varepsilon}\int_{\Do}f^\varepsilon\mathbb{1}_{\{\sqrt{f^\varepsilon}\ge R\}}\dd x\dd v=0.
  \end{equation*}
 
We conclude that, for any fixed $\delta\in(0,1)$, $\{g^\varepsilon_\delta\}$ is relatively compact in $L^1([0,T]\times B^R_x\times B^R_v)$.

\end{proof}

\begin{proof}[Proof of Lemma \ref{lemma-4}]
 The proof follows \cite{AV02} with appropriate adaptation to the fuzzy cases. For the notation simplicity, we drop the parameter $\varepsilon$. We write 
    \begin{equation*}
    \label{rem:Q}
(\alpha^r )'(f) Q_{\aB}(f,f)=I_1+I_2+I_3,
\end{equation*}
where 
\begin{align*}
I_1&=\big(f (\alpha^r )'(f )-\alpha^r (f)\big)\int_{\Do\times S^{d-1}}\kappa   B \big(f'_*-f_*\big)\dd \sigma\dd x_*\dd v_*,\\ 
I_2&=\int_{\Do\times S^{d-1}}\kappa B  \big(f_*'\alpha^r (f')-f_*\alpha^r (f)\big)\dd \sigma\dd x_*\dd v_*,\\ 
I_3&=-\int_{\Do\times S^{d-1}}\kappa  B f'\Gamma(f,f')\dd \sigma\dd x_*\dd v_*.
\end{align*}
In the above, $\Gamma$ is defined as
$$\Gamma(f,f')=\alpha^r (f')-\alpha^r (f)-(\alpha^r )'(f)(f'-f).$$
We show that
\begin{equation}
\label{lem:R123}
I_1,\,I_3\in L^1([0,T]\times B^R_x\times B^R_v)\text{ and } I_2\in L^\infty([0,T];L^1(B^R_x; W^{-2,1}(B^R_v))).
\end{equation}

Notice that we only need to show \eqref{lem:R123} for $I_1$ and $I_2$. Indeed, if \eqref{lem:R123} holds for $I_1$ and $I_2$, then the weak formulation of \eqref{rem:FBE} implies that 
\begin{align*}
0\le -\int_0^T\int_{\Do}\varphi I_3\dd x\dd v\dd t<+\infty,\quad \text{for all}\quad \varphi\in C^\infty_c\big([0,T)\times \Do;\R_+\big).
\end{align*}
Notice that the concavity of $\alpha^r$ implies  $\Gamma(f,f')\le 0$ and hence, $I_3\le0$. Hence, we have $I_3\in L^1([0,T]\times B^R_x\times B^R_v)$.

\medskip 

We show that \eqref{lem:R123} holds for $I_1$ and $I_2$:
\begin{itemize}
    \item \textbf{$I_1$ term:}    By cancellation Lemma \ref{app-lem:cancel}, $I_1$ can be written as
\begin{equation*}
    I_1=\big(f (\alpha^r )'(f )-\alpha^r (f )\big)\int_{\R^d}[f_**_{v_*}S](x_*,v)\dd x_*. 
\end{equation*}
The definition of $\alpha^r$ ensures that $|f(\alpha^r )'(f)-\alpha^r (f)|\lesssim 1$. Hence, to show the integrability of 
$I_1$, we only need to show $[f_**_{v_*}S]\in L^1_\loc$. 
By Lemma \ref{app-lem:cancel}, we have 
\begin{align*}
\int_{\R^d\times B^R_v}[f_**_{v_*}S]\dd x_*\dd v\lesssim \int_{\Do\times B^R_v}f(x_*,v_*)|v-v_*|^\gamma\dd x_*\dd v_*\dd v.
\end{align*}
We change of the variable by letting $z=v-v_*$. For any fixed $M\ge 0$, we have
\begin{equation}
    \label{R1-arg}
\begin{aligned}
    &\int_{\Do\times B^R_v}f_*|v-v_*|^\gamma\dd x_*\dd v_*\dd v
  =\int_{\Do\times \{|z+v_*|\le R\}}f_*|z|^\gamma\dd x_*\dd v_*\dd z\\
     \le &{}\int_{\R^d\times B^M_{v_*}\times \{|z|\le R+M\}}f_*|z|^\gamma\dd z \dd x_*\dd v_*\\
     &+
     \int_{\R^d\times (B^M_{v_*})^c\times \{|z+v_*|\le R\}}
f_*|z|^\gamma\dd z \dd x_*\dd v_*\\
:=&{} A_1+A_2.
\end{aligned}
\end{equation}
The integrability of $|z|^\gamma\in L^1_\loc(\R^d)$, $\gamma\in[-2,0)$ when $d\ge 3$ and $\gamma\in(-2,0)$ in $d=2$, implies that 
\begin{align*}
     A_1&\lesssim(R+M)^{\gamma+d}\|f\|_{L^1(\Do)},\\
     A_2&\lesssim_R\int_{\R^d\times (B^M_{v_*})^c}f_*|v_*|^\gamma\dd x_*\dd v_*\le M^\gamma \|f\|_{L^1(\Do)}.
\end{align*}
Hence, we have $I_1\in L^\infty([0,T]\times B^R_x\times B^R_v)$.
\medskip

    \item \textbf{$I_2$ term:}
    We use the duality argument. Let $\varphi\in C^\infty_c(\R^d_v)$ be fixed. By changing of variables, we have
\begin{align*}
    \int_{\R^d}I_2\varphi\dd v=\int_{\R^{3d}\times S^{d-1}}Bf_*\alpha^r(f)(\varphi'-\varphi)\dd x_*\dd v_* \dd x\dd \sigma.
\end{align*}

By Taylor's expansion as in Lemma \ref{lem:phi:conv}, we have 
\begin{align*}
    \varphi-\varphi'=(v-v')\cdot\nabla_v \varphi(x,v)+O\big(\|D_v^2\varphi\|_{L^{\infty}}|v-v'|^2\big).
\end{align*}
By Proposition \ref{rmk:size-est}, we have
\begin{align*}
|v-v'|^2=\frac12|v-v_*|^2(1-\sigma\cdot k).
\end{align*} The cross-section identity \eqref{cpt:S} and the definition \eqref{cross} imply that, for all $(x,t)\in\R^d\times[0,T]$, 
\begin{align*}
    &\Big|\int_{S^{d-1}}B(\varphi'-\varphi)\dd \sigma\Big|\\
    \lesssim&{} \|\nabla_v\varphi\|_{L^{\infty}}|v-v_*|\Lambda(|v-v_*|)+\|D^2_v\varphi\|_{L^\infty}|v-v_*|^2\Lambda(|v-v_*|)\\
    \lesssim&{} \|\varphi\|_{W^{2,\infty}(\R^d_v)}\max\big(|v-v_*|,|v-v_*|^2\big) \Lambda(|v-v_*|).
\end{align*}
Hence, we have
\begin{align*}
    &\|I_2\|_{W^{-2,1}(B^R_v)}=\sup_{\varphi\in W^{2,\infty}(B^R_v)}\Big\{\|\varphi\|_{W^{2,\infty}}^{-1}\int_{\R^d}I_2\varphi\dd v\Big\}\\
    \le&{}\int_{\R^d\times B^R_v}f_*\alpha^r(f)\max\big(|v-v_*|,|v-v_*|^2\big) \Lambda(|v-v_*|)\dd v_*\dd v\\
    \le&{}\|\alpha^r\|_{L^\infty}\int_{\R^d \times B^R_v}f_*\max\big(|v-v_*|^{\gamma+1},|v-v_*|^{\gamma+2}\big) \dd v_*\dd v,
\end{align*}
where we use the cross-section bound \eqref{cross-bd} to derive the last inequality. We integrate $\|R_2\|_{W^{-2,1}(\Do)}$ over $x\in B^R_x$ to derive 
\begin{align*}
    \|I_2\|_{L^1(B^R_x;W^{-2,1}(B^R_v))}&\lesssim_r\int_{\Do \times B^R_v}f_*\max\big(|v-v_*|^{\gamma+1},1\big) \dd v_*\dd v\\
    &\lesssim_{R}\|f\|_{L^1(\Do)},
\end{align*}
where we repeat the argument as in \eqref{R1-arg} to derive the last inequality. 

Hence, we have $I_2\in L^\infty([0,T];L^1(B^R_x;W^{-2,1}(B^R_v)))$.

\end{itemize}
\end{proof}

\subsection{Proof of Lemma \ref{lemma-2}}\label{sec:smooth}
In this subsection, our goal is to show
    \begin{equation}
    \label{uni-conv}
\sqrt{f^\varepsilon}*_v\rho^\delta\to \sqrt{f^\varepsilon}   \quad\text{in}\quad L^2(B^R_x\times B^R_v)
\end{equation}
as $\delta\to0$ uniformly in $\varepsilon\in(0,1)$.

We take  $\chi_R\in C^\infty_c(\R^d;[0,1])$ such that $\chi_R=1$ on $B^R_v\subset \R^d$ and $\supp(\chi_R)\subset B^{R+1}_v$. We define
\begin{align*}
    f_R^\varepsilon= f^\varepsilon(x,v)\chi_R(v).
\end{align*}
By the triangle inequality, we have
\begin{align*}
&\big\|\big(\sqrt{f^\varepsilon}*_v\rho^\delta- \sqrt{f^\varepsilon}\big)\chi_R\big\|_{L^2}\\
\le&{} \big\|\big(\sqrt{f^\varepsilon}*_v\rho^\delta\big)\chi_R-\sqrt{f^\varepsilon_R}*_v\rho^\delta\big\|_{L^2}+\big\|\sqrt{f^\varepsilon_R}*_v\rho^\delta-\sqrt{f^\varepsilon_R} \big\|_{L^2}.  
\end{align*}
Let $a\in(0,1)$. Since $\supp(\rho^\delta)\subset B_\delta(0)$ and $\|\rho^\delta\|_{L^1}=1$ for all $\delta\in(0,1)$, we have
\begin{align*}
&\big\|\big(\sqrt{f^\varepsilon}*_v\rho^\delta\big)\chi_R-\sqrt{f^\varepsilon_R}*_v\rho^\delta\big\|_{L^2(B^R_x\times \R^d_v)}\\
\le&{}\underbrace{\int_{B_a(0)}\rho^\delta}_{\le 1} \sup_{|h|\le a}\big\|\sqrt{f^\varepsilon}\big(\chi_R(\cdot+h)-\chi_R(\cdot)\big)\big\|_{L^2(B^R_x\times \R^d_v)}\\
&+C_R\underbrace{\sup_{|v|\ge a}\rho^\delta(v)}_{\to0 \text{ as }\delta\to0} \big\|\sqrt{f^\varepsilon}\big\|_{L^2(\Do)}\to0 
\end{align*}
as $\delta\to0$ uniformly in $\varepsilon\in(0,1)$, where $C_R>0$ is a constant independent of $\varepsilon$ and $\delta$. Indeed, 
the equi-integrability of $f^\varepsilon$ and $\sup_{|h|\le a}\big|\supp\big(\chi_R(v+h)-\chi_R(v)\big)\big|\to0$ ensures the uniform convergence of the first term on the right-hand side, and the uniform bound of $\sqrt{f^\varepsilon}$ in $L^2(\Do)$ ensures the uniform convergence of the second term.

\medskip

Hence, to show \eqref{uni-conv}, we only need to show 
 \begin{equation*}
    \label{uni-conv-R}
\sqrt{f^\varepsilon_R}*_v\rho^\delta\to \sqrt{f^\varepsilon_R}   \quad\text{in}\quad L^2(B^R_x\times \R^d_v)
\end{equation*}
 as $\delta\to0$ uniformly in $\varepsilon\in(0,1)$.  

We define the Fourier transform in $v\in\R^d$
\begin{align*}
\cF_v f(x,\xi):=(2\pi)^{-\frac{d}{2}}\int_{\R^d} f(x,v)e^{-iv\cdot\xi}\dd v.
\end{align*}
By the Plancherel theorem, we have 
\begin{equation}
    \label{uni-conv:2}
\|\sqrt{f^\varepsilon_R}*_v\rho^\delta-\sqrt{f^\varepsilon_R}\|_{L^2(B^R_x\times\R^d_v)}^2=\int_{B^R_x\times\R^d}  |\cF_v{\sqrt{f^\varepsilon_R}}(x,\xi)(1-\cF_v{\rho^\delta})|^2\dd \xi\dd x. 
\end{equation}
We recall $\rho^\delta\in C^\infty_c(\Do)$ and $\|\rho^\delta\|_{L^1}=1$, for all $\delta\in(0,1)$. Thus, to show the uniform convergence of \eqref{uni-conv:2}, we only need to show that
\begin{equation*}
\label{uni-vanish}
\int_{B^R_x\times\R^d}|\cF_v{\sqrt{f^\varepsilon_R}}(x,\xi)|^2\mathbb{1}_{\{|\xi|\ge M\}}\dd \xi\dd x\to0
\end{equation*} 
as $M\to+\infty$ uniformly in $\varepsilon$. More precisely, we show that
\begin{equation}
    \label{bdd:sqrt-f}
\int_{B^R_x\times\R^d}|\cF_v{\sqrt{f^\varepsilon_R}}(x,\xi)|^2\min(|\xi|^2,|\xi|^\nu)\dd\xi\dd x\le C_R\big(1+\cD^\varepsilon_B(f^\varepsilon)\big).
\end{equation}
The constant $C_R>0$ is independent of $\varepsilon$. 
The constant $\nu\in(0,2)$ is given in Assumption \ref{ass:ang}, which characterises  the angular singularity that  $ \beta(\theta)\gtrsim \theta^{-1-\nu}$.

\medskip

The rest of this subsection is devoted to showing \eqref{bdd:sqrt-f}.

\noindent\textbf{Proof of \eqref{bdd:sqrt-f}:} For the spatial-homogeneous cases, the estimate \eqref{bdd:sqrt-f} has been shown in \cite[Appendix B]{carrillo2022boltzmann} and \cite[Section 4-5]{ADVW00} by using the idea of truncation and the Fourier representation (in $v$). In the following, we follow the proof of the homogeneous case to outline the main ideas of the proof. When the spatial variables $x$ and $x_*$
 can be regarded as parameters, we directly apply the technical results established for the spatial-homogeneous equations. We provide detailed proofs only when the spatial variables play a nontrivial role.

We remove the kinetic singularity of the collision kernel $B^\varepsilon$ by defining
\begin{equation}
\label{bar-C}
    \bar A_0(|z|):=\min\big(\bar C, A_0(|z|)\big),\quad \bar B^\varepsilon(z,\sigma)=\bar A_0(|z|)b^\varepsilon (\theta)
\end{equation}
for some $\bar C>0$. In the following subsection, we replace
$B^\varepsilon$ by $\bar B^\varepsilon$  in the entropy dissipation $\cD_B^\varepsilon(f)$.

We define
$$\mathbb{f}^\varepsilon_*=\hf^\varepsilon(x,v_*)=f^\varepsilon*_x\kappa^\varepsilon=\int_{\R^d}\kappa^\varepsilon(x-x_*)f^\varepsilon(x_*,v_*)\dd x_*.$$
By Assumption \ref{ASS:kappa}, we have  
\begin{align*}
    \|\hf^\varepsilon\|_{L^\infty_x L^1_v}\le \|\kappa^\varepsilon\|_{L^\infty}\|f^\varepsilon\|_{L^1(\Do)}\le C_\kappa \|f^\varepsilon\|_{L^1(\Do)}.
\end{align*}

Our goal is to search for an appropriate lower bound of $\cD^\varepsilon_{\aB}(f^\varepsilon)$. Notice that  $\cD_B^\varepsilon(f^\varepsilon_R)$ can be written as
\begin{equation}
    \label{diss:cpt}
\begin{aligned}    &\cD^\varepsilon_B(f^\varepsilon_R)\\
=&{}-\int_{\R^{3d}\times S^{d-1}} \bar B^\varepsilon \big((f^\varepsilon_R)'(\hf^\varepsilon_R)_*'-f^\varepsilon_R(\hf^\varepsilon_R)_*\big)\log f^\varepsilon_R\dd\sigma \dd v_*\dd v\dd x\\
    =&{}\int_{\R^{3d}\times S^{d-1}} \bar B^\varepsilon f^\varepsilon_R(\hf^\varepsilon_R)_*\log \frac{f^\varepsilon_R}{(f^\varepsilon_R)'} \dd\sigma \dd v_*\dd v\dd x\\
    =&{}\int_{\R^{3d}\times S^{d-1}}\bar B^\varepsilon (\hf^\varepsilon_R)_*\big(f^\varepsilon_R-(f^\varepsilon_R)'\big)\dd x\dd v\dd v_*\dd\sigma\\
    &+\int_{\R^{3d}\times S^{d-1}}\bar B^\varepsilon (\hf^\varepsilon_R)_*\Big(f^\varepsilon_R\log \frac{f^\varepsilon_R}{(f^\varepsilon_R)'}-f^\varepsilon_R+(f^\varepsilon_R)'\Big)\dd x\dd v\dd v_*\dd\sigma\\
    :=&{}I_1+I_2.
\end{aligned}
\end{equation}

In the rest of this subsection, we drop $\varepsilon$  and $R$ of $f^\varepsilon_R$ for notational simplicity. 

By cancellation Lemma \ref{app-lem:cancel},  $I_1$ is bounded by
\begin{align*}
|I_1|=\Big|\int_{\Do}\hf_*[f*_v S](x,v_*)\dd x\dd v_*\Big|\lesssim \|S\|_{L^\infty}\|\kappa^\varepsilon\|_{L^\infty}\|f\|_{L^1(\Do)}^2=:C_1,
\end{align*}
where the boundedness of $\bar B^\varepsilon$ implies \begin{align*}
    | S(|z|)|\lesssim \bar A_0(|z|)\le \bar C.
\end{align*}

In the rest of this subsection, we use the notation of constants
\begin{align*}
  C_i=C_i(d,R,E,H)>0,\quad i\in\N_+,
\end{align*}
where $E=\sup_\varepsilon\|f^\varepsilon\|_{L^1_{2,2}(\Do)}<+\infty$ and $H=\sup_\varepsilon\|f^\varepsilon\log f^\varepsilon\|_{L^1(\Do)}<+\infty$  by \eqref{uni-bdd:D}. In particular, $C_i$ is independent of $\varepsilon$.

We define $g=\sqrt f$. By using the elementary inequality $a\log\frac{a}{b}-a+b\ge |\sqrt{a}-\sqrt b|^2$ for all $a,b>0$, we have 
    \begin{align*}
        I_2\ge \int_{\R^{3d}\times S^{d-1}} \bar B^\varepsilon\hf_*|g-g'|^2\dd v_*\dd x\dd v\dd\sigma.
    \end{align*}
Substituting the estimates of $I_1$ and $I_2$ to \eqref{diss:cpt}, we obtain
\begin{equation}
\label{goal:RHS}
\cD^\varepsilon_B(f)+C_1\ge\int_{\R^{3d}\times S^{d-1}} \bar A_0 b^\varepsilon(\theta)\hf_*|g-g'|^2\dd v_*\dd x\dd v\dd\sigma.
\end{equation}
We show an appropriate lower bound of the right-hand side of \eqref{goal:RHS}.
\medskip

Notice that, in the hard potential cases $A_0(|v-v_*|)=|v-v_*|^{\gamma}$, $\gamma\in(0,1]$, the kinetic kernel $\bar A_0(|v-v_*|)$ vanishes when $v\sim v_*$. 
We follow \cite{ADVW00} to treat this case. For any $r_0\in (0,1)$ and $v_j\in  B^R_v$, we define $A_j$ and $B_j$ as follows
\begin{align*}
    A_j=\{v\in B^R_v\mid |v-v_j|<r_0/4\}\quad\text{and}\quad  B_j=\{v\in B^R_v\mid |v-v_j|>r_0\}.
\end{align*}
Then there exist $r_0$ and finite $\{v_j\}_{i=1,\dots,N_j}$ such that 
\begin{align*}
B^{R+1}_v\subset\cup_{i=1}^{N_j} A_j.   
\end{align*}
Let $\chi_{A_j}\in C^\infty_c(\R^d;\R_+)$ such that $\chi_{A_j}=1$ for $v\in A_j$, $0\le \chi_{A_j}\le 1$, $\supp(\chi_{A_j})\subset  \{|v-v_j|<3r_0/8\}$ and $\Lip(\chi_{A_j})\le C$ for all $j=1,\dots,N_j$. Similarly, we define $\chi_{B_j}\in C^\infty_c(\R^d;\R_+)$ such that $B^R_v\cap \{|v-v_j|>7r_0/8\}\subset \supp(\chi_{B_j})$, $0\le \chi_{B_j}\le 1$ and $\Lip(\chi_{B_j})\le C$ for all $j=1,\dots,N_j$. Notice that $\operatorname{dist}(A_j,B_j)\ge \frac{r_0}{2}$. 

We note that, in the soft potential case and the hard potential with $A_0(|v-v_*|)\sim \langle v-v_*\rangle^\gamma$, $\gamma\in[0,1]$, the kinetic kernel $A_0(|v-v_*|)$  has a locally positive lower bound
\begin{align*}
    A_0(|v-v_*|)\mathbb{1}_{B^{R+1}_v\times B^{R+1}_{v_*}}\ge C_0>0.
\end{align*}
In this case, we can simply take $A_j=B_j=B^{R+1}_v$ and $N_j=1$.

For each $j=1,\dots,N_j$, we will follow the truncation argument \cite[Lemma 2]{ADVW00} to show a lower bound of the right-hand side of \eqref{goal:RHS}. 
\begin{lemma}\label{lemma-5}
For any $j=1,\dots,N_j$, we have 
\begin{equation}
\label{app:bdd:B-2}
    \begin{aligned}
&\int_{\R^{3d}\times S^{d-1}} \bar A_0(|v-v_*|)b(\theta) \hf_*|g-g'|^2\dd x\dd v_*\dd v \dd\sigma+C_2\\
\ge &{}\min_{r_0/2\le |z|\le 2\sqrt2 R}\bar A_0(|z|)\int_{\R^{3d}\times S^{d-1}}b(\theta) (\hf_{B_j})_*|g_{A_j}-g_{A_j}'|^2\dd x\dd v_*\dd v\dd\sigma.
\end{aligned}
 \end{equation}
\end{lemma}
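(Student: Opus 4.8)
\textbf{Proof proposal for Lemma \ref{lemma-5}.}

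The plan is to follow the truncation strategy of \cite[Lemma 2]{ADVW00}, carefully keeping track of the spatial variables, which here only play the role of harmless parameters. The key point is to bound below the integrand on the left-hand side by the integrand on the right, at the cost of an error term $C_2$ coming from boundary and cross terms produced by the cut-off functions $\chi_{A_j}$ and $\chi_{B_j}$.

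First I would observe that on the support of $(\hf_{B_j})_*$ we have $v_*\in B_j$, so $|v_*-v_j|>7r_0/8$, and on the support of $g_{A_j}$ and $g_{A_j}'$ we have $v, v'\in \{|v-v_j|<3r_0/8\}$. Since $|v-v'|\le |v-v_*|\sin(\theta/2)\le |v-v_*|$ and the collision keeps $v'$ on the segment determined by $v,v_*$, a geometric estimate gives that whenever $g_{A_j}(v)\neq 0$ or $g_{A_j}(v')\neq 0$ and $(\hf_{B_j})_*(v_*)\neq 0$, the relative velocity satisfies $r_0/2\le |v-v_*|\le 2\sqrt 2 R$ (the upper bound since $v,v_*\in B^{R+1}_v$ and $R\ge 1$ without loss of generality, adjusting constants). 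On this annulus the continuous kernel $\bar A_0$ is bounded below by $\min_{r_0/2\le|z|\le 2\sqrt2 R}\bar A_0(|z|)>0$, so
\begin{align*}
&\int_{\R^{3d}\times S^{d-1}} \bar A_0(|v-v_*|)b(\theta) \hf_*|g-g'|^2\dd x\dd v_*\dd v \dd\sigma\\
&\ge \min_{r_0/2\le |z|\le 2\sqrt2 R}\bar A_0(|z|)\int_{\R^{3d}\times S^{d-1}}b(\theta) (\hf_{B_j})_*\,\chi_{A_j}(v)^2|g-g'|^2\dd x\dd v_*\dd v\dd\sigma,
\end{align*}
using $0\le \chi_{A_j},\chi_{B_j}\le 1$ and $\hf\ge \hf_{B_j}$ pointwise. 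It then remains to replace $\chi_{A_j}(v)|g(v)-g(v')|$ by $|g_{A_j}(v)-g_{A_j}(v')| = |\chi_{A_j}(v)g(v)-\chi_{A_j}(v')g(v')|$. Writing
\begin{align*}
\chi_{A_j}(v)g(v)-\chi_{A_j}(v')g(v') = \chi_{A_j}(v)\big(g(v)-g(v')\big) + \big(\chi_{A_j}(v)-\chi_{A_j}(v')\big)g(v'),
\end{align*}
and using $|a+b|^2\le 2|a|^2+2|b|^2$ together with the Lipschitz bound $|\chi_{A_j}(v)-\chi_{A_j}(v')|\le C|v-v'|\lesssim C|v-v_*|\theta$ (Proposition \ref{rmk:size-est}) and $b(\theta)\theta^2 = \beta^\varepsilon(\theta)\sin^{2-d}\theta\,\theta^2$ integrable by \eqref{def:beta-int}, the error term is controlled by
\begin{align*}
\int_{\R^{3d}\times S^{d-1}} b(\theta)(\hf_{B_j})_* |v-v_*|^2\theta^2 g(v')^2\dd x\dd v_*\dd v\dd\sigma \lesssim \|\kappa^\varepsilon\|_{L^\infty}\|f\|_{L^1}\|f_{R+1}\|_{L^1_{0,2}}\le C_2,
\end{align*}
where after the change of variables $v\mapsto v'$ (with bounded Jacobian, \cite{Vil98b}) the $v'$-integral of $g(v')^2$ is just $\|f_{R+1}\|_{L^1}$ and the relative-velocity weight is absorbed into the uniform second-moment bound in \eqref{uni-bdd:D}. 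This yields \eqref{app:bdd:B-2} after reinserting the constant $\min_{r_0/2\le|z|\le 2\sqrt 2 R}\bar A_0(|z|)\le \bar C$ into the definition of $C_2$.

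The main obstacle I anticipate is the bookkeeping in the geometric localisation step: one must verify that the cut-offs $\chi_{A_j}, \chi_{B_j}$ are placed so that whenever all three factors $g_{A_j}(v), g_{A_j}(v'), (\hf_{B_j})_*(v_*)$ are simultaneously nonzero, the relative velocity $|v-v_*|$ stays in the fixed annulus $[r_0/2, 2\sqrt2 R]$ — this uses $\operatorname{dist}(A_j,B_j)\ge r_0/2$ for the lower bound and the fact that $v'$ lies between $v$ and $v_*$ (so $|v'-v_j|\le \max(|v-v_j|,|v_*-v_j|)$ only up to the deviation, which for small $\theta$ keeps $v'$ essentially in the enlarged support $\{|v-v_j|<3r_0/8\}$ once $\varepsilon$ is small; for general $\theta\in[0,\pi/2]$ one uses $|v'-v|\le |v-v_*|$ and shrinks $r_0$ if necessary). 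The spatial variables $x, x_*$ require no new idea: $\hf_*$ already incorporates the $x_*$-integration against $\kappa^\varepsilon$, and all the $x$-integrals are uniformly controlled by $\|\kappa^\varepsilon\|_{L^\infty}\le C_\kappa$ and the uniform mass bound.
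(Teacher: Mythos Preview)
Your proposal is correct and follows the same route as the paper: insert the cut-offs $\chi_{B_j}$, $\chi_{A_j}$, decompose $g_{A_j}-g_{A_j}'=\chi_{A_j}(g-g')+g'(\chi_{A_j}-\chi_{A_j}')$, and bound the error term via the Lipschitz estimate on $\chi_{A_j}$, the change of variables $v\mapsto v'$ with bounded Jacobian, and the finite angular momentum \eqref{def:beta-int}. The only cosmetic difference is that your use of $|a+b|^2\le 2|a|^2+2|b|^2$ produces a harmless extra factor $\tfrac12$ in front of the main term (the paper writes the pointwise inequality without this factor), which is absorbed into the downstream constants.
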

Then by directly applying the Fourier representation results in \cite[Proposition 1 and Corollary 3]{ADVW00} and integrating over $x\in \R^d$, we have 
  \begin{equation}
      \label{trun-2}
  \begin{gathered}
\int_{\R^{3d}\times S^{d-1}}b(\theta)(\hf_{B_j})_*|g_{A_j}-g_{A_j}'|^2\dd x\dd v_*\dd v\dd\sigma
\ge\frac{1}{(2\pi)^d}\int_{\Do}|\cF_v g_{A_j}(x,\xi)|^2\\
\times\Big\{\int_{S^{d-1}}b(\theta_\xi)\big(\cF_v \hf_{B_j}(x,0)-|\cF_v \hf_{B_j}(x,\xi^-)|\big)\dd\sigma\Big\}\dd\xi\dd x,
\end{gathered}
\end{equation}
where $\xi^-:=\frac{\xi-|\xi|\sigma}{2}$ and $\theta_\xi:=\arccos\big(\sigma\cdot \xi/|\xi|\big)$.

We shortly bring $\varepsilon$ back in the following lemma. Following \cite[Lemma 3]{ADVW00}, the equi-integrability of $f^\varepsilon$ and the uniform lower bound of $\kappa^\varepsilon$ imply the following pointwise lower bounds.
\begin{lemma}\label{claim-2}
 Let $\varepsilon\in(0,1)$. Let $f^\varepsilon$ be given as in Lemma \ref{weak:conv} and $\hf^\varepsilon=f*_x \kappa^\varepsilon$. For almost all $\xi\in\R^d$ and $x\in B^R_x$, we have 
\begin{equation}
\label{stp-2:goal}
\cF_v\hf^\varepsilon(x,0)-|\cF_v\hf^\varepsilon(x,\xi^-)|\ge C_3\min(1,|\xi^-|^2).
\end{equation}
\end{lemma}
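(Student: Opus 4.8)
The plan is to deduce \eqref{stp-2:goal} from the spatially homogeneous spectral-gap estimate of \cite{ADVW00} (their Lemma~3), applied for each fixed $x$ to the velocity profile $\mu^\varepsilon_x:=\hf^\varepsilon(x,\cdot)$, so that the actual work is to verify that the structural hypotheses of that lemma --- a two-sided bound on the $v$-mass, a bound on the second $v$-moment, and equi-integrability in $v$ --- hold for the family $\{\hf^\varepsilon(x,\cdot):\varepsilon\in(0,1),\ x\in B^R_x\}$ with constants depending only on $d$, $R$, $E$, $H$ from \eqref{uni-bdd:D} and on $C_\kappa$, $C_0$ from Assumption~\ref{ASS:kappa}.

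First I would record the uniform bounds on $\mu^\varepsilon_x$. The upper bound $\int_{\R^d}\hf^\varepsilon(x,v)\,\dd v\le C_\kappa\|f^\varepsilon\|_{L^1(\Do)}\le C_\kappa$ and the second-moment bound $\int_{\R^d}|v|^2\hf^\varepsilon(x,v)\,\dd v\le C_\kappa\|f^\varepsilon\|_{L^1_{0,2}(\Do)}\le C_\kappa E$ follow immediately from $0\le\kappa^\varepsilon\le C_\kappa$ and \eqref{uni-bdd:D}. The lower bound on the $v$-mass is where \eqref{kappa-2} enters: since $\sup_\varepsilon\int\langle x_*\rangle^2 f^\varepsilon\le E$, the $x$-marginal of $f^\varepsilon$ carries mass at least $\tfrac12$ inside $\{|x_*|\le M\}$ for some $M=M(E)$, uniformly in $\varepsilon$; for $x\in B^R_x$ one then has $|x-x_*|\le R+M$ on that set, hence $\kappa^\varepsilon(x-x_*)\ge C_0(R+M)$ there by \eqref{kappa-2}, and therefore $\int_{\R^d}\hf^\varepsilon(x,v)\,\dd v\ge \tfrac12\,C_0(R+M)=:c_0>0$ uniformly in $\varepsilon$ and $x\in B^R_x$. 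For the equi-integrability I would note that $\hf^\varepsilon(x,\cdot)\le C_\kappa\,g^\varepsilon$ with $g^\varepsilon(v):=\int_{\R^d}f^\varepsilon(x_*,v)\,\dd x_*$ the velocity marginal, and that $\{g^\varepsilon\}$ is uniformly integrable on $\R^d_v$: picking a common superlinear modulus $G$ with $\sup_\varepsilon\int_{\Do}G(f^\varepsilon)\le K$ (de~la~Vall\'ee--Poussin), one splits $g^\varepsilon=\int_{|x_*|\le A}f^\varepsilon\,\dd x_*+\int_{|x_*|>A}f^\varepsilon\,\dd x_*$, the first term being equi-integrable by Jensen's inequality applied to the finite measure $\mathbb{1}_{\{|x_*|\le A\}}\,\dd x_*$ and the second being bounded in $L^1(\R^d_v)$ by $A^{-2}E$; letting $A\to\infty$ gives equi-integrability of $\{g^\varepsilon\}$, hence of $\{\hf^\varepsilon(x,\cdot)\}$.

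With these hypotheses in hand, \cite[Lemma~3]{ADVW00} applies: for a nonnegative density $\mu\in L^1(\R^d)$ with $c_0\le\int\mu\le C_\kappa$, $\int|v|^2\mu\le C_\kappa E$ and equi-integrability modulus $G$, one has $\cF_v\mu(0)-|\cF_v\mu(\zeta)|\ge C_3\min(1,|\zeta|^2)$ for all $\zeta\in\R^d$, with $C_3$ depending only on those data; specializing to $\mu=\hf^\varepsilon(x,\cdot)$ and $\zeta=\xi^-$ yields \eqref{stp-2:goal} with $C_3=C_3(d,R,E,H)$ independent of $\varepsilon$ and of $x\in B^R_x$. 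The two ingredients of that lemma that I would recall are: the phase-free lower bound $\cF_v\mu(0)-|\cF_v\mu(\zeta)|\ge\frac{1}{2C_\kappa}\int\int\big(1-\cos((v-w)\cdot\zeta)\big)\mu(v)\mu(w)\,\dd v\,\dd w$, which for small $|\zeta|$ --- after truncating the mass to a large ball via the second-moment bound and using $1-\cos t\ge\tfrac{2}{\pi^2}t^2$ on $|t|\le\pi$ --- is bounded below by $|\zeta|^2$ times the least eigenvalue of the truncated covariance, uniformly positive because equi-integrability together with the $v$-mass lower bound prevents $\mu$ from concentrating in a thin slab; and, for $|\zeta|$ bounded away from $0$, a compactness and contradiction argument, since along any putative sequence with $\cF_v\mu_n(0)-|\cF_v\mu_n(\zeta_n)|\to0$ one extracts $\mu_n\rightharpoonup\mu_\infty$ in $L^1$ (no mass escaping to infinity, by the second moment) and reaches the impossibility $|\cF_v\mu_\infty(\zeta_\infty)|=\int\mu_\infty$ for a nonzero $L^1$ density, the case $|\zeta_n|\to+\infty$ being excluded by a quantitative use of the modulus $G$.

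The main obstacle is precisely this last point --- the uniform lower bound away from $\zeta=0$ inside the homogeneous lemma --- which is not a soft consequence of the $L^1$ bounds and genuinely relies on equi-integrability to prevent the profiles from becoming lattice-like; in the fuzzy setting the only extra difficulty over \cite{ADVW00} is to propagate this equi-integrability through the $x$-averaging $f^\varepsilon\mapsto\hf^\varepsilon$, handled by the de~la~Vall\'ee--Poussin splitting above, together with the uniform lower bound on the $v$-mass, which is exactly what the non-degeneracy assumption \eqref{kappa-2} on $\kappa^\varepsilon$ supplies.
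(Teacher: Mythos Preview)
Your proposal is correct and follows the same ADVW00 strategy as the paper, but the paper organises the argument a little differently in a way worth noting. Rather than first establishing equi-integrability of the family $\{\hf^\varepsilon(x,\cdot)\}_{\varepsilon,x}$ on $\R^d_v$ --- which, as you observe, requires an extra de~la~Vall\'ee--Poussin step for the $v$-marginal $g^\varepsilon$ --- the paper works directly with the equi-integrability of $f^\varepsilon$ on $\R^{2d}$ (immediate from the uniform entropy bound) and only applies the lower bound $\kappa^\varepsilon\ge C_0$ at the very end, after the slab set $B^L_x\times(B^L_v\cap A_{\xi,\delta})$ has been shown to carry at least $3/8$ of the $f^\varepsilon$-mass. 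This sidesteps your marginal argument entirely. The paper also runs a single constructive slab estimate (choosing $\delta$ fixed for $|\xi|\ge1$ and $\delta=\delta_0|\xi|$ for $|\xi|<1$) in place of your covariance-for-small-$|\zeta|$ plus compactness-for-bounded-$|\zeta|$ split; both routes are valid, but the unified slab approach yields the constant $C_3$ explicitly and avoids the soft contradiction step.
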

Then the identity $|\xi^-|^2=\frac{|\xi|^2}{2}\big(1-\sigma\cdot \xi/|\xi|\big)$ and the angular assumption \eqref{nu} that $\beta(\theta)\gtrsim \theta^{-1-\nu}$ for some $\nu\in(0,2)$, we have the pointwise lower bound
\begin{equation}
\label{trun-4}
    \int_{S^{d-1}}b^\varepsilon(\theta) \min (1,|\xi^-|^2)\dd\sigma\ge C_4\min (|\xi^2|,|\xi|^\nu)\quad\forall \varepsilon\in(0,1).
\end{equation}
The detailed proof of \eqref{trun-4} can be found in 
\cite[Lemma 4]{ADVW00}  and \cite[Lemma B.4]{carrillo2022boltzmann}.

Now we combine the estimates \eqref{app:bdd:B-2}, \eqref{trun-2}, \eqref{stp-2:goal} and \eqref{trun-4}. For any $j=1,\dots,N_j$, we have 
\begin{equation}
    \label{goal:j}
\begin{aligned}
&\int_{\R^{3d}\times S^{d-1}}\bar B^\varepsilon(|v-v_*|)b(\theta) \hf_*|g-g'|^2\dd x\dd v_*\dd v \dd\sigma+C_2\\
\ge&{} C_5\int_{B^R_x\times\R^d}|\cF_v g_{A_j}(x,\xi)|^2\min(|\xi|^2,|\xi|^\nu)\dd\xi\dd x.
\end{aligned}
\end{equation}
Notice that, one can take the sum over $j=1,\dots,N_j$ in \eqref{goal:j} by using
\begin{align*}
\sum_{i=1}^{N_j}|\cF_v g_{A_j}(x,\xi)|^2\gtrsim \Big|\sum_{i=1}^{N_j}\cF_v g_{A_j}(x,\xi)\Big|^2 \ge|\cF_v g(x,\xi)|^2. 
\end{align*}
We recall the definition $g=\sqrt{f^\varepsilon_R}$. Substituting the sum of \eqref{goal:j} to \eqref{goal:RHS}, we obtain \eqref{bdd:sqrt-f}
\begin{align*}
\int_{B^R_x\times \R^d}|\cF_v {\sqrt{f^\varepsilon_R}}(x,\xi)|^2\min(|\xi|^2,|\xi|^\nu)\dd x\dd\xi\le C_R \big(1+\cD^\varepsilon_B(f^\varepsilon)\big).
\end{align*}

\medskip

We prove Lemma \ref{lemma-5} and Lemma \ref{claim-2}.

\begin{proof}[Proof of Lemma \ref{lemma-5}]
We follow 
\cite[Lemma 2]{ADVW00} with appropriate modification for the fuzzy case. 
By the definition of $\chi_{A_j}$ and $\chi_{B_j}$, we have 
\begin{align*}
\hf_*|g-g'|^2&\ge  \hf_*|g-g'|^2(\chi_{B_j})_*\chi_{A_j}^2= (\hf_{B_j})_* |g-g'|^2\chi_{A_j}^2\\
&\ge (\hf_{B_j})_*\big( |g_{A_j}-g_{A_j}'|^2-|g'(\chi_{A_j}-\chi_{A_j}')|^2\big),
\end{align*}
where we use 
\begin{align*}
|g_{A_j}-g_{A_j}'|^2&=|g\chi_{A_j}-g'\chi_{A_j}+g'\chi_{A_j}-g'\chi_{A_j}'|^2\\
&\le|g-g'|^2\chi_{A_j}^2+|g'(\chi_{A_j}-\chi_{A_j}')|^2.
\end{align*}

Hence, for any $i\in\{1,\dots,N_j\}$, we have 
\begin{equation*}
\label{smooth:bdd-2}
\begin{aligned}
&\int_{\R^{3d}\times S^{d-1}}\bar A_0 b(\theta) \hf_*|g-g'|^2\dd x\dd v_*\dd v\dd\sigma\\
\ge&{}\min_{r_0/2\le |z|\le 2\sqrt2 R}\bar A_0(|z|)\int_{\R^{3d}\times S^{d-1}}b(\theta) (\hf_{B_j})_*|g_{A_j}-g_{A_j}'|^2\dd x\dd v_*\dd v\dd\sigma\\
&-\bar C\underbrace{\int_{\R^{3d}\times S^{d-1}}b(\theta) (\hf_{B_j})_*f'|\chi_{A_j}-\chi_{A_j}'|^2\dd x\dd v_*\dd v\dd\sigma}_{=:I_3},
\end{aligned}
\end{equation*}
where we have $|\bar A_0|\le \bar C$ by definition \eqref{bar-C}.

We are left to show the upper bound of $|I_3|$.
By Lemma \ref{lem:phi:conv}, we have
\begin{align*}
|\chi_{A_j}-\chi_{A_j}'|^2\lesssim \Lip(\chi_{A_j})\theta^2|v-v_*|^2.   
\end{align*}
By definition $v'=\frac{v+v_*}{2}+\frac{|v-v_*|}{2}\sigma$, we have  $\Big|\frac{\dd v}{\dd v'}\Big|\lesssim 1$, since
\begin{align*}
    \Big|\frac{\dd v'}{\dd v}\Big|=\frac12 | I_d+k\otimes \sigma|=\frac{1+k\cdot \sigma}{2^d}\ge 2^{-d}.
\end{align*} 
Then by changing of variable $v\mapsto v'$, we have 

\begin{align*}
|I_3|&\lesssim \int_{\R^{3d}\times S^{d-1}}\theta^2b^\varepsilon(\theta)  (\hf_{B_j})_*f'|v-v_*|^2\dd v'\dd v_*\dd x\dd \sigma\\
&= \int_{\R^{3d}\times S^{d-1}}\theta^2b^\varepsilon(\theta)  (\hf_{B_j})_*f|v'-v_*|^2\dd \theta \dd v\dd v_*\dd x\dd \sigma\\
&\le 2\Big(\int_0^{\frac{\pi}{2}}\theta^2\beta^\varepsilon(\theta)\dd\theta\Big)\int_{\R^{\G}}  f_*f|v-v_*|^2 \dd \eta\lesssim \|f\|_{L^1_{0,2}(\Do)},
\end{align*}
where we use the finite angular momentum assumption \eqref{def:beta-int} and the bound $|v-v'|^2\le |v-v_*|^2$ in the last inequality.

\end{proof}

\begin{proof}[Proof of Lemma \ref{claim-2}] We follow 
\cite[Lemma 3]{ADVW00} with appropriate modification for the fuzzy case. 
It is sufficient to show that 
\begin{equation*}
\label{stp-2:goal-2}
\cF_v\hf^\varepsilon(x,0)-|\cF_v\hf^\varepsilon(x,\xi^-)|\ge C_3\min(1,|\xi^-|^2)\quad \text{for all $\xi\in\R^d$ and $x\in B^R_x$}. 
\end{equation*}

 Let $\tau\in[0,2\pi)$ denote the angle between the real and imaginary parts of $\cF_v \hf^\varepsilon(x,\xi)$. 
Then we have
\begin{align*}
 |\cF_v \hf^\varepsilon(x,\xi)|&=(2\pi)^{-\frac{d}{2}}\Big|\int_{\R^d}\hf^\varepsilon(x,v)e^{-iv\cdot\xi}\dd v\Big|\\
 &=(2\pi)^{-\frac{d}{2}}\int_{\R^d}\hf^\varepsilon(x,v)\big(\cos(\xi\cdot v)\cos\tau-\sin(\xi\cdot v)\sin\tau \big)\dd v\\
&=(2\pi)^{-\frac{d}{2}}\int_{\R^d}\hf^\varepsilon(x,v)\cos(\xi\cdot v+\tau)\dd v,
\end{align*}
and 
\begin{align*}
\cF_v \hf^\varepsilon(x,0)- |\cF_v \hf^\varepsilon(x,\xi)|
&=2(2\pi)^{-\frac{d}{2}}\int_{\R^d}\hf^\varepsilon(x,v)\sin^2\Big(\frac{\xi\cdot v+\tau}{2}\Big) \dd v.
\end{align*}
For some $\delta>0$ small to be determined later, we define 
\begin{align*}
    A_{\xi,\delta}=\{v\in \R^d\mid |v\cdot \xi+\tau-2n\pi|\ge 2\delta \quad \forall n\in\mathbb{Z}\}.
\end{align*}
For some $L>0$ large to be determined later, we have 
\begin{align*}
    \R^d_v=(B^L_v)^c\cup (B^L_v\cap (\R^d\setminus A_{\xi,\delta}))\cup(B^R_v\cap A_{\xi,\delta}).
\end{align*}
Using this, we can write
\begin{equation}
    \label{coe}
\begin{aligned}
&\cF_v \hf^\varepsilon(x,0)- |\cF_v \hf^\varepsilon(x,\xi)|\\
\ge &{} 2(2\pi)^{-\frac{d}{2}}\sin^2\delta \int_{A_{\xi,\delta}}\hf^\varepsilon(x,v)\dd v\\
\ge&{}2(2\pi)^{-\frac{d}{2}}\sin^2\delta \Big(\int_{\R^d\setminus \big(B^L_v\cap(\R^d\setminus A_{\xi,\delta})\big)}\hf^\varepsilon(x,v)\dd v-\int_{(B^R_v)^c}\hf^\varepsilon(x,v)\dd v\Big).
\end{aligned}
\end{equation}
Notice that $v\in B^L_v\cap(\R^d\setminus A_{\xi,\delta})$ if and only if there exists $n_0\in\Z$ such that
\begin{equation}
\label{A:delta}
    \Big|v\cdot \frac{\xi}{|\xi|}+\frac{\tau}{|\xi|}-2\pi\frac{n_0}{|\xi|}\Big|\le \frac{2\delta}{|\xi|}.
\end{equation}
In the case of $|\xi|\ge 1$, \eqref{A:delta} is bounded by $2\delta$, and we have \begin{align*}
|B^L_v\cap(\R^d\setminus A_{\xi,\delta})|\le (2L)^{d-1}\frac{4\delta}{|\xi|}(1+L|\xi|/\pi)\lesssim \delta.   
\end{align*}

The uniform bounds \eqref{uni-bdd:D} ensures that $f^\varepsilon$ is equi-integrable. We choose $L$ large enough and $\delta$ small enough such that
\begin{gather*}
\int_{B^L_x\times B^L_v} f^\varepsilon\dd x\dd v\ge \frac12\quad \text{and}\quad \int_{B^L_x\times \big(B^L_v\cap (\R^d\setminus A_{\xi,\delta})\big)} f^\varepsilon\dd x\dd v\le \frac18
\end{gather*}
for all $\varepsilon\in(0,1)$, and hence, 
\begin{gather*}
\int_{B^L_x\times (B^L_v \cap A_{\xi,\delta})} f^\varepsilon\dd x\dd v\ge\frac38.
\end{gather*}
Combining with the assumption \eqref{kappa-2} that $\kappa^\varepsilon(x-x_*)\ge C_0>0$ for all $x\in B^R_x$ and $x_*\in B^L_x$, we have
\begin{align*}
\int_{\R^d\setminus \big(B^L_v\cap(\R^d\setminus A_{\xi,\delta})\big)}\hf^{\varepsilon}(x,v)\dd v\ge \int_{B^L_x\times (B^L_v \cap A_{\xi,\delta})} \kappa^{\varepsilon}(x-x_*)f^\varepsilon(x_*,v)\dd x_*\dd v \ge \frac{3C_0}{8} 
\end{align*}
for all $x\in B^R_x$ and $\varepsilon\in(0,1)$. Since $\||v|^2f^\varepsilon\|_{L^1(\Do)}$ is uniformly bounded, we choose $L$ large enough such that 
\begin{align*}
\sup_{x\in \R^d}\frac{\||v|^2\hf^\varepsilon\|_{L^1_v}}{L^2}=\frac{\||v|^2f^\varepsilon\|_{L^1(\Do)}}{L^2}\le \frac{C_0}{8}.
\end{align*}

Hence, the inequality \eqref{coe} has the following lower bound
\begin{equation}
\label{ineq-C4}
\begin{aligned}
\cF_v \hf^\varepsilon(x,0)- |\cF_v \hf^\varepsilon(x,\xi)|
\ge\frac{C_0}{2(2\pi)^{\frac{d}{2}}}\sin^2\delta
\end{aligned}
\end{equation}
and \eqref{stp-2:goal} holds by choosing $C_4=\frac{C_0}{2(2\pi)^{\frac{d}{2}}}\sin^2\delta$.

In the case of $|\xi|<1$, we choose $\delta= \delta_0 |\xi|$ for some $\delta_0>0$ small. Then \eqref{A:delta} implies that 
\begin{align*}
|B^L_v\cap(\R^d\setminus A_{\xi,\delta})|\le 4\delta_0(2L)^{d-1}(1+L/\pi)\lesssim \delta_0.   
\end{align*}
We repeat the arguments in the case of $|\xi|\ge 1$ by choosing $\delta_0>0$ small to arrive at \eqref{ineq-C4}. We note that
\begin{align*}
\sin^2\delta \ge \delta_0^2 \inf_{|\xi|\le 1}\Big|\frac{\sin(\delta_0|\xi|)}{\delta_0|\xi|}\Big|^2\ge \sin^2\delta_0. 
\end{align*}

We conclude that the inequality \eqref{stp-2:goal} holds by choosing $C_4=\frac{C_0}{2(2\pi)^{\frac{d}{2}}}\sin^2\delta_0$.
\end{proof}

\section{Convergence of dissipation}\label{sec:dissipation}
We recall the definition of the $\cosh$-dissipation \eqref{D:cosh}
\begin{align*}
\cD_{\cosh}^\varepsilon(f)=\frac12 \int_{\G\times S^{d-1}} \big|\sqrt{f'f_*'}-\sqrt{ff_*}\big|^2B^\varepsilon\kappa^\varepsilon  \dd\eta\dd\sigma.
\end{align*}
In this section, we show the following theorem.
\begin{theorem}
\label{main-thm-dissipation}
Let $\varepsilon\in(0,1)$. Let $B^\varepsilon$ and $\kappa^\varepsilon$ satisfy Assumption \ref{ASS:kappa}. Let $f^\varepsilon$ be $\cH$-solutions of \eqref{def:scaling} with initial values satisfying Assumption \ref{ass:curve}. Let $f$ be the weak limit of $f^\varepsilon$ given in Lemma \ref{weak:conv}. Then we have 
\begin{equation}
   \label{dissipation:goal}
   \frac12\DL(f) \le \liminf_{\varepsilon\to0} \cD^\varepsilon_{\cosh}(f^\varepsilon).
\end{equation}
\end{theorem}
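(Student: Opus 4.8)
The plan is to follow the strategy of \textcite{carrillo2022boltzmann} for the homogeneous case, adapting it to the fuzzy setting where $f$ and $f_*$ depend on the independent spatial variables $x$ and $x_*$. The starting point is the polar-coordinate representation of the $\cosh$-dissipation. Writing $g^\varepsilon = \sqrt{f^\varepsilon}$ and using the spherical decomposition $\sigma = k\cos\theta + p\sin\theta$, we expand $\sqrt{(f^\varepsilon)'(f^\varepsilon)'_*} - \sqrt{f^\varepsilon f^\varepsilon_*}$ in $\theta$. To leading order this behaves like $\frac{\theta}{2}|v-v_*|\, p\cdot\bigl(\nabla_v g^\varepsilon \sqrt{f^\varepsilon_*} - \sqrt{f^\varepsilon}\,(\nabla_v g^\varepsilon)_*\bigr)$ type terms, analogous to the pointwise limit \eqref{p.w.conv-1} in Lemma \ref{lem:phi:conv}, but now for $g^\varepsilon$ rather than a smooth test function. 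Because $B^\varepsilon = A_0 b^\varepsilon$ and $\int_0^{\varepsilon/2}\theta^2\beta^\varepsilon(\theta)\dd\theta$ is a fixed normalisation constant by \eqref{def:beta-int}, integrating the squared expansion against $B^\varepsilon\kappa^\varepsilon$ and over $S^{d-1}$ should formally produce, in the limit, a quadratic form in $\Pi_{(v-v_*)^\perp}(\nabla_v\sqrt f \sqrt{f_*} - \sqrt f (\nabla_v\sqrt f)_*)$, which after using $\sqrt{f}\nabla_v\sqrt{f} = \frac12 \nabla_v f$ and the identity $\widetilde\nabla\log f = \sqrt{A}\,\Pi_{(v-v_*)^\perp}(\nabla_v\log f - (\nabla_v\log f)_*)$ matches $\frac12 \DL(f)$ (the factor $\frac12$ coming from \eqref{ele-ineq}).

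The rigorous execution proceeds by lower semicontinuity rather than by direct computation of the limit. First I would test against a truncation: fix $R>0$, restrict to $|x|,|x_*|,|v|,|v_*|\le R$, and use the strong convergence $\sqrt{f^\varepsilon}\to\sqrt{f}$ in $L^2_\loc(\Do)$ from Lemma \ref{lemma:cpt:sqrt}. Since $\cD^\varepsilon_{\cosh}$ is a sum of squares, I would introduce a smooth vector-valued test function $\psi = \psi(x,x_*,v,v_*)$ (compactly supported, $\R^d$-valued, and antisymmetric under $(x,v)\leftrightarrow(x_*,v_*)$) and bound $\cD^\varepsilon_{\cosh}(f^\varepsilon)$ from below by a linear-in-$\psi$ functional minus a quadratic-in-$\psi$ correction, via the elementary inequality $|a|^2 \ge 2a\cdot\psi - |\psi|^2$ applied pointwise with $a$ the $\cosh$-increment rescaled by $1/\theta$. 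The linear term, after passing $\varepsilon\to 0$ using strong $L^2$ convergence of $\sqrt{f^\varepsilon}$ together with the pointwise/dominated-convergence estimates of the fuzzy Boltzmann gradient in Lemma \ref{lem:phi:conv} (the bounds \eqref{bd-1}--\eqref{bd-2} supply the domination, with $A_0 = |v-v_*|^\gamma$ controlled by $|v-v_*|^{\gamma+2}$ as in \eqref{fuzzy:better} for $\gamma\in[-2,1]$), converges to an integral involving $\nabla_v\sqrt{f}$ paired against $\psi$; the quadratic term converges to $\int |\psi|^2$-type quantity with the right weights. Then taking the supremum over all admissible $\psi$ recovers $\frac12\DL(f)$ by the variational (dual) characterisation of the $L^2$-type quadratic form $\int \kappa f f_* |\widetilde\nabla\log f|^2$, and finally letting $R\to\infty$ by monotone convergence gives \eqref{dissipation:goal}.

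A key technical point is handling the $\log$ and the absence of equi-integrability of the integrand of $\cD^\varepsilon_{\cosh}$: this is precisely why the $\cosh$/square-root formulation is used rather than $\cD^\varepsilon_{\Psi^*}$ directly (cf. Remark \ref{rmk:non-qua}(iii)). The square-root increments $\sqrt{f^\varepsilon}-\sqrt{f^\varepsilon}{}'$ are controlled in $L^2$ because $\cD^\varepsilon_{\cosh}(f^\varepsilon)\le \frac12\cD^\varepsilon_\aB(f^\varepsilon)$ is uniformly bounded by \eqref{uni-bdd:D}, so $\{g^\varepsilon - (g^\varepsilon)'\}$ (weighted by $\sqrt{B^\varepsilon\kappa^\varepsilon}$) is bounded in $L^2$ of the product space and converges weakly; the strong $L^2_\loc$ convergence of $g^\varepsilon$ identifies the weak limit of the increments with the correct derivative object. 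I expect the main obstacle to be rigorously justifying the passage to the limit in the linear term: one must show that the (weak) limit of the rescaled increment $\frac{1}{\theta}(\sqrt{(f^\varepsilon)'(f^\varepsilon)'_*} - \sqrt{f^\varepsilon f^\varepsilon_*})$, integrated against a smooth kernel and the measure $\beta^\varepsilon(\theta)\theta^2\dd\theta$, is exactly the Landau gradient applied to $\sqrt f \sqrt{f_*}$, which requires combining the pointwise limit \eqref{p.w.conv-1}, the concentration of $\beta^\varepsilon$ near $\theta=0$, and a careful splitting of the increment into a piece where a discrete derivative of $g^\varepsilon$ appears (handled by weak $L^2$ convergence) and remainder pieces (handled by the quantitative bounds of Lemma \ref{lem:phi:conv} and dominated convergence). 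The independence of $x$ and $x_*$ forces one to keep $\sqrt{f^\varepsilon}$ and $\sqrt{f^\varepsilon_*}$ as separate factors throughout (Remark \ref{rmk:compare}), which is exactly what restricts the argument to $\gamma\ge -2$.
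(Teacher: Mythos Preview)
Your overall strategy—reduce to a dual/affine representation and pass to the limit for each fixed test object—is exactly what the paper does. The meaningful difference is in \emph{which} test object you use, and that difference hides a real gap.

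The paper does not test $\cD^\varepsilon_{\cosh}$ against a general vector field $\psi(x,x_*,v,v_*)$. It first rewrites $\DL(f)$ as a supremum over \emph{scalar potentials} $\Phi\in\mathbf{DS}^\infty_c(\G)$ (symmetric in $(x,v)\leftrightarrow(x_*,v_*)$ and vanishing near $v=v_*$), obtaining
\[
\DL(f)\le\sup_{\Phi}\Big\{-4\int_{\G}\sqrt{\kappa F}\,\widetilde\nabla\cdot\widetilde\nabla\Phi\dd\eta-2\int_{\G}|\widetilde\nabla\Phi|^2\dd\eta\Big\}.
\]
This reduction from vector fields to gradients is not automatic: it requires solving the Landau-type elliptic equation $\widetilde\nabla\cdot\widetilde\nabla\Phi=\widetilde\nabla\cdot(\Pi_{(v-v_*)^\perp}V)$ (Lemma~\ref{lem:elliptic}), which the paper does by decoupling into two homogeneous elliptic problems with $(x,x_*)$ carried as parameters. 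On the Boltzmann side one then takes the scalar test $\xi=\frac12\overline\nabla\Phi$, so the linear term reads $-\int\sqrt{\kappa^\varepsilon F^\varepsilon}\int_{S^{d-1}}\overline\nabla\Phi\,B^\varepsilon\dd\sigma\dd\eta$. The payoff is that \emph{all} differential structure sits on the fixed smooth $\Phi$: the limit of this term (to $-2\int\sqrt{\kappa F}\,\widetilde\nabla\cdot\widetilde\nabla\Phi$) follows from dominated convergence using only $\sqrt{F^\varepsilon}\to\sqrt F$ in $L^2_\loc$ and the second-order expansion of Corollary~\ref{corr:big:phi}.

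Your plan instead keeps the derivative on $f$: you want the weak limit of the rescaled increment $\theta^{-1}\bigl(\sqrt{(F^\varepsilon)'}-\sqrt{F^\varepsilon}\bigr)$ to equal the Landau gradient of $\sqrt F$. The difficulty you flag as ``the main obstacle'' is real and is not resolved by what you wrote. The $L^2$ bound furnished by $\cD^\varepsilon_{\cosh}(f^\varepsilon)\le C$ is with respect to the measure $A_0\kappa^\varepsilon\beta^\varepsilon(\theta)\dd\theta\dd p\dd\eta$, whose $\theta$-marginal concentrates at $0$ as $\varepsilon\to0$; you cannot extract a weak limit in a fixed $L^2$ space, and a two-scale or varying-measure argument plus an identification step would be needed. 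The paper's choice of $\overline\nabla\Phi$ as test sidesteps this entirely. A secondary issue: the scalar increment lives on $\G\times S^{d-1}$ while your $\psi$ is a vector on $\G$, and the pairing (presumably via $p\in S^{d-2}_{k^\perp}$) is only implicit in your sketch—making it precise is exactly where the elliptic lemma enters.
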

As a direct consequence of Remark \ref{lem:psi:low}, we have the following corollary.
\begin{corollary}
Let $(\Psi^*,\Theta)$ satisfy Assumption \ref{ass-pair} and \ref{ass:Psi}. Let $f^\varepsilon$ and $f$ be given as in Theorem \ref{main-thm-dissipation}.  Then we have   
\begin{equation*}
   \frac12\DL(f) \le \liminf_{\varepsilon\to0} \cD_{\Psi^*}(f^\varepsilon).
\end{equation*}
\end{corollary}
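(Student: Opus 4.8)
\textbf{Proof plan for Theorem \ref{main-thm-dissipation}.}
The plan is to follow the homogeneous strategy of \textcite{carrillo2022boltzmann} (their lower-semicontinuity argument for the Landau dissipation), carefully keeping the spatial variables $x$ and $x_*$ separated throughout, in the spirit of Remark \ref{rmk:compare}. The starting point is the elementary inequality recorded in \eqref{ele-ineq}, which gives $\cD^\varepsilon_{\cosh}(f^\varepsilon)\le \frac12\cD^\varepsilon_{\aB}(f^\varepsilon)$ in one direction, but here we want the matching lower bound on the $\liminf$. Write $g^\varepsilon=\sqrt{f^\varepsilon}$ and note that $\sqrt{f'f_*'}-\sqrt{ff_*}=g^\varepsilon_*(g^\varepsilon{}'-g^\varepsilon)+g^\varepsilon{}'(g_*^\varepsilon{}'-g^\varepsilon_*)$ — actually it is cleaner to symmetrise and write $2(\sqrt{f'f_*'}-\sqrt{ff_*}) = (g'+g_*')(g'-g) + (g+g)(\ldots)$; the precise algebraic splitting will be chosen so that the Boltzmann gradient $\overline\nabla$ acting on $g^\varepsilon$ appears, divided by $\theta$, and the leftover terms are controlled by the size estimates of Proposition \ref{rmk:size-est} and the cancellation Lemma \ref{app-lem:cancel}.

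The key steps, in order, are as follows. First, pass to the polar representation \eqref{wqb-polar} and rescale, so that $\cD^\varepsilon_{\cosh}(f^\varepsilon)$ becomes an integral of $\big|\tfrac{1}{\theta}(\sqrt{f'f_*'}-\sqrt{ff_*})\big|^2 A_0 \kappa^\varepsilon \theta^2\beta^\varepsilon(\theta)$ against $\dd p\,\dd\theta\,\dd\eta$, using $\int_0^{\varepsilon/2}\theta^2\beta^\varepsilon(\theta)\dd\theta$ normalised by \eqref{def:beta-int}. Second, for a fixed test function and on a fixed ball $B^R_x\times B^R_v$, use the $L^2_\loc$-convergence $g^\varepsilon\to g=\sqrt f$ of Lemma \ref{lemma:cpt:sqrt} together with the uniform $L^2_xH^{\nu/2}_v$ bound \eqref{bdd:sqrt-f} to identify the limit of $\tfrac1\theta(\sqrt{f'f_*'}-\sqrt{ff_*})$: by a Taylor expansion analogous to \eqref{taylor-1}, the leading term is $\tfrac{|v-v_*|}{2}\,\sqrt{f_*}\, p\cdot(\nabla_v\sqrt f - (\nabla_v\sqrt f)_*)$, i.e.\ (up to the factor $\sqrt{ff_*}$ absorbed) the integrand of $\widetilde\nabla\log f$. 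Third, apply a weak lower-semicontinuity argument (Fatou, or convexity of $u\mapsto|u|^2$ combined with weak convergence of the relevant gradient quantities against the fixed angular measure $\theta^2\beta^\varepsilon(\theta)\dd\theta\,\dd p$ concentrating the mass via Proposition \ref{lem:pi}), together with the angular identity $\int_{S^{d-2}_{k^\perp}}p\otimes p\,\dd p = \tfrac{|S^{d-2}|}{d-1}\Pi_{k^\perp}$, to recover $\tfrac12\cD_{\aL}(f)$ on the ball; then let $R\to\infty$ by monotone convergence. The dissipation integrand is manifestly nonnegative, so exhausting $\Do$ by balls loses nothing.

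The main obstacle I expect is controlling the difference between $\tfrac1\theta(\sqrt{f'f_*'}-\sqrt{ff_*})$ and the quantity $\sqrt{f_*^\varepsilon}\,\tfrac1\theta\overline\nabla g^\varepsilon$ (and similar "product rule'' remainders) uniformly in $\varepsilon$ on the support of $\beta^\varepsilon$: one cannot differentiate $\sqrt{f^\varepsilon}$ pointwise, so the naive Taylor expansion is unavailable, and instead one must argue as in \cite{ADVW00,carrillo2022boltzmann} via a mollification $g^\varepsilon_\delta$, use the uniform fractional-Sobolev bound \eqref{bdd:sqrt-f} to pass $\delta\to0$, and handle the kinetic singularity $A_0=|v-v_*|^\gamma$, $\gamma\in[-2,0)$ using the bound $|\overline\nabla\phi|\lesssim\theta|v-v_*|$ from Lemma \ref{lem:phi:conv} so that $A_0|\overline\nabla\cdot|^2\lesssim|v-v_*|^{\gamma+2}$ is locally integrable — here is exactly where the fuzzy case is forced into $\gamma\ge-2$ (Remark \ref{rmk:diff}), and the bound $[f_*\ast_{v_*}S]\in L^1_\loc$ from the cancellation lemma is what makes the cross terms summable. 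The corollary is then immediate from \eqref{pair-lbd} in Remark \ref{lem:psi:low}: under Assumption \ref{ass:Psi} one has $\cD_{\Psi^*}(f^\varepsilon)\ge\cD_{\cosh}(f^\varepsilon)$ pointwise in $\varepsilon$, so the $\liminf$ inequality transfers directly.
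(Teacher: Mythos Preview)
Your deduction of the corollary from Theorem~\ref{main-thm-dissipation} is correct and matches the paper exactly: once $\tfrac12\cD_{\aL}(f)\le\liminf_{\varepsilon\to0}\cD^\varepsilon_{\cosh}(f^\varepsilon)$ is established, the pointwise bound $\cD_{\Psi^*}(f^\varepsilon)\ge\cD_{\cosh}(f^\varepsilon)$ from \eqref{pair-lbd} transfers the $\liminf$ inequality directly.

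However, your plan for Theorem~\ref{main-thm-dissipation} itself takes a genuinely different route from the paper, and the route you propose leaves unresolved exactly the obstacle you yourself flag. You attempt to identify the limit of $\tfrac1\theta\bigl(\sqrt{(f^\varepsilon)'(f^\varepsilon_*)'}-\sqrt{f^\varepsilon f^\varepsilon_*}\bigr)$ directly, via a product-rule splitting and a Taylor expansion of $g^\varepsilon=\sqrt{f^\varepsilon}$, then invoke weak lower semicontinuity of $u\mapsto|u|^2$. But weak lower semicontinuity requires weak $L^2$ convergence of the difference quotients themselves, and the only a~priori regularity available is the fractional bound \eqref{bdd:sqrt-f} with $\nu<2$; this does not give control of $\tfrac1\theta\overline\nabla g^\varepsilon$ in $L^2$ uniformly in $\varepsilon$ as $\theta\to0$, nor does it let you identify the weak limit as $\widetilde\nabla\sqrt F$ without already knowing $\cD_{\aL}(f)<\infty$. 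Your mollification sketch does not close this gap: passing $\delta\to0$ after $\varepsilon\to0$ still requires commuting a limit with the quadratic functional, and the cross terms you mention are not controlled by the cancellation lemma (which handles $f'_*-f_*$, not differences of square roots).

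The paper (following \cite[Section~6]{carrillo2022boltzmann}, which you cite but do not actually follow) sidesteps this entirely by an \emph{affine representation} (duality) argument. One writes
\[
\cD_{\aL}(f)\le\sup_{\Phi\in\mathbf{DS}^\infty_c}\Bigl\{-4\int\sqrt{\kappa F}\,\widetilde\nabla\cdot\widetilde\nabla\Phi-2\int|\widetilde\nabla\Phi|^2\Bigr\}
\]
and, matching term by term,
\[
\cD^\varepsilon_{\cosh}(f^\varepsilon)\ge\sup_{\Phi}\Bigl\{-\int\sqrt{\kappa^\varepsilon F^\varepsilon}\Bigl(\int_{S^{d-1}}\overline\nabla\Phi\,B^\varepsilon\dd\sigma\Bigr)-\tfrac18\int|\overline\nabla\Phi|^2B^\varepsilon\Bigr\}.
\]
Now all derivatives fall on the \emph{smooth test function} $\Phi$, never on $\sqrt{F^\varepsilon}$; the limit in $\varepsilon$ is taken for each fixed $\Phi$ using only the strong $L^2_{\loc}$ convergence $\sqrt{F^\varepsilon}\to\sqrt F$ and the pointwise estimates of Corollary~\ref{corr:big:phi} on $\overline\nabla\Phi$. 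The key technical ingredient replacing your Taylor expansion is the solvability Lemma~\ref{lem:elliptic} for the Landau elliptic equation $\widetilde\nabla\cdot\widetilde\nabla\Phi=\widetilde\nabla\cdot(\Pi_{(v-v_*)^\perp}V)$, which lets one restrict the supremum to gradients $\widetilde\nabla\Phi$ without loss. This dual formulation is what makes the argument go through without any regularity of $\sqrt f$ beyond $L^2$.
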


\begin{remark}[Affine representation]
\label{rmk:affine}
In Section \ref{sec:dissipation} and  Section \ref{sec:action}, we show the limit of entropy dissipation and curve action by using the idea of affine representations. Let $\Psi,\Psi^*:\R\to\R$ be a smooth convex dual pair. Then we have
\begin{align*}
    \int\Psi^*(\xi)=\sup_{\phi\in C^\infty_c}\Big\{\int \xi\cdot\phi-\int\Psi(\phi)\Big\}.
\end{align*}
We follow the affine-representation approach developed for the homogeneous case \cite{carrillo2022boltzmann}, adapting it appropriately to incorporate the spatial variable. The use of affine representations based on Legendre transformations has appeared previously in the literature, for example, in \cite{leonard1995large, Ott01}.
\end{remark}

First, we extend the test functions from $\phi=\phi(x,v)$ to 
$\Phi=\Phi(x,x_*,v,v_*)$ in Section \ref{subsec:extension}. Then, Section \ref{subsec:proof-dissipation} contains the detailed proof of Theorem \ref{main-thm-dissipation} 

\subsection{Extension of notations}\label{subsec:extension}
To simplify the proof of Theorem \ref{main-thm-dissipation}, we follow \cite[Section 3]{carrillo2022boltzmann} with appropriate modifications concerning the spatial variables $x$. More precisely, we extend the notation of $\widetilde \nabla$ to functions $\Phi\in \G$, for which the elliptic equation $\widetilde\nabla\cdot\widetilde \nabla\Phi$ is solved in Lemma \ref{lem:elliptic}. \\ \\
In the rest of this section, we write
\begin{align*}
    F=ff_*,\quad F'=f'f_*'.
\end{align*}
By Lemma \ref{lemma:cpt:sqrt}, we have
\begin{align*}
    \sqrt{F^\varepsilon}\to \sqrt{F}\quad\text{in}\quad L^2_{\loc}(\G)\quad \text{as $\varepsilon\to0$}.
\end{align*}
We recall that for a fixed $\sigma\in S^{d-1}$ the fuzzy Boltzmann gradient is defined via
\begin{equation*}
\overline\nabla\phi(x,v) = \phi(x,v')+\phi(x_*,v'_*) - \phi(x,v) - \phi(x_*,v_*).
\end{equation*}
We recall the weak formulation of the fuzzy Boltzmann collision operator
\begin{equation*}
\label{boltzmann:weak}
    \begin{aligned}
   \int_{\Do} \QB(f,f)\phi\dd x\dd v =-\frac14\int_{\G\times S^{d-1}} \kappa B (f'f_*'-ff_*) \overline\nabla\phi \dd\eta\dd\sigma 
    \end{aligned}
\end{equation*}
for all test functions $\phi:\Do\to\R$.
Notice that, one can extend the weak formulation to all test functions 
\begin{align*}
\Phi:\G\to\R    
\end{align*} by redefining $\overline\nabla_\sigma$ as
\begin{align*}
\overline\nabla_\sigma\Phi&=\Phi'_*+\Phi'-\Phi_*-\Phi,
\end{align*}
where we define
\begin{align*}
\Phi'=\Phi(x,x_*,v',v'_*),\quad \Phi_*'=\Phi(x_*,x,v_*',v')\quad\text{and}\quad \Phi_*=\Phi(x_*,x,v_*,v).
\end{align*}
For simplification, we write $\overline\nabla_\sigma=\overline\nabla$. 
We abuse the notation of $\langle Q(f,f), \Phi\rangle$. By changing of variables, the weak Boltzmann collision operators can be written as
\begin{align*}
    \langle Q_{\aB}(f,f), \Phi\rangle 
    &= -\frac14\int_{\G\times S^{d-1}}\kappa B( F'-F)\ \overline\nabla\Phi \dd \eta\dd \sigma.
\end{align*}
Similarly, we can extend the definition of the fuzzy Landau gradient
as follows 
\begin{equation}
    \label{def:big:Lgrad}
    \widetilde\nabla\Phi=\frac{\sqrt A}{2}\Pi_{(v-v_*)^\perp}\nabla_0(\Phi+\Phi_*),
\end{equation}
where we define $\nabla_0=(\nabla_v-\nabla_{v_*})$, and $\Phi_*=\Phi(x_*,x,v_*,v)$. 
Correspondingly, we can define $\widetilde\nabla\cdot$ via integration by parts. Notice that 
\begin{equation*}
    \widetilde \nabla \cdot \widetilde \nabla\Phi = \frac {A}{2} \nabla_0\cdot\big(\Pi_{(v-v_*)^\perp}\nabla_0(\Phi+\Phi_*)\big).
\end{equation*}
By the extended definition, we have
\begin{align*}
\widetilde \nabla F = \sqrt{A} \ \Pi_{(v-v_*)^\perp}\nabla_0F.
\end{align*}
We abuse the notation of $\langle Q(f,f), \Phi\rangle$. The weak Landau collision operators can be written as
\begin{align*}
    \langle \QL(f,f),\Phi\rangle &=-\int_{\GL}\kappa \widetilde\nabla F\cdot \widetilde \nabla \Phi \dd \eta = \int_{\GL} \kappa F\ \widetilde \nabla \cdot \widetilde \nabla\Phi\dd \eta.
\end{align*}

\begin{remark}[Comparison with the homogeneous Landau gradient]\label{rmk:gradient}
Compared with the homogeneous case, due to the independent spatial variables $x$ and $x_*$, we always treat $f$ and $f_*$ independently. 
The Landau gradient in the homogeneous case is defined as (see also \eqref{def:homo:Lgrad} below)
\begin{equation*}
\widetilde\nabla_{\sf homo}\Phi=\sqrt A\Pi_{(v-v_*)^\perp}\nabla_0\Phi.
\end{equation*}
In comparison, in the fuzzy case we define
\begin{align*}
 \widetilde\nabla_{\sf fuzz}\Phi=\sqrt A\ \Pi_{(v-v_*)^\perp}\nabla_0\frac{\Phi+\Phi^*}{2}. 
\end{align*}
Notice that in the symmetric case ($\Phi=\Phi_*$), for instance when $\Phi=ff_*$, we have 
\begin{align*}
    \Phi=\frac{\Phi+\Phi_*}{2}\quad\text{and}\quad \widetilde\nabla_{\sf homo}\Phi=\widetilde\nabla_{\sf fuzzy}\Phi.
\end{align*}
\end{remark}

As a direct consequence of Lemma \ref{lem:phi:conv}, the following holds.
\begin{corollary}
\label{corr:big:phi}
Let $\Phi\in \cS(\G;\R)$. The following pointwise bounds hold 
    \begin{gather}
    |\overline\nabla\Phi|\le C_1\theta |v-v_*|,\label{big:bd1}\\
  \left|\frac{1}{|S^{d-2}|}\int_{S^{d-2}_{k^\perp}}\overline\nabla\Phi\dd p\right|\le C_2 \theta^2\big(|v-v_*|+|v-v_*|^2\big), \notag
\end{gather}
where $C_1=C_1(\|\nabla_0 \phi\|_{L^\infty(\G)})$ and $C_2=C_2(\|\nabla_0 \Phi\|_{L^\infty(\G)},\|D^2_0 \Phi\|_{L^\infty(\G)})$ are positive constants independent of $\theta$.
We have the following pointwise limits  
\begin{gather}
\lim_{\theta\to0}\frac{1}{\theta}\overline\nabla\Phi=\frac{|v-v_*|}{2} p\cdot\nabla_0\big(\Phi+\Phi_*\big), \label{big:conv1}\\
\lim_{\theta\to0}\frac{1}{\theta^2}\int_{S^{d-2}_{k^\perp}}\overline\nabla\Phi\dd p=\frac{|S^{d-2}|}{8(d-1)}\nabla_0\cdot\big(|v-v_*|^2\Pi_{k^\perp}\nabla_0(\Phi+\Phi_*)\big). \label{big:conv2}
\end{gather}
\end{corollary}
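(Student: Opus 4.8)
The statement to prove is Corollary~\ref{corr:big:phi}, which extends Lemma~\ref{lem:phi:conv} from test functions $\phi=\phi(x,v)$ on $\Do$ to test functions $\Phi=\Phi(x,x_*,v,v_*)$ on $\G$. The plan is to mimic the Taylor-expansion argument of Lemma~\ref{lem:phi:conv} verbatim, keeping track of the fact that now the symmetrised combination $\Phi+\Phi_*$ plays the role that $\phi-\phi_*$ (more precisely $\nabla_v\phi-(\nabla_v\phi)_*$) played before. Concretely, recall $v'=v_1+|v_2|\sigma$, $v'_*=v_1-|v_2|\sigma$, $v=v_1+|v_2|k$, $v_*=v_1-|v_2|k$, with $v_1=\tfrac{v+v_*}{2}$, $v_2=\tfrac{v-v_*}{2}$. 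Since $\overline\nabla\Phi=\Phi'+\Phi'_*-\Phi-\Phi_*$ and in $\Phi'$ the velocity arguments are $(v',v'_*)=(v_1+|v_2|\sigma,\,v_1-|v_2|\sigma)$ while in $\Phi'_*$ they are $(v_1-|v_2|\sigma,\,v_1+|v_2|\sigma)$ — i.e. $\sigma\mapsto-\sigma$ together with a swap of the two velocity slots — the spatial labels $x,x_*$ stay fixed throughout a given term and only the velocity variables move along the segment from $k$ to $\sigma$. This is exactly why the expansion closes: differentiating in the velocity direction produces $\nabla_0=\nabla_v-\nabla_{v_*}$ acting on $\Phi$ and, from the reflected term $\Phi'_*$, on $\Phi_*$, giving the combination $\nabla_0(\Phi+\Phi_*)$ that appears in \eqref{def:big:Lgrad}.

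First I would write the first-order Taylor expansion along $t\mapsto v_1+|v_2|(t\sigma+(1-t)k)$, exactly as in \eqref{taylor-1}, obtaining
\[
\overline\nabla\Phi = |v_2|(\sigma-k)\cdot\int_0^1\Big(\nabla_0\Phi\big(x,x_*,\bar v(t),\bar v_*(t)\big)+\nabla_0\Phi\big(x_*,x,\bar v_*(t),\bar v(t)\big)\Big)\dd t,
\]
where $\bar v(t)=v_1+|v_2|(t\sigma+(1-t)k)$ and $\bar v_*(t)=v_1-|v_2|(t\sigma+(1-t)k)$; here the second summand is the contribution of $\Phi'_*-\Phi_*$ and already carries the swapped spatial labels, hence equals $(\nabla_0\Phi)_*$ evaluated on the moved velocities. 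Then a second-order expansion of each $\nabla_0\Phi$ term around $t=0$ (as in the block after \eqref{taylor-1}) splits $\overline\nabla\Phi=I_1+I_2$ with $I_1=|v_2|(\sigma-k)\cdot\nabla_0(\Phi+\Phi_*)$ and $I_2=|v_2|^2(\sigma-k)\otimes(\sigma-k):T$, where $T$ is the obvious integral of $D^2_0\Phi$ and $D^2_0\Phi_*$ terms. The size estimates $|\sigma-k|\le2\theta$, $|\sigma-k|^2=2(1-\cos\theta)\le\theta^2$ from Proposition~\ref{rmk:size-est} give $|\overline\nabla\Phi|\lesssim\|\nabla_0\Phi\|_{L^\infty(\G)}\,\theta|v-v_*|$, i.e. \eqref{big:bd1}. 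For the integrated bound, decompose $\sigma-k=k(\cos\theta-1)+p\sin\theta$ and use $\int_{S^{d-2}_{k^\perp}}N_p\dd p=0$ (the odd-in-$p$ part integrates to zero) so that $\int_{S^{d-2}_{k^\perp}}I_1\dd p=|S^{d-2}||v_2|(\cos\theta-1)\,k\cdot\nabla_0(\Phi+\Phi_*)$, which is $O(\theta^2|v-v_*|)$, while $\int_{S^{d-2}_{k^\perp}}I_2\dd p$ is $O(\theta^2|v-v_*|^2)$; together these give the second pointwise bound.

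For the limits, \eqref{big:conv1} is immediate from the $I_1+I_2$ decomposition together with $\sin\theta\sim\theta$ and the $O(\theta^2)$ bound on $I_2$: $\tfrac1\theta\overline\nabla\Phi=\tfrac{|v-v_*|}{2}p\cdot\nabla_0(\Phi+\Phi_*)+o(1)$. For \eqref{big:conv2}, combine $\int I_1\dd p=-\tfrac{\theta^2}{2}|v_2||S^{d-2}|\,k\cdot\nabla_0(\Phi+\Phi_*)+o(\theta^2)$ with the dominated-convergence computation of $\int I_2\dd p$, where Proposition~\ref{lem:pi} gives $\int_{S^{d-2}_{k^\perp}}p\otimes p\dd p=\tfrac{|S^{d-2}|}{d-1}\Pi_{k^\perp}$, so $\lim_{\theta\to0}\theta^{-2}\int I_2\dd p=\tfrac{|v_2|^2|S^{d-2}|}{2(d-1)}\Pi_{k^\perp}:D^2_0(\Phi+\Phi_*)$; then use $|v_2|k=v_2$, $\Pi_{k^\perp}=\Pi_{(v-v_*)^\perp}$, and the algebraic identity $\nabla_0\cdot(|v-v_*|^2\Pi_{(v-v_*)^\perp})=-2(d-1)(v-v_*)$ to reassemble the right-hand side of \eqref{big:conv2} as $\tfrac{|S^{d-2}|}{8(d-1)}\nabla_0\cdot\big(|v-v_*|^2\Pi_{k^\perp}\nabla_0(\Phi+\Phi_*)\big)$. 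Since this is the same chain of identities as at the end of the proof of Lemma~\ref{lem:phi:conv}, one can in fact just invoke that proof and remark that the only change is the replacement $\nabla_v\phi-(\nabla_v\phi)_*\rightsquigarrow\tfrac12\nabla_0(\Phi+\Phi_*)$, with the spatial variables carried along passively.

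The main obstacle — really the only point requiring care rather than routine bookkeeping — is to verify cleanly that under the grazing expansion the spatial arguments $x,x_*$ genuinely remain frozen within each of the four terms of $\overline\nabla\Phi$, so that all derivatives that appear are velocity derivatives and no $\nabla_x$-type terms are generated. This is where the fuzzy case differs structurally from the homogeneous one (cf.\ Remark~\ref{rmk:diff}): one must not try to Taylor-expand in the spatial variables, and consequently the bounds involve $\nabla_0\Phi$ and $D^2_0\Phi$ (derivatives only in $v,v_*$) rather than full gradients, which is exactly what is claimed in the constants $C_1,C_2$. Once this observation is recorded, the proof is a line-by-line transcription of Lemma~\ref{lem:phi:conv} with $\phi\mapsto\Phi$, $\phi_*\mapsto\Phi_*$, and the symmetrisation $\tfrac12(\Phi+\Phi_*)$ in place of $\phi$ wherever a Landau-gradient term appears.
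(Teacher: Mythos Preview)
Your approach is correct and matches the paper's: the corollary is stated there as ``a direct consequence of Lemma~\ref{lem:phi:conv}'' with no proof given, and your transcription of the Taylor expansion with $\nabla_v\phi-(\nabla_v\phi)_*$ replaced by $\nabla_0(\Phi+\Phi_*)$ is exactly how that direct consequence is meant to be read. One small slip: in your displayed first-order formula the second summand should carry a minus sign, since differentiating $t\mapsto\Phi(x_*,x,\bar v_*(t),\bar v(t))$ produces $-|v_2|(\sigma-k)\cdot(\nabla_0\Phi)(x_*,x,\bar v_*(t),\bar v(t))$; this is self-correcting because $-(\nabla_0\Phi)_*=\nabla_0(\Phi_*)$, so your $I_1=|v_2|(\sigma-k)\cdot\nabla_0(\Phi+\Phi_*)$ and everything downstream remain correct.
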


\subsection{Proof of Theorem \ref{main-thm-dissipation}}\label{subsec:proof-dissipation}

{\bf Strategy of the proof:}
   We only consider the case of $\liminf_{\eps\da0}\cD_{\cosh}(f^\varepsilon)< +\infty$, otherwise there is nothing to show. We show \eqref{dissipation:goal} in the following three steps:
    \begin{enumerate}[Step $1$:]
        \item We show the affine representation inequality of the Landau dissipation
        \begin{align*}
\DL(f)\le&\sup_{\Phi\in \mathbf{DS}^{\infty}_c}\Big\{-4\int_{\R^{4d}}\sqrt{\kappa F}\ \widetilde\nabla\cdot\widetilde\nabla\Phi\deta-2\int_{\R^{4d}} |\widetilde\nabla \Phi|^2\deta \Big\}.
        \end{align*}
        The functional space $\mathbf{DS}^{\infty}_c$ is defined as in \eqref{def:Sym} below.
  \item We show the affine representation inequality of the reduced Bltzmann dissipation 
  \begin{align*}
      \cD_{\cosh}(f^\varepsilon) \ge& \sup_{\Phi\in \mathbf{DS}^{\infty}_c}\Big\{-\int_{\R^{4d}}\sqrt{\kappa^\eps F^\eps}\big(\int_{ S^{d-1}}\Bgrad \Phi B^\eps\dsigma\big)\deta -\frac18\int_{\R^{4d}\times S^{d-1}}|\Bgrad \Phi|^2 B^\eps\dsigma\deta\Big\}.
  \end{align*}
  \item We conclude \eqref{dissipation:goal} by showing
  \begin{gather*}
        \int_{\R^{4d}}\sqrt{\kappa^\eps F^\eps} \ \Big(\int_{ S^{d-1}}\overline\nabla\Phi B^\eps\dd\sigma\Big) \dd\eta \limit{\eps \da 0} 2 \int_{\R^{4d}}\sqrt{\kappa F}\widetilde\nabla \cdot \widetilde\nabla\Phi \dd\eta\\
        \text{and} \quad\frac18\int_{\R^{4d}\times S^{d-1}} |\overline\nabla\Phi|^2 B^\eps \dd\eta\dd\sigma \limit{\eps \da 0} \int_{\R^{4d}} |\widetilde \nabla \Phi|^2\dd\eta.
\end{gather*}
    \end{enumerate}

\medskip

\noindent\textbf{Step 1: Affine representation of the fuzzy Landau dissipation}\\ 

The following result extends \cite[Proposition 6.1]{carrillo2022boltzmann} to the fuzzy setting.
\begin{proposition}\label{prop:aff}
We have the following preliminary expression for the fuzzy Landau dissipation

\begin{equation}
    \label{AR:D-landau}
\begin{aligned}
    \DL(f)=\sup_{\xi\in C^\infty_c(\R^{4d};\R^d)}\Big\{&-2\int_{\R^{4d}}|\xi|^2\deta \\
    &\quad{}-4\int_{\R^{4d}}\sqrt{\kappa A  F}\ \nabla_{0}\cdot(\Pi_{(v-v_*)^\perp}\xi)\dd \eta\Big\}.   
\end{aligned}
\end{equation}
\end{proposition}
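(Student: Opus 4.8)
\textbf{Proof proposal for Proposition \ref{prop:aff}.}

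The plan is to view the fuzzy Landau dissipation $\cD_{\aL}(f)=\tfrac12\int_{\G}\kappa F|\widetilde\nabla\log f|^2\deta$ as a (degenerate) quadratic form in the vector field $\widetilde\nabla\log f$ and then to invoke the elementary duality identity $\tfrac12|a|^2 = \sup_{\xi}\{\,a\cdot\xi - \tfrac12|\xi|^2\,\}$ applied pointwise, promoting the supremum to one over test vector fields $\xi\in C^\infty_c(\R^{4d};\R^d)$. First I would rewrite $\cD_{\aL}(f)$ in a form that isolates the square: using $\widetilde\nabla\log f = \sqrt A\,\Pi_{(v-v_*)^\perp}(\nabla_v\log f - (\nabla_v\log f)_*)$ and $F=ff_*$, one has $\kappa F|\widetilde\nabla\log f|^2 = \kappa A F\big|\Pi_{(v-v_*)^\perp}(\nabla_v\log f-(\nabla_v\log f)_*)\big|^2$. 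Since $\sqrt F\,(\nabla_v\log f-(\nabla_v\log f)_*)$ can be re-expressed through $\nabla_0\sqrt F$ — concretely $\nabla_v\sqrt{F}=\tfrac12\sqrt{F}\,\nabla_v\log f$ and its starred analogue, so that $\nabla_0\sqrt F=\tfrac12\sqrt F(\nabla_v\log f-(\nabla_v\log f)_*)$ once we are careful about which variable each derivative hits — the integrand becomes $4\kappa A\big|\Pi_{(v-v_*)^\perp}\nabla_0\sqrt F\big|^2$, and hence $\cD_{\aL}(f)=2\int_{\G}\kappa A\,\big|\Pi_{(v-v_*)^\perp}\nabla_0\sqrt F\big|^2\deta$. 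This matches the normalisation in \eqref{AR:D-landau}: applying $\tfrac12|a|^2=\sup_\xi\{a\cdot\xi-\tfrac12|\xi|^2\}$ with $a = 2\sqrt{\kappa A}\,\Pi_{(v-v_*)^\perp}\nabla_0\sqrt F$ gives $\cD_{\aL}(f) = \sup_\xi\{\,4\int\sqrt{\kappa A}\,\Pi_{(v-v_*)^\perp}\nabla_0\sqrt F\cdot\xi - 2\int|\xi|^2\,\}$, and then integrating by parts in $v,v_*$ to move $\nabla_0$ off $\sqrt F$ turns $4\int\sqrt{\kappa A}\,(\Pi_{(v-v_*)^\perp}\nabla_0\sqrt F)\cdot\xi\,\deta$ into $-4\int\sqrt{\kappa A F}\,\nabla_0\cdot(\Pi_{(v-v_*)^\perp}\xi)\,\deta$ (using that $\kappa=\kappa(x-x_*)$ and $A=A(|v-v_*|)$ are smooth where $\nabla_0$ acts trivially on $\sqrt\kappa$, and that $\Pi_{(v-v_*)^\perp}$ is symmetric), yielding precisely \eqref{AR:D-landau}.

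The two directions of the supremum identity require slightly different care. For the lower bound $\cD_{\aL}(f)\ge \sup_\xi\{\dots\}$, the pointwise Fenchel inequality $a\cdot\xi-\tfrac12|\xi|^2\le\tfrac12|a|^2$ integrates directly, after the integration by parts, provided the boundary terms vanish — which holds since $\xi\in C^\infty_c$ and $\sqrt{F}\in L^2$ locally (indeed $F=ff_*$ with $f\in L\log L$ gives enough integrability, and $\sqrt{\kappa A}$ is controlled on the support of $\xi$ because $\kappa$ is bounded and $A=|v-v_*|^{\gamma+2}$ is locally bounded for $\gamma\ge -2$). For the upper bound (attainment), the natural candidate is $\xi = 2\sqrt{\kappa A}\,\Pi_{(v-v_*)^\perp}\nabla_0\sqrt F$, which is generally not in $C^\infty_c$; the standard remedy is to first show that the supremum over $C^\infty_c$ equals the supremum over a larger class in which this optimiser lives — or, more robustly, to run the argument only under the a priori assumption $\cD_{\aL}(f)<+\infty$ (the case $\cD_{\aL}(f)=+\infty$ being handled by noting that the supremum is then also $+\infty$, obtained by testing against a sequence $\xi_n$ approximating the optimiser), and to approximate the optimiser by truncation-and-mollification, controlling the error via dominated convergence using the finiteness of $\int\kappa A|\Pi_{(v-v_*)^\perp}\nabla_0\sqrt F|^2$. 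This density argument is the main technical obstacle: one must check that $\sqrt{F}$ has a weak gradient $\nabla_0\sqrt F$ in the relevant weighted $L^2$ space whenever $\cD_{\aL}(f)<\infty$ (this is exactly the content of the $\cH$-solution framework and is implicit in Definition \ref{def:introH}), and that cutoff functions $\chi_n(x,x_*,v,v_*)\to 1$ can be chosen so that $\|\nabla_0\chi_n\|$ does not spoil the limit — here the fact that $A=|v-v_*|^{\gamma+2}$ may vanish or blow up near $v=v_*$ needs a localisation away from the diagonal, but since $\Pi_{(v-v_*)^\perp}\nabla_0\sqrt F$ is paired against $\Pi_{(v-v_*)^\perp}\xi$ and $\xi$ is compactly supported, this is manageable.

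Finally I would remark that \eqref{AR:D-landau} is stated as an identity, so both inequalities are genuinely needed; in the application (Step 1 of the proof of Theorem \ref{main-thm-dissipation}) only the ``$\le$'' direction with $\xi$ of the special form $\xi=\Pi_{(v-v_*)^\perp}\nabla_0\Phi$ for $\Phi\in\mathbf{DS}^\infty_c$ is used, which is why the lower-semicontinuity argument downstream only needs the easy inequality — but for the Proposition as stated the attainment direction should be recorded, following verbatim the corresponding homogeneous argument in \cite[Proposition 6.1]{carrillo2022boltzmann} with the spatial variables $x,x_*$ carried along as inert parameters inside the integrals (they only enter through $\kappa(x-x_*)$, which is smooth, bounded, and bounded below on compacts by Assumption \ref{ASS:kappa}).
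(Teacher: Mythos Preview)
Your strategy—rewrite $\DL(f)=2\int\kappa A\big|\Pi_{(v-v_*)^\perp}\nabla_0\sqrt F\big|^2\deta$, dualise pointwise via $\tfrac12|a|^2=\sup_\xi\{a\cdot\xi-\tfrac12|\xi|^2\}$, integrate by parts—matches the paper's. Two corrections are worth making.

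First, your integration-by-parts justification is incomplete: $\nabla_0$ does \emph{not} act trivially on $\sqrt A$ (since $A=A(|v-v_*|)$ depends on $v,v_*$). What saves you is that $\nabla_0\sqrt A$ is parallel to $v-v_*$, so $\Pi_{(v-v_*)^\perp}\nabla_0\sqrt A=0$; the paper records this explicitly as the key identity allowing the passage from $-4\int\sqrt{\kappa A F}\,\nabla_0\cdot(\Pi\xi)$ to $4\int\sqrt\kappa\,\widetilde\nabla\sqrt F\cdot\xi$.

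Second, for the attainment direction ($\DL\le\cI_\mtt{L}$) the paper avoids your truncation--mollification programme altogether. It assumes the supremum $\cI_\mtt{L}$ is finite (the case $\cI_\mtt{L}=+\infty$ being trivial), observes that the linear functional $T(\xi):=-4\int\sqrt{\kappa AF}\,\nabla_0\cdot(\Pi\xi)\deta$ is then bounded on $C^\infty_c\subset L^2$, and invokes Riesz representation to produce $u\in L^2$ with $T=4\langle u,\cdot\rangle$. This $u$ is by definition the distributional $\sqrt\kappa\,\widetilde\nabla\sqrt F$, so $\DL(f)=2\|u\|_{L^2}^2$ follows by density of $C^\infty_c$ in $L^2$. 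This sidesteps both your diagonal-localisation worry and your somewhat vague handling of the case $\DL=+\infty$ (where there is no $L^2$ optimiser to approximate).

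Finally, your closing remark is backwards: the downstream inequality \eqref{UARB:D-landau} reads $\DL(f)\le\sup_\Phi\{\dots\}$, and obtaining it (via the equality \eqref{var-AS} and then Lemma \ref{lem:elliptic}) genuinely requires the \emph{attainment} direction of Proposition \ref{prop:aff}, not merely the easy Cauchy--Schwarz bound.
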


\begin{proof}
Let us denote the right-hand side of \eqref{AR:D-landau} by $\cI_\mtt{L}$ and notice that
\begin{align*}
    \DL(f)=2\int_{\R^{4d}}\big|\sqrt\kappa\ \widetilde\nabla \sqrt{F}\big|^2\deta
\end{align*}
We want to show equality $\DL(f) = \cI_\mtt{L}$.
\begin{itemize}
    \item $\cI_\mtt{L} \le \DL$: We only need to consider the case of $\DL < +\infty$.
    For any fixed $\xi \in \Ccinfty(\G:\R^d)$, we have
    \begin{equation}
        \label{I:D-1}
\begin{aligned}
    &-4\int_{\R^{4d}}\sqrt{\kappa A  F}\ \nabla_{0}\cdot(\Pi_{(v-v_*)^\perp}\xi)\dd \eta-2\int_{\R^{4d}} |\xi|^2\dd\eta\\
    =&{}4\int_{\R^{4d}}\big(\sqrt{\kappa}\ \Lgrad\sqrt{F}\ \big)\cdot \xi\dd \eta -2\int_{\R^{4d}}|\xi|^2\dd\eta\\
    \le&{} 4\Big(\int_{\R^{4d}}|\sqrt\kappa \ \Lgrad\sqrt{F}|^2\dd \eta\Big)^\frac12\cdot\Big(\int_{\R^{4d}}|\xi|^2\dd\eta\Big)^\frac12-2\int_{\R^{4d}}|\xi|^2\dd\eta\\
    \le&{} 2\int_{\R^{4d}} |\sqrt \kappa \ \widetilde\nabla \sqrt{F}|^2\dd\eta=\DL(f).
\end{aligned}
\end{equation}
In the first line, the integration by parts of $\sqrt{A}\ \nabla_0\cdot \Pi_{(v-v_*)^\perp}$ onto $\sqrt{F}$ is justified, since finite dissipation implies $\sqrt \kappa \widetilde\nabla\sqrt{F} \in L^2(\G)$. Moreover, we used the fact that $\Pi_{(v-v_*)^\perp} \nabla_0\sqrt{A}=0$. The last lines follow from the Cauchy-Schwarz and Young's inequalities.
    \item $\DL \le \cI_\mtt{L}$: We only need to consider the case of $\cI_{\mtt{L}} < +\infty$. We define the following linear functional 
    \begin{equation}
    \label{def:F}
        T(\xi)= -4\int_{\R^{4d}}\sqrt{\kappa A  F}\ \grad_0\cdot(\Pi_{(v-v_*)^\perp}\xi)\dd\eta,\quad \forall\,\xi\in C^\infty_c(\G;\R^d).
    \end{equation}
The boundedness of $\cI_\mtt{L}$ implies 
    \begin{equation*}
        \sup_{\xi \in \Ccinfty, \norm{ \xi}_{\Lp{}{2}}=1} T(\xi)\le \cI_\mtt{L}(f) +2< +\infty.
    \end{equation*}
Therefore, one can uniquely extend $F$ to $\Lp{}{2}(\R^{4d};\R^d)$. 
    Moreover, by the Riesz representation theorem, there exists a unique $u \in \Lp{}{2}(\R^{4d};\R^d)$ such that 
    \begin{equation*}
       T(\xi)=4\langle u,\xi\rangle,\quad \forall \xi \in\Lp{}{2}(\R^{4d};\R^d),
    \end{equation*}
    where $\langle \cdot,\cdot\rangle$ denote the $L^2$-inner product.
    Since $\widetilde\nabla \sqrt{\kappa F}\in L^2(\G;\R^d)$, the integration by parts of the right-hand side of \eqref{def:F} implies that  $u=\widetilde\nabla \sqrt{\kappa F}$.
Moreover, by \eqref{I:D-1}, we have 
\begin{equation*}
\label{ob-L-1}
T(\xi)-2\langle \xi,\xi\rangle\le \cD_{\aL}(f)=T(u)-2\langle u,u\rangle,\quad \forall \xi
\in\Lp{}{2}(\R^{4d};\R^d).
\end{equation*}
Since $\Ccinfty$ is dense in $\Lp{}{2}$, we have 
\begin{equation*}
\label{ob-L-2}
\cD_{\aL}(f)=\sup_{\xi \in \Ccinfty(\R^{4d};\R^d)}\{T(\xi)-2\langle \xi,\xi\rangle\}=\cI_\mtt{L}(f).
\end{equation*}
\end{itemize}
\end{proof}

By Proposition \ref{prop:aff}, the optimal $\xi$ in $L^2(\R^{4d};\R^d)$ such that $\cI_\mtt{L}(f)=T(\xi)-2\langle \xi,\xi\rangle$ is given by $u=\widetilde \nabla \sqrt{\kappa F}$. Since u is anti-symmetric, so that $u=-u_*$, and $u\perp (v-v_*)$, we can characterise $\DL(f)$ by
\begin{equation}
    \label{var-AS}
\begin{aligned}
     \DL(f)=\sup_{V\in \mathbf{AS}^{\infty}_c}\Big\{&-2\int_{\R^{4d}}|\Pi_{(v-v_*)^\perp}V|^2\deta-4\int_{\R^{4d}}\sqrt{\kappa F}\ \widetilde\nabla\cdot (\Pi_{(v-v_*)^\perp}V)\deta\Big\}.   
\end{aligned}
\end{equation}
In the above, the space $\mathbf{AS}^{\infty}_c$ is defined as
\begin{equation*}
    \label{def:as}
\begin{gathered}
    \mathbf{AS} = \{V\in L^2(\G;\R^d)\mid  V = - V_*\ \text{a.e.}\},\\
     \mathbf{AS}^{\infty}_c = \{V\in \Ccinfty(\R^{4d};\R^d)\mid V\in \mathbf{AS}\}.
\end{gathered}
\end{equation*}
Notice that $\big(\mathbf{AS},\langle\cdot,\cdot\rangle_{L^2}\big)$ is a closed subspace of $L^2(\R^{4d};\R^d)$, and $\mathbf{AS}^{\infty}_c$ is dense in $\mathbf{AS}$ with respect to $L^2$-norm.\\ \\
We define the functional spaces
\begin{equation}
\label{def:Sym}
    \mathbf{DS}^{\infty}_c = \{\Phi \in \Ccinfty(\R^{4d};\R)\mid \Phi = \Phi_* ,\,\exists \delta_\Phi>0\text{ s.t. } \Phi=0\,\forall |v-v_*|\le \delta_{\Phi}\}.
\end{equation}
The vector field $V\in \mathbf{AS}^{\infty}_c$ in \eqref{var-AS} can be replaced by $\widetilde\nabla \Phi \in \mathbf{DS}^{\infty}_c $ by directly applying the following solvability lemma of the Landau elliptic equation.
\begin{lemma}
\label{lem:elliptic}
Let $V\in \mathbf{AS}^{\infty}_c(\G;\R^d)$, there exists a unique solution $\Phi\in \mathbf{DS}^{\infty}_c(\G;\R)$ to the following equation
\begin{equation}
 \label{elliptic}   \widetilde\nabla\cdot\widetilde\nabla \Phi=\widetilde\nabla\cdot(\Pi_{(v-v_*)^\perp} V).
\end{equation}
Moreover, $\Phi$ satisfies
\begin{equation}
\label{L2}
\| \widetilde\nabla\Phi\|_{L^2(\G)}^2\le\| \Pi_{(v-v_*)^\perp} V\|_{L^2(\G)}^2.   
\end{equation}
\end{lemma}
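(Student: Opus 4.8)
\textbf{Proof proposal for Lemma~\ref{lem:elliptic}.}

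The plan is to solve \eqref{elliptic} by a direct variational (Lax--Milgram type) argument on the Hilbert space $\mathbf{AS}$, exploiting that $\widetilde\nabla$ and $\widetilde\nabla\cdot$ are formally adjoint, and then to establish the regularity and support properties of the minimiser separately. First I would set up the bilinear form on $\mathbf{AS}$: for $W_1,W_2\in\mathbf{AS}$ define $a(W_1,W_2)=\langle \widetilde\nabla\cdot(\Pi_{(v-v_*)^\perp}W_1),\,\cdot\,\rangle$ interpreted weakly, or more concretely minimise the strictly convex functional
\begin{equation*}
\mathcal{F}(\Phi)=\int_{\G}|\widetilde\nabla\Phi|^2\,\dd\eta - 2\int_{\G}\widetilde\nabla\Phi\cdot(\Pi_{(v-v_*)^\perp}V)\,\dd\eta
\end{equation*}
over $\Phi$ in the closure of $\mathbf{DS}^{\infty}_c$ under the seminorm $\|\widetilde\nabla\cdot\|_{L^2(\G)}$. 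Completing the square,
\begin{equation*}
\mathcal{F}(\Phi)=\int_{\G}|\widetilde\nabla\Phi-\Pi_{(v-v_*)^\perp}V|^2\,\dd\eta-\int_{\G}|\Pi_{(v-v_*)^\perp}V|^2\,\dd\eta,
\end{equation*}
so the infimum is attained (by weak lower semicontinuity and coercivity once one checks $\|\widetilde\nabla\cdot\|_{L^2}$ is a genuine norm modulo the kernel), is unique, and the Euler--Lagrange equation is exactly the weak form of \eqref{elliptic}; moreover the bound \eqref{L2} is immediate from $\mathcal{F}(\Phi_{\min})\ge -\|\Pi_{(v-v_*)^\perp}V\|_{L^2}^2$ together with $\mathcal{F}(\Phi_{\min})=-\|\widetilde\nabla\Phi_{\min}\|_{L^2}^2$ (using that $\widetilde\nabla\Phi_{\min}$ is the $L^2$-orthogonal projection of $\Pi_{(v-v_*)^\perp}V$ onto the range of $\widetilde\nabla$).

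Next I would identify the structure of the minimiser. Since $V\in\mathbf{AS}$ is antisymmetric and $\Pi_{(v-v_*)^\perp}V$ inherits this, and since $\widetilde\nabla$ by its definition \eqref{def:big:Lgrad} automatically produces symmetric-in-$(\Phi\leftrightarrow\Phi_*)$ objects mapped into $(v-v_*)^\perp$, the natural ansatz is $\Phi=\Phi_*$, consistent with membership in $\mathbf{DS}^{\infty}_c$; the symmetrisation $\tfrac12(\Phi+\Phi_*)$ built into $\widetilde\nabla$ means one only ever sees the symmetric part, so one may as well work in the symmetric class from the start. The support condition $\Phi=0$ for $|v-v_*|\le\delta_\Phi$ should follow because $V\in\mathbf{AS}^{\infty}_c$ is compactly supported away from the collinear set (this is part of how $\mathbf{AS}^{\infty}_c$ is used downstream), so the right-hand side of \eqref{elliptic} vanishes near $|v-v_*|=0$ and the elliptic operator $\widetilde\nabla\cdot\widetilde\nabla$ — which is, after fixing $x,x_*$, a (degenerate but uniformly elliptic on $(v-v_*)^\perp$ away from $|v-v_*|=0$) second-order operator in the $v,v_*$ variables — preserves that vanishing; one can enforce it by testing against functions supported near the collinear set. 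Smoothness and compact support in all four variables follow from elliptic regularity: for fixed $(x,x_*)$ the equation is a linear elliptic PDE in $(v,v_*)$ with smooth coefficients and smooth compactly supported data, and the $x,x_*$ dependence is through smooth parameters, so bootstrapping gives $\Phi\in C^\infty$; compact support in $v,v_*$ comes from the uniform-in-$(x,x_*)$ support of the data together with decay estimates (the operator has no zeroth-order term, so one must be slightly careful, but the projection structure and the weight $A(|v-v_*|)$ growing at infinity confine the solution).

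The main obstacle I anticipate is twofold. First, the operator $\widetilde\nabla\cdot\widetilde\nabla$ is genuinely degenerate: on the collinear set $\{v\parallel v_*\}$, or more precisely where $v=v_*$, the projection $\Pi_{(v-v_*)^\perp}$ is ill-defined and $A(|v-v_*|)=|v-v_*|^{\gamma+2}$ may vanish or blow up depending on $\gamma$, so the ellipticity is lost there; this is precisely why $\mathbf{DS}^{\infty}_c$ demands vanishing near $|v-v_*|\le\delta_\Phi$ and why $V\in\mathbf{AS}^{\infty}_c$ is taken supported away from that set. The argument must therefore be localised to the region $|v-v_*|\ge\delta$ and one must verify the variational solution indeed lives there. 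Second, establishing that $\|\widetilde\nabla\cdot\|_{L^2(\G)}$ is a norm (injectivity of $\widetilde\nabla$ on the symmetric class modulo constants-in-the-right-sense, i.e. a Poincaré-type inequality) requires care because of the constraints $\Phi=\Phi_*$ and the fact that $\widetilde\nabla$ annihilates functions of $|v-v_*|$ and $x,x_*$ alone; pinning down the right quotient space and proving coercivity there is the delicate point. Since \cite{carrillo2022boltzmann} handled the homogeneous analogue, I expect to adapt their Lemma on the homogeneous Landau elliptic equation, with the spatial variables $x,x_*$ entering only as smooth parameters — so the genuinely new content is checking that this parametric dependence is harmless (uniform ellipticity constants, uniform support), which is routine but must be stated.
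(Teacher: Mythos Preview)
Your variational approach faces exactly the obstacles you name, and they do not close on their own: the degeneracy of $\widetilde\nabla\cdot\widetilde\nabla$ near $|v-v_*|=0$ blocks off-the-shelf elliptic regularity, and the kernel of $\widetilde\nabla$ on the symmetric class (any function of $x,x_*$ and $|v-v_*|$ is annihilated) is large enough that establishing a Poincar\'e-type coercivity on the right quotient is essentially equivalent to reproving the homogeneous solvability result from \cite{carrillo2022boltzmann}. So the variational setup is a detour that lands you back at the parametric reduction you mention in your final paragraph --- and that reduction is in fact the paper's entire argument, not a fallback.

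Concretely, the paper freezes $(x,x_*)$ and applies \cite[Lemma~6.5]{carrillo2022boltzmann} directly to the homogeneous equation $\widetilde\nabla_{\sf homo}\cdot\widetilde\nabla_{\sf homo}\Phi_1=\widetilde\nabla_{\sf homo}\cdot(\Pi_{(v-v_*)^\perp}V)$ in $(v,v_*)$, obtaining existence, uniqueness, membership in the homogeneous analogue of $\mathbf{DS}^\infty_c$, and the $L^2$ decomposition all at once. Solving a second copy with $V_*$ in place of $V$, the anti-symmetry $V=-V_*$ forces $\Phi_1=-(\Phi_2)_*$ by uniqueness, and setting $\Phi=2\Phi_1$ gives the required symmetry $\Phi=\Phi_*$. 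Smoothness and compact support in $(x,x_*)$ then follow simply by differentiating the equation in those parameters and invoking uniqueness (where $V$ vanishes, so does $\Phi_1$). The $L^2$ bound \eqref{L2} is just the homogeneous $L^2$ decomposition integrated over $(x,x_*)$. There is no global minimisation, no regularity bootstrap, and no Poincar\'e inequality to prove --- the homogeneous black box absorbs all of that.
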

\begin{proof}
We define the homogeneous Landau gradient $\widetilde\nabla_{\sf homo}$ as follows
\begin{equation}
    \label{def:homo:Lgrad}
    \widetilde\nabla_{\sf homo}\Phi=\sqrt A\Pi_{(v-v_*)^\perp}\nabla_0\Phi.
\end{equation}
We fix $x$ and $x_*$ as parameters,  by directly applying the solvability of the homogeneous (Landau)-elliptic equation given in \cite[Lemma 6.5]{carrillo2022boltzmann}, there exist unique $\Phi_1$ and $\Phi_2\in \mathbf{DS}^{\infty}_c(\Do;\R)$ that solve the following equations
\begin{align}
\widetilde\nabla_{\sf homo}\cdot\widetilde\nabla_{\sf homo} \Phi_1&=\widetilde\nabla_{\sf homo}\cdot(\Pi_{(v-v_*)^\perp} V),\label{two-eq-1}\\ \widetilde\nabla_{\sf homo}\cdot\widetilde\nabla_{\sf homo} \Phi_2&=\widetilde\nabla_{\sf homo}\cdot(\Pi_{(v-v_*)^\perp} V_*)\label{two-eq-2}    
\end{align}
for any fixed $(x,x_*)\in\Do$. We comment that the solvability result in \cite{carrillo2022boltzmann} works for both $A_0$ taking the form \eqref{A-1} and \eqref{A-3}.
Moreover, we have the following $L^2$-decomposition equalities for all $(x,x_*)\in\Do$
\begin{equation}
\label{L2:hmo}
\begin{gathered}
\| \Pi_{(v-v_*)^\perp} V\|_{L^2_{v,v_*}}^2=\| \widetilde\nabla_{\sf homo} \Phi_1\|_{L^2_{v,v_*}}^2+\| \Pi_{(v-v_*)^\perp} V-\widetilde\nabla_{\sf homo} \Phi_1\|_{L^2_{v,v_*}}^2,\\
\| \Pi_{(v-v_*)^\perp} V_*\|_{L^2_{v,v_*}}^2=\| \widetilde\nabla_{\sf homo} \Phi_2\|_{L^2_{v,v_*}}^2+\| \Pi_{(v-v_*)^\perp} V_*-\widetilde\nabla_{\sf homo} \Phi_2\|_{L^2_{v,v_*}}^2.
\end{gathered}
\end{equation}
Notice that \eqref{two-eq-2} holds if and only if 
\begin{align*}
  -\widetilde\nabla_{\sf homo}\cdot\widetilde\nabla_{\sf homo} (\Phi_2)_*&=\widetilde\nabla_{\sf homo}\cdot(\Pi_{(v-v_*)^\perp} V).
\end{align*}
The uniqueness of the homogeneous elliptic equations ensures that
\begin{align*}
\Phi_1 = -(\Phi_2)_*\quad \text{for a.e.}\quad  (x,x_*,v,v_*)\in \G.    
\end{align*}
Let $\Phi=2\Phi_1$. 
We have $\Phi\in \mathbf{DS}^{\infty}_c(\G;\R^d)$: The compacted support of $\Phi$ in $x$ and $x_*$ is ensured by the uniqueness of \eqref{two-eq-1} compact support of $V\in \mathbf{AS}^{\infty}_c(\G;\R^d)$; We have the smoothness in $x$ and $x_*$ by taking $\nabla_x$ and $\nabla_{x_*}$ of the both sides of \eqref{two-eq-1}. Thus, $\Phi$ is a desired solution of \eqref{elliptic}.
The $L^2$-inequality \eqref{L2} holds by integrating \eqref{L2:hmo} over $(x,x_*)\in \Do$.

The uniqueness of \eqref{elliptic} is ensured by the uniqueness of the homogeneous equation \eqref{two-eq-1}, since 
\begin{align*}
\widetilde\nabla\cdot\widetilde\nabla\Phi=0\quad\text{if and only if}\quad \nabla_0\cdot \big(\Pi_{(v-v_*)^\perp} \nabla_0\Phi\big)=0.
\end{align*}

\end{proof}
Let $\Phi\in \mathbf{DS}^{\infty}_c(\G;\R^d)$ be a solution of \eqref{elliptic}. 
We substitute $V=\Lgrad\Phi\in \mathbf{AS}^{\infty}_c(\Do;\R^d)$ to \eqref{var-AS}, the $L^2$-inequality \eqref{L2} implies that
    \begin{equation}
    \label{UARB:D-landau}
        \DL(f)\le\sup_{\Phi\in \mathbf{DS}^{\infty}_c}\Big\{-2\int_{\R^{4d}}|\widetilde\nabla \Phi|^2\deta -4\int_{\R^{4d}} \sqrt{\kappa F}\ \widetilde\nabla\cdot\widetilde\nabla\Phi\deta\Big\}.
    \end{equation}

\medskip

\noindent\textbf{Step 2: Affine representation of the fuzzy Boltzmann dissipation}\\ 

The following result extends \cite[Proposition 6.6]{carrillo2022boltzmann} to the fuzzy setting.

\begin{proposition}
We have the following preliminary expression for the reduced fuzzy Boltzmann dissipation
    \begin{equation}
    \label{AR:D-Boltzmann}
    \begin{gathered}
        \cD_{\cosh}(f) = \sup_{\xi \in L^2(B\dd \sigma\dd\eta)}\Big\{\int_{\R^{4d}\times S^{d-1}}\sqrt\kappa(\sqrt{F'}-\sqrt{F} ) \xi B \dsigma \dd \eta\\
        -\frac12\int_{\R^{4d}\times S^{d-1}}\abs{\xi}^2 B\dsigma\dd \eta 
        \Big\}.
    \end{gathered}
    \end{equation}
\end{proposition}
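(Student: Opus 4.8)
The statement to be proved is the affine (dual) representation of the reduced fuzzy Boltzmann dissipation $\cD_{\cosh}(f)$:
\begin{equation*}
\cD_{\cosh}(f) = \sup_{\xi \in L^2(B\dd\sigma\dd\eta)}\Big\{\int_{\R^{4d}\times S^{d-1}}\sqrt\kappa(\sqrt{F'}-\sqrt{F})\xi B\dsigma\dd\eta -\frac12\int_{\R^{4d}\times S^{d-1}}|\xi|^2 B\dsigma\dd\eta\Big\}.
\end{equation*}

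\textbf{Plan of the proof.} The idea is a pointwise Legendre-transform identity integrated against the measure $B\dsigma\dd\eta$, exactly as in \cite[Proposition 6.6]{carrillo2022boltzmann}, but now keeping the spatial variables $x,x_*$ as inert parameters throughout. Recall from \eqref{D:cosh} that
\begin{equation*}
\cD_{\cosh}(f)=\frac12\int_{\R^{4d}\times S^{d-1}}\big|\sqrt{F'}-\sqrt{F}\big|^2 B\kappa\dd\eta\dd\sigma
=\frac12\int_{\R^{4d}\times S^{d-1}}\big|\sqrt{\kappa}(\sqrt{F'}-\sqrt{F})\big|^2 B\dd\eta\dd\sigma ,
\end{equation*}
using $0\le\kappa\le C_\kappa$ so that $\kappa B$ is a genuine (finite, by the moment and entropy bounds of $f$) measure and $\sqrt{\kappa}(\sqrt{F'}-\sqrt F)$ is the natural $L^2(B\dd\sigma\dd\eta)$ object. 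First I would record the elementary scalar identity $\tfrac12 a^2 = \sup_{s\in\R}\{as - \tfrac12 s^2\}$, with the supremum attained at $s=a$, applied pointwise with $a = \sqrt{\kappa}(\sqrt{F'}-\sqrt{F})$. This gives, for a.e.\ $(x,x_*,v,v_*,\sigma)$,
\begin{equation*}
\tfrac12\big|\sqrt{\kappa}(\sqrt{F'}-\sqrt{F})\big|^2 = \sup_{s\in\R}\Big\{\sqrt{\kappa}(\sqrt{F'}-\sqrt{F})\,s - \tfrac12 s^2\Big\}.
\end{equation*}

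\textbf{From pointwise to integrated form.} The inequality $\cD_{\cosh}(f)\ge$ (right-hand side) is immediate: for any fixed $\xi\in L^2(B\dd\sigma\dd\eta)$, integrate the pointwise inequality $\sqrt{\kappa}(\sqrt{F'}-\sqrt{F})\xi - \tfrac12\xi^2 \le \tfrac12|\sqrt\kappa(\sqrt{F'}-\sqrt F)|^2$ against $B\dd\sigma\dd\eta$; both sides are integrable (the left by Cauchy--Schwarz since $\sqrt\kappa(\sqrt{F'}-\sqrt F)\in L^2(B\dd\sigma\dd\eta)$ whenever $\cD_{\cosh}(f)<+\infty$, and the trivial case $\cD_{\cosh}(f)=+\infty$ needs no argument as the supremum is then also $+\infty$ by taking $\xi$ a truncation of $\sqrt\kappa(\sqrt{F'}-\sqrt F)$). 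For the reverse inequality, when $\cD_{\cosh}(f)<+\infty$ I would simply take $\xi = \sqrt{\kappa}(\sqrt{F'}-\sqrt{F})$, which lies in $L^2(B\dd\sigma\dd\eta)$ by definition of finiteness of the dissipation, and observe that this choice turns the bracket into exactly $\int \tfrac12|\sqrt\kappa(\sqrt{F'}-\sqrt F)|^2 B\dd\sigma\dd\eta = \cD_{\cosh}(f)$, realising the supremum. When $\cD_{\cosh}(f)=+\infty$, truncating $\xi_N = \sqrt\kappa(\sqrt{F'}-\sqrt F)\mathbb{1}_{\{|v-v_*|+|v|+|v_*|\le N\}}$ (or any exhausting sequence making each $\xi_N\in L^2(B\dd\sigma\dd\eta)$, which holds since on bounded sets $B$ and $\kappa$ are bounded and $F,F'\in L^1$) gives bracket values $\tfrac12\int|\xi_N|^2 B\dd\sigma\dd\eta \to +\infty$ by monotone convergence, so the supremum is $+\infty$ as well.

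\textbf{Main obstacle and care points.} The only genuinely delicate point is the passage from the pointwise Legendre identity to the claimed equality of the \emph{integral} and the \emph{supremum over $L^2(B\dd\sigma\dd\eta)$}: one must verify that the supremum is indeed attained within the class $L^2(B\dd\sigma\dd\eta)$ (not merely over measurable $\xi$), which is precisely why the formulation restricts $\xi$ to that space and why one needs $\sqrt\kappa(\sqrt{F'}-\sqrt{F})\in L^2(B\dd\sigma\dd\eta)$ in the finite case — this membership follows tautologically from $\cD_{\cosh}(f)<+\infty$ together with $\kappa\le C_\kappa$. One should also note measurability of $(x,x_*,v,v_*,\sigma)\mapsto\sqrt\kappa(\sqrt{F'}-\sqrt F)$, which is clear since $f\in L^1$ and the collisional change of variables is a smooth diffeomorphism for each fixed $\sigma$. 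No new difficulty is introduced by the spatial variables: $\kappa(x-x_*)$ enters only as a bounded nonnegative weight and $F=f(x,v)f(x_*,v_*)$, $F'=f(x,v')f(x_*,v'_*)$ depend on $x,x_*$ only through evaluation, so the argument is a verbatim adaptation of the homogeneous computation in \cite[Proposition 6.6]{carrillo2022boltzmann} with $\dd\eta = \dd x_*\dd v_*\dd x\dd v$ in place of $\dd v_*\dd v$. Thus the proof reduces to the scalar identity plus a one-line check on each side; I do not expect any substantive obstruction.
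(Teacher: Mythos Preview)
Your proposal is correct and follows essentially the same approach as the paper: both use the pointwise quadratic Legendre identity (equivalently, Cauchy--Schwarz/Young) to get the upper bound on the supremum, and both attain equality by choosing $\xi=\sqrt{\kappa}(\sqrt{F'}-\sqrt{F})$ (which the paper writes as $\tfrac12\overline\nabla\sqrt{\kappa F}$). Your treatment is slightly more detailed in that you also handle the case $\cD_{\cosh}(f)=+\infty$ via truncation, which the paper leaves implicit.
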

\begin{proof}
For any fixed $\xi \in L^2(B\dd \sigma\dd\eta)$, by the Cauchy--Schwarz inequality, we have
\begin{equation*}
        \label{I:D-2}
    \begin{aligned}        &\int_{\R^{4d}\times S^{d-1}}\sqrt\kappa\ (\sqrt{F'}-\sqrt{F} ) \xi B \dsigma \dd \eta - \frac12\int_{\R^{4d}\times S^{d-1}}\abs{\xi}^2 B\dsigma\dd \eta \\
         \le&{} \frac12\int_{\R^{4d}\times S^{d-1}} |\sqrt\kappa\ (\sqrt{F'}-\sqrt{F})|^2B\dd\sigma\dd \eta=\cD_{\cosh}(f).
    \end{aligned}
    \end{equation*}
The left-hand side attains its maximum value, equal to $\cD_{\cosh}(f)$, when $\xi = \tfrac{1}{2} \overline{\nabla}\sqrt{\kappa F}\in L^2(B\dd \sigma\dd\eta)$.
\end{proof}

In comparison to \eqref{UARB:D-landau}, we want to replace $\xi \in L^2(B\dd \sigma\dd\eta)$ in \eqref{AR:D-Boltzmann} by $\overline\nabla \Phi$ for $\Phi\in 
\mathbf{DS}^{\infty}_c(\G)$.
By Corollary \ref{corr:big:phi}, we have 
    \begin{equation*}
        |\overline\nabla\Phi|^2 \lesssim \mathbbm{1}_{\supp(\overline\nabla \Phi)}\theta^2|v-v_*|^2.
    \end{equation*}
The integrability of $\theta^2\beta(\theta)$  \eqref{def:beta-int} ensures that
\begin{align*}
 \|\overline\nabla \Phi\|_{L^2(B\dd\eta\dd\sigma)}^2
 \lesssim \big|\supp(\overline\nabla \Phi)\big|\int_0^{\frac{\pi}{2}}\theta^2\beta(\theta)\dd\theta<+\infty.
\end{align*}
 Moreover, by changing of variables, we have
\begin{align*}
\frac12\int_{\R^{4d}\times S^{d-1}}\sqrt\kappa(\sqrt{F'}-\sqrt{F} ) \overline\nabla\Phi B \dsigma \dd \eta
=-\int_{\R^{4d}\times S^{d-1}}\sqrt{\kappa F}  \overline\nabla\Phi B \dsigma \dd \eta.
    \end{align*}

We conclude that
\begin{equation*}
    \label{LARB:D-Boltzmann}
    \begin{aligned}
         \cD_{\cosh}(f) \ge  \sup_{\Phi \in \mathbf{DS}^{\infty}_c}\Big\{&-\frac18\int_{\R^{4d}\times S^{d-1}}|\Bgrad \Phi|^2 B\dsigma\deta\\
        & -\int_{\R^{4d}}\sqrt{\kappa F}\big(\int_{ S^{d-1}}\Bgrad \Phi B\dsigma\big)\deta \Big\}.
    \end{aligned}
    \end{equation*}

\medskip

\noindent\textbf{Step 3: Proof of Theorem \ref{main-thm-dissipation}.}

\begin{lemma}\label{lem:AR-conv}
    Let $\Phi \in \mathbf{DS}^{\infty}_c(\G)$. Let $B^\varepsilon$ and $\kappa^\varepsilon$  satisfy Assumption \ref{ASS:kappa}. Then, we have 
    \begin{align}
        \int_{\R^{4d}}\sqrt{\kappa^\eps F^\eps} \ \Big(\int_{ S^{d-1}}\overline\nabla\Phi B^\eps\dd\sigma\Big) \dd\eta&\limit{\eps \da 0} 2 \int_{\R^{4d}}\sqrt{\kappa F}\ \widetilde\nabla \cdot \widetilde\nabla\Phi \dd\eta,\label{AR:conv1}\\
        \int_{\R^{4d}\times S^{d-1}} |\overline\nabla\Phi|^2 B^\eps \dd\sigma\dd \eta&\limit{\eps \da 0} 8\int_{\R^{4d}}|\widetilde \nabla \Phi|^2 \dd\eta. \label{AR:conv2}
    \end{align}
\end{lemma}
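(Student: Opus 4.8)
The plan is to prove both limits \eqref{AR:conv1} and \eqref{AR:conv2} by the dominated convergence theorem (Theorem \ref{DCT}) after rewriting the integrals in the spherical coordinates of Section \ref{sec:SC}. First I would fix $\Phi\in\mathbf{DS}^\infty_c(\G)$ and write $B^\varepsilon=A_0(|v-v_*|)b^\varepsilon(\theta)$, and pass to the variables $(\theta,p)\in[0,\varepsilon/2]\times S^{d-2}_{k^\perp}$, so that for a.e.\ fixed $(x,x_*,v,v_*)$ and every test of $\sigma$-integral one has $\int_{S^{d-1}}(\cdot)\,\dd\sigma=\int_0^{\varepsilon/2}\beta^\varepsilon(\theta)\big(\int_{S^{d-2}_{k^\perp}}(\cdot)\,\dd p\big)\,\dd\theta$. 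Using the change of variable $\theta=\varepsilon s/\pi$ and the scaling \eqref{def:beta-epsilon}, the $\theta$-integral becomes $\int_0^{1/2}\beta(\pi s)(\cdot)\,\dd s$ with the integrand evaluated at deviation angle $\theta=\varepsilon s/\pi\to0$; the normalization $\int_0^{\pi/2}\theta^2\beta^\varepsilon(\theta)\,\dd\theta=8(d-1)/|S^{d-2}|$ from \eqref{def:beta-int} is what converts the $\theta^2$-asymptotics into the correct constant $\tfrac{|S^{d-2}|}{8(d-1)}\cdot\tfrac{8(d-1)}{|S^{d-2}|}=1$ in the limit.

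For \eqref{AR:conv1}: using \eqref{big:conv2} from Corollary \ref{corr:big:phi}, $\frac1{\theta^2}\int_{S^{d-2}_{k^\perp}}\overline\nabla\Phi\,\dd p\to\frac{|S^{d-2}|}{8(d-1)}\nabla_0\cdot(|v-v_*|^2\Pi_{k^\perp}\nabla_0(\Phi+\Phi_*))$ as $\theta\to0$. Multiplying by $A_0(|v-v_*|)$ and integrating against $\theta^2\beta^\varepsilon(\theta)$, the inner $\sigma$-integral $\int_{S^{d-1}}\overline\nabla\Phi\,B^\varepsilon\,\dd\sigma$ converges pointwise (in $(x,x_*,v,v_*)$) to $A_0(|v-v_*|)\cdot\frac{|S^{d-2}|}{8(d-1)}\cdot\frac{8(d-1)}{|S^{d-2}|}\nabla_0\cdot(|v-v_*|^2\Pi_{k^\perp}\nabla_0(\Phi+\Phi_*))$, which, since $A(|v-v_*|)=A_0(|v-v_*|)|v-v_*|^2$, equals $2\,\widetilde\nabla\cdot\widetilde\nabla\Phi$ by the extended definition \eqref{def:big:Lgrad}. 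For the outer integration I need two things: (i) an $\varepsilon$-uniform integrable dominating function, supplied by the bound \eqref{bd-2} (equivalently the second bound in Corollary \ref{corr:big:phi}), giving $|\int_{S^{d-1}}\overline\nabla\Phi\,B^\varepsilon\,\dd\sigma|\lesssim A_0(|v-v_*|)(|v-v_*|+|v-v_*|^2)\mathbbm{1}_{\supp(\overline\nabla\Phi)}$, which because $\Phi\in\mathbf{DS}^\infty_c$ vanishes for $|v-v_*|\le\delta_\Phi$ and has compact support, is bounded and compactly supported, hence integrable against $\sqrt{\kappa^\varepsilon F^\varepsilon}$ uniformly (using $0\le\kappa^\varepsilon\le C_\kappa$ from Assumption \ref{ASS:kappa} and $\|f^\varepsilon\|_{L^1}=1$ so $\sqrt{F^\varepsilon}\le\tfrac12(f^\varepsilon+f^\varepsilon_*)$, or more simply $\sqrt{\kappa^\varepsilon F^\varepsilon}\in L^1_\loc$); and (ii) convergence of the weight $\sqrt{\kappa^\varepsilon F^\varepsilon}\to\sqrt{\kappa F}$, which follows from $\kappa^\varepsilon\to\kappa$ pointwise (Assumption \ref{ASS:kappa}) together with $\sqrt{F^\varepsilon}\to\sqrt{F}$ in $L^2_\loc(\G)$ from Lemma \ref{lemma:cpt:sqrt}. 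Combining, the integrand converges pointwise a.e.\ and is dominated uniformly by a fixed $L^1$ function on the compact $v$-$v_*$ support, so Theorem \ref{DCT} applies and gives \eqref{AR:conv1}.

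For \eqref{AR:conv2}: here there is no $\kappa^\varepsilon F^\varepsilon$ weight, so the argument is cleaner. Writing the integral in spherical coordinates and rescaling $\theta=\varepsilon s/\pi$, by \eqref{big:conv1} I have $\frac1\theta\overline\nabla\Phi\to\frac{|v-v_*|}2 p\cdot\nabla_0(\Phi+\Phi_*)$ pointwise, so $|\overline\nabla\Phi|^2\sim\frac{\theta^2|v-v_*|^2}4\,|p\cdot\nabla_0(\Phi+\Phi_*)|^2$. Integrating in $p$ over $S^{d-2}_{k^\perp}$ and using Proposition \ref{lem:pi}, $\int_{S^{d-2}_{k^\perp}}|p\cdot w|^2\,\dd p=\frac{|S^{d-2}|}{d-1}|\Pi_{k^\perp}w|^2$; then integrating against $A_0(|v-v_*|)\theta^2\beta^\varepsilon(\theta)$ and using the normalization \eqref{def:beta-int} produces in the limit $\frac{A_0(|v-v_*|)|v-v_*|^2}{4}\cdot\frac{|S^{d-2}|}{d-1}\cdot\frac{8(d-1)}{|S^{d-2}|}\,|\Pi_{k^\perp}\nabla_0(\Phi+\Phi_*)|^2=2A(|v-v_*|)|\Pi_{(v-v_*)^\perp}\nabla_0(\Phi+\Phi_*)|^2=8|\widetilde\nabla\Phi|^2$ by \eqref{def:big:Lgrad}. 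The dominating function is $|\overline\nabla\Phi|^2B^\varepsilon\lesssim A_0(|v-v_*|)\theta^2|v-v_*|^2\beta^\varepsilon(\theta)\mathbbm{1}_{\supp(\overline\nabla\Phi)}$ whose $\theta$-integral is $\lesssim A_0(|v-v_*|)|v-v_*|^2\mathbbm{1}_{\supp(\overline\nabla\Phi)}$, finite on the compact support of $\Phi$ in $(v,v_*)$, giving an $L^1$ bound uniform in $\varepsilon$; Theorem \ref{DCT} then yields \eqref{AR:conv2}.

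The main obstacle is the first limit \eqref{AR:conv1}: unlike \eqref{AR:conv2} it involves the varying weight $\sqrt{\kappa^\varepsilon F^\varepsilon}$, so I must simultaneously handle the (only $L^2_\loc$-strong, not pointwise) convergence $\sqrt{F^\varepsilon}\to\sqrt{F}$ and the $\varepsilon\to0$ limit of the $\sigma$-integral. The clean way around this is to note that the $\sigma$-integral $g^\varepsilon:=\int_{S^{d-1}}\overline\nabla\Phi\,B^\varepsilon\,\dd\sigma$ converges \emph{pointwise} and is \emph{uniformly bounded} by a fixed compactly supported $L^\infty$ function $G(v,v_*)\in L^1_\loc$ independent of $\varepsilon$ (from \eqref{bd-2}), so $g^\varepsilon\to 2\widetilde\nabla\cdot\widetilde\nabla\Phi$ in every $L^p_\loc$, $p<\infty$; pairing the strong-$L^2_\loc$ convergence of $\sqrt{F^\varepsilon}$ with this $L^2_\loc$ (plus $L^\infty$-bounded) convergence of $g^\varepsilon$, and $\kappa^\varepsilon\to\kappa$ pointwise with $0\le\kappa^\varepsilon\le C_\kappa$, the product $\sqrt{\kappa^\varepsilon F^\varepsilon}\,g^\varepsilon$ converges in $L^1_\loc$ to $\sqrt{\kappa F}\,\cdot 2\widetilde\nabla\cdot\widetilde\nabla\Phi$, which is integrable since $\Phi$ is supported away from $v=v_*$ and has compact support. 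This bypasses the need for pointwise control of $\sqrt{F^\varepsilon}$ entirely and makes the dominated-convergence step routine.
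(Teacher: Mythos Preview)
Your proposal is correct and follows essentially the same approach as the paper: both arguments rewrite the integrals in spherical coordinates, use the pointwise limits \eqref{big:conv1}--\eqref{big:conv2} and the bounds from Corollary \ref{corr:big:phi} together with the normalization \eqref{def:beta-int}, and invoke the $L^2_\loc$ convergence $\sqrt{F^\varepsilon}\to\sqrt{F}$ from Lemma \ref{lemma:cpt:sqrt} for the weight in \eqref{AR:conv1}. The only cosmetic difference is that the paper phrases the passage to the limit in \eqref{AR:conv1} via the generalized dominated convergence Theorem \ref{DCT} (with varying majorant $\sqrt{F^\varepsilon}\mathbb{1}_\Omega$), whereas you frame it as a product of a uniformly bounded pointwise-convergent factor $g^\varepsilon$ with the $L^2_\loc$-strongly convergent factor $\sqrt{\kappa^\varepsilon F^\varepsilon}$; these are equivalent ways of packaging the same estimate.
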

\begin{proof}
We follow \cite[Section 6.3]{carrillo2022boltzmann} with appropriate modifications to the fuzzy cases.
Without loss of generality, we assume $|\overline\nabla \Phi|\le 1$ and $\supp(\overline\nabla \Phi)\subset \Omega\times S^{d-1}$, where $\Omega:=B_1(0)\setminus (\Do\times\{|v-v_*|\le \delta\})$ for some $\delta\in(0,1)$, where $B_1(0)\subset \G$ denotes the unit ball.   Notice that removing the region of $|v-v_*|\le \delta$ leads to the boundedness of $A_0(|v-v_*|)\mathbb{1}_\Omega\le C$. Let $\theta= \eps\chi/\pi$. We recall the finite angular momentum assumption \eqref{def:beta-int}.


We first show \eqref{AR:conv1} by the dominated convergence theorem.  The right-hand side of \eqref{AR:conv1} can be written as 
\begin{align*}
I_1:=\int_{\Omega}A_0\sqrt{\kappa^\eps F^\eps} \Big(\int_0^{\frac{\pi}{2}}\frac{\beta(\chi)}{\pi \varepsilon^2}\int_{ S^{d-2}_{k^\perp}}\overline\nabla\Phi \dd p\dd\chi \Big)\dd\eta.
\end{align*}
By using the upper bound of $\abs{\int_{ S^{d-2}_{k^\perp}}\overline\nabla \Phi \dd p}\lesssim \varepsilon^2\chi^2|v-v_*|$ in Corollary \ref{corr:big:phi}, the integrand of $I_1$ is dominated by 
\begin{align*}
 &A_0\sqrt{\kappa^\eps F^\eps} \Big(\int_0^{\frac{\pi}{2}}\frac{\beta(\chi)}{\pi \varepsilon^2}\int_{ S^{d-2}_{k^\perp}}\overline\nabla\Phi \dd p\dd\chi \Big)\\\lesssim_{\Phi,C_\kappa} &{}\sqrt{F^\varepsilon}\mathbb{1}_{\Omega}\int_0^{\frac{\pi}{2}}\chi^2\beta(\chi)\dd\chi\lesssim \sqrt{F^\varepsilon}\mathbb{1}_{\Omega},
\end{align*}
which is uniformly integrable since $\sqrt{F^\eps} \to \sqrt{F}$ in $L^2(\Omega)$
by Lemma \ref{lemma:cpt:sqrt}.

Then we are left to show the pointwise limit 
\begin{equation*}\label{Bgrad:conv}
   \int_{ S^{d-1}}\overline\nabla\Phi B^\eps\dd\sigma \limit{\eps\da0} 2\widetilde\nabla\cdot\widetilde\nabla\Phi,
\end{equation*}
which is a direct consequence of the pointwise limit \eqref{big:conv2} 
\begin{align*}
    &\int_{ S^{d-1}}\overline\nabla\Phi B^\eps\dd\sigma =  A_0(|v-v_*|)\int_0^{\frac{\pi}{2}}\frac{\beta(\chi)}{\pi\varepsilon^2}\int_{S^{d-2}_{k^\perp}}\overline\nabla\Phi\dd p\dd \chi\\
    \limit{\eps\da0}&{}A_0\frac{|S^{d-2}|}{8(d-1)}\int_0^{\frac\pi2}\chi^2\beta(\chi)\dd\chi \grad_0\cdot\big(|v-v_*|^2\Pi_{(v-v_*)^\perp}\grad_0(\Phi+\Phi_*)\big)\\ 
    =&{} 2 \widetilde\nabla\cdot\widetilde\nabla \Phi \quad \text{for a.e. } (x,x_*,v,v_*) \in \G.
\end{align*}
Then we obtain the limit \eqref{AR:conv1} by the dominated convergence theorem.

We show \eqref{AR:conv2} similarly. The right-hand side of \eqref{AR:conv2} can be written as 
\begin{align*}
I_2:=\int_{\Omega}A_0\Big(\int_0^{\frac{\pi}{2}}\frac{\beta(\chi)}{\pi \varepsilon^2}\int_{ S^{d-2}_{k^\perp}}|\overline\nabla\Phi|^2 \dd p\dd\chi \Big)\dd\eta,
\end{align*}
and the integrated of $I_2$ is dominated by 
\begin{align*}
 A_0\Big(\int_0^{\frac{\pi}{2}}\frac{\beta(\chi)}{\pi \varepsilon^2}\int_{ S^{d-2}_{k^\perp}}|\overline\nabla\Phi|^2 \dd p\dd\chi \Big)\lesssim_{\Phi,C_\kappa} \mathbb{1}_{\Omega}.
\end{align*}
On the other hand, the bound and  limit of $\overline\nabla\Phi$ given by \eqref{big:bd1} and \eqref{big:conv1} 
\begin{align*}
|\overline\nabla \Phi|^2\lesssim \varepsilon^2\chi^2|v-v_*|^2\quad\text{and} \quad \lim_{\varepsilon\to0}\frac{1}{\varepsilon }   \overline\nabla \Phi=\frac{\chi}{\pi}\frac{|v-v_*|}{2} p\cdot\nabla_0\big(\Phi+\Phi_*\big)
\end{align*}
yield the pointwise limit for a.e. $(x,x_*,v,v_*)\in\G$
\begin{equation}
\label{Bgard:8}
\lim_{\varepsilon\to0}\int_{ S^{d-1}}\abs{\Bgrad\Phi}^2B^\eps\dd\sigma=8|\Lgrad\Phi|^2.  
\end{equation}
Indeed, by the identity $\abs{p\cdot z}^2 = z^T (p\otimes p)z$ for all $z \in \R^d$, we have 
\begin{align*}
    &\int_{ S^{d-1}}\abs{\Bgrad\Phi}^2B^\eps\dd\sigma=A_0\int_0^\frac\pi2\beta(\chi)\int_{ S^{d-2}_{k^\perp}}\abs{\frac{\pi}{\varepsilon}\Bgrad\Phi}^2\dd p \dd \chi \\
    \limit{\eps \da 0}&{} \frac{A_0}{4}|v-v_*|^2\int_0^\frac\pi2 \beta(\chi)\chi^2\dd \chi\int_{ S^{d-2}_{k^\perp}}\abs{p\cdot\grad_0(\Phi+\Phi_*)}^2\dd p  \\
   =&{} \frac{2(d-1)A}{|S^{d-2}|}\big(\nabla_0(\Phi+\Phi_*)\big)^T\Big(\int_{S^{d-2}_{k^\perp}} p\otimes p\dd p \Big)\nabla_0(\Phi+\Phi_*) \\
    =&{}2A\big(\nabla_0(\Phi+\Phi_*)\big)^T\Pi_{(v-v_*)^\perp} \nabla_0(\Phi+\Phi_*)= 8|\Lgrad\Phi|^2,
\end{align*}
where we use the notation $A(|v-v_*|)=A_0(|v-v_*|)|v-v_*|^2$, and the second last equality is given by Lemma \ref{lem:pi}.

Hence, the limit \eqref{AR:conv2} holds by the dominated convergence theorem.
\end{proof}

\section{Convergence of curve action}\label{sec:action}

We recall the definition of $\aR$ and $\cA_{\aL}$
\begin{gather}
\aR(f,U)=\frac14\int_{\G\times S^{d-1}} \Psi\Big(\frac{U}{\Theta(f)B\kappa}\Big)\Theta(f)B\kappa\dd\eta\dd\sigma,  \label{sec-6:R}\\
\text{and}\quad \cA_{\aL}(f,U)=\frac12\int_{\G} \frac{|U|^2}{\kappa ff_*}\dd\eta.\notag
\end{gather}
In this Section, we show the following theorem.
\begin{theorem}\label{lem:sec-6}
    Let $\varepsilon\in(0,1)$. Let $B^\varepsilon$ and $\kappa^\varepsilon$ satisfy Assumption \ref{ASS:kappa}. Let $f^\varepsilon$ be $\cH$-solutions of \eqref{def:scaling} with initial values satisfying Assumption \ref{ass:curve}. Let $f$ be the weak limit of $f^\varepsilon$ given in Lemma \ref{weak:conv}. Let $(\Psi^*,\Theta)$ be an admissible pair satisfying Assumption \ref{ass-pair} and Assumption \ref{ass:Psi}. Let  \begin{align*}
U^\varepsilon_{\aB}=\kappa^\varepsilon B^\varepsilon\big((f^\varepsilon)'(f^\varepsilon_*)'-f^\varepsilon f^\varepsilon_*\big).
\end{align*} 
There exists $U:[0,T]\times \G\to\R^d$ such that
\begin{equation*}
\label{sec-5:ineq}
 (f,U)\in\TGRE_T\quad \text{and}\quad  \frac12\cA_{\aL}(f,U)\le \liminf_{\varepsilon\to0}  \aR(f^\varepsilon,U^\varepsilon_{\aB}).
    \end{equation*}
\end{theorem}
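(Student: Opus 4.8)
The plan is to mimic the affine-representation/duality argument used for the dissipation term in Section~\ref{sec:dissipation}, but now starting from the curve action $\aR^\varepsilon(f^\varepsilon, U^\varepsilon_{\aB})$. The key structural fact is that $\aR(f,U)$ is, for each fixed $f$, a convex functional of $U$, so it has an affine (Legendre) representation against test functions. Concretely, using the convex conjugate $\Psi^*$ of $\Psi$, one writes
\begin{equation*}
\aR(f,U) = \sup_{\phi} \Big\{ \tfrac14\int_{\G\times S^{d-1}} \overline\nabla\phi\, U \dd\eta\dd\sigma - \tfrac14\int_{\G\times S^{d-1}} \Psi^*(\overline\nabla\phi)\,\Theta(f)B\kappa\dd\eta\dd\sigma\Big\},
\end{equation*}
where the supremum runs over a suitable class of test functions $\phi$ (extended to $\Phi\in\mathbf{DS}^\infty_c(\G)$ as in Section~\ref{subsec:extension}, and here one should allow antisymmetric combinations so that the dual variable lands in the right space). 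The term $\int \overline\nabla\phi\, U^\varepsilon_{\aB}$ equals, after an integration by parts via \eqref{intro:TCRE}, something of the form $\int (\partial_t f^\varepsilon + v\cdot\nabla_x f^\varepsilon)\phi$ plus boundary-in-time terms — i.e. it is controlled purely through the weak formulation and the uniform moment/entropy bounds \eqref{uni-bdd:D}, with no appearance of $\overline\nabla\log f^\varepsilon$. This is precisely why the technical difficulty flagged in Remark~\ref{rmk:non-qua}(iii) does not arise: $|\overline\nabla\phi|$ is uniformly bounded (Corollary~\ref{corr:big:phi} gives $|\overline\nabla\phi|\lesssim\theta|v-v_*|$), so $\Psi^*(\overline\nabla\phi)\sim \tfrac12|\overline\nabla\phi|^2$ up to harmless lower-order terms on the relevant compact support.

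\textbf{Key steps, in order.} First, establish the affine/variational representation of $\aR^\varepsilon(f^\varepsilon,U^\varepsilon_{\aB})$ above (analogue of Propositions in Section~\ref{sec:dissipation}), checking that the supremum is attained at $\phi$ with $\overline\nabla\phi = (\Psi^*)'^{-1}$ of the appropriate argument, i.e. essentially recovering $U^\varepsilon_{\aB}$. Second, pass to the limit $\varepsilon\to0$ term by term for each fixed test function $\Phi\in\mathbf{DS}^\infty_c$: for the linear term use the weak continuity of $f^\varepsilon\rightharpoonup f$ (Lemma~\ref{weak:conv}) together with the rate-equation \eqref{intro:TCRE}$\to$\eqref{intro:TGRE}, which identifies the limiting rate $U$; for the dual-penalty term use the dominated convergence computation of Lemma~\ref{lem:AR-conv} adapted to $\Psi^*$ in place of $|\cdot|^2/2$ — since $\Psi^*(r) = \tfrac{r^2}{2} + o(r^2)$ near $0$ by Assumption~\ref{ass-pair} and $\overline\nabla\Phi = O(\theta|v-v_*|)\to 0$ pointwise, while $\Theta(f^\varepsilon)B^\varepsilon\kappa^\varepsilon$ converges with $\Theta(f^\varepsilon)\le C_{\Psi^*}(f^\varepsilon + f^\varepsilon_*)$ by Lemma~\ref{lem:mean} controlling integrability via $\sqrt{F^\varepsilon}\to\sqrt F$ in $L^2_{\loc}$, one gets
\begin{equation*}
\tfrac14\int_{\G\times S^{d-1}} \Psi^*(\overline\nabla\Phi)\Theta(f^\varepsilon)B^\varepsilon\kappa^\varepsilon\dd\eta\dd\sigma \limit{\eps\da0} \int_\G |\widetilde\nabla\Phi|^2\, \kappa\, \sqrt{\cdot}\ \text{— i.e. the quadratic Landau penalty}.
\end{equation*}
The factor bookkeeping ($\tfrac14$ vs $\tfrac12$, the $8(d-1)$ normalisation, the $\Theta(f)\to ff_*$-type limit once the $(\Psi^*)'$-weight is tracked through \eqref{con-cpt}) is exactly the $\varepsilon\to0$ collapse of a non-quadratic pair onto the quadratic logarithmic-mean pair described in Remark~\ref{rmk:non-qua}(i). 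Third, combine: for every fixed $\Phi$,
\begin{equation*}
\liminf_{\varepsilon\to0}\aR^\varepsilon(f^\varepsilon,U^\varepsilon_{\aB}) \ \ge\ \int_0^T\Big\{\text{linear}(f,\Phi) - \text{quadratic penalty}(\Phi)\Big\}\dd t,
\end{equation*}
and take the supremum over $\Phi\in\mathbf{DS}^\infty_c$ on the right, which by the affine representation of $\tfrac12\cA_{\aL}$ (a standard Benamou--Brenier-type duality for the quadratic action, with the elliptic solvability Lemma~\ref{lem:elliptic} ensuring that $\widetilde\nabla\Phi$ exhausts the admissible dual vector fields) equals $\tfrac12\cA_{\aL}(f,U)$, where $U$ is the rate produced in Step~2. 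One also has to record that $U$ has finite total mass (item (3) of Definition~\ref{def:TGRE}): this follows because $\aR^\varepsilon$ is uniformly bounded by the entropy inequality \eqref{start:Boltzmann} together with Fenchel's inequality \eqref{Rxv}, and $\Psi$ is superlinear, giving equi-integrability of $U^\varepsilon_{\aB}$; hence $(f,U)\in\TGRE_T$.

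\textbf{Main obstacle.} The delicate point is Step~2's limit of the non-quadratic penalty term: one must show that the $o(r^2)$ error in $\Psi^*(\overline\nabla\Phi) = \tfrac12|\overline\nabla\Phi|^2 + o(|\overline\nabla\Phi|^2)$ genuinely vanishes after integration against $\Theta(f^\varepsilon)B^\varepsilon\kappa^\varepsilon$, uniformly in $\varepsilon$. This requires (a) the uniform boundedness of $|\overline\nabla\Phi|$ on $\supp\Phi$ — available from Corollary~\ref{corr:big:phi} once $\Phi$ is compactly supported away from $\{v=v_*\}$, which also bounds the kinetic kernel $A_0$ there, handling the soft-potential singularity as in \eqref{fuzzy:better} — and (b) the upper bound $\Theta(f^\varepsilon)\le C_{\Psi^*}(f^\varepsilon+f^\varepsilon_*)$ from Lemma~\ref{lem:mean} combined with the strong $L^2_{\loc}$ convergence $\sqrt{F^\varepsilon}\to\sqrt F$ from Lemma~\ref{lemma:cpt:sqrt} to upgrade the weak convergence $\Theta(f^\varepsilon)\rightharpoonup$ (something) to a usable convergence of $\Theta(f^\varepsilon)B^\varepsilon\kappa^\varepsilon$ against the bounded, $\theta^2$-decaying weight $|\overline\nabla\Phi|^2$. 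Identifying the pointwise limit $\Theta(f^\varepsilon)\to \sqrt{FF}=F$ (for the $\cosh$ pair) or more generally reconstructing the logarithmic-mean structure out of $(\Psi^*)'$ via \eqref{con-cpt} as $\overline\nabla\log f^\varepsilon\to0$ is where all the non-quadratic-to-quadratic collapse is concentrated, and it must be done carefully because $\overline\nabla\log f^\varepsilon$ is not controlled pointwise — only $\Theta(f^\varepsilon)$ and $F^\varepsilon$ are, and one uses that $\Psi^*(r)/(\Psi^*)'(r)\to r/2$ is applied to the bounded quantity $\overline\nabla\Phi$, not to $\overline\nabla\log f^\varepsilon$, which is the whole reason the curve-action argument is cleaner than the dissipation argument.
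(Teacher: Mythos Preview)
Your high-level strategy — affine representation of $\aR^\varepsilon$, pass to the limit term by term against fixed test functions, then take the supremum to recover $\tfrac12\cA_{\aL}(f,U)$ — is correct and is exactly the paper's Step~3 (Proposition~\ref{sec-5:prop} and Lemma~\ref{lem:6-6}). The convergence of the penalty term works as you describe, using $\Psi^*(r)=\tfrac{r^2}{2}+o(r^2)$ together with the bound $\Theta(s,t)\le C_{\Psi^*}(s+t)$ and the strong convergence of $\sqrt{F^\varepsilon}$.

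The genuine gap is in how you construct and identify $U$. Two concrete issues:

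\emph{(i) Test-function confusion.} You move between $\phi\in C^\infty_c(\Do)$ and $\Phi\in\mathbf{DS}^\infty_c(\G)$, and then claim that the linear term $\int U^\varepsilon_{\aB}\,\overline\nabla\Phi$ reduces via the TCRE \eqref{intro:TCRE} to $\int(\partial_t f^\varepsilon+v\cdot\nabla_x f^\varepsilon)\phi$. That integration by parts \eqref{Boltzmann-IP} is only available for $\phi=\phi(x,v)$, not for the extended $\Phi=\Phi(x,x_*,v,v_*)$. Correspondingly, the elliptic Lemma~\ref{lem:elliptic} (which lives in $\mathbf{DS}^\infty_c(\G)$ and was used only in Section~\ref{sec:dissipation}) plays no role here; the paper's Section~\ref{sec:action} works entirely with $\phi\in C^\infty_c(\Do)$. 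Once you restrict to $\phi(x,v)$, the supremum in the Landau affine representation equals $\cA_{\aL}(f,U)$ only for the \emph{specific} $U$ lying in the tangent space $T_f$, i.e.\ $U=Mff_*\kappa$ with $M$ in the $L^2(ff_*\kappa)$-closure of $\{\widetilde\nabla\phi\}$; this is the content of Lemma~\ref{lem:decompo}, and you do not supply an argument for it.

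\emph{(ii) Integrability of the rate.} Your claim that superlinearity of $\Psi$ yields equi-integrability of $U^\varepsilon_{\aB}$ in $L^1(\dd\eta\dd\sigma)$ is false: the bound $\aR^\varepsilon<\infty$ controls $U^\varepsilon_{\aB}$ only against the measure $\Theta(f^\varepsilon)B^\varepsilon\kappa^\varepsilon\dd\eta\dd\sigma$, and $B^\varepsilon$ is non-integrable (non-cutoff). What is uniformly $L^1$ is the weighted quantity $\theta\,|v-v_*|(\langle v\rangle+\langle v_*\rangle)^\delta\,U^\varepsilon_{\aB}$; the paper establishes its weak compactness directly via Cauchy--Schwarz against $\cD^\varepsilon_{\aB}$ (Lemma~\ref{cpt:U}), and this is precisely what makes the linear term $\int U^\varepsilon_{\aB}\overline\nabla\phi$ converge (since $\overline\nabla\phi\sim\theta|v-v_*|$). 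The paper then defines a concrete $U^\delta_{\aL}$ from this weak limit, shows $(f,U^\delta_{\aL})\in\TGRE_T$ with finite action, and projects onto $T_f$ (Lemmas~\ref{def:bbL}--\ref{lem:decompo}) to obtain the optimal $\widetilde{U^\delta_{\aL}}$ for which the affine representation over $\phi\in C^\infty_c(\Do)$ is exact. Your proposal skips this entire construction, and without it (or a correct Riesz-representation substitute on the closure of $\{\widetilde\nabla\phi\}$) the argument is incomplete.
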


\medskip


We show Theorem \ref{lem:sec-6} in three steps:
\begin{enumerate}[Step $1$:]
    \item Let $\delta>0$ be a constant defined as in Lemma \ref{cpt:U} below. We show the weak precompactness of 
\begin{equation}
\label{def:U-epsilon-q-delta}
\big\{U^\varepsilon_{\delta}\big\}_{\varepsilon\in(0,1)},\quad  U^\varepsilon_{\delta}:=|v-v_*|(\langle v\rangle+\langle v_*\rangle)^{\delta}\theta U^\varepsilon_{\aB}
\end{equation}
in $L^1([0,T]\times \G\times S^{d-1})$. 
We denote (up to a subsequence) $U^{\delta}_{\aB} $ the weak limit of $U^\varepsilon_{\delta}$. Correspondingly, we define
\begin{equation}
\label{def:UL}
   U^{\delta}_{\aL}=\frac14|v-v_*|^{-1}A_0^{-\frac12}(\langle v\rangle+\langle v_*\rangle)^{-\delta}\int_{S^{d-1}} U^{\delta}_{\aB}p\dd\sigma.
\end{equation}
\item We verify that $(f,U^\delta_{\aL})\in\TGRE_T$ and $\int_0^T\cA_{\aL}(f,U^\delta_{\aL})\dd t<+\infty$. Then we search for  $\widetilde U^\delta_{\aL}$ such that
\begin{equation*}
\int_0^T\cA_{\aL}(f,\widetilde U^\delta_{\aL})\dd t=\min_{\widetilde\nabla\cdot W=0} \Big\{\int_0^T\cA_{\aL}(f,U^\delta_{\aL}+W)\dd t\Big\}. 
\end{equation*}
Moreover, we show $(f,\widetilde U^\delta_{\aL})\in\TGRE_T$.

\item We show that $\frac12\cA_{\aL}(f,\widetilde U^\delta_{\aL})\le \liminf_{\varepsilon\to0}  \aR(f^\varepsilon,U^\varepsilon_{\aB})$.
\end{enumerate}


\medskip

\noindent\textbf{Step $1$: The compactness of $U^\varepsilon_{\delta}$.}

\begin{lemma}
\label{cpt:U}

In the hard and Maxwellian potential case \eqref{A-hard}, we take $\delta\in [-1-|\gamma|/2,0)$. In the soft potential case \eqref{A-soft}, we take $\delta\in[0,|\gamma|/2)$.
Let $U^\varepsilon_\delta $ be defined as in \eqref{def:U-epsilon-q-delta}
\begin{equation*}
U^\varepsilon_{\delta}=|v-v_*|(\langle v\rangle+\langle v_*\rangle)^{\delta}\theta U^\varepsilon_{\aB}.
\end{equation*}
Under the same assumptions as Theorem \ref{lem:sec-6}, we have the weak precompactness of
\begin{equation*}
\label{cpt:U-v}
\big\{U^\varepsilon_\delta\big\}_{\varepsilon\in(0,1)}\quad\text{in}\quad L^1([0,T]\times \G\times S^{d-1}).
\end{equation*}
    
\end{lemma}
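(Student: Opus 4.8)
\textbf{Proof plan for Lemma \ref{cpt:U}.}
The plan is to establish weak precompactness in $L^1$ via the Dunford--Pettis theorem, i.e.\ by proving that $\{U^\varepsilon_\delta\}$ is equi-integrable and tight on $[0,T]\times\G\times S^{d-1}$. First I would rewrite $U^\varepsilon_\delta$ in a form amenable to the entropy-dissipation bound. Recall $U^\varepsilon_{\aB}=-\Lambda(f^\varepsilon)B^\varepsilon\kappa^\varepsilon\,\overline\nabla\log f^\varepsilon = -B^\varepsilon\kappa^\varepsilon\big((f^\varepsilon)'(f^\varepsilon_*)'-f^\varepsilon f^\varepsilon_*\big)$. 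Using the elementary bound $|s-t|=|\sqrt s-\sqrt t|\,|\sqrt s+\sqrt t|\le |\sqrt s-\sqrt t|\big(\sqrt{s}+\sqrt{t}\big)$ applied to $s=(f^\varepsilon)'(f^\varepsilon_*)'$, $t=f^\varepsilon f^\varepsilon_*$, together with $\sqrt{F'}+\sqrt{F}\le 2(\sqrt{F'}+\sqrt F)$ (trivially) and the pointwise bound $B^\varepsilon\kappa^\varepsilon\le C_\kappa A_0(|v-v_*|)b^\varepsilon(\theta)$, I would split $|U^\varepsilon_\delta|$ into the product of $|\sqrt{F^\varepsilon{}'}-\sqrt{F^\varepsilon}|\sqrt{B^\varepsilon\kappa^\varepsilon}$ (whose square integrates to $2\cD_{\cosh}^\varepsilon(f^\varepsilon)\le 2\cD_{\aB}^\varepsilon(f^\varepsilon)$, uniformly bounded by \eqref{uni-bdd:D}) and a remaining factor $\big(\sqrt{F^\varepsilon{}'}+\sqrt{F^\varepsilon}\big)\sqrt{B^\varepsilon\kappa^\varepsilon}\,|v-v_*|(\langle v\rangle+\langle v_*\rangle)^\delta\theta$. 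A Cauchy--Schwarz in the collision measure $B^\varepsilon\kappa^\varepsilon\dd\eta\dd\sigma\dd t$ then reduces equi-integrability of $U^\varepsilon_\delta$ on any set $E$ to (i) the uniform bound on $\cD^\varepsilon_{\cosh}$ and (ii) a uniform-in-$\varepsilon$ bound on
\[
\int_E (F^\varepsilon{}'+F^\varepsilon)\,B^\varepsilon\kappa^\varepsilon\,|v-v_*|^2(\langle v\rangle+\langle v_*\rangle)^{2\delta}\theta^2\dd\eta\dd\sigma\dd t,
\]
together with the statement that this integral is small when $|E|$ is small and when $E$ is moved to infinity.

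Second, I would carry out the bound on that second integral. After the change of variables $(v,v_*)\mapsto(v',v'_*)$ for the $F^\varepsilon{}'$ part (whose Jacobian is $\lesssim 1$ as recorded in Section \ref{sec:conv}) and using $|v'-v'_*|=|v-v_*|$, $\langle v'\rangle+\langle v'_*\rangle\lesssim \langle v\rangle+\langle v_*\rangle$, the two pieces are estimated identically. Integrating in $\sigma$ first and using the finite angular momentum \eqref{def:beta-int} ($\int_0^{\pi/2}\theta^2\beta^\varepsilon(\theta)\dd\theta = 8(d-1)/|S^{d-2}|$), uniformly in $\varepsilon$, it remains to control
\[
\int_0^T\!\!\int_{\G} f^\varepsilon f^\varepsilon_*\, A_0(|v-v_*|)\,|v-v_*|^2(\langle v\rangle+\langle v_*\rangle)^{2\delta}\dd\eta\dd t.
\]
Here the choice of $\delta$ matters. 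In the hard/Maxwellian case $A_0(|v-v_*|)|v-v_*|^2\lesssim \langle v-v_*\rangle^{\gamma+2}\lesssim (\langle v\rangle+\langle v_*\rangle)^{\gamma+2}$, and taking $\delta\le -1-|\gamma|/2$ makes $(\langle v\rangle+\langle v_*\rangle)^{\gamma+2+2\delta}\lesssim \langle v\rangle^2+\langle v_*\rangle^2$, which is controlled by $\|f^\varepsilon\|_{L^1_{0,2}}^2$, uniformly bounded by \eqref{uni-bdd:D}. In the soft case $A_0=|v-v_*|^\gamma$, $\gamma\in[-2,0)$, one has $A_0|v-v_*|^2=|v-v_*|^{\gamma+2}\lesssim (\langle v\rangle+\langle v_*\rangle)^{\gamma+2}$ with $\gamma+2\in[0,2)$, and $\delta\in[0,|\gamma|/2)$ gives $\gamma+2+2\delta<2$, again controlled by $\|f^\varepsilon\|_{L^1_{2,2+\gamma_+}}^2$. (The kinetic singularity $|v-v_*|^\gamma$ near $v\sim v_*$ is harmless since $\gamma+2\ge 0$, so no cancellation is needed — this is precisely the point made in Remark \ref{rmk:diff} about restricting to $\gamma\ge-2$.)

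Third, I would upgrade these uniform bounds to equi-integrability and tightness: the integrability of $|z|^{\gamma+2}\in L^1_{\loc}$ and the uniform equi-integrability of $\{f^\varepsilon\}$ in $L^1(\Do)$ (from the uniform entropy and moment bounds, via Dunford--Pettis, as already used in Lemma \ref{weak:conv}) give that $\int_E(\cdots)$ is small uniformly in $\varepsilon$ when $|E|\to0$; the uniform moment bound $\sup_\varepsilon\|f^\varepsilon\|_{L^1_{2,2+\gamma_+}}\le C$ gives tightness on $[0,T]\times\G\times S^{d-1}$ (the $\sigma$-variable and time live on compact sets, and the factor $(\langle v\rangle+\langle v_*\rangle)^{2\delta}$ with the extra moment weight forces mass to concentrate in bounded regions of $(x,x_*,v,v_*)$; one also uses $\int_0^T\cD^\varepsilon_{\cosh}\le C$ to control the contribution of $|\overline\nabla\log f^\varepsilon|$ being large). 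Combining, Dunford--Pettis yields the claimed weak precompactness in $L^1([0,T]\times\G\times S^{d-1})$. The main obstacle is the bookkeeping of the velocity weights: one must verify that the exponent $\gamma+2+2\delta$ produced after absorbing $A_0|v-v_*|^2$ stays within the range $[0,2]$ covered by the available uniform moment bounds on $f^\varepsilon$, which is exactly what dictates the two admissible ranges of $\delta$; getting the Jacobian change of variables and the $\sigma$-integration to preserve these weights (so that the $F^\varepsilon{}'$ and $F^\varepsilon$ pieces are genuinely symmetric) is the place where the fuzzy (two-spatial-variable) structure must be handled carefully, but since $x$ and $x_*$ are untouched by the velocity collision they only contribute the harmless bounded factor $\kappa^\varepsilon\le C_\kappa$ and the $x$-tightness from the moment bound.
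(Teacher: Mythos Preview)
Your approach is essentially the same as the paper's: a Cauchy--Schwarz splitting of $|U^\varepsilon_\delta|$ into a uniformly bounded dissipation factor and a moment factor, the latter handled by the change of variables $(v,v_*)\mapsto(v',v'_*)$, the angular-momentum identity \eqref{def:beta-int}, and the strict inequality on the resulting velocity exponent so that Dunford--Pettis applies; the only cosmetic difference is that you use $\cD^\varepsilon_{\cosh}$ and the factorisation $|s-t|=|\sqrt s-\sqrt t|(\sqrt s+\sqrt t)$, whereas the paper uses $\cD^\varepsilon_{\aB}$ and $\Lambda(s,t)\le (s+t)/2$, which are equivalent via \eqref{ele-ineq}. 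One small slip: in the hard case you write ``$\delta\le -1-|\gamma|/2$'', but the lemma allows $\delta\in[-1-|\gamma|/2,0)$, and the relevant constraint is simply $\delta<0$, which forces the exponent $s_1=2+\gamma_++2\delta$ strictly below $2+\gamma_+$ and hence within the available uniform moment bound.
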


\begin{proof}
   We first show the uniform bounds of $U^\varepsilon_\delta$. 
For any $\phi\in L^\infty([0,T]\times \Do)$, by Cauchy--Schwarz inequality, 
we have
    \begin{gather*}
       \Big| \int_0^T\int_{\G\times S^{d-1}}\phi U^\varepsilon_\delta\dd\eta\dd\sigma\dd t\Big|^2
       \le \Big(\int_0^T\cD_{\aB}(f^\varepsilon)\dd t\Big)^{\frac12}\\
       \times\Big(\int_0^T\int_{\GB}|v-v_*|^{2} (\langle v\rangle+\langle v_*\rangle)^{2\delta}\theta^2\kappa^\varepsilon B^\varepsilon\Lambda(f^\varepsilon)\phi^2\dd\eta\dd\sigma\dd t\Big)^{\frac12}.
    \end{gather*}
   By using of the inequality  $\Lambda(s,t)\le\frac{s+t}{2}$ for all $s,t>0$, we have 
    \begin{align*}
        &\int_{\GB}|v-v_*|^{2} (\langle v\rangle+\langle v_*\rangle)^{2\delta}\theta^2\kappa^\varepsilon B^\varepsilon\Lambda(f^\varepsilon)\phi^2\dd\eta\dd\sigma\\
        \le&{}\int_{\G}\int^{\frac{\varepsilon}{2}}_0\int_{S^{d-2}_{k^\perp}}|v-v_*|^{2}(\langle v\rangle+\langle v_*\rangle)^{2\delta}A_0\theta^2\kappa^\varepsilon \beta^\varepsilon(\theta)\phi^2\frac{f^\varepsilon f^\varepsilon_*+(f^\varepsilon)'(f^\varepsilon_*)'}{2}\dd p\dd \theta\dd\eta\\
        \lesssim &{}\int_{\G}\int^{\frac{\varepsilon}{2}}_0\int_{S^{d-2}_{k^\perp}}|v-v_*|^{2}(\langle v\rangle+\langle v_*\rangle)^{2\delta}A_0 \theta^2\kappa^\varepsilon \beta^\varepsilon(\theta)\big(\phi^2+(\phi')^2\big) f^\varepsilon f^\varepsilon_*\dd p\dd \theta\dd\eta,
        \end{align*}
where we change the variable $(v,v_*)\mapsto (v',v_*')$ and use the energy conservation law $|v|^2+|v_*|^2=|v'|^2+|v_*'|^2$ in the last step. The finite angular momentum assumption \eqref{def:beta-epsilon} and uniform bound of $\kappa^\varepsilon$ imply that 
        \begin{align*}
 &\int_{\G}\int^{\frac{\varepsilon}{2}}_0\int_{S^{d-2}_{k^\perp}}|v-v_*|^{2}(\langle v\rangle+\langle v_*\rangle)^{2\delta}A_0 \kappa^\varepsilon \theta^2\beta^\varepsilon(\theta)\big(\phi^2+(\phi')^2\big) f^\varepsilon f^\varepsilon_*\dd p\dd \theta\dd\eta\\
  \lesssim&{}\|\phi\|_{L^\infty}^2\int_{\G}|v-v_*|^{2}(\langle v\rangle+\langle v_*\rangle)^{2\delta}A_0  f^\varepsilon f^\varepsilon_*\dd\eta.
    \end{align*}
In the hard potential case,
we have 
\begin{align*}
    \int_{\G}|v-v_*|^2(\langle v\rangle+\langle v_*\rangle)^{2\delta}A_0f^\varepsilon f^\varepsilon_*\dd\eta\lesssim \|f^\varepsilon\|_{L^1_{0,s_1}(\Do)}^2,
\end{align*}
where $s_1=2+\gamma_++2\delta\in [0,2+\gamma_+)$. 
By uniform bounds \eqref{uni-bdd:D}, the right-hand side is uniformly bounded. Moreover, the weak compactness of $\{f^\varepsilon f^\varepsilon_*\}$ in $L^1_\loc (\G)$ and $s_1<2+\gamma_+$ ensure the weak compactness of $\big\{\big(\langle v\rangle+\langle v_*\rangle\big)^{s_1}f^\varepsilon f^\varepsilon_*\big\}$ and $U^\varepsilon_\delta$ by Dunford--Petitts theorem.

Similarly, in the case of $\gamma\in[-2,0)$, we have the uniform boundedness and compactness of  
\begin{align*}
    \int_{\G}|v-v_*|^2A_0(\langle v\rangle+\langle v_*\rangle)^{2\delta}  f^\varepsilon f^\varepsilon_*\dd\eta\lesssim \|f^\varepsilon\|_{L^1_{0,s_2}(\Do)}^2,
\end{align*}
where $s_2=2+\gamma+2\delta\in[0,2)$.
\end{proof}

\medskip

\noindent\textbf{Step $2$: The verification of $(f,\widetilde U^\delta_{\aL})\in\TGRE_T$.}

In the following, we fix a $\delta$  given in Lemma \ref{cpt:U}.
We denote (up to a subsequence) $U^{\delta}_{\aB} $ the weak limit of $U^\varepsilon_{\delta}$ in $L^1([0,T]\times\G\times S^{d-1})$. 
We recall the definition \eqref{def:UL}
\begin{equation}
\label{def:U-delta-L}
    U^{\delta}_{\aL}=\frac14|v-v_*|^{-1}A_0^{-\frac12}(\langle v\rangle+\langle v_*\rangle)^{-\delta}\int_{S^{d-1}} U^{\delta}_{\aB}p\dd\sigma,
\end{equation}
where $p\in S^{d-2}_{k^\perp}$.

We first verify that $(f,U^\delta_{\aL})\in \TGRE_T$. The weak continuity in time of $f$ is given by Lemma \ref{weak:conv}. We show that $(f,U^\delta_{\aL})$ is a weak solution of the transport grazing rate equation
\begin{equation}
\label{sec-5:T-eq}
    \d_tf +v\cdot\nabla_x f+\frac12 \widetilde\nabla\cdot U_{\aL}=0.
\end{equation}
We recall the weak formulation for the fuzzy Boltzmann equation 
\begin{align*}
&\int_0^T\int_{\Do}(\d_t+v\cdot\nabla_x)\phi f^\varepsilon\dd x\dd v\dd t-\int_{\Do}\phi_0f_0\dd x\dd v\\
&+\frac14\int_0^T\int_{\G\times S^{d-1}}  \overline\nabla\phi U_{\aB}^\varepsilon\dd\eta\dd\sigma\dd t=0,
\end{align*}
and the transport grazing rate equation for the fuzzy Landau equation
\begin{equation*}
    \label{weak-sec-5-TG}
\begin{aligned}
&\int_0^T\int_{\Do}(\d_t+v\cdot\nabla_x)\phi f^\varepsilon\dd x\dd v-\int_{\Do}\phi_0f_0\dd x\dd v\\
&+\frac12\int_0^T\int_{\G}  \widetilde\nabla\phi\cdot U_{\aL}\dd\eta\dd t=0
\end{aligned}
\end{equation*}
for all $\phi\in C^\infty_c([0,T)\times\Do)$.
Since in Lemma \ref{weak:conv} we showed that $f^\varepsilon\rightharpoonup f$ in $L^1([0,T]\times \Do)$, to show  $\big(f,U^{\delta}_{\aL}\big)$ is a weak solution of \eqref{sec-5:T-eq}, we only need to show that 
\begin{equation}
\label{sec-5:goal-1}
\lim_{\varepsilon\to0} \int_0^T\int_{\G\times S^{d-1}}  \overline\nabla\phi U_{\aB}^\varepsilon\dd\eta \dd\sigma \dd t= 2\int_0^T\int_{\G}  \widetilde\nabla\phi\cdot U^\delta_{\aL}\dd\eta\dd t.
\end{equation}
By definition \eqref{def:U-epsilon-q-delta}, we have  
\begin{align*}
&\int_0^T\int_{\G\times S^{d-1}}  \overline\nabla\phi U_{\aB}^\varepsilon \dd\eta\dd\sigma
=\int_0^T\int_{\G\times S^{d-1}}   \frac{\overline\nabla\phi}{\theta |v-v_*| (\langle v\rangle+\langle v_*\rangle)^{\delta}} U_{\delta}^\varepsilon\dd\eta\dd\sigma.
\end{align*}

On the one hand, Lemma \ref{cpt:U} ensures $U_\delta^\varepsilon \rightharpoonup U^\delta_{\aB}$ in $L^1([0,T]\times \G\times S^{d-1})$.
On the other hand, we have $\frac{\overline\nabla\phi}{\theta |v-v_*| (\langle v\rangle+\langle v_*\rangle)^{\delta}}$ is uniformly bounded and convergent in measure as $\theta\to0$. Indeed, we have $(\langle v\rangle+\langle v_*\rangle)^{-\delta}\le C$ for all $v,v_*\in \supp(\overline\nabla\phi)$. Lemma \ref{lem:phi:conv} implies that 
\begin{align*}
   \frac{|\overline\nabla\phi|}{\theta |v-v_*| } \lesssim  \Lip(\phi),
\end{align*}
and the following pointwise convergence 
\begin{equation*}
\label{conv:pw}
\begin{aligned}
\frac{\overline\nabla\phi}{\theta |v-v_*| }
\to&\frac12 p\cdot (\nabla_v\phi-\nabla_{v_*}\phi_*)\\
=&\frac12 |v-v_*|^{-1}A_0^{-\frac12}p\cdot \widetilde\nabla\phi\quad\text{as }\theta\to0.
\end{aligned}   
\end{equation*}
In the last step,  we use the definition of $k=\frac{v-v_*}{|v-v_*|}$, $p\in S^{d-2}_{k^\perp}$ and $\widetilde\nabla\phi=|v-v_*|A_0(|v-v_*|) ^\frac12\Pi_{k^\perp}(\nabla_v\phi-\nabla_{v_*}\phi_*)$.

Then by weak-strong convergence, we obtain \eqref{sec-5:goal-1} that
\begin{align*}
&\int_0^T\int_{\G\times S^{d-1}}  \overline\nabla\phi U_{\aB}^\varepsilon \dd\eta\dd\sigma\\
\to &{}\frac12\int_0^T\int_{\G} \widetilde\nabla\phi\cdot    |v-v_*|^{-1}(\langle v\rangle+\langle v_*\rangle)^{-\delta}A_0^{-\frac12} \int_{S^{d-1}}  U^\delta_{\aB}p\dd\sigma\dd\eta\\
=&{}2\int_0^T\int_{\G} \widetilde\nabla\phi\cdot U^\delta_{\aL}\dd\eta\quad\text{as }\varepsilon\to0.
\end{align*}
Hence, the pair $\big(f,U^\delta_{\aL}\big)$ is a weak solution of \eqref{sec-5:T-eq}. 

\medskip

{
To show  $\big(f,U^\delta_{\aL}\big)\in\TGRE_T$, we left to show $U^\delta_{\aL}\in L^1(\G)$.
We will first show
\begin{equation}
\label{AL:bdd}
    \int_0^T\cA_{\aL}(f,U^\delta_{\aL})\dd t<+\infty.
\end{equation}
To this end, we use the elementary result that for functions $a_n\rightharpoonup a$ and $b_n\rightharpoonup b$ in $L^1$, we have
\begin{equation}
\label{a-b}
    \int\frac{|a|^2}{b}\le\sup_n\int\frac{|a_n|^2}{b_n}.
\end{equation}
Indeed, by affine representation $\frac{|a|^2}{b}=\sup_{\phi}\{2a\phi-b\phi^2\}$, we have 
\begin{align*}
 \int\frac{|a|^2}{b}&=\sup_{\phi\in C^\infty_c}\Big\{2\int a\phi-\int b\phi^2\Big\}= \sup_{\phi}\lim_n\Big\{2\int a_n\phi-\int b_n\phi^2\Big\}\\
 &\le \sup_{n,\phi}\Big\{2\int a_n\phi-\int b_n\phi^2\Big\}\le\sup_n\int\frac{|a_n|^2}{b_n}.
\end{align*}
By definition of $U^\delta_{\aL}$ in \eqref{def:U-delta-L}, the weak convergence of $U^\varepsilon_\delta$ in Lemma \ref{cpt:U} and the convergence $\kappa^\varepsilon f^\varepsilon f^\varepsilon_*\to \kappa ff_*$ in $L^1(\G)$, we apply \eqref{a-b} to derive
\begin{align*} 
&\int_0^T\cA_L(f,{U_{\aL}^\delta})\dd t
=\frac12\int_0^T\int_{\G}\frac{| U^{\delta}_{\aL}|^2}{ff_*\kappa}\dd\eta \dd t\\
\lesssim&{} \sup_{\varepsilon}\int_0^T\int_{\G}\frac{|\int_{ S^{d-1}}\theta U^\varepsilon_{\aB}|^2}{A_0 \kappa^\varepsilon f^\varepsilon f^\varepsilon_*}\dd\eta \dd t.
\end{align*}
By Cauchy–-Schwarz inequality, we have 
\begin{align*}
    &\Big|\int_{S^{d-1}} \theta U^\varepsilon_{\aB}\dd\sigma\Big|^2\\
    \le &{}|S^{d-2}|\Big(\int_0^{\frac{\varepsilon}{2}}\theta^2\beta^\varepsilon(\theta)\dd\theta\Big)\int_{S^{d-1}}  A_0 B^\varepsilon \big|\kappa^\varepsilon\big((f^\varepsilon)_*'(f^\varepsilon)'-f^\varepsilon_*f^\varepsilon\big)\big|^2\dd\sigma\\
     \lesssim &{}A_0\int_{S^{d-1}} B^\varepsilon \big|\kappa^\varepsilon\big((f^\varepsilon)_*'(f^\varepsilon)'-f^\varepsilon_*f^\varepsilon\big)\big|^2\dd\sigma,
\end{align*}
where we use the angular momentum assumption \eqref{def:beta-int} for the last step.
We aim to show the uniform bound of 
\begin{align*}
\sup_{\varepsilon}\int_0^TD^\varepsilon,\quad D^\varepsilon:=\int_{\G\times S^{d-1}}\frac{B^\varepsilon \big|\kappa^\varepsilon\big((f^\varepsilon)_*'(f^\varepsilon)'-f^\varepsilon_*f^\varepsilon\big)\big|^2}{\kappa^\varepsilon f^\varepsilon f^\varepsilon_*}\dd\sigma\dd\eta.
\end{align*}
By affine representation, we have 
\begin{align*}
D^\varepsilon=\sup_{\xi}\Big\{\int_{\G\times S^{d-1}} 2\sqrt{B^\varepsilon}\kappa^\varepsilon\big((f^\varepsilon)_*'(f^\varepsilon)'-f^\varepsilon_*f^\varepsilon\big)\xi- \int_{\G\times S^{d-1}} |\xi|^2 \kappa^\varepsilon f^\varepsilon f^\varepsilon_*\Big\},
\end{align*}
where we take $\xi\in L^2(\kappa^\varepsilon f^\varepsilon f^\varepsilon_*\dd\eta\dd\sigma)\cap L^\infty(\G\times S^{d-1})$ such that $\|\xi\|_{L^\infty}=1$.
Since in Section \ref{sec:conv} we showed that $\frac{(f^\varepsilon)_*'(f^\varepsilon)'+f^\varepsilon_*f^\varepsilon}{2}\to ff_*$ in $L^1(\G\times S^{d-1})$, we have the following uniform bound
\begin{align*}
   \sup_{\varepsilon} \int_{\G\times S^{d-1}} |\xi|^2 \kappa^\varepsilon \Big|f^\varepsilon f^\varepsilon_*-\frac{(f^\varepsilon)_*'(f^\varepsilon)'+f^\varepsilon_*f^\varepsilon}{2}\Big|\le C
\end{align*}
for some constant $C>0$ independent of $\varepsilon$. 
By using the bound of the logarithm mean $\Lambda(s,t)\le \frac{s+t}{2}$ for $s,t>0$, we have 
\begin{align*}
D^\varepsilon-C\le \sup_{\xi}\Big\{\int_{\G\times S^{d-1}} 2\sqrt{B^\varepsilon}\kappa^\varepsilon\big((f^\varepsilon)_*'(f^\varepsilon)'-f^\varepsilon_*f^\varepsilon\big)\xi- \int_{\G\times S^{d-1}} |\xi|^2 \kappa^\varepsilon \Lambda(f^\varepsilon)\Big\}.
\end{align*}
Since $L^2(\kappa^\varepsilon f^\varepsilon_*f^\varepsilon)=L^2(\kappa^\varepsilon \Lambda(f^\varepsilon))$, the right-hand side of the above inequality can be seen as the affine representation of
\begin{align*}
     \cD_{\aB}^\varepsilon(f^\varepsilon)=\int_{\G\times S^{d-1}}\frac{B^\varepsilon\big|\kappa^\varepsilon\big((f^\varepsilon)_*'(f^\varepsilon)'-f^\varepsilon_*f^\varepsilon\big)\big|^2}{\kappa^\varepsilon\Lambda(f^\varepsilon)}\dd\sigma\dd\eta.
\end{align*}
The uniform bound of dissipation \eqref{uni-bdd:D} implies that
\begin{align*}
\sup_{\varepsilon}\int_0^T D^\varepsilon \le \int_0^T \cD_{\aB}^\varepsilon(f^\varepsilon)+CT <+\infty. 
\end{align*}
Thus, \eqref{AL:bdd} holds.
We show $U^\delta_{\aL}\in L^1([0,T]\times \G)$. By the Cauchy--Schwarz inequality, we have 
\begin{equation}
    \label{C-S:U}
\begin{aligned}
\int_0^T\int_{\G} |U| &\le  \sqrt{\int_0^T\int_{\G} ff_*\kappa}
\sqrt{\int_0^T\cA_{\aL}(f,U^\delta_{\aL})} \\
&\le \sqrt{T\|\kappa\|_{L^\infty}}\|f\|_{L^1(\Do)} \sqrt{\int_0^T\cA_{\aL}(f,U)}<+\infty.
\end{aligned}
\end{equation}
We conclude that $\big(f,U^\delta_{\aL}\big)\in\TGRE_T$ and $ \int_0^T\cA_{\aL}(f,U^\delta_{\aL})\dd t<+\infty$.
}

\medskip

For any $W\in L^1(\G;\R^d)$ such that $\widetilde\nabla\cdot W=0$ in the weak sense, i.e.
\begin{equation}
\label{div-free:tilde}
    \int_{\G} \widetilde\nabla \phi \cdot W\dd\eta=0,\quad \forall\, \phi\in C^\infty_c(\Do),
\end{equation}
we have $\big(f,U^\delta_{\aL}+W\big)\in \TGRE_T$ if $\cA_{\aL}(f,U^\delta_{\aL}+W)\in L^1([0,T])$. To verify that $\big(f,U^\delta_{\aL}+W\big)$ is a weak solution of \eqref{sec-5:T-eq}, we use 
\begin{equation*}
  \int_{\G} U^\delta_{\aL}\cdot \widetilde\nabla\phi\dd\eta=\int_{\G} (U^\delta_{\aL}+W)\cdot \widetilde\nabla\phi\dd\eta.
\end{equation*}

The rest of Step 2 is devoted to searching for the unique $\widetilde U^\delta_{\aL}$ such that
\begin{equation*}
\int_0^T\cA_{\aL}(f,\widetilde U^\delta_{\aL})\dd t=\min_{\widetilde\nabla\cdot W=0} \Big\{\int_0^T\cA_{\aL}(f,U^\delta_{\aL}+W)\dd t\Big\}. 
\end{equation*}
\begin{lemma}
\label{def:bbL}
Let $\big(f,U\big)\in \TGRE_T$ such that $\int_0^T\cA_{\aL}(f,U)\dd t<+\infty$.
Let $W\in L^1(\G;\R^d)$ satisfying \eqref{div-free:tilde}. We have (up to a subsequence)
\begin{align*}
U+W_n\rightharpoonup \widetilde U\quad\text{in } L^1([0,T]\times \G;\R^d)    
\end{align*}
such that
\begin{equation}
\label{opt-condition}
\int_0^T\cA_{\aL}(f,\widetilde U)\dd t=\min_{W\in L^1(\G;\R^d)} \Big\{\int_0^T\cA_{\aL}(f,U+W)\dd t \Bigl\vert \widetilde\nabla\cdot W=0 \text{ weakly}\Big\}. 
\end{equation}
Moreover, $\widetilde U$ is unique.
\end{lemma}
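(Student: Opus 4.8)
The statement to prove is a standard Hilbert-space-style minimization over an affine subspace, complicated only by the time variable and the degeneracy of the weight $ff_*\kappa$. Here is how I would proceed.

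\medskip

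\textbf{Setup and strategy.} The plan is to realize $\int_0^T\cA_{\aL}(f,\cdot)\dd t$ as (half) the squared norm of a weighted $L^2$ space and to minimize over the affine set $U+\cN$, where $\cN=\{W\in L^1([0,T]\times\G;\R^d):\widetilde\nabla\cdot W=0\text{ weakly}\}$. Concretely, for fixed $f$ define the (possibly degenerate) inner product $\langle V_1,V_2\rangle_f=\frac12\int_0^T\int_\G \frac{V_1\cdot V_2}{ff_*\kappa}\dd\eta\dd t$, so that $\int_0^T\cA_{\aL}(f,V)\dd t=\|V\|_f^2$ whenever $V$ is supported on $\{ff_*\kappa>0\}$ and $+\infty$ otherwise. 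The set over which we minimize is $\{U+W:W\in\cN\}$; since $U$ has finite $\cA_{\aL}$-action, $U$ is supported on $\{ff_*\kappa>0\}$, and the minimizer $\widetilde U$ will also be so supported. The key structural point is that $\cN$ is a closed, convex (indeed linear) subspace, so the problem is a projection onto a closed affine subspace of a Hilbert space, which has a unique solution; I would then extract it as a weak $L^1$ limit of a minimizing sequence $U+W_n$.

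\medskip

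\textbf{Main steps.} First I would verify the problem is nontrivial: the infimum $m:=\inf_{W\in\cN}\int_0^T\cA_{\aL}(f,U+W)\dd t$ is finite (taking $W=0$ gives the bound $\int_0^T\cA_{\aL}(f,U)\dd t<+\infty$) and nonnegative. Second, take a minimizing sequence $W_n\in\cN$ with $\int_0^T\cA_{\aL}(f,U+W_n)\dd t\to m$; the uniform action bound combined with Cauchy--Schwarz exactly as in \eqref{C-S:U} (using $\|\kappa\|_{L^\infty}$, $\|f\|_{L^1}$ bounded, $f\in\cP(\Do)$) gives a uniform $L^1([0,T]\times\G)$ bound on $U+W_n$; moreover the equi-integrability needed for Dunford--Pettis follows because the action functional controls the tails: on any set $E$, $\int_E |U+W_n|\le (\int_0^T\int_\G ff_*\kappa\,\mathbbm1_E)^{1/2}(2m+1)^{1/2}$ for $n$ large, and $ff_*\kappa\in L^1$ is itself equi-integrable. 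Hence, up to a subsequence, $U+W_n\rightharpoonup\widetilde U$ weakly in $L^1([0,T]\times\G;\R^d)$. Third, I would identify $\widetilde U$ as admissible: the weak limit of $W_n=(U+W_n)-U$ is $\widetilde U-U=:W_\infty$, and the constraint $\int_\G\widetilde\nabla\phi\cdot W\dd\eta=0$ is linear in $W$ and stable under $L^1$-weak convergence (for each fixed $\phi\in C^\infty_c(\Do)$, $\widetilde\nabla\phi$ is bounded and supported where $|v-v_*|$ is bounded away from $0$ and $\infty$, hence a valid $L^\infty$ test function; one integrates in $t$ as well), so $W_\infty\in\cN$ and thus $\widetilde U\in U+\cN$. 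Fourth, lower semicontinuity: by the affine/duality representation already used in Step 2 of the excerpt, $\int_0^T\cA_{\aL}(f,V)\dd t=\sup_{\xi}\{\int_0^T\int_\G(2V\cdot\xi - ff_*\kappa|\xi|^2)\}$ over $\xi\in C^\infty_c$, hence it is convex and weakly-$L^1$ lower semicontinuous in $V$, giving $\int_0^T\cA_{\aL}(f,\widetilde U)\dd t\le\liminf_n\int_0^T\cA_{\aL}(f,U+W_n)\dd t=m$; combined with $\widetilde U\in U+\cN$ and the definition of $m$ this forces equality, i.e.\ \eqref{opt-condition}. Finally, $(f,\widetilde U)\in\TGRE_T$: weak time-continuity of $f$ is from Lemma \ref{weak:conv}; $\widetilde U\in L^1$ follows by Cauchy--Schwarz \eqref{C-S:U} from the now-finite action; and the weak transport-grazing-rate equation holds because, for any $\phi\in C^\infty_c(\Do)$, $\int_\G\widetilde\nabla\phi\cdot\widetilde U=\int_\G\widetilde\nabla\phi\cdot(U+W_\infty)=\int_\G\widetilde\nabla\phi\cdot U$, which is the same equation $(f,U)$ already satisfies.

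\medskip

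\textbf{Uniqueness and the main obstacle.} Uniqueness follows from strict convexity of $V\mapsto\int_0^T\cA_{\aL}(f,V)\dd t$ on the set of $V$ supported in $\{ff_*\kappa>0\}$: if $\widetilde U_1,\widetilde U_2$ were two minimizers, both in $U+\cN$ with action $m$, then $\tfrac12(\widetilde U_1+\widetilde U_2)\in U+\cN$ and by the parallelogram-type inequality $\int_0^T\cA_{\aL}(f,\tfrac12(\widetilde U_1+\widetilde U_2))\dd t\le m - \tfrac14\|\widetilde U_1-\widetilde U_2\|_f^2$, forcing $\widetilde U_1=\widetilde U_2$ on $\{ff_*\kappa>0\}$; and since both vanish a.e.\ off that set (finite action), they agree a.e. I expect the only genuinely delicate point to be the equi-integrability of the minimizing sequence needed to invoke Dunford--Pettis: one must be careful that the degeneracy set $\{ff_*\kappa=0\}$ does not obstruct the tail estimate, but as noted the action functional itself provides the decomposition $\int_E|U+W_n|\le(\int_E ff_*\kappa)^{1/2}(2m+1)^{1/2}$ valid on arbitrary measurable $E$, and the equi-integrability of $ff_*\kappa\in L^1([0,T]\times\G)$ (which holds since $\kappa$ is bounded and $f$, hence $ff_*$, is equi-integrable with bounded mass) closes the argument. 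A minor technical check is that test functions $\widetilde\nabla\phi$ are legitimate pairings against $L^1$-weak limits; this is fine because $\phi$ has compact support in $\Do$ so $\widetilde\nabla\phi$ is bounded with the needed support properties.
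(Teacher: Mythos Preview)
Your approach is essentially the same as the paper's: take a minimizing sequence, extract a weak $L^1$ limit via a Cauchy--Schwarz bound of the form \eqref{C-S:U}, and conclude by lower semicontinuity and convexity of $\cA_{\aL}$. You supply more detail than the paper does (equi-integrability for Dunford--Pettis, closure of the constraint set, the parallelogram argument for uniqueness), which is fine.

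One inaccuracy worth flagging: your claim that for $\phi\in C^\infty_c(\Do)$ the field $\widetilde\nabla\phi$ is bounded and supported where $|v-v_*|$ is bounded is not correct. Since $\widetilde\nabla\phi=\sqrt{A}\,\Pi_{(v-v_*)^\perp}(\nabla_v\phi-(\nabla_v\phi)_*)$ and only one of $(x,v)$ or $(x_*,v_*)$ need lie in $\supp\phi$, the factor $\sqrt{A}\sim |v-v_*|^{(\gamma+2)/2}$ is unbounded and $\widetilde\nabla\phi\notin L^\infty(\G)$ in general. Thus the stability of the constraint $\int_\G\widetilde\nabla\phi\cdot W_n=0$ under weak $L^1$ convergence does not follow from pairing with an $L^\infty$ function. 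The fix is already implicit in your setup: the uniform action bound gives $U+W_n$ bounded in the weighted space $L^2((ff_*\kappa)^{-1}\dd\eta\dd t)$, and $\widetilde\nabla\phi\in L^2(ff_*\kappa\,\dd\eta\dd t)$ by the moment bounds on $f$ (this is exactly the pairing used in the definition of $T_f$ in Lemma \ref{lem:decompo}); hence the constraint passes to the limit via weak convergence in that weighted $L^2$ space, and the weighted-$L^2$ and $L^1$ weak limits agree. The paper's own proof is terse on this point as well, so your argument, once corrected, is at least as complete.
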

\begin{proof}
We define $\cA_0=\min \big\{\|\cA_{\aL}(f,U+W)\|_{L^1([0,T])}\big\}$.
There exists a sequence of divergence-free $\{W_n\}\subset L^1(\G;\R^d)$ such that $\int_0^T\cA_{\aL}(f,U+W_n)\dd t\le \int_0^T\cA_{\aL}(f,U)\dd t$ and 
\begin{align*}
\lim_{n\to\infty}\int_0^T\cA_{\aL}(f,U+W_n)\dd t=\cA_0.
\end{align*}
The sequence $\{U+W_n\}$ is weakly compact in $L^1([0,T]\times\G)$, for which we use the uniform bound \eqref{uni-bdd:D} and \eqref{C-S:U} that
\begin{align*}
\int_0^T\int_{\G} |U+W_n| \dd\eta\dd t\lesssim \|f\|_{L^1(\Do)} \Big(\int_0^T\cA_{\aL}(f,U)\dd t\Big)^{\frac12}.
\end{align*}

The lower semi-continuity and convexity of $\cA_{\aL}$ ensures that 
\begin{align*}
\cA_{\aL}(f,\widetilde U)\le \liminf_{n\to\infty} \cA_{\aL}(f,U+W_n)=\cA_0,   
\end{align*}
and the uniqueness of $\widetilde U$.
\end{proof}


We have the following lemma to characterise the optimal $\widetilde U$.
\begin{lemma}\label{lem:decompo}
  Let $\big(f,U\big)\in \TGRE_T$ such that $
    \int_0^T\cA_{\aL}(f,U^\delta)\dd t<+\infty$. Let $\widetilde U$ satisfy \eqref{opt-condition}. There exists $M\in L^1([0,T]\times\G;\R^d)$ such that
    \begin{align*}
    \widetilde U=\widetilde Mff_*\kappa\quad\text{and}\quad  
\widetilde M\in\overline{\{ \widetilde \nabla\phi\mid \phi\in C^\infty_c(\Do)\}}^{L^2(ff_* \kappa\dd\eta)}=:T_f.    
    \end{align*}

   
\end{lemma}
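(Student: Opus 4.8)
The statement asserts an orthogonal-type decomposition of the minimizer $\widetilde U$ coming from Lemma~\ref{def:bbL}: namely that $\widetilde U = \widetilde M f f_* \kappa$ with $\widetilde M$ in the closure $T_f$ of fuzzy Landau gradients in $L^2(f f_* \kappa \dd\eta)$. The natural approach is a Hilbert-space projection argument carried out fiberwise in time. For a.e.\ $t\in[0,T]$ the weighted space $\cH_t := L^2(f_t (f_t)_* \kappa \dd\eta;\R^d)$ is a Hilbert space, and the admissible competitors $U^\delta_{\aL} + W$ (with $\widetilde\nabla\cdot W = 0$ weakly) form an affine subspace; minimizing $\cA_{\aL}(f,\cdot)=\tfrac12\|\cdot/(ff_*\kappa)\|$ over it is exactly an orthogonal projection. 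The key algebraic point is that, since $\widetilde U/(ff_*\kappa)\in\cH_t$, writing $\widetilde M := \widetilde U/(ff_*\kappa)$ gives $\widetilde U = \widetilde M ff_*\kappa$ by construction; the content is that $\widetilde M\in T_f$, i.e.\ $\widetilde M$ is $\cH_t$-orthogonal to the set $N_f := \{W/(ff_*\kappa) : W\in L^1(\G;\R^d),\ \widetilde\nabla\cdot W = 0 \text{ weakly}\}$, and then one identifies $T_f = N_f^{\perp}$.

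First I would record the orthogonality condition produced by the minimization in Lemma~\ref{def:bbL}. Since $\widetilde U$ minimizes $W\mapsto \int_0^T\cA_{\aL}(f,U^\delta_{\aL}+W)\dd t$ over weakly divergence-free $W$, and this set is a linear space (not merely affine, as $W=0$ is admissible and the constraint is linear), the first-order optimality condition reads
\begin{equation*}
\int_0^T\int_{\G}\frac{\widetilde U\cdot W}{f f_* \kappa}\dd\eta\dd t = 0 \qquad\text{for all } W\in L^1(\G;\R^d) \text{ with } \widetilde\nabla\cdot W = 0 \text{ weakly},
\end{equation*}
obtained by perturbing $\widetilde U\rightsquigarrow \widetilde U + sW$ and differentiating the quadratic functional at $s=0$; one must check that $\widetilde U+sW$ is still an admissible $L^1$ competitor with finite action, which follows since $\cA_{\aL}(f,\cdot)$ is quadratic and both $\widetilde U/(ff_*\kappa)$ and $W/(ff_*\kappa)$ lie in $\cH_t$ for a.e.\ $t$ (for $W$ one needs an a priori bound; use that finite action of $\widetilde U$ and of $U^\delta_{\aL}$ forces finite action of their difference $W$, which sits in $\cH_t$). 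Setting $\widetilde M := \widetilde U/(ff_*\kappa)$, the displayed identity says $\widetilde M \perp_{\cH_t} N_f$ after integrating in $t$, hence (modulo a measurable-selection/fiberwise argument) $\widetilde M_t \perp_{\cH_t} N_{f_t}$ for a.e.\ $t$.

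Next I would prove that $N_f^{\perp} = T_f$ inside $\cH_t$. The inclusion $T_f \subseteq N_f^{\perp}$ is immediate from the definition of weak divergence-freeness: for $\phi\in C^\infty_c(\Do)$ and $W$ with $\widetilde\nabla\cdot W=0$ weakly, $\int_{\G}\widetilde\nabla\phi\cdot W = 0$, and dividing/multiplying by $ff_*\kappa$ this is exactly $\langle \widetilde\nabla\phi, W/(ff_*\kappa)\rangle_{\cH_t} = 0$; pass to the $\cH_t$-closure to get all of $T_f$. For the reverse inclusion $N_f^\perp\subseteq T_f$, I would argue by contraposition: if $g\in\cH_t$ is $\cH_t$-orthogonal to $T_f$ but $g\notin N_f$ would be the wrong direction — rather, the clean route is: any $g\in N_f^\perp$ can be tested against $ff_*\kappa\, g$; one checks $ff_*\kappa\, g \in N_f$, i.e.\ $\widetilde\nabla\cdot(ff_*\kappa\, g)=0$ weakly, is NOT generally true, so instead use the Hilbert-space identity $\cH_t = \overline{T_f} \oplus (\overline{T_f})^\perp$ and show $(\overline{T_f})^\perp = N_f$ via an approximation/density argument: any $W$ with $\widetilde\nabla\cdot W=0$ weakly induces an element of $N_f$ orthogonal to $T_f$, and conversely any $\cH_t$-element orthogonal to all $\widetilde\nabla\phi$ is, by the definition of the distributional divergence $\widetilde\nabla\cdot$, weakly divergence-free after multiplication by the weight, giving membership in $N_f$. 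Thus $\widetilde M_t\in N_{f_t}^\perp = \overline{T_{f_t}} = T_{f_t}$ for a.e.\ $t$. Finally, $\widetilde M\in L^1([0,T]\times\G;\R^d)$ follows from the Cauchy–Schwarz bound as in \eqref{C-S:U}, since $\int_0^T\int_\G |\widetilde M| ff_*\kappa = \int_0^T\int_\G|\widetilde U| < +\infty$ and $\|\widetilde M\|_{L^1(ff_*\kappa)}\le \sqrt{T\|\kappa\|_{L^\infty}}\|f\|_{L^1}\sqrt{\int_0^T\cA_{\aL}(f,\widetilde U)}$, whence also $\widetilde M\in L^2(ff_*\kappa\,\dd\eta)$ for a.e.\ $t$ as needed for the closure to make sense.

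\textbf{Main obstacle.} The delicate point is the fiberwise passage: the optimality condition and the space $T_f$ are stated with an integral over $[0,T]$, and one must justify disintegrating in $t$ (so that $\widetilde M_t\in T_{f_t}$ for a.e.\ $t$) and, conversely, gluing the fiberwise projections into a jointly measurable $\widetilde U$. This requires a measurable-selection argument for the family of Hilbert spaces $\cH_t$ and care that the constraint $\widetilde\nabla\cdot W=0$ — which couples only $v,v_*$ and not $t$ — interacts well with the disintegration. A secondary technical nuisance is verifying that the competitor perturbations $\widetilde U + sW$ remain in $L^1$ with finite action so that the first-order condition is legitimate; this is handled by the quadratic structure of $\cA_{\aL}$ together with the a priori finiteness $\int_0^T\cA_{\aL}(f,\widetilde U)\dd t<+\infty$ established in Step~2 and the analogous bound for $U^\delta_{\aL}$.
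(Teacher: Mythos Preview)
Your approach is essentially the same Hilbert-space projection argument the paper uses, and it is correct in outline, but you have over-complicated two points that the paper handles more directly.

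First, the identification $N_f^\perp = T_f$: the paper avoids this entirely by \emph{defining} $N_f$ as the orthogonal complement of $T_f$ in $L^2(ff_*\kappa\,\dd\eta)$, namely
\[
N_f=\Big\{\widetilde N\in L^2(ff_* \kappa\dd \eta)\ \Big|\ \int_{\G}\widetilde\nabla \phi\cdot \widetilde N\, ff_*\kappa\,\dd\eta=0\ \forall \phi\in C^\infty_c(\Do) \Big\}.
\]
Since $T_f$ is closed by construction, $(T_f^\perp)^\perp=T_f$ is automatic; there is nothing to prove. The link to admissible competitors is the observation (which you also make, after some back-and-forth) that for any $\widetilde N\in N_f$ the vector field $W:=ff_*\kappa\,\widetilde N$ satisfies $\widetilde\nabla\cdot W=0$ weakly, so it is admissible in \eqref{opt-condition}. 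Minimality then gives $\|\widetilde M\|_{L^2(ff_*\kappa)}\le\|\widetilde M+s\widetilde N\|_{L^2(ff_*\kappa)}$ for all $s\in\R$, forcing $\widetilde M\perp N_f$, hence $\widetilde M\in T_f$. Your detour through the reverse inclusion and the momentary confusion about whether $ff_*\kappa\,g$ is divergence-free are unnecessary.

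Second, the existence of $\widetilde M$ with $\widetilde U=\widetilde M ff_*\kappa$: you write $\widetilde M:=\widetilde U/(ff_*\kappa)$ directly, which presupposes that $\widetilde U$ is absolutely continuous with respect to $ff_*\kappa\,\dd\eta$. The paper cites \cite[Lemma~3.3]{DH25} for this Radon--Nikodym step; the underlying reason is that finiteness of $\cA_{\aL}(f,\widetilde U)$ forces $\widetilde U$ to vanish on $\{ff_*\kappa=0\}$, but this deserves a sentence of justification rather than being assumed.

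Finally, the time-disintegration issue you flag as the main obstacle is not really one: the divergence-free constraint is purely spatial, so the minimization in \eqref{opt-condition} decouples fiberwise in $t$, and the paper (like its use in Proposition~\ref{sec-5:prop}) effectively works at each fixed $t$. No measurable-selection subtlety arises.
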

\begin{proof}
We follow \cite[Proposition A.11]{erbar2023gradient} with appropriate adaptations concerning the fuzzy Landau gradient.
By \cite[Lemma 3.3]{DH25}, there exists $\widetilde M\in L^1_\loc(\G)$ such that
\begin{align*}
    \widetilde U=ff_*\kappa \widetilde M.
\end{align*}
By definition of $\widetilde U$, we have, for all $s\in \R$,
\begin{align*}
\|\widetilde M\|_{L^2(ff_* \kappa\dd \eta)}\le \|\widetilde M+s\widetilde N\|_{L^2(ff_* \kappa\dd\eta)},\quad\forall  \widetilde N\in N_f.
\end{align*}
The space $N_f$ is defined as
\begin{align*}
N_f=\Big\{\widetilde N\in L^2(ff_* \kappa\dd \eta)\mid \int_{\G}\widetilde\nabla \phi ff_*\kappa \widetilde N\dd\eta=0,\,\forall \phi\in C^\infty_c(\Do) \Big\}.   
\end{align*}
Hence, $N_f$ is the orthogonal complement of $T_f$ in $L^2(ff_* \kappa\dd \eta)$, and we have $\widetilde M\in T_f$.

\end{proof}

We recall that $(f,U^\delta_{\aL})\in\TGRE_T$, where $U^\delta_{\aL}$ is defined as in \eqref{def:U-delta-L}. {By \eqref{AL:bdd}, we have  $
    \int_0^T\cA_{\aL}(f,U^\delta)\dd t<+\infty$.} By Lemma \ref{def:bbL}, we define $\widetilde{U^\delta_{\aL}}$ such that
    \begin{equation}
    \label{def:U-tilde-delta}
\int_0^T\cA_{\aL}(f,\widetilde{U^\delta_{\aL}})\dd t=\min_{\widetilde\nabla\cdot W=0} \Big\{\int_0^T\cA_{\aL}(f,U^\delta_{\aL}+W)\dd t \Big\}.
\end{equation}
Notice that we have $\big(f,\widetilde{U^\delta_{\aL}}\big)\in \TGRE_T$. By definition of $\widetilde U$ in Lemma \ref{def:bbL}, there exists $\{W_n\}\subset L^1(\G;\R^d)$ such that $\widetilde\nabla\cdot W_n=0$ and 
\begin{align*}
 U^\delta_{\aL}+W_n\rightharpoonup\widetilde U_{\aL}^\delta \quad \text{in}\quad L^1(\G;\R^d).   
 \end{align*}
The weak formulation \eqref{sec-5:T-eq} holds, since, for all $\phi\in C^\infty_c(\Do)$, we have 
\begin{equation}
\label{tilde:Tg}
  \int_{\G} U^\delta_{\aL}\cdot \widetilde\nabla\phi\dd\eta=\int_{\G} (U^\delta_{\aL}+W_n)\cdot \widetilde\nabla\phi\dd\eta\to\int_{\G} \widetilde{U^\delta_{\aL}}\cdot \widetilde\nabla\phi\dd\eta.
\end{equation}
Hence, we have $\big(f,\widetilde{U^\delta_{\aL}}\big)\in\TGRE_T$.

\medskip

\noindent\textbf{Step $3$: Verification of the curve action limit.}

In Step $3$, we show 
\begin{equation}
\label{gobal:step-3}
 \frac12\cA_{\aL}(f,\widetilde{U_{\aL}^\delta})\le \liminf_{\varepsilon\to0}  \aR(f^\varepsilon,U^\varepsilon_{\aB}).
\end{equation}

Similar to Proposition \ref{prop:aff}, we have the following affine representation of $\aR$ and $\cA_{\aL}$.
\begin{proposition}
    [Affine representation]\label{sec-5:prop}
   Let $U^\varepsilon_{\aB}=\kappa^\varepsilon B^\varepsilon\big((f^\varepsilon)'(f^\varepsilon_*)'-f^\varepsilon f^\varepsilon_*\big)$.  Let $\widetilde{U_{\aL}^\delta}$ satisfy \eqref{def:U-tilde-delta}.
Under the same assumptions as in Theorem \ref{lem:sec-6}, we have the following affine representation for the Boltzmann and Landau curve actions
    \begin{gather}
        \aR^\varepsilon(f^\varepsilon,U^\varepsilon_{\aB})=\frac14\sup_{\phi\in C^\infty_c(\Do)}\Big\{\int_{\GB}U^\varepsilon_{\aB}\overline\nabla \phi-\int_{\GB}\Psi^*(\overline\nabla\phi)B^\varepsilon \Theta(f^\varepsilon)\kappa^\varepsilon\Big\},\label{sec-5:aff-1}
   \\\cA_L(f,\widetilde{U_{\aL}^\delta})=\sup_{\phi\in C^\infty_c(\Do)}\Big\{  \int_{\Do}\widetilde{U_{\aL}^\delta}\cdot \widetilde\nabla \phi-\frac12\int_{\G}|\widetilde\nabla\phi|^2 ff_* \kappa\Big\}.\label{sec-5:aff-2}
    \end{gather}
\end{proposition}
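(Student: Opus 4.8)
The plan is to prove each of the two representations by the standard two-sided Legendre--Fenchel argument: one inequality comes from inserting an arbitrary test function into a pointwise convex-duality estimate and integrating, and the reverse from exhibiting an admissible sequence $\phi_n\in C^\infty_c(\Do)$ along which the supremum is attained in the limit. The two cases differ only in how delicate the second step is.

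For the Boltzmann representation \eqref{sec-5:aff-1}, write $a^\varepsilon:=U^\varepsilon_{\aB}/\big(\Theta(f^\varepsilon)B^\varepsilon\kappa^\varepsilon\big)$. Convex conjugacy of $\Psi,\Psi^*$ gives $\Psi(a^\varepsilon)\ge a^\varepsilon\,\overline\nabla\phi-\Psi^*(\overline\nabla\phi)$ pointwise; multiplying by $\Theta(f^\varepsilon)B^\varepsilon\kappa^\varepsilon$, using $a^\varepsilon\,\Theta(f^\varepsilon)B^\varepsilon\kappa^\varepsilon=U^\varepsilon_{\aB}$, and integrating over $\GB$ yields
\begin{equation*}
\aR^\varepsilon(f^\varepsilon,U^\varepsilon_{\aB})\ \ge\ \tfrac14\Big\{\int_{\GB}U^\varepsilon_{\aB}\,\overline\nabla\phi-\int_{\GB}\Psi^*(\overline\nabla\phi)B^\varepsilon\Theta(f^\varepsilon)\kappa^\varepsilon\Big\}\qquad\text{for every }\phi,
\end{equation*}
so $\aR^\varepsilon(f^\varepsilon,U^\varepsilon_{\aB})$ dominates the supremum. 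Here $\aR^\varepsilon(f^\varepsilon,U^\varepsilon_{\aB})\le\cD_{\aB}^\varepsilon(f^\varepsilon)<+\infty$ by \eqref{bdd:D-Psi} and \eqref{uni-bdd:D}, while $\int_{\GB}\Psi^*(\overline\nabla\phi)\Theta(f^\varepsilon)B^\varepsilon\kappa^\varepsilon<+\infty$ because $\Psi^*(\overline\nabla\phi)\lesssim|\overline\nabla\phi|^2\lesssim\theta^2|v-v_*|^2$ on $\supp(\overline\nabla\phi)$, $\Theta(f^\varepsilon)\lesssim f^\varepsilon f^\varepsilon_*+(f^\varepsilon)'(f^\varepsilon_*)'$ by Lemma \ref{lem:mean}, $\int_0^{\pi/2}\theta^2\beta^\varepsilon(\theta)\,\dd\theta<+\infty$, and the moment bounds \eqref{uni-bdd:D} hold; these two facts also force $\int_{\GB}|U^\varepsilon_{\aB}\,\overline\nabla\phi|<+\infty$. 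For the converse, the compatibility condition \eqref{con-cpt} with $s=(f^\varepsilon)'(f^\varepsilon_*)'$, $t=f^\varepsilon f^\varepsilon_*$ gives $a^\varepsilon=\big((f^\varepsilon)'(f^\varepsilon_*)'-f^\varepsilon f^\varepsilon_*\big)/\Theta(f^\varepsilon)=(\Psi^*)'(\overline\nabla\log f^\varepsilon)$, so the equality case of Fenchel's inequality holds pointwise at $c=\overline\nabla\log f^\varepsilon$, whence
\begin{equation*}
\aR^\varepsilon(f^\varepsilon,U^\varepsilon_{\aB})=\tfrac14\int_{\GB}\big[U^\varepsilon_{\aB}\,\overline\nabla\log f^\varepsilon-\Psi^*(\overline\nabla\log f^\varepsilon)\Theta(f^\varepsilon)B^\varepsilon\kappa^\varepsilon\big],
\end{equation*}
the two integrals being finite since $\int U^\varepsilon_{\aB}\,\overline\nabla\log f^\varepsilon=4\cD_{\aB}^\varepsilon(f^\varepsilon)\ge0$ and $\int\Psi^*(\overline\nabla\log f^\varepsilon)\Theta(f^\varepsilon)B^\varepsilon\kappa^\varepsilon=4\cD_{\Psi^*}^\varepsilon(f^\varepsilon)\le 4\cD_{\aB}^\varepsilon(f^\varepsilon)$, finite for a.e.\ $t$ by \eqref{bdd:D-Psi} and \eqref{uni-bdd:D}. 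It then remains to approximate $\log f^\varepsilon$ by $\phi_n\in C^\infty_c(\Do)$ — truncating $\log f^\varepsilon$ at levels $\pm n$, multiplying by a smooth $(x,v)$-cutoff, and mollifying — along which $\int U^\varepsilon_{\aB}\,\overline\nabla\phi_n\to\int U^\varepsilon_{\aB}\,\overline\nabla\log f^\varepsilon$ and $\limsup_n\int\Psi^*(\overline\nabla\phi_n)\Theta(f^\varepsilon)B^\varepsilon\kappa^\varepsilon\le\int\Psi^*(\overline\nabla\log f^\varepsilon)\Theta(f^\varepsilon)B^\varepsilon\kappa^\varepsilon$, carried out as in the proof of the variational characterisation of the fuzzy Boltzmann equation (Theorem \ref{thm:VC:Boltzmann}, \cite{EH25}).

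For the Landau representation \eqref{sec-5:aff-2}, Young's inequality $\widetilde{U^\delta_{\aL}}\cdot\widetilde\nabla\phi\le\tfrac12|\widetilde{U^\delta_{\aL}}|^2/(ff_*\kappa)+\tfrac12|\widetilde\nabla\phi|^2 ff_*\kappa$, integrated over $\G$ (recall $\widetilde{U^\delta_{\aL}}\in L^1(\G)$ with $\cA_{\aL}(f,\widetilde{U^\delta_{\aL}})<+\infty$), shows the supremum is $\le\cA_{\aL}(f,\widetilde{U^\delta_{\aL}})$. The matching bound is precisely where the minimality \eqref{def:U-tilde-delta} of $\widetilde{U^\delta_{\aL}}$ enters: by Lemma \ref{lem:decompo}, $\widetilde{U^\delta_{\aL}}=\widetilde M\,ff_*\kappa$ with $\widetilde M\in T_f=\overline{\{\widetilde\nabla\phi:\phi\in C^\infty_c(\Do)\}}^{L^2(ff_*\kappa\,\deta)}$; picking $\phi_n\in C^\infty_c(\Do)$ with $\widetilde\nabla\phi_n\to\widetilde M$ in $L^2(ff_*\kappa\,\deta)$ and expanding gives
\begin{equation*}
\int\widetilde{U^\delta_{\aL}}\cdot\widetilde\nabla\phi_n-\tfrac12\int|\widetilde\nabla\phi_n|^2 ff_*\kappa=\int\widetilde M\cdot\widetilde\nabla\phi_n\,ff_*\kappa-\tfrac12\int|\widetilde\nabla\phi_n|^2 ff_*\kappa\ \longrightarrow\ \tfrac12\int|\widetilde M|^2 ff_*\kappa=\cA_{\aL}(f,\widetilde{U^\delta_{\aL}}),
\end{equation*}
so the supremum is $\ge\cA_{\aL}(f,\widetilde{U^\delta_{\aL}})$ and \eqref{sec-5:aff-2} follows.

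The main obstacle is the density step for \eqref{sec-5:aff-1}: approximating $\log f^\varepsilon$ by smooth compactly supported test functions in the Orlicz-type topology dictated by $\Psi^*$ with weight $\Theta(f^\varepsilon)B^\varepsilon\kappa^\varepsilon$. The exponential growth of $\Psi^*$ (Assumption \ref{ass-pair}) rules out a naive dominated-convergence argument for the $\Psi^*(\overline\nabla\phi_n)$ term, so one must use convexity of $\Psi^*$ together with the finiteness of $\cD_{\Psi^*}^\varepsilon(f^\varepsilon)$ to kill the contribution of the truncation sets $\{|\overline\nabla\log f^\varepsilon|\ge n\}$ as $n\to\infty$; the corresponding step for \eqref{sec-5:aff-2} is, by contrast, a soft Hilbert-space argument. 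We note finally that only the Fenchel half of \eqref{sec-5:aff-1} and the Lemma~\ref{lem:decompo}-dependent half of \eqref{sec-5:aff-2} are actually invoked in Step~3 of Section \ref{sec:action}, though we record both identities in full.
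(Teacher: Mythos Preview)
Your proposal is correct and follows the same approach as the paper: Fenchel/Young for the upper bound on the supremum, and the optimality structure (the compatibility condition \eqref{con-cpt} giving the Fenchel equality case at $\xi=\overline\nabla\log f^\varepsilon$ for \eqref{sec-5:aff-1}; Lemma~\ref{lem:decompo} plus $L^2(ff_*\kappa)$-approximation for \eqref{sec-5:aff-2}) for the matching lower bound. You are in fact more careful than the paper on one point: the density step for \eqref{sec-5:aff-1} --- passing from the optimiser $\overline\nabla\log f^\varepsilon$ to $\overline\nabla\phi_n$ with $\phi_n\in C^\infty_c(\Do)$ --- is not carried out in the paper's proof, which simply records the pointwise equality and concludes with ``the entropy dissipation bound \eqref{bdd:D-Psi} ensures the affine representation holds''; your identification of this as the delicate step, and your closing remark that only the Fenchel half of \eqref{sec-5:aff-1} is actually used downstream, are both accurate.
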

\begin{proof}
By convex duality, we have \begin{align*}
\aR(f^\varepsilon,U^\varepsilon_{\aB})=\frac14\sup_{\xi}\Big\{\int_{\GB}U_{\aB}^\varepsilon\xi-\int_{\GB}\Psi^*(\xi)B^\varepsilon \Theta(f^\varepsilon)\kappa^\varepsilon \Big\},
\end{align*}
where $\xi\in L^2(\Theta B \kappa\dd\eta\dd\sigma;\R)$.

On the other hand, since $U_{\aB}^\varepsilon=\kappa^\varepsilon B^\varepsilon (\Psi^*)'(\overline\nabla \log f^\varepsilon)\Theta(f^\varepsilon)=\kappa^\varepsilon B^\varepsilon \big((f^\varepsilon)'(f^\varepsilon)_*'-f^\varepsilon f^\varepsilon_*\big)$, the duality inequality becomes an equality, i.e.
\begin{align*}
 \Psi\Big(\frac{U^\varepsilon_{\aB}}{\Theta(f^\varepsilon)B^\varepsilon\kappa^\varepsilon}\Big)\Theta(f)B\kappa=U_{\aB}^\varepsilon\xi-\Psi^*(\xi)B^\varepsilon \Theta(f^\varepsilon)\kappa^\varepsilon,\quad \xi=\overline\nabla \log f^\varepsilon.
\end{align*} 
The entropy dissipation bound \eqref{bdd:D-Psi} ensures the affine representation \eqref{sec-5:aff-1} holds.

Concerning the Landau affine representation, we have
\begin{align*} \cA_L(f,\widetilde{U_{\aL}^\delta})=\sup_{\zeta\in L^2(ff_*\kappa\dd\eta;\R^d)}\Big\{  \int_{\Do}\widetilde{U_{\aL}^\delta}\cdot \zeta -\int_{\G}\frac12|\zeta|^2 ff_* \kappa\Big\}.
\end{align*}
By Lemma \ref{lem:decompo}, we have
\begin{align*}
\widetilde{U_{\aL}^\delta}=Mff_*\kappa\quad\text{and}\quad  
M\in\overline{\{ \widetilde \nabla\phi\mid \phi\in C^\infty_c(\Do)\}}^{L^2(ff_* \kappa\dd\eta)}.    
    \end{align*}
    Hence, there exists $\{\phi_n\}\subset C^\infty_c(\Do)$, such that
\begin{align*}
\widetilde \nabla \phi_nff_*\kappa\cdot \widetilde \nabla \phi_n-\frac12 |\widetilde \nabla \phi_n|^2ff_*\kappa \to \frac12 \frac{|\widetilde{U_{\aL}^\delta}|^2}{ff_*\kappa} \quad\text{in}\quad  L^1(\G).
\end{align*}
Hence, we have the Landau affine representation \eqref{sec-5:aff-2}.
\end{proof}


To show \eqref{gobal:step-3}, we show the following lemma.
\begin{lemma}
\label{lem:6-6}
Let $\phi\in C^\infty_c(\Do)$. 
Let $U^\varepsilon_{\aB}=\kappa^\varepsilon B^\varepsilon\big((f^\varepsilon)'(f^\varepsilon_*)'-f^\varepsilon f^\varepsilon_*\big)$.  Let $\widetilde{U_{\aL}^\delta}$ satisfy \eqref{def:U-tilde-delta}.
Under the same assumptions as in Theorem \ref{lem:sec-6}, we have 
\begin{gather}
\frac14 \int_{\G\times S^{d-1}}U^\varepsilon_{\aB}\overline\nabla\phi\dd\eta\dd \sigma \to   \frac12\int_{\G} \widetilde{U^\delta_{\aL}}\cdot \widetilde\nabla\phi\dd\eta,\label{li-1}\\
\int_{\G\times S^{d-1}}\Psi^*(\overline\nabla\phi)B^\varepsilon \Theta(f^\varepsilon)\kappa^\varepsilon \dd\eta\dd\sigma\to \int_{\G} |\widetilde\nabla \phi|^2ff_*\kappa\dd\eta  \label{li-2}
\end{gather}
as $\varepsilon\to0$.
\end{lemma}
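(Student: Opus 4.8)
\textbf{Proof plan for Lemma \ref{lem:6-6}.}

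The plan is to establish the two limits \eqref{li-1} and \eqref{li-2} by the same weak-strong convergence mechanism already used in Step 2, combined with the pointwise limits of the fuzzy Boltzmann gradient from Lemma \ref{lem:phi:conv} and the behaviour of the pair $(\Psi^*,\Theta)$ near the origin. Throughout, $\phi\in C^\infty_c(\Do)$ is fixed, so $\supp(\overline\nabla\phi)\subset \Omega\times S^{d-1}$ with $\Omega$ bounded and bounded away from $\{v=v_*\}$; in particular $(\langle v\rangle+\langle v_*\rangle)^{\pm\delta}$, $A_0(|v-v_*|)$ and $|v-v_*|^{-1}$ are all bounded on $\Omega$, so the kinetic singularity plays no role here and the whole argument is essentially local.

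For \eqref{li-1} I would rewrite the integrand exactly as in Step 2: using the definition \eqref{def:U-epsilon-q-delta} of $U^\varepsilon_\delta=|v-v_*|(\langle v\rangle+\langle v_*\rangle)^\delta\theta U^\varepsilon_{\aB}$,
\[
\frac14\int_{\G\times S^{d-1}}U^\varepsilon_{\aB}\overline\nabla\phi
=\frac14\int_{\G\times S^{d-1}}\frac{\overline\nabla\phi}{\theta|v-v_*|(\langle v\rangle+\langle v_*\rangle)^\delta}\,U^\varepsilon_\delta.
\]
By Lemma \ref{cpt:U}, $U^\varepsilon_\delta\rightharpoonup U^\delta_{\aB}$ in $L^1([0,T]\times\G\times S^{d-1})$ (here we drop the $t$-integral and work pointwise in $t$, or equivalently integrate against a test function and conclude for a.e. $t$ exactly as in the passage to \eqref{sec-5:goal-1}). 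The multiplier $\overline\nabla\phi/(\theta|v-v_*|(\langle v\rangle+\langle v_*\rangle)^\delta)$ is uniformly bounded by $C\Lip(\phi)$ (by \eqref{bd-1}) and converges pointwise, as $\theta\to0$ hence as $\varepsilon\to0$, to $\tfrac12(\langle v\rangle+\langle v_*\rangle)^{-\delta}p\cdot(\nabla_v\phi-(\nabla_v\phi)_*)=\tfrac12|v-v_*|^{-1}A_0^{-1/2}(\langle v\rangle+\langle v_*\rangle)^{-\delta}\,p\cdot\widetilde\nabla\phi$ by \eqref{p.w.conv-1} and the definition of $\widetilde\nabla$. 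Weak-strong convergence then gives
\[
\frac14\int_{\G\times S^{d-1}}U^\varepsilon_{\aB}\overline\nabla\phi
\to\frac18\int_{\G}\widetilde\nabla\phi\cdot|v-v_*|^{-1}A_0^{-1/2}(\langle v\rangle+\langle v_*\rangle)^{-\delta}\Big(\int_{S^{d-1}}U^\delta_{\aB}p\,\dd\sigma\Big)\dd\eta
=\frac12\int_{\G}\widetilde\nabla\phi\cdot U^\delta_{\aL}\,\dd\eta
\]
by the definition \eqref{def:U-delta-L} of $U^\delta_{\aL}$. Finally one replaces $U^\delta_{\aL}$ by $\widetilde{U^\delta_{\aL}}$ using \eqref{tilde:Tg}, i.e. that $\widetilde{U^\delta_{\aL}}-U^\delta_{\aL}$ is a weak $\widetilde\nabla$-divergence-free limit and hence pairs to zero against $\widetilde\nabla\phi$; this yields \eqref{li-1}.

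For \eqref{li-2} the key point is that $|\overline\nabla\phi|\le C_1\theta|v-v_*|\to0$ uniformly on $\Omega$, so we are evaluating $\Psi^*$ near $0$. Using \eqref{Psi-0} (equivalently Assumption \ref{ass-pair}: $\Psi^*(0)=(\Psi^*)'(0)=0$, $(\Psi^*)''(0)=1$) gives $\Psi^*(\overline\nabla\phi)=\tfrac12|\overline\nabla\phi|^2+o(|\overline\nabla\phi|^2)$ pointwise, and $\Psi^*(\overline\nabla\phi)\le C|\overline\nabla\phi|^2\le C\theta^2|v-v_*|^2$ by the quadratic control of $\Psi^*$ near $0$. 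For the weight, $\Theta(f^\varepsilon)=\Theta((f^\varepsilon)'(f^\varepsilon)'_*,f^\varepsilon f^\varepsilon_*)$ satisfies the bound $\Theta(s,t)\le C_{\Psi^*}(s+t)$ from Lemma \ref{lem:mean}, so the integrand is dominated by $C\theta^2\beta^\varepsilon(\theta)\cdot |v-v_*|^2A_0\kappa^\varepsilon((f^\varepsilon)'(f^\varepsilon)'_*+f^\varepsilon f^\varepsilon_*)\mathbbm 1_\Omega$; after the $\sigma$-integration the factor $\int_0^{\varepsilon/2}\theta^2\beta^\varepsilon(\theta)\dd\theta$ is bounded by \eqref{def:beta-int}, and $\mathbbm 1_\Omega(f^\varepsilon f^\varepsilon_*+(f^\varepsilon)'(f^\varepsilon)'_*)\to 2\mathbbm 1_\Omega ff_*$ in $L^1(\G)$ (by Lemma \ref{lemma:cpt:sqrt} and the change of variables $(v,v_*)\mapsto(v',v_*')$, energy conservation giving the $\Omega$-localisation), hence the family is uniformly integrable. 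For the pointwise limit, $\tfrac12|\overline\nabla\phi|^2$ and $\Theta(f^\varepsilon)\to ff_*$ (continuity of $\Theta$, $\Theta(r,r)=r$, and the convergences $(f^\varepsilon)'(f^\varepsilon)'_*,f^\varepsilon f^\varepsilon_*\to ff_*$) combine with the computation \eqref{Bgard:8}–type identity: integrating $\tfrac12|\overline\nabla\phi|^2 B^\varepsilon$ over $\sigma$ and using \eqref{p.w.conv-1}, the identity $|p\cdot z|^2=z^T(p\otimes p)z$, Lemma \ref{lem:pi} and $A=A_0|v-v_*|^2$ gives the pointwise limit $ff_*\kappa\cdot A|v-v_*|^{-2}\cdot$(the same $p\otimes p$ average) $=|\widetilde\nabla\phi|^2 ff_*\kappa\cdot(\text{normalisation})$; the normalisation constant in \eqref{def:beta-int} is exactly chosen so the prefactors collapse, reproducing precisely $|\widetilde\nabla\phi|^2 ff_*\kappa$ — this is the analogue of the last display in the proof of Lemma \ref{lem:AR-conv}. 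The dominated convergence theorem (Theorem \ref{DCT}) then yields \eqref{li-2}.

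The main obstacle is the second limit \eqref{li-2}: unlike the dissipation $\cD_{\Psi^*}$, where one could only hope for one-sided bounds (Remark \ref{rmk:non-qua}(iii)), here we genuinely need an equality, and this works only because the argument of $\Psi^*$ is the \emph{bounded, vanishing} quantity $\overline\nabla\phi$ rather than $-\overline\nabla\log f^\varepsilon$ — so the quadratic expansion \eqref{Psi-0} can be used pointwise with a legitimate $L^1$-dominating function (built from $\theta^2\beta^\varepsilon$, $A_0\mathbbm 1_\Omega$ bounded, $\kappa^\varepsilon$ bounded, and the $L^1$-convergent $f^\varepsilon f^\varepsilon_*$). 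One must be careful that the $o(|\overline\nabla\phi|^2)$ error term, after multiplication by $\Theta(f^\varepsilon)B^\varepsilon\kappa^\varepsilon$ and integration, really does vanish: since $|\overline\nabla\phi|\le C\theta|v-v_*|$ uniformly on $\Omega$ and $\theta\le\varepsilon/2\to0$, the ratio $o(|\overline\nabla\phi|^2)/|\overline\nabla\phi|^2\to0$ uniformly, so the error is bounded by $\eta(\varepsilon)$ times the (uniformly bounded) dominating integral with $\eta(\varepsilon)\to0$. Apart from this, everything reduces to the weak-strong convergence already exploited in Step 2 and the spherical-averaging identities of Section \ref{sec:pre}.
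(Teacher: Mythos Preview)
Your approach is essentially the paper's: for \eqref{li-1} you (like the paper) invoke the weak--strong convergence argument already carried out in Step~2 and then pass from $U^\delta_{\aL}$ to $\widetilde{U^\delta_{\aL}}$ via \eqref{tilde:Tg}; for \eqref{li-2} you (like the paper) apply the dominated convergence theorem using the bound $\Theta(s,t)\le C_{\Psi^*}(s+t)$ from Lemma~\ref{lem:mean}, the quadratic control $\Psi^*(r)\le Cr^2$ near $0$, the estimate \eqref{bd-1}, and the spherical-average limit of \eqref{Bgard:8} type. The paper organises \eqref{li-2} as two sub-steps (first replace $\Theta(F^\varepsilon)$ by $F^\varepsilon$, then compute the limit), while you run a single DCT; this is only a cosmetic difference.

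One correction: for $\phi\in C^\infty_c(\Do)$ the support of $\overline\nabla\phi$ is \emph{not} bounded away from $\{v=v_*\}$ --- $\overline\nabla\phi$ vanishes there to first order by \eqref{bd-1}, but its support touches this set. Consequently your assertion ``$A_0\mathbbm{1}_\Omega$ bounded'' fails in the soft-potential case $A_0=|v-v_*|^\gamma$, $\gamma<0$. This does not break the argument: what is actually needed is that the \emph{product} $|v-v_*|^2A_0=|v-v_*|^{2+\gamma}$ (arising from $\Psi^*(\overline\nabla\phi)\lesssim|\overline\nabla\phi|^2\lesssim\theta^2|v-v_*|^2$) is bounded on a bounded set, which holds since $2+\gamma\ge0$; this is exactly how the paper handles the singularity. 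With this adjustment your domination goes through unchanged.
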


\begin{proof}
The limit \eqref{li-1} holds as a consequence of \eqref{sec-5:goal-1} and \eqref{tilde:Tg}
\begin{align*}
   \int_{\G\times S^{d-1}} U^\varepsilon_{\aB}\overline\nabla\phi\dd \eta\dd\sigma\to   2 \int_{\G} U^\delta_{\aL}\cdot \widetilde\nabla\phi\dd\eta=2 \int_{\G} \widetilde{U^\delta_{\aL}}\cdot \widetilde\nabla\phi\dd\eta.
\end{align*}

We use the notations $F^\varepsilon=f^\varepsilon f^\varepsilon_*$ and $\Theta(F^\varepsilon)=\Theta\big(F^\varepsilon,(F^\varepsilon)'\big)$. To show \eqref{li-2}, we only need to show, as $\varepsilon\to0$, 
\begin{gather}
    I_1:=\int_{\G\times S^{d-1}}\Psi^*(\overline\nabla\phi)B^\varepsilon \kappa^\varepsilon\Theta(F^\varepsilon) \dd\eta\dd\sigma\to \int_{\G\times S^{d-1}}\Psi^*(\overline\nabla\phi)B^\varepsilon \kappa^\varepsilon F^\varepsilon \dd\eta\dd\sigma,\label{DL-1}\\
    I_2:=\int_{\G\times S^{d-1}}\Psi^*(\overline\nabla\phi) F^\varepsilon B^\varepsilon\kappa^\varepsilon \dd\eta\dd\sigma\to \int_{\G} |\widetilde\nabla \phi|^2F\kappa\dd\eta.\label{DL-2}
\end{gather}
 Without loss of generality, we assume $|\overline\nabla \phi|\le 1$ and $\supp(\overline\nabla \phi)\subset B_1(0)\times S^{d-1}$, where $B_1(0)$ denotes the unit ball in $\G$.

We show the limit of $I_i$, $i=1,2$.
\begin{itemize}
    \item \textbf{(The limit of $I_1$).} We use the dominated convergence theorem. Let $\theta= \eps\chi/\pi$. We write
\begin{align*}
I_1= \int_{\G}\kappa^\varepsilon
A_0(|v-v_*|)\Theta(F^\varepsilon)\Big(\frac{1}{\pi \varepsilon^2}\int_0^{\frac{\pi}{2}}\beta(\chi)\Big(\int_{S^{d-2}_{k^\perp}}\Psi^*(\overline\nabla\phi) \dd p\Big)\dd\chi\Big)\dd\eta.
    \end{align*}

By Lemma \ref{lem:mean}, the following pointwise bounds hold
    \begin{align*}
    \Theta(F^\varepsilon)\le C_{\Psi^*} \big((F^\varepsilon)'+F^\varepsilon \big),\quad C_{\Psi^*} >0.
    \end{align*}
    Concerning the majorant of $\Psi^*(\overline\nabla\phi)$, $\Psi^*$ satisfies $\Psi^*(0)=(\Psi^*)'(0)=0$ and $(\Psi^*)''(0)=1$ by Assumption \ref{ass-pair}, and, without loss of generality, we assume $|\overline\nabla \phi|\le 1$. By Taylor expansion, we have 
\begin{equation}
\label{do-1}
\Psi^*(\overline\nabla\phi)\le \frac{\|(\Psi^*)''\|_{L^\infty([-1,1])}}{2}|\overline\nabla\phi|^2.   
    \end{equation}
By Lemma \ref{lem:phi:conv}, we have 
    \begin{equation}
    \label{do-2}
        |\overline\nabla \phi|\lesssim  \|\phi\|_{W^{1,\infty}}|v-v_*|\varepsilon^2\chi^2.
    \end{equation}
Since $\supp(\overline\nabla \phi)\subset B_1(0)\times S^{d-1}$ and  $\Psi^*(0)=0$, up to a constant independent of $\varepsilon$, the integrand of $I_1$ is dominated by 
    \begin{align*}
       &\kappa^\varepsilon
A_0(|v-v_*|)\Theta(F^\varepsilon)\Big(\frac{1}{\pi \varepsilon^2}\int_0^{\frac{\pi}{2}}\beta(\chi)\Big(\int_{S^{d-2}_{k^\perp}}\Psi^*(\overline\nabla\phi) \dd p\Big)\dd\chi\Big)\\
       \lesssim &{} 
       |v-v_*|^2A_0\chi^2\beta(\chi)\big((F^\varepsilon)'+F^\varepsilon\big) \mathbb{1}_{B_1(0)}.
    \end{align*}

    By changing of variables $(v',v_*')\mapsto (v,v_*)$, we have 
    \begin{equation}
        \label{majorant:conv}
     \begin{aligned}
         &\int_{B_1(0)\times S^{d-1}}|v-v_*|^2A_0\chi^2\beta(\chi)\big((F^\varepsilon)'+F^\varepsilon\big) \dd\eta\dd\sigma\\
         =&{}2|S^{d-2}|\Big(\int_0^{\frac{\pi}{2}}\chi^2 \beta(\chi)\dd\chi\Big)\int_{B_1(0)}|v-v_*|^2A_0(|v-v_*|)F^\varepsilon \dd\eta\\
         \lesssim &{}\int_{B_1(0)}|v-v_*|^2A_0(|v-v_*|)F^\varepsilon \dd\eta\lesssim \int_{\G}F^\varepsilon\dd\eta.
    \end{aligned}
    \end{equation}
    In the last inequality, we use $|v-v_*|^2A_0(|v-v_*|)\lesssim  |v-v_*|^{2+\gamma}\le C$, where the bounded region of integration ensures the boundedness of $|v-v_*|$, and in the soft potential case, the singularity of $A_0$ near $0$ is controlled. By Lemma \ref{weak:conv}, $F^\varepsilon$ is weakly precompact in $L^1(\G)$.  

   On the other hand, Lemma \ref{lemma:cpt:sqrt} implies the pointwise convergence
   \begin{align*}
(F_{\varepsilon})',\,F_{\varepsilon}\to F\quad\text{as}\quad \varepsilon\to0.   \end{align*}
Combining with the homogeneity assumption of $\Theta$ in Assumption \ref{ass-pair}, we have  the pointwise convergence
\begin{align*}
\Theta(F_{\varepsilon}) \to F_*\quad\text{as}\quad \varepsilon\to0.   
\end{align*}
By dominated convergence, the limit \eqref{DL-1} holds.

\item \textbf{(The limit of $I_2$).}
Similar to the $I_1$ term, we use the dominated convergence theorem. $I_2$ can be written as
    \begin{align*}
I_2= \int_{\G}\kappa^\varepsilon
A_0(|v-v_*|)F^\varepsilon\Big(\int_0^{\frac{\pi}{2}}\frac{\beta(\chi)}{\pi\varepsilon^2}\Big(\int_{S^{d-2}_{k^\perp}}\Psi^*(\overline\nabla\phi) \dd p\Big)\dd\chi\Big)\dd\eta.
    \end{align*}
    By using of \eqref{do-1} and \eqref{do-2}, 
similar to the $I_1$ term, the integrand of $I_2$ is dominated by 
    \begin{align*}
 &\kappa^\varepsilon
A_0(|v-v_*|)F^\varepsilon\Big(\int_0^{\frac{\pi}{2}}\frac{\beta(\chi)}{\pi\varepsilon^2}\Big(\int_{S^{d-2}_{k^\perp}}\Psi^*(\overline\nabla\phi) \dd p\Big)\dd\chi\Big)\\
\lesssim&{} |v-v_*|^2A_0F^\varepsilon \mathbb{1}_{B_1(0)}\int_{0}^{\frac{\pi}{2}}\chi^2 \beta(\chi)\dd\chi\lesssim |v-v_*|^2A_0F^\varepsilon \mathbb{1}_{B_1(0)},
\end{align*}
which is uniformly integrable by a similar argument as \eqref{majorant:conv}.

We are left to show the pointwise limit
the pointwise limit
  \begin{equation}
  \label{pw:limit}
\lim_{\varepsilon\to0}\int_{S^1}B^\varepsilon \Psi^*(\overline\nabla\phi)\dd \sigma =2|\widetilde \nabla\phi|^2\quad\text{for a.e. }(x,x_*,v,v_*)\in\G.
  \end{equation}
 By Taylor expansion, we have the pointwise limit $\lim_{r\to0}\frac{\Psi^*(r)}{r^2}=\frac12$. Combing the pointwise limit of bound \eqref{do-2} of $\overline\nabla\phi$ in Lemma \ref{lem:phi:conv},  we have 
 \begin{align*}
  &\lim_{\varepsilon\to0}\frac{1}{\varepsilon^2}\int_{S^{d-2}_{k^\perp}} \Psi^*(\overline\nabla\phi) \dd p =\lim_{\varepsilon\to0}\int_{S^{d-2}_{k^\perp}} \frac{|\overline\nabla\phi|^2}{2\varepsilon^2} \dd p.
  \end{align*}
  for almost every $(x,x_*,v,v_*)\in\G$. By repeating the arguments as in \eqref{Bgard:8}, we have the pointwise limit
  \begin{align*}
  &\lim_{\varepsilon\to0}\int_{S^1}B^\varepsilon |\overline\nabla\phi|^2 \dd \sigma =2|\widetilde \nabla\phi|^2.
  \end{align*}
  Hence, the limit \eqref{pw:limit} holds.
  Combining the pointwise limit of $F^\varepsilon$, the dominated convergence theorem implies the limit \eqref{DL-2}.

   \end{itemize} 
\end{proof}

\printbibliography

\Addresses

\end{document}